\documentclass[11pt]{article}
\usepackage{bbm}
\usepackage{mathrsfs}
\usepackage{amsfonts}
\usepackage{amssymb}
\usepackage{amssymb,amsmath,amsthm, amsfonts}
\textwidth=16cm \textheight=23cm
\parindent=1cm
\oddsidemargin=-5mm \evensidemargin=-5mm \topmargin=-10mm
\baselineskip=20pt

\oddsidemargin  = 0pt \evensidemargin = 0pt \marginparwidth = 1in
\marginparsep   = 0pt \leftmargin     = 1.25in \topmargin =0pt
\headheight     = 0pt \headsep        = 0pt \topskip =0pt
\footskip       =0.25in \textheight     = 9in \textwidth      =
6.5in

\setcounter{MaxMatrixCols}{30}
\providecommand{\U}[1]{\protect \rule{.1in}{.1in}}
\newtheorem{theorem}{Theorem}[section]

\newtheorem{corollary}{Corollary}[section]

\newtheorem{definition}{Definition}[section]

\newtheorem{lemma}{Lemma}[section]

\newtheorem{proposition}{Proposition}[section]
\newtheorem{remark}{Remark}[section]

\def\esssup{\hbox{\rm ess$\,$\rm sup$\,$}}
\def\esssup{\mathop{\rm esssup}}

\def\sup{\mathop{\rm sup}}
\makeatletter
   
   \@addtoreset{equation}{section}
\makeatother

\begin{document}
\title{Mean-field forward and backward SDEs with jumps. Associated nonlocal quasi-linear integral-PDEs
\thanks {The work has been supported in part by the NSF of P.R.China (No. 11222110), NSFC-RS (No. 11661130148), 111 Project (No. B12023). {\small \textit{E-mail: juanli@sdu.edu.cn.}}}}

\author{ Juan LI\\
{\small School of Mathematics
and Statistics, Shandong University, Weihai,}
{\small Weihai, 264209, P. R. China.}\\
}
\date{October 08, 2016}

\maketitle

\begin{abstract}
In this paper we consider a mean-field backward stochastic differential equation (BSDE) driven by a Brownian motion and an independent Poisson random measure. Translating the splitting method introduced by Buckdahn, Li, Peng and Rainer \cite{BLPR} to BSDEs, the existence and the uniqueness of the solution $(Y^{t,\xi}, Z^{t,\xi}, H^{t,\xi})$,  $(Y^{t,x,P_\xi}, Z^{t,x,P_\xi}, H^{t,x,P_\xi})$ of the split equations are proved. The first and the second order derivatives of the process $(Y^{t,x,P_\xi}, Z^{t,x,P_\xi}, H^{t,x,P_\xi})$  with respect to $x$, the derivative of the process $(Y^{t,x,P_\xi}, Z^{t,x,P_\xi}, H^{t,x,P_\xi})$  with respect to the measure $P_\xi$, and the derivative of the process $(\partial_\mu Y^{t,x,P_\xi}(y), \partial_\mu Z^{t,x,P_\xi}(y), \partial_\mu H^{t,x,P_\xi}(y))$  with respect to $y$ are studied under appropriate regularity assumptions on the coefficients, respectively. These derivatives turn out to be bounded and continuous in $L^2$. The proof of the continuity of the second order derivatives is particularly involved and requires subtle estimates. This regularity ensures that the value function $V(t,x,P_\xi):=Y_t^{t,x,P_\xi}$ is regular and allows to show with the help of a new It\^{o} formula that it is the unique classical solution of the related nonlocal quasi-linear integral-partial differential equation (PDE) of mean-field type.
\end{abstract}

\bigskip

\noindent \textbf{Keyword.} BSDEs with jump, mean-field BSDEs with jump, integral-PDE of mean-field type, It\^{o}'s formula, value function

\noindent \textbf{AMS Subject classification:} 60H10; 60K35

\noindent \section{{\protect \large {Introduction}}}

\noindent Mean-field stochastic differential equations, also called McKean-Vlasov
equations, can be dated back to the works of Kac \cite{KAC1},\ \cite{KAC2} in the 1950s.
Nonlinear mean-field backward stochastic
differential equations had not been investigated before the work of Buckdahn, Djehiche Li and Peng
\cite{BDLP} in 2009. Since their work the theory of mean-field forward-backward
stochastic differential equations (FBSDEs), as well as that of the associated partial differential equations (PDEs) of mean-field type
has been intensively investigated. For example, Buckdahn, Li and Peng \cite{BLP} obtained for mean-field BSDEs an existence and uniqueness theorem, but also a comparison theorem. Using a BSDE approach, first introduced by Peng \cite{PENG} in 1997, the
authors also gave a probabilistic interpretation to related nonlocal partial differential equations.
Min, Peng and Qin \cite{MPQ} proved through a continuation method that fully coupled mean-field FBSDEs have a unique square integrable adapted solution. On the other hand, with the development of the theory of mean-field FBSDEs, many stochastic control problems in
the mean-field framework have also been considered. For instance, Li \cite{LI} studied a stochastic maximum principle for the mean-field controls.
A stochastic optimal control problem with delay and of mean-field type was considered by
Shen, Meng and Shi \cite{SMS}. With the help of the theory of FBSDEs involving the value function, but with frozen partial
initial values, Hao and Li investigated an optimal control problem with systems of decoupled controlled mean-field
FBSDEs \cite{HL1}, as well as fully coupled controlled mean-field
FBSDEs \cite{HL2}. We remark that, generally speaking, the dynamic programming
principle for mean-field FBSDEs does not hold true anymore because of the
presence of expectation terms in coefficients. For this reason, in \cite{BLP}, \cite{HL1} and \cite{HL2}, the authors adopted a new method: They fixed partially the initial values, to overcome this difficulty. Besides, there are
also many other works in the mean-field area, see, e.g., Kloeden and Lorenz \cite{KL},
Kotelenez and Kurtz \cite{KK}, Yong \cite{Yo} and references therein. In particular,  Lasry and Lions
\cite{LL}
extended the application areas for mean-field problems to Economics, Finance and game theory.

The lectures given by P.L. Lions \cite{LIONS} at $\emph{Coll\`{e}ge de France}$ and the notes edited
by Cardaliguet \cite{Ca1} give the definition of the derivative for a function $\varphi: \mathcal{P}_2(\mathbb{R}^d)\rightarrow\mathbb{R}$
with respect to measure. Many works adopt this definition, for example, R. Carmona and
F. Delarue \cite{CD2}, Cardaliaguet \cite{Ca2}.
Among all these works,  we refer in particular to that of Buckdahn, Li, Peng and Rainer \cite{BLPR}.
The authors considered general mean-field SDEs and related nonlocal PDEs, and proved that the solution $(X^{t,\xi},X^{t,x,P_\xi})$
of such a couple of forward SDEs satisfies the $\emph{flow property}$. This allowed to prove that the
associated nonlocal PDE has a unique classical solution. This approach overcame the drawback of partial freezing of initial
data, see \cite{BLP}. Recently, Chassagneux, Crisan and Delarue \cite{CCD} considered fully coupled
mean-field FBSDEs driven by Brownian motion, and proved that the fully coupled mean-field FBSDEs have unique solutions.

We are interested here in more general mean-field FBSDE with jumps. The theory of FBSDEs with jumps has developed very dynamically in the recent years because of its variable applications. There are many works on FBSDEs with jumps, see, e.g., Bass \cite{BASS}, Barles, Buckdahn and Pardoux
\cite{BBP}, Tang and Li \cite{TL}, Li and Peng \cite{LP}, Buckdahn, Li and Hu \cite{BLH},
Li and Wei \cite{LW1}, \cite{LW2}. On the other hand, Hao and Li \cite{HL3} studied  mean-field SDEs with jumps. They showed that the unique
solution  $(X^{t,\xi},X^{t,x,P_\xi})$ of the split mean-field SDE with jumps satisfies the $\emph{flow property}$, and using a new approach the authors succeeded in proving the existence and the uniqueness of classical solutions for the related nonlocal linear integral-PDEs. Inspired by the works of Hao, Li \cite{HL3} and Pardoux, Peng \cite{PP}, the objective of our present work is to associate the mean-field (forward) SDE with jumps with a mean-field BSDE driven by a Brownian motion and an independent Poisson random measure, and to describe the associated nonlocal integral-PDE
of mean-field type which unlike \cite{HL3} and \cite{BLPR} is quasi-linear. We emphasize that this generalization is far from being trivial and related with very subtle BSDE estimates.

More precisely, given the solution of the split forward SDE $(X^{t,\xi},X^{t,x,P_\xi})$ (see the equations (\ref{equ 3.1}) and (\ref{equ 3.2})), we consider the split BSDEs with jumps (see the equations (\ref{equ 4.1}) and (\ref{equ 4.2})), driven by the Brownian motion $B$ and the independent compensated Poisson random measure $N_\lambda$ (with associated L\'evy measure $\lambda$ defined over $K \!\subset \! \mathbb{R}^\ell\setminus\{0\}$). From Theorem 10.1 in the Appendix it follows equation (\ref{equ 4.1}) has a unique solution $(Y^{t,\xi},Z^{t,\xi},H^{t,\xi})$.
Once knowing $(Y^{t,\xi},Z^{t,\xi},H^{t,\xi})$, equation (\ref{equ 4.2}) can be treated as a classical BSDE with jumps, and it possesses a unique solution
$(Y^{t,x,\xi},Z^{t,x,\xi},H^{t,x,\xi})$. We show that this solution of (\ref{equ 4.2}) depends on $\xi$ only through its law, but not on $\xi$ itself (see Proposition 4.1), which allows to define $(Y^{t,x,P_\xi},Z^{t,x,P_\xi},H^{t,x,P_\xi})=(Y^{t,x,\xi},Z^{t,x,\xi},H^{t,x,\xi})$. The flow property of $(X^{t,x,P_\xi}, X^{t,\xi})$ (see (\ref{3.5})) leads to a corresponding property for $(Y^{t,x,P_\xi}, Y^{t,\xi})$ (see (\ref{4.6-1})), which is crucial to study the related nonlocal quasi-linear integral-PDE of mean-field type. As we are interested in classical solutions of the related PDEs, we have to study the regularity of $(Y^{t,x,P_\xi},Z^{t,x,P_\xi},H^{t,x,P_\xi})$, i.e., its twice continuous differentiability with respect to $x$, its continuous differentiability with respect to the law and the continuous differentiability of this latter derivative with respect to the variable which is generated by the derivative with respect to the law. The study of these second order derivatives for a BSDE leads to new BSDEs whose driver depends, in particular, on non-linear functions of $(Y^{t,x,P_\xi},Z^{t,x,P_\xi},H^{t,x,P_\xi})$ multiplied with the square of the first order derivatives of the processes $Z^{t,x,P_\xi}$ and $H^{t,x,P_\xi}$, which are only square integrable with respect to the time parameter. This makes the proof of the continuity of the second order derivatives of these processes very subtle and is related with very technical estimates (see, in particular, Section 8, Section 10.3), a point which in their study of classical BSDEs and classical solutions of associated PDEs in \cite{PP} was not developed there. The regularity of $(Y^{t,x,P_\xi},Z^{t,x,P_\xi},H^{t,x,P_\xi})$ yields that of the value function $V$ defined by $V(t,x,P_\xi)=Y_t^{t,x,P_\xi}.$
We prove that this value function $V(t,x,P_\xi)$ is the unique classical solution of the new nonlocal quasi-linear integral-PDE of mean-field type (\ref{equ 1.4}) (see Theorem 9.2). For this we first prove a new more general Ito's formula $F(t,U_t, P_{X_t})$, where $U$ and $X$ are It\^{o} processes with jumps, respectively. In particular, unlike \cite{BLPR} and \cite{HL3} we don't need the existence of the second order mixed derivatives $\partial_x\partial_\mu F$, $\partial_\mu\partial_x F$, $\partial_\mu^2 F$ for the It\^{o} formula, see Theorem 2.1. This new It\^{o} formula simplifies the proof of Theorem 9.2, even for the more special case studied in \cite{BLPR} and \cite{HL3}. We also get the representation formulas for the solutions of (\ref{equ 4.1}) and (\ref{equ 4.2}), see (\ref{991}) and  (\ref{9911}).

This paper is organized as follows. In Section 2 we recall the definition of the derivative
of a function $\varphi$ defined on $\mathcal{P}_2(\mathbb{R}^d)$ with respect to the measure. We also prove a new general It\^{o} formula. Section 3 studies mean-field SDEs with jumps. The properties of the solution for our split mean-field BSDEs with jumps are proved in Section 4. Section 5 shows that the first order derivatives of the process $X^{t,x,P_\xi}$ with respect to $x$ and the measure
$P_\xi$ exist, and the corresponding estimates are obtained.
Section 6 is devoted to study the first order derivatives of ($Y^{t,x,P_\xi},Z^{t,x,P_\xi},H^{t,x,P_\xi}$) with respect to $x$ and the measure $P_\xi$, respectively, which are bounded and Lipschitz continuous in $L^2$. In Section 7 the second order derivatives of $X^{t,x,P_\xi}$
are discussed. The second order derivatives of ($Y^{t,x,P_\xi},Z^{t,x,P_\xi},H^{t,x,P_\xi}$)
are investigated in Section 8. In Section 9 we prove by using our new It\^{o}'s formula that our associated integral-PDEs of mean-field type has a unique classical solution. Section 10 (the Appendix) gives the proof of Theorem 2.1 (Subsection 10.1), that of an auxiliary result for Proposition 9.1 (Subsection 10.3), and recalls some basic results on mean-field BSDE with jumps (Subsection 10.2).
\section{{\protect \large {Preliminaries}}}
Let us consider a complete probability space $(\Omega,{\cal
F},P)$ on which is defined a $d$-dimensional Brownian motion
$B(=(B^1,\dots,B^d))=(B_t)_{t\in [0,T]}$, and an independent Poisson random
measure $N$ on $\mathbb{R}_{+}\times K$. Here $K\subset \mathbb{R}^l\setminus\{0\}$ is a nonempty open set equipped with
its Borel field $\mathcal{K}$. The compensator $\nu(de,dt)=\lambda(de)dt$ of $N$ is such that $\big\{N_\lambda([0,t]\times E)=(N-\nu)([0,t]\times E)\big\}_{t\geq0}$ is a martingale for all $E\in\mathcal{K}$ satisfying $\lambda(E)<\infty$, and $\lambda$ is a given $\sigma$-finite L\'{e}vy measure on $(K, \mathcal{K})$, i.e., a measure on $(K,\mathcal{K})$ with the property that
$\int_K(1\wedge |e|^2)\lambda(de)< \infty$. Let $T>0$ denote an arbitrarily fixed time
horizon. We suppose that there is a sub-$\sigma$-field ${\cal
F}_0\subset {\cal F}$ such that

i) the Brownian motion $B$ and the Poisson random measure $N$
are
 independent of ${\cal F}_0$,

ii) ${\cal F}_0$ is ``rich enough'', i.e., ${\cal P}_2({\mathbb R}^k)=
\{P_\vartheta,\, \vartheta\in L^2({\cal F}_0;{\mathbb R}^k)\},\, k\ge 1,$

iii) $\mathcal{F}_0\supset \mathcal{N}_P,$
where $\mathcal{N}_P$ is the set of all $P$-null subsets of ${\cal
F}$.

\noindent By $\mathbb{F}=({\cal F}_t)_{t\in[0,T]}$ we denote the
filtration generated by this Browinan motion $B$ and the Poisson random measure $N$, augmented by ${\cal
F}_0$, i.e.,
$$
\begin{aligned}
\mathcal{F}^0_t&=\sigma\big\{B_s,\ N([0,s]\times E)\big|\ s\leq t,
E\in\mathcal{K}\big\},\\
\mathcal{F}_t:&=\mathcal{F}^0_{t+}\vee\mathcal{F}_0
\Big(=\big(\bigcap\limits_{s:s>t}\mathcal{F}^0_s \big)\vee\mathcal{F}_0
\Big),\ t\in[0,T].\\
\end{aligned}
$$
Note that $\mathbb{F}=({\cal F}_t)_{t\in[0,T]}$ satisfies the standard assumption of right-continuity and completeness. Let us introduce the following spaces which are needed in what follows.\\
\noindent $\bullet$ $\mathcal{H}^2_{\mathbb{F}}(t,T;\mathbb{R}^d):=\big\{
\psi|\ \psi:\Omega\times [t,T]\rightarrow\mathbb{R}^d$ is an $\mathbb{F}$-predictable
process with $E[\int_t^T|\psi_s|^2ds]<+\infty\big\}$;\\
\noindent $\bullet$ $\mathcal{S}^2_{\mathbb{F}}(t,T;\mathbb{R}^d):=\big\{
\varphi|\ \varphi:\Omega\times [t,T]\rightarrow\mathbb{R}^d$ is an $\mathbb{F}$-adapted c\`adl\`ag process with
$E[\sup_{0\leq s\leq T}|\varphi_s|^2ds]<+\infty\big\}$;\\
\noindent $\bullet$  $\mathcal{K}^2_\lambda(t,T;\mathbb{R}^d):=
\{H|\ H:\Omega\times [t,T]\times K\rightarrow\mathbb{R}^d$
is $\mathcal{P}^0\otimes \mathcal{B}(K)$-measurable and $E[\int_t^T\int_K|H_s(e)|^2\lambda(de)ds]$
 $ <+\infty\}.$

 Here $t\in[0,T]$ and $\mathcal{P}^0$ denotes the $\sigma$-field of $\mathbb{F}$-predictable subsets of $\Omega\times[0,T].$ Note that we may omit $\mathbb{R}^d$ and just write $\mathcal{H}^2_{\mathbb{F}}(t,T)$ when $d=1$, similar to other notations.

Let us introduce some notations and concepts, which are used frequently in what
follows. By $\mathcal{P}(\mathbb{R}^d)$ we denote the set of probability measures over $(\mathbb{R}^d,\mathcal{B}(\mathbb{R}^d))$; $\mathcal{P}_2(\mathbb{R}^d)$ denotes the set of probability measures $\mu$ from $\mathcal{P}(\mathbb{R}^d)$ with
$\int_{\mathbb{R}^d}|x|^2\mu(dx)< +\infty.$ Let $\mathcal{P}_2(\mathbb{R}^d)$ be endowed with the 2-Wasserstein  metric: For $\nu,\ \bar{\nu}\in \mathcal{P}_2(\mathbb{R}^d),$
$$W_2(\nu,\bar{\nu}):=\inf\bigg\{\Big(\int_{\mathbb{R}^{2d}}|x-y|^2\rho(dxdy)\Big)^{\frac{1}{2}},\
  \rho\in\mathcal{P}_2(\mathbb{R}^{2d}),\ \text{such that}$$
  $$\rho(A_1\times \mathbb{R}^d)=\nu(A_1),\ A_1\in\mathcal{B}(\mathbb{R}^d),\
  \rho(\mathbb{R}^d\times A_2)=\bar{\nu}(A_2),\ A_2\in\mathcal{B}(\mathbb{R}^d)\bigg\}.$$
We now introduce the notion of differentiability of a function defined on
$\mathcal{P}_2(\mathbb{R}^d)$ with respect to probability measure. Here we adopt
the approach introduced by Lions in his course at $\emph{Coll\`{e}ge de France}$ \cite{LIONS}
and later edited in the notes by Cardaliaguet \cite{Ca1}. Given a function $\varphi: \mathcal{P}_2(\mathbb{R}^d)\rightarrow \mathbb{R},$  we consider
the lifted function $\tilde{\varphi}(\xi):=\varphi(P_\xi),\ \xi\in L^2(\mathcal{F};\mathbb{R}^d)(:=L^2(\Omega, {\cal F}, P;\mathbb{R}^d)).$
If for a given $\mu_0\in {\cal P}_2(\mathbb{R}^d)$ there exists a random variable $\xi_0\in L^2(\mathcal{F}; \mathbb{R}^d)$ satisfying  $P_{\xi_0}=\mu_0,$
such that $\tilde{\varphi}: L^2(\mathcal{F};\mathbb{R}^d)\rightarrow \mathbb{R}$ is Fr\'{e}chet differentiable
at this point $\xi_0,$ then  we called that $\varphi$ is differentiable with respect to $\mu_0$. This is equivalent with the existence of a continuous linear mapping
$D\tilde{\varphi}(\xi_0): L^2(\mathcal{F};\mathbb{R}^d)\rightarrow \mathbb{R}$
(i.e.,

\noindent$D\tilde{\varphi}(\xi_0)\in L(L^2(\mathcal{F};\mathbb{R}^d);\mathbb{R})$) such that
\begin{equation}\label{equ 2.1}
\widetilde{\varphi}(\xi_0+\zeta)-\widetilde{\varphi}(\xi_0)=D\widetilde{\varphi}(\xi_0)(\zeta)+o(|\zeta|_{L^2}),
\end{equation}
for $\zeta\in L^2(\mathcal{F};\mathbb{R}^d)$ with $|\zeta|_{L^2}\rightarrow0.$
Riesz's Representation Theorem allows to show that there exists a unique $\eta\in L^2(\mathcal{F};\mathbb{R}^d)$
such that $D\tilde{\varphi}(\xi_0)(\zeta)=E[\eta\cdot\zeta],\ \zeta\in L^2(\mathcal{F};\mathbb{R}^d).$
But this random variable $\eta$ is a Borel measurable function of $\xi_0$, refer
to Cardaliaguet \cite{Ca1}. This means that $\eta$ is of the form $\eta=\psi(\xi_0)$, where
$\psi$ is a Borel measurable function depending on $\xi_0$ only through its law. Hence, combining (\ref{equ 2.1})
and the above argument, we have
$$
\varphi(P_{\xi_0+\zeta})-\varphi(P_{\xi_0})=E[\psi(\xi_0)\cdot\zeta]+o(|\zeta|_{L^2}).
$$
In the spirit of Lions and Cardaliaguet, the derivative of
$\varphi:\mathcal{P}_2(\mathbb{R}^d)\rightarrow\mathbb{R}$
with respect to the measure $P_{\xi_0}$ is denoted by
$\partial_\mu\varphi(P_{\xi_0},y):=\psi(y),\ y\in\mathbb{R}^d$.
Observe that $\partial_\mu\varphi(P_{\xi_0},y)$ is only $P_{\xi_0}(dy)$-a.e.
uniquely determined; see also Definition 2.1 in Buckdahn, Li, Peng and Rainer \cite{BLPR}.

The following two spaces are used frequently. For more details the
reader may refer to \cite{BLPR}.
\begin{definition}\label{det 2.1}
\indent $1)$\ We say that $\varphi$ belongs to
$C^{1,1}_b(\mathcal{P}_2(\mathbb{R}^d))$,
if $\varphi: \mathcal{P}_2(\mathbb{R}^d)\rightarrow \mathbb{R}$ is differentiable on $\mathcal{P}_2(\mathbb{R}^d)$ and $\partial_{\mu}\varphi(\cdot,\cdot): {\cal P}_2(\mathbb{R}^d)\times \mathbb{R}^d \rightarrow \mathbb{R}^d$\ is bounded and Lipschitz
continuous, i.e., there exists some positive constant $L$
such that\\
\indent $\mathrm{(i)}\ |\partial_\mu \varphi(\mu,y)|\leq L,\ \mu\in\mathcal{P}_2(\mathbb{R}^d),\ y\in\mathbb{R}^d,$\\
\indent $\mathrm{(ii)}\  |\partial_\mu \varphi(\mu, y)-\partial_\mu \varphi(\mu', y')|
\leq L (W_2(\mu,\mu')+|y-y'|),\
\mu,\ \mu'\in\mathcal{P}_2(\mathbb{R}^d),\ y,\ y'\in\mathbb{R}^d.$

\noindent$2)$\
By $C^{2}_b(\mathcal{P}_2(\mathbb{R}^d))$ we denote the space of all functions
$\varphi\in C^{1,1}_b(\mathcal{P}_2(\mathbb{R}^d))$ with $(\partial_\mu \varphi)_j(\mu,\cdot):\mathbb{R}^d\rightarrow\mathbb{R}$
is differentiable, for every $\mu\in\mathcal{P}_2(\mathbb{R}^d)$, and the derivative
$\partial_y\partial_\mu \varphi: \mathcal{P}_2(\mathbb{R}^d)\times\mathbb{R}^d\rightarrow
\mathbb{R}^d\otimes\mathbb{R}^d$ is bounded and continuous.\\
Here we use the notation $\partial_\mu \varphi(\mu,y):=\Big((\partial_\mu \varphi)_j(\mu,y)\Big)_{1\leq j\leq d},$
$(\mu,y)\in\mathcal{P}_2(\mathbb{R}^d)\times\mathbb{R}^d.$
\end{definition}
\begin{definition}\label{det 2.2}
\indent We say that $\varphi$ belongs to
$C^{1,2,2} ([0, T]\times \mathbb{R}^d\times\mathcal{P}_2(\mathbb{R}^d))$,
if $\varphi: [0, T]\times \mathbb{R}^d\times\mathcal{P}_2(\mathbb{R}^d)\rightarrow \mathbb{R}$ satisfies\\
\indent {\rm (i)} $\varphi(\cdot,\cdot,\mu)\in C^{1,2}([0,T]\times\mathbb{R}^d)$, for all $\mu\in{\cal P}_2(\mathbb{R}^d)$;\\
\indent {\rm (ii)} $\varphi(t,x,\cdot)\in C^2_b({\cal P}_2(\mathbb{R}^d))$, for all $(t,x)\in [0, T]\times\mathbb{R}^d$;\\
\indent {\rm (iii)} All derivatives of order 1 and 2 are continuous on $[0,T]\times\mathbb{R}^d\times{\cal P}_2(\mathbb{R}^d)\times\mathbb{R}^d$, $\partial_\mu \varphi$  and \\
 \indent $\partial_y(\partial_\mu \varphi)$ are bounded over $[0,T]\times\mathbb{R}^d\times{\cal P}_2(\mathbb{R}^d)\times\mathbb{R}^d$.
\end{definition}
\noindent Now we give a general It\^{o}'s formula for the jump case which generalizes that in \cite{BLPR} and \cite{HL3}.
\begin{theorem}\label{minThm2.1}{\rm(}It\^{o}'s formula{\rm)}\\ Let $F\in C^{1,2,2} ([0, T]\times \mathbb{R}^d\times\mathcal{P}_2(\mathbb{R}^d))$.
We consider the following two It\^{o} processes: \\
 \begin{equation}
X_t= X_0+\int_0^t b_sds+\int_0^t\sigma_sdB_s+\int_0^t\int_K\beta_s(e)N_\lambda(ds,de),\ t\in[0,T],
\end{equation}
where $b\in \mathcal{H}^2_{\mathbb{F}}(0,T;\mathbb{R}^d)$, $\sigma\in \mathcal{H}^2_{\mathbb{F}}(0,T;\mathbb{R}^{d\times d})$, $\beta\in {\cal K}^2_\lambda(0,T;\mathbb{R}^d)$, $X_0\in L^2({\cal F}_0;\mathbb{R}^d)$, and
\begin{equation}
U_t= U_0+\int_0^t u_sds+\int_0^tv_sdB_s+\int_0^t\int_K \gamma_s(e)N_\lambda(ds,de),\ t\in[0,T],
\end{equation}
where\footnote{1) $L^0_{\mathbb{F}}(\Omega;L^1([0,T];\mathbb{R}^d))$ is the set of $\mathbb{F}$-adapted processes $u: [0, T]\times \Omega \rightarrow \mathbb{R}^d$ with $\int_0^T |u(s)|ds< +\infty$, P-a.s.;\\ 2) $L^0_{\mathbb{F}}(\Omega;L^2([0,T];\mathbb{R}^{d\times d}))$ is the set of $\mathbb{F}$-predictable  processes $v: [0, T]\times \Omega \rightarrow \mathbb{R}^{d\times d}$ with $\int_0^T |v(s)|^2ds< +\infty$, P-a.s.;\\ 3) ${\cal K}^0_\lambda(0,T;\mathbb{R}^d)$ is the set of $\mathcal{P}^0\otimes \mathcal{B}(K)$-measurable processes $\gamma: [0, T]\times \Omega \times K \rightarrow \mathbb{R}^{d}$ with $\int_0^T\int_K |\gamma_s(e)|^2\lambda(de)ds< +\infty$, P-a.s.}   $u\in L^0_{\mathbb{F}}(\Omega;L^1([0,T];\mathbb{R}^d))$, $v\in L^0_{\mathbb{F}}(\Omega;L^2([0,T];\mathbb{R}^{d\times d}))$, $\gamma\in {\cal K}^0_\lambda(0,T;\mathbb{R}^d)$ is such that $|\gamma_s(e)|\leq \zeta (1\wedge |e|)$, P-a.s., $(s,e)\in [0, T]\times K$, with $\zeta\geq0, \zeta\in L^0({\cal F}_0)$, and $U_0\in L^0({\cal F}_0;\mathbb{R}^d)$.\\
Then, for all $t\in[0,T]$ we have

\begin{equation}
\begin{aligned}
&\ F(t,U_t,P_{X_t})-F(0,U_0,P_{X_0})\\
=&\displaystyle\ \int_0^t\Big\{(\partial_s F)(s,U_s,P_{X_s})+\sum\limits_{i=1}^{d}(\partial_{x_i}F)(s,U_s,P_{X_s})u^i_s+\frac{1}{2}\sum\limits_{i,j,k=1}^{d}(\partial^2_{x_ix_j}F)(s,U_s,P_{X_s})v^{ik}_sv^{jk}_s\\
+&\displaystyle\ \int_K\Big(F(s,U_s+\gamma_s(e),P_{X_s})-F(s,U_s,P_{X_s})-\sum\limits_{i=1}^{d}(\partial_{x_i}F)(s,U_s,P_{X_s})\gamma^i_s(e)\Big)\lambda(de)\Big\}ds\\
+&\displaystyle\ \int_0^t\widehat{E}\Big[\sum\limits_{i=1}^{d}(\partial_\mu F)_i(s,U_s,P_{X_s},\widehat{X_s})\widehat{b_s^i}+\frac{1}{2}\sum\limits_{i,j,k=1}^{d}\partial_{y_i}(\partial_\mu F)_j(s,U_s,P_{X_s},\widehat{X_s})\widehat{\sigma^{ik}_s}\widehat{\sigma^{jk}_s}\\
 +&\displaystyle\ \int_K\int_0^1\sum\limits_{i=1}^{d}\Big\{(\partial_\mu F)_i(s,U_s,P_{X_s},\widehat{X_s}+\rho\widehat{\beta_s(e)})-(\partial_\mu F)_i(s,U_s,P_{X_s},P_{X_s},\widehat{X_s})\Big\}\widehat{\beta^i_s(e)}d\rho\lambda(de)\Big]ds\\
+&\displaystyle\ \int_0^t\sum\limits_{i,j=1}^{d}(\partial_{x_i} F)(s,U_s,P_{X_s})v_s^{i,j}dB_s^j+\int_0^t\int_K\Big(F(s,U_{s-}+\gamma_s(e),P_{X_s})-F(s,U_{s-},P_{X_s})\Big)N_\lambda(ds,de).
\end{aligned}
\end{equation}
\end{theorem}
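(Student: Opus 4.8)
The strategy is to reduce to the classical It\^o formula for jump-diffusions by observing that the measure slot of $F$ is fed only by the \emph{deterministic} curve $s \mapsto P_{X_s}$ (a law, hence non-random). Accordingly I introduce the auxiliary function
$$\Phi(s,u) := F(s,u,P_{X_s}), \qquad (s,u) \in [0,T]\times\mathbb{R}^d,$$
which is a genuine deterministic function of $(s,u)$. The whole theorem follows once one shows $\Phi \in C^{1,2}([0,T]\times\mathbb{R}^d)$ with $\partial_u\Phi = \partial_x F(\cdot,\cdot,P_{X_\cdot})$, $\partial^2_{uu}\Phi = \partial^2_{xx}F(\cdot,\cdot,P_{X_\cdot})$, and
$$\partial_s\Phi(s,u) = \partial_s F(s,u,P_{X_s}) + \widehat{E}\big[\,\mathcal{M}(s,u)\,\big],$$
where $\mathcal{M}(s,u)$ is the integrand of the $\widehat{E}$-bracket in the statement evaluated at the frozen point $u$: indeed, applying the ordinary It\^o formula with jumps to $\Phi(s,U_s)$ and then substituting $u = U_s$ reproduces every term of the assertion, the jump increments of $U$ contributing $\Phi(s,U_{s-}+\gamma_s(e)) - \Phi(s,U_{s-}) = F(s,U_{s-}+\gamma_s(e),P_{X_s}) - F(s,U_{s-},P_{X_s})$ in both the compensator and the $N_\lambda$-martingale. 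This is exactly why no mixed derivatives $\partial_x\partial_\mu F$, $\partial_\mu^2 F$ occur: since $\Phi$ carries the law as a deterministic time parameter, there is no cross-variation between $U$ and the measure. Before starting I would, by localization with stopping times together with a mollification of $F$ (legitimate under the $C^{1,2,2}$ hypothesis), reduce to the case where $b,\sigma,\beta,u,v,\gamma,X_0,U_0$ are bounded and $F$ together with all its first and second derivatives is bounded and Lipschitz.

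The heart of the matter is the computation of $\partial_s\Phi$, that is, a purely measure-theoretic It\^o formula with jumps: for a fixed frozen pair $(s_0,u_0)$ and $G := F(s_0,u_0,\cdot)\in C^2_b(\mathcal{P}_2(\mathbb{R}^d))$,
$$G(P_{X_t}) - G(P_{X_0}) = \int_0^t \widehat{E}\big[\,\mathcal{M}_G(s)\,\big]\,ds,$$
with $\mathcal{M}_G(s)$ the corresponding $\widehat{E}$-integrand built from $\partial_\mu G$ and $\partial_y\partial_\mu G$. To prove it I would take a partition $0 = s_0 < \dots < s_n = t$, telescope $G(P_{X_t}) - G(P_{X_0}) = \sum_k(\widetilde{G}(X_{s_{k+1}}) - \widetilde{G}(X_{s_k}))$ through the lift $\widetilde{G}(\xi) = G(P_\xi)$, and use the \emph{exact} first-order identity
$$\widetilde{G}(X_{s_{k+1}}) - \widetilde{G}(X_{s_k}) = \int_0^1 E\big[\partial_\mu G(P_{X_{s_k}+\rho\Delta_k},\,X_{s_k}+\rho\Delta_k)\cdot\Delta_k\big]\,d\rho, \quad \Delta_k := X_{s_{k+1}} - X_{s_k},$$
which involves only the first derivative $\partial_\mu G$. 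A further exact expansion of $\partial_\mu G$ in its spatial argument, contracted against $\partial_y\partial_\mu G$, splits the increment into a first-order part and a quadratic part. The crucial point, and the reason $\partial_\mu^2 G$ never appears, is that every contribution coming from the variation of the \emph{measure} slot $P_{X_{s_k}+\rho\Delta_k}$ is $\mathcal{F}_{s_k}$-measurable (the law being non-random) and is multiplied by an increment $\Delta_k$; against the Brownian and the compensated-jump parts of $\Delta_k$ this has vanishing conditional mean, and against the drift part it is $o(1)$ after summation. The first-order part then leaves only the drift term $\widehat{E}[(\partial_\mu G)\cdot\widehat{b}]$, while the quadratic part produces, from the Brownian quadratic variation, the term $\tfrac12\widehat{E}[\partial_y(\partial_\mu G)\widehat{\sigma}\widehat{\sigma}^{\top}]$.

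The jump term is extracted from the same quadratic part but must be kept \emph{exact}, since the jump increments are not small and $\lambda$ may have infinite activity. On an interval carrying a jump of size $\widehat{\beta}_s(e)$ the spatial argument interpolates between $\widehat{X}_s$ and $\widehat{X}_s + \widehat{\beta}_s(e)$, and the fundamental theorem of calculus rebuilds
$$\int_0^1 \sum_i\big((\partial_\mu G)_i(P_{X_s},\widehat{X}_s+\rho\widehat{\beta}_s(e)) - (\partial_\mu G)_i(P_{X_s},\widehat{X}_s)\big)\widehat{\beta}^i_s(e)\,d\rho,$$
whose built-in subtraction of $(\partial_\mu G)(P_{X_s},\widehat{X}_s)\cdot\widehat{\beta}_s(e)$ is precisely the compensation of $N_\lambda$; taking the $\lambda$-compensator of the jump quadratic variation yields the stated $\int_K\int_0^1\!\cdots\,d\rho\,\lambda(de)$ term. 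This jump analysis is the main obstacle: one must control the second-order remainders uniformly as the mesh tends to zero when small jumps accumulate, which I would handle with the Lipschitz continuity of $\partial_y(\partial_\mu G)$ (from $G\in C^2_b(\mathcal{P}_2)$), the $|\widehat{\beta}_s(e)|$-type square-integrability of $\mathcal{K}^2_\lambda$ together with $\int_K(1\wedge|e|^2)\lambda(de)<\infty$, and the $W_2$-continuity of $s\mapsto P_{X_s}$ inherited from the $L^2$-continuity of $X$. These same estimates yield the continuity in $(s,u)$ of $\mathcal{M}(s,u)$, hence the regularity $\Phi\in C^{1,2}$ needed to justify the classical jump-It\^o step; a final approximation removes the auxiliary boundedness assumptions and completes the proof.
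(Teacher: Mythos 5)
Your proposal follows essentially the same architecture as the paper's proof: first establish the pure-measure It\^o formula for $t\mapsto G(P_{X_t})$ by telescoping over a partition and using the exact first-order identity for the lift $\widetilde G(\xi)=G(P_\xi)$, keeping the jump contribution in the exact form $\int_0^1\{(\partial_\mu G)(\cdot,\widehat X+\rho\widehat\beta)-(\partial_\mu G)(\cdot,\widehat X)\}\widehat\beta\,d\rho$; then observe that $G(t,x):=F(t,x,P_{X_t})$ is a deterministic $C^{1,2}$ function and conclude by the classical jump It\^o formula applied to $G(t,U_t)$. The only genuine internal difference is how the second-order term $\tfrac12\widehat E[\partial_y(\partial_\mu F)\widehat\sigma\widehat\sigma^{\top}]$ is produced: the paper first reduces to bounded step coefficients and uses Gaussian integration by parts, $E[\varphi(\zeta)\zeta]=E[\varphi'(\zeta)]$, on the Brownian increment, whereas you extract it from the quadratic variation after a fundamental-theorem-of-calculus expansion of $\partial_\mu G$ in its spatial argument; both routes work. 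Two technical points in your write-up need repair, though neither is fatal. First, you invoke ``mollification of $F$'' to reduce to Lipschitz data; smoothing a function on $\mathcal P_2(\mathbb R^d)$ while controlling $\partial_\mu$ and $\partial_y\partial_\mu$ is not a routine operation and is not needed --- the paper instead approximates the \emph{coefficients} $b,\sigma,\beta$ by bounded step processes and passes to the limit. Second, you rely on ``the Lipschitz continuity of $\partial_y(\partial_\mu G)$ (from $G\in C^2_b(\mathcal P_2)$)'' to control the remainders; by Definition 2.1 of the paper, $C^2_b(\mathcal P_2(\mathbb R^d))$ only gives $\partial_y\partial_\mu G$ bounded and \emph{continuous} (it is $\partial_\mu G$ that is Lipschitz), so the small-jump remainders must instead be handled, as in the paper, by boundedness, the integrability $\int_K(1\wedge|e|^2)\lambda(de)<\infty$, and dominated convergence along refining partitions.
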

\noindent Here $(\widehat{X}, \widehat{b}, \widehat{\sigma}, \widehat{\beta})$ denotes an independent copy of $(X, b, \sigma, \beta)$, defined on a probability space $(\widehat{\Omega}, \widehat{{\cal F}}, \widehat{P})$. The expectation $\widehat{E}[\cdot]$ on $(\widehat{\Omega}, \widehat{{\cal F}}, \widehat{P})$ concerns only random variables endowed with the superscript $\widehat{ }$ .
The proof of this theorem is given in the Appendix for the convenience.
\begin{remark} Observe that unlike \cite{BLPR} and \cite{HL3} we don't need the existence of the second order mixed derivatives $\partial_x\partial_\mu F$, $\partial_\mu\partial_x F$, $\partial_\mu^2 F$ for the It\^{o} formula. This is why they are neither introduced in the definition of the space $C^{1,2,2} ([0, T]\times \mathbb{R}^d\times\mathcal{P}_2(\mathbb{R}^d))$.
\end{remark}

\section{{\protect \large {Mean-field stochastic differential equations with jumps}}}

From now on let us be given deterministic Lipschitz functions $\sigma:{\mathbb R}^d\times
{\cal P}_2({\mathbb R}^d)\rightarrow {\mathbb R}^{d\times d}$, $b:{\mathbb R}^d\times
{\cal P}_2({\mathbb R}^d)\rightarrow {\mathbb R}^{d}$,
and $\beta:{\mathbb R}^d\times
{\cal P}_2({\mathbb R}^d)\times K\rightarrow {\mathbb R}^{d}$ satisfying
\smallskip

\noindent$\mathbf{Assumption\ (H3.1)}$ (i) $b$ and $\sigma$ are bounded and Lipschitz continuous on $\mathbb{R}^d\times\mathcal{P}_2(\mathbb{R}^d)$;\\
(ii) There exists a positive constant $L$ such that, for all $e\in K$, $x,\ \bar{x}\in \mathbb{R}^d,$
$\nu,\ \bar{\nu}\in \mathcal{P}_2(\mathbb{R}^d),$
$|\beta(x,\nu,e)|\leq L(1\wedge|e|),$\
$|\beta(x,\nu,e)-\beta(\bar{x},\bar{\nu},e)|\leq L(1\wedge|e|)(|x-\bar{x}|+W_2(\nu,\bar{\nu})).$
\smallskip

We consider for the
initial data $(t,x)\in[0,T]\times {\mathbb R}^d$ and $\xi\in L^2({\cal
F}_t;{\mathbb R}^d)$ the following both stochastic differential equations (SDEs) with jumps:
\begin{equation}\label{equ 3.1}
X_s^{t,\xi}=\xi+\!\!\int^s_tb(X_r^{t,\xi},P_{X_r^{t,\xi}})dr
+\!\!\int^s_t\sigma(X_r^{t,\xi},P_{X_r^{t,\xi}})dB_r +\!\!\int^s_t\int_K\beta(X_{r-}^{t,\xi},P_{X_r^{t,\xi}},e)N_{_\lambda}(dr,de),
\end{equation}
and
\begin{equation}\label{equ 3.2}
X_s^{t,x,\xi}=x+\!\!\int^s_tb(X_r^{t,x,\xi},P_{X_r^{t,\xi}})dr
+\!\!\int^s_t\sigma(X_r^{t,x,\xi},P_{X_r^{t,\xi}})dB_r+\!\!\int^s_t\int_K\beta(X_{r-}^{t,x,\xi},P_{X_r^{t,\xi}},e)N_{_\lambda}(dr,de),
\end{equation}
where \noindent $s\in[t,T]$.

We recall that under the assumption (H3.1) the both SDEs have a unique solution in ${\cal S}^2_{\mathbb{F}}(t,T;{\mathbb R}^d)$ (see, e.g., Hao and Li~\cite{HL3}). In particular, the solution $X^{t,\xi}$ of the equation (\ref{equ 3.1}) allows to determine that of (\ref{equ 3.2}), and $X^{t,x,\xi}\in {\cal S}^2_{\mathbb{F}}(t,T;{\mathbb R}^d)$ is independent of ${\cal F}_t$. As SDE standard estimates show, we have for some
$C\in {\mathbb R}_+$ depending only on the Lipschitz constants of $\sigma$,\ $b$  and $\beta$,
\begin{equation}\label{3.3} E[\sup_{s\in[t,T]}|X_s^{t,x,\xi}
-X_s^{t,x',\xi}|^2]\le C|x-x'|^2,
\end{equation}
\noindent for all $t\in[0,T],\ x,\ x'\in {\mathbb R}^d,\ \xi\in L^2(
{\cal F}_t;{\mathbb R}^d)$. This allows to substitute in (\ref{equ 3.2}) for $x$ the random variable $\xi$ and shows that $X^{t,x,\xi}|_{x=\xi}$ solves the same SDE as $X^{t,\xi}$. From the uniqueness of the solution we conclude
\begin{equation}\label{3.4}X^{t,\xi}_s=X^{t,x,\xi}_s\big|_{x=\xi}=X^{t,\xi,\xi}_s,\quad s\in[t,T].
\end{equation}
Moreover, we deduce the following {\it flow property}
\begin{equation}\label{3.5}(X^{s,X^{t,x,\xi}_s,X^{t,\xi}_s}_r,X^{s,X^{t,\xi}_s}_r)
=(X^{t,x,\xi}_r,X^{t,\xi}_r),\ r\in[s,T],
\mbox{ for all } 0\le t\le s\le T,\, x\in {\mathbb R}^d,\ \xi\in L^2({\cal
F}_t;{\mathbb R}^d).
\end{equation}
In fact, putting $\eta=X^{t,\xi}_s\in L^2({\cal
F}_s;{\mathbb R}^d)$, and considering the SDEs (\ref{equ 3.1}) and (\ref{equ 3.2}) with the initial data $(s, \eta)$\ and $(s,y)$, respectively,
\begin{equation}\label{equ 3.6}
 X^{s,\eta}_r=\eta+\int_s^rb(X^{s,\eta}_u,P_{X^{s,\eta}_u})du+\int_s^r\sigma(X^{s,\eta}_u,P_{X^{s,\eta}_u})dB_u
 +\int_s^r\beta(X^{s,\eta}_{u-},P_{X^{s,\eta}_u},e)N_\lambda(du,de)
,
\end{equation}
and
\begin{equation}\label{equ 3.7}
 X^{s,y,\eta}_r=y+\int_s^rb(X^{s,y,\eta}_u,P_{X^{s, \eta}_u})du+\int_s^r\sigma(X^{s,y,\eta}_u,P_{X^{s,\eta}_u})dB_u+\int_s^r\beta(X^{s,y,\eta}_{u-},P_{X^{s,\eta}_u},e)N_\lambda(du,de)
,
\end{equation}
\noindent$r\in[s,T]$, we get from the uniqueness of the solution of (\ref{equ 3.1}) that $X^{s,\eta}_r=X^{t,\xi}_r$, $r\in [s, T]$, and, consequently,
from the uniqueness of the solution of  (\ref{equ 3.2})  $X^{s,X^{t,x,\xi}_s,\eta}_r=X^{t,x,\xi}_r,$\
$r\in [t, T]$, i.e., we have (\ref{3.5}).

We have to show that the solution $X^{t,x,\xi}$ does not depend on $\xi$ itself
but only on its law $P_\xi$. For this, the following lemma is very useful; please refer to Hao and Li \cite{HL3}.
\begin{lemma}\label{le 3.1} For all $p\ge 2$ there is a
constant $C_p>0$ only depending on
the Lipschitz constants of $\sigma$, $b$ and $\beta$, such that we have the
following estimates
\begin{equation} \label{equ 3.8}
\begin{aligned}
&\mathrm{i)}\ E[\sup_{s\in[t,T]}|X^{t,\xi}_s-X^{t,\widehat{\xi}}_s|^p|\mathcal{F}_t]\le C_p \left(|\xi-\widehat{\xi}|^p+W_2(P_\xi,P_{\widehat{\xi}})^p\right),\\
&\mathrm{ii)}\ E[\sup_{s\in[t,T]}|X^{t,x,\xi}_s-X^{t,\widehat{x},\widehat{\xi}}_s|^p|\mathcal{F}_t]\le C_p\left(|x-\widehat{x}|^p+W_2(P_{\xi},P_{\widehat{\xi}})^p\right),\\
&\mathrm{iii)}\ E[\sup_{s\in[t,T]}|X^{t,x,\xi}_s|^p|\mathcal{F}_t]\le C_p(1+|x|^p),\\
&\mathrm{iv)}\ E[\sup_{s\in[t,T]}|X^{t,\xi}_s|^p|\mathcal{F}_t]\le C_p(1+|\xi|^p),\\
&\mathrm{v)}\ \sup_{s\in[t,T]}W_2(P_{X_s^{t,\xi}},P_{X_s^{t,\widehat{\xi}}})\leq C_2W_2(P_{\xi},P_{\widehat{\xi}}),\\
&\mathrm{vi)}\ E[\sup_{[t,t+h]}(|X_s^{t,\xi}-\xi|^p+|X_s^{t,x,\xi}-x|^p)|\mathcal{F}_t]\leq C_ph,
\end{aligned}
\end{equation}
for all $t\in [0,T],\  x,\ \widehat{x}\in {\mathbb R}^d,\ \xi,\ \widehat{\xi} \in L^2({\cal F}_t;{\mathbb R}^d).$
\end{lemma}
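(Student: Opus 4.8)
The plan is to treat Lemma~\ref{le 3.1} as a family of standard a~priori estimates for the McKean--Vlasov SDEs (\ref{equ 3.1})--(\ref{equ 3.2}), the only genuinely new feature being the deterministic mean-field (Wasserstein) terms hidden in the coefficients. The tools I would use are the conditional Burkholder--Davis--Gundy inequality for the Brownian integral, its jump analogue (Kunita's inequality, which for $p\ge 2$ produces two terms controlled by $\int_K(1\wedge|e|^2)\lambda(de)<\infty$ together with the trivial bound $(1\wedge|e|)^p\le 1\wedge|e|^2$), Gronwall's lemma, and the elementary coupling bound $W_2(P_Y,P_{Y'})\le (E[|Y-Y'|^2])^{1/2}$, which holds because $(Y,Y')$ is an admissible coupling of its own marginals. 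The conditional forms of BDG and Kunita are legitimate here because the post-$t$ increments of $B$ and $N$ are independent of $\mathcal{F}_t$. I would carry out the six estimates in an order chosen to avoid circularity, since the mean-field terms couple the pathwise estimates to the estimates on laws.

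First I would dispose of the moment and increment bounds iii), iv), vi). Because (H3.1) makes $b,\sigma$ bounded and $|\beta(x,\nu,e)|\le L(1\wedge|e|)$, one writes $X^{t,x,\xi}_s-x$ as the sum of a drift, a Brownian and a compensated-jump integral and applies conditional BDG/Kunita directly, with no Gronwall needed. On $[t,t+h]$ each term is $O(h^{p/2})+O(h)=O(h)$ (using $h\le T$), which gives vi), while on $[t,T]$ the same computation yields iii) and iv) with constants $C_p(1+|x|^p)$ and $C_p(1+|\xi|^p)$ respectively.

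Next come the difference estimates, which are the heart of the matter. Writing the SDE satisfied by $X^{t,\xi}-X^{t,\widehat\xi}$, the Lipschitz assumption bounds each coefficient difference by $L\big(|X^{t,\xi}_r-X^{t,\widehat\xi}_r|+W_2(P_{X_r^{t,\xi}},P_{X_r^{t,\widehat\xi}})\big)$, where the second summand is deterministic. I would first take \emph{full} (unconditional) expectations and set $\phi(u)=E[\sup_{s\in[t,u]}|X^{t,\xi}_s-X^{t,\widehat\xi}_s|^p]$; since by Jensen $W_2(P_{X_r^{t,\xi}},P_{X_r^{t,\widehat\xi}})^p\le (E[|X^{t,\xi}_r-X^{t,\widehat\xi}_r|^2])^{p/2}\le\phi(r)$, the Wasserstein term folds into the very same unknown, and Gronwall yields $\phi(T)\le C_p E[|\xi-\widehat\xi|^p]$. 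Specialising to $p=2$, choosing $\xi,\widehat\xi$ to be an optimal coupling of $P_\xi,P_{\widehat\xi}$, and using that the law $P_{X^{t,\xi}_s}$ depends on the initial datum only through $P_\xi$ (so replacing $\widehat\xi$ by a same-law copy does not change the left side), I obtain v): $W_2(P_{X_s^{t,\xi}},P_{X_s^{t,\widehat\xi}})\le C_2 W_2(P_\xi,P_{\widehat\xi})$.

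Finally, armed with v) I would return to the conditional estimates i) and ii). The deterministic Wasserstein terms in the coefficient differences are now bounded, via v), by $C_2 W_2(P_\xi,P_{\widehat\xi})$, so they enter a conditional Gronwall argument merely as a constant forcing term: setting $\psi(u)=E[\sup_{s\in[t,u]}|X^{t,\xi}_s-X^{t,\widehat\xi}_s|^p\mid\mathcal{F}_t]$ one reaches $\psi(u)\le C_p|\xi-\widehat\xi|^p+C_p W_2(P_\xi,P_{\widehat\xi})^p+C_p\int_t^u\psi(r)\,dr$, and conditional Gronwall gives i); the argument for ii) is identical except that $|\xi-\widehat\xi|^p$ is replaced by $|x-\widehat x|^p$ and the measure flow $P_{X^{t,\xi}_r}$ is common to both systems, so only its $\xi$-dependence produces the $W_2(P_\xi,P_{\widehat\xi})^p$ term. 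The main obstacle I anticipate is precisely the bookkeeping for these deterministic mean-field terms inside a \emph{conditional} expectation: unlike in the unconditional estimate, they cannot be absorbed into the conditional Gronwall unknown, which is exactly why v) must be proved first and then used to freeze them.
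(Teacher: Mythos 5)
Your proposal is correct; note that the paper itself gives no proof of Lemma 3.1 but simply refers to Hao and Li \cite{HL3}, and your argument is essentially the standard one used there (and in \cite{BLPR} for the continuous case): conditional BDG/Kunita plus Gronwall for iii), iv), vi), an unconditional Gronwall closing the loop on the Wasserstein term to get v) via an optimal coupling (available thanks to the richness of $\mathcal{F}_0$), and then conditional Gronwall for i) and ii) with the deterministic $W_2$ forcing term frozen by v). Your explicit identification of the ordering issue --- that the deterministic mean-field terms cannot be absorbed into a conditional Gronwall unknown, so v) must precede i) and ii) --- is exactly the right point of care.
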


\begin{remark} An immediate consequence of the above
Lemma 3.1-{\emph{\rm{ii)}}} is that, given $(t,x)\in[0,T]\times {\mathbb R}^d$, the processes
$X^{t,x,\xi_1}$ and $X^{t,x,\xi_2}$ are indistinguishable, whenever the laws
of $\xi_1\in  L^2({\cal F}_t;{\mathbb R}^d)$ and $\xi_2\in  L^2({\cal F}_t;{\mathbb R}^d)$
are the same. But this means that we can define
\begin{equation}\label{3.15} X^{t,x,P_\xi}:=X^{t,x,\xi},\ (t,x)\in
[0,T]\times {\mathbb R}^d,\ \xi\in L^2({\cal F}_t;{\mathbb R}^d),\end{equation}

\noindent and, extending the notation introduced in the preceding
section for functions to random variables and processes, we
shall consider the lifted process $\displaystyle \widetilde{X}^{t,x,\xi}_s:=X^{t,x,P_\xi}_s=
X^{t,x,\xi}_s,\ s\in[t,T],\ (t,x)\in[0,T]\times {\mathbb R}^d,\ \xi\in
L^2({\cal F}_t;{\mathbb R}^d).$ However, we prefer to continue to write $X^{t,x,\xi}$ and reserve the notation
$\widetilde{X}^{t,x,P_\xi}$ for an independent copy of
$X^{t,x,P_\xi}$, which we will introduce later.\end{remark}

\section{{\protect \large {Mean-field BSDEs with jumps}}}

 In this section we consider mean-field BSDEs driven by a Brownian motion and an independent compensated Poisson random measure. The
existence and the uniqueness of the solution for this type of BSDEs is proved; for more details please refer to Section 10.2 in the Appendix.

Let $f:\mathbb{R}^d\times \mathbb{R}\times\mathbb{R}^{d}\times \mathbb{R}\times
\mathcal{P}_2(\mathbb{R}^{d}\times\mathbb{R}\times\mathbb{R}^d\times \mathbb{R})
\rightarrow \mathbb{R}$ and
$\Phi:\mathbb{R}^d\times\mathcal{P}_2(\mathbb{R}^d)\rightarrow \mathbb{R}$
be deterministic and satisfy:\\
\noindent \textbf{Assumption\ (H4.1)} The functions $f$ and $\Phi$ are bounded and Lipschitz, i.e., there exists a constant
$C>0$ such that, for all
$x,\ x'\in \mathbb{R}^d,\ y,\ y'\in \mathbb{R},\ z,\ z'\in \mathbb{R}^{d},$ $h,\ h'\in \mathbb{R},$
$\mu,\ \mu'\in \mathcal{P}_2(\mathbb{R}^{d}\times\mathbb{R}\times\mathbb{R}^d\times \mathbb{R}),$
$$
\begin{aligned}
&|f(x,y,z,h,\mu)-f(x',y',z',h',\mu')|+|\Phi(x,\mu)-\Phi(x',\mu')|\\
&\leq C(|x-x'|+|y-y'|+|z-z'|+|h-h'|+W_2(\mu,\mu')).
\end{aligned}
$$

Given $x\in \mathbb{R}^{d}$ and $\xi\in L^2(\mathcal{F}_t;\mathbb{R}^d)$ we consider the following both BSDEs with jumps:
\begin{equation}\label{equ 4.1}
\left\{
\begin{aligned}
dY_s^{t,\xi}&=
-f(\Pi^{t,\xi}_s,P_{\Pi^{t,\xi}_s})ds+Z_s^{t,\xi}dB_s+\int_KH^{t,\xi}_s(e)N_\lambda(de,ds),\ s\in[t,T],\\
Y^{t,\xi}_T&=\Phi(X_T^{t,\xi},P_{X_T^{t,\xi}}),
\end{aligned}
 \right.
\end{equation}
and
\begin{equation}\label{equ 4.2}
\left\{
\begin{aligned}
dY_s^{t,x,\xi}&=
-f(\Pi^{t,x,\xi}_s,P_{\Pi^{t,\xi}_s})ds+Z_s^{t,x,\xi}dB_s+\int_KH^{t,x,\xi}_s(e)N_\lambda(de,ds),\ s\in[t,T],\\
Y^{t,x,\xi}_T&=\Phi(X_T^{t,x,\xi},P_{X_T^{t,\xi}}),
\end{aligned}
 \right.
\end{equation}
where $$\Pi^{t,\xi}_s:=(X^{t,\xi}_s,Y^{t,\xi}_s,Z^{t,\xi}_s,\int_KH^{t,\xi}_s(e)l(e)\lambda(de)),\
\Pi^{t,x,\xi}_s:=(X^{t,x,\xi}_s,Y^{t,x,\xi}_s,Z^{t,x,\xi}_s,\int_KH^{t,x,\xi}_s(e)l(e)\lambda(de)),$$
and $l: K\rightarrow \mathbb{R}$ is a Borel function with growth condition $|l(e)|\leq C(1\wedge|e|)$, $e\in K$. Recall that the processes $X^{t,\xi}$ and $X^{t,x,\xi}$ are the solution of SDEs (\ref{equ 3.1}) and (\ref{equ 3.2}), respectively.

Under Assumption (H4.1) we know that from Theorem \ref{thli 4.12.1} in the Appendix the equation (\ref{equ 4.1})
has a unique solution $(Y^{t,\xi},Z^{t,\xi},H^{t,\xi})\in \mathcal{S}^2_{\mathbb{F}}(t,T;\mathbb{R})
\times \mathcal{H}^2_{\mathbb{F}}(t,T;\mathbb{R}^{d})\times \mathcal{K}^2_\lambda(t,T;\mathbb{R})$.
On the other hand, once having the solution of (\ref{equ 4.1}), under Assumption (H4.1) the BSDE (\ref{equ 4.2}) becomes classical and possesses a unique solution
$(Y^{t,x,\xi},$ $Z^{t,x,\xi},H^{t,x,\xi})\in \mathcal{S}^2_{\mathbb{F}}(t,T;\mathbb{R})
\times \mathcal{H}^2_{\mathbb{F}}(t,T;\mathbb{R}^{d})
\times \mathcal{K}^2_\lambda(t,T;\mathbb{R}).$

Indeed, once we have got $\Pi_s^{t,\xi}=(X_s^{t,\xi},Y_s^{t,\xi},Z_s^{t,\xi},\int_KH_s^{t,\xi}(e)l(e)\lambda(de))$, we define
$$
\begin{aligned}
\tilde{f}(s,y,z,h)=f(X_s^{t,x,\xi},y,z,h, P_{\Pi_s^{t,\xi}}),\ \tilde{\xi}=\Phi(X_T^{t,x,\xi},P_{X_T^{t,\xi}}).
\end{aligned}
$$
Obviously, $\tilde{f}$ and $\tilde{\xi}$ satisfy all assumptions of classical
BSDEs with jumps, hence, the BSDE (\ref{equ 4.2}) has a unique solution $(Y^{t,x,\xi},Z^{t,x,\xi},H^{t,x,\xi})\in \mathcal{S}^2_{\mathbb{F}}(t,T;\mathbb{R})
\times \mathcal{H}^2_{\mathbb{F}}(t,T;\mathbb{R}^{d})
\times \mathcal{K}^2_\lambda(t,T;\mathbb{R})$ (see, e.g., Li and Wei \cite{LW1}).

From the flow property (\ref{3.5}) and the uniqueness of the solution of (\ref{equ 4.1}) and (\ref{equ 4.2}) we have the following properties: For all $0\leq t\leq s\leq T,\ x\in \mathbb{R}^{d},\ \xi\in L^2({\cal
F}_t;{\mathbb R}^d),$
\begin{equation}\label{4.2-1}
\begin{array}{lll}
{\rm (i)}&  (Y^{s,X^{t,x,\xi}_s,X^{t,\xi}_s}_r,Y^{s,X^{t,\xi}_s}_r)
=(Y^{t,x,\xi}_r,Y^{t,\xi}_r),\ r\in[s,T],\ \mbox{P-a.s.};\\
{\rm (ii)}&  (Z^{s,X^{t,x,\xi}_s,X^{t,\xi}_s}_r,Z^{s,X^{t,\xi}_s}_r)
=(Z^{t,x,\xi}_r,Z^{t,\xi}_r),\  \mbox{drdP-a.e.};\\
{\rm (iii)}&  (H^{s,X^{t,x,\xi}_s,X^{t,\xi}_s}_r,H^{s,X^{t,\xi}_s}_r)
=(H^{t,x,\xi}_r,H^{t,\xi}_r),\  \mbox{drd}\lambda\mbox{dP-a.e.}\\
\end{array}
\end{equation}
\begin{proposition}\label{pro 4.3}
Suppose the Assumption (H4.1) holds true. Then, for all $p\geq2$, there exists a constant
$C_p>0$ only depending on the Lipschitz constants of $\sigma$, $b$, $\beta$, $f$ and $\Phi$, such that, for $t\in[0,T]$, $x,\ \widehat{x}\in \mathbb{R}^d,$
$\xi,\ \widehat{\xi}\in L^2(\mathcal{F}_t;\mathbb{R}^d),$
\begin{equation*}
\begin{split}
{\rm (i)}& E\big[\sup_{s\in[t,T]}|Y_{s}^{t,x,\xi}|^{p}+(\int_{t}^{T}|Z_{s}^{t,x,\xi}|^{2}ds)^{\frac{p}{2}}
+(\int_{t}^{T}\int_{K}|H_{s}^{t,x,\xi}(e)|^{2}\lambda(de)ds)^{\frac{p}{2}}|\mathcal{F}_t\big]\leq C_p;\\
{\rm (ii)}& E\big[\!\!\sup_{s\in[t,T]}|Y_{s}^{t,x,\xi}-Y_{s}^{t,\widehat{x},\widehat{\xi}}|^{p}
+(\int_{t}^{T}\!\!\!|Z_{s}^{t,x,\xi}-Z_{s}^{t,\widehat{x},\widehat{\xi}}|^{2}ds)^{\frac{p}{2}} +(\int_{t}^{T}\!\!\!\int_{K}\!\!\!|H_{s}^{t,x,\xi}(e)-H_{s}^{t,\widehat{x},\widehat{\xi}}(e)|^{2}\lambda(de)ds)^{\frac{p}{2}}|\mathcal{F}_t\big]\\
&\leq C_{p}(|x-\widehat{x}|^{p}+W_{2}(P_{\xi},P_{\widehat{\xi}})^{p});\\
{\rm (iii)}&\int_{t}^{T}W_{2}(P_{\Pi_{s}^{t,\xi}},P_{\Pi_{s}^{t,\widehat{\xi}}})^{2}ds
\leq CW_{2}(P_{\xi},P_{\widehat{\xi}})^{2}.
\end{split}
\end{equation*}
\end{proposition}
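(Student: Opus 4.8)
The plan is to derive all three estimates from standard a priori BSDE bounds combined with the SDE estimates in Lemma 3.1. The guiding principle is that a mean-field BSDE with jumps, once the forward process and the law terms are inserted, obeys the same type of moment and stability estimates as a classical BSDE with jumps; the only genuinely new feature is that the coefficient $f$ depends on the law $P_{\Pi^{t,\xi}_s}$, and this dependence must be controlled by the corresponding estimate for the $\xi$-equation \eqref{equ 4.1}. I would first establish the analogous Proposition-4.3-type estimates for the solution $(Y^{t,\xi},Z^{t,\xi},H^{t,\xi})$ of \eqref{equ 4.1}, since those feed into the driver of \eqref{equ 4.2} through $P_{\Pi^{t,\xi}_s}$.

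First I would prove (i). Applying It\^o's formula to $|Y^{t,x,\xi}_s|^2$ (or, for the $L^p$ bound, to $|Y^{t,x,\xi}_s|^p$ together with the Burkholder--Davis--Gundy inequality for both the Brownian and the compensated-Poisson martingale parts), using the boundedness of $f$ and $\Phi$ from (H4.1), one gets
\begin{equation*}
E\Big[\sup_{s\in[t,T]}|Y^{t,x,\xi}_s|^p+\big(\int_t^T|Z^{t,x,\xi}_s|^2ds\big)^{p/2}+\big(\int_t^T\!\!\int_K|H^{t,x,\xi}_s(e)|^2\lambda(de)ds\big)^{p/2}\,\Big|\,\mathcal{F}_t\Big]\le C_p,
\end{equation*}
where the constant is uniform in $x$ precisely because $f$ and $\Phi$ are bounded (the conditioning on $\mathcal{F}_t$ is handled by working with conditional expectations throughout, which is legitimate since $B$ and $N$ are independent of $\mathcal{F}_t$). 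The key quadratic-variation control for the jump term uses $\int_t^T\int_K|H_s(e)|^2\lambda(de)ds<\infty$ from the solution space $\mathcal{K}^2_\lambda$.

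Next, for (ii), I would write the BSDE satisfied by the differences $\Delta Y=Y^{t,x,\xi}-Y^{t,\widehat{x},\widehat{\xi}}$, $\Delta Z$, $\Delta H$, whose driver is the difference of drivers and whose terminal value is $\Phi(X^{t,x,\xi}_T,P_{X^{t,\xi}_T})-\Phi(X^{t,\widehat{x},\widehat{\xi}}_T,P_{X^{t,\widehat{\xi}}_T})$. Using the Lipschitz property of $f$ in all its arguments and of $\Phi$, It\^o's formula on $|\Delta Y_s|^p$ and Gronwall's inequality reduce the left side to a constant times
\begin{equation*}
E\big[\sup_{s\in[t,T]}|X^{t,x,\xi}_s-X^{t,\widehat{x},\widehat{\xi}}_s|^p\,\big|\,\mathcal{F}_t\big]+\int_t^T W_2(P_{\Pi^{t,\xi}_s},P_{\Pi^{t,\widehat{\xi}}_s})^p\,ds,
\end{equation*}
where the first term is bounded by $C_p(|x-\widehat{x}|^p+W_2(P_\xi,P_{\widehat{\xi}})^p)$ via Lemma 3.1-ii), and the second term is controlled once (iii) is available. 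The main obstacle is (iii): it is not a conditional but an unconditional estimate on the Wasserstein distance of the laws of the quadruples $\Pi^{t,\xi}_s=(X^{t,\xi}_s,Y^{t,\xi}_s,Z^{t,\xi}_s,\int_K H^{t,\xi}_s(e)l(e)\lambda(de))$, and it must be proved \emph{without} already knowing (ii) for the $Z$- and $H$-components, to avoid circularity. I would prove (iii) directly for the $\xi$-equation \eqref{equ 4.1} by bounding $W_2(P_{\Pi^{t,\xi}_s},P_{\Pi^{t,\widehat{\xi}}_s})^2$ by the $L^2$-norm of the difference of the two solution quadruples driven by $\xi$ and $\widehat{\xi}$ on the \emph{same} probability space, then integrating in $s$ over $[t,T]$ and applying the BSDE stability estimate: the time-integrated difference of $Z$ and $H$ is bounded by the terminal and driver differences, which reduce through Lemma 3.1 and the boundedness of $l$ (recall $|l(e)|\le C(1\wedge|e|)$, so $\int_K l(e)^2\lambda(de)<\infty$) to $CW_2(P_\xi,P_{\widehat{\xi}})^2$. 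The subtlety is that only the \emph{time-integrated} (not pointwise-in-$s$) control of $Z$ and $H$ is available, which is exactly why (iii) is phrased with a $ds$-integral rather than a supremum; this matching of the estimate's form to what BSDE theory can deliver is the crux, and once it is set up correctly the rest is a routine Gronwall argument.
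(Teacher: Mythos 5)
Your overall strategy coincides with the paper's: (i) is the standard $L^p$ a priori bound (Lemma \ref{leli 4.1}-2)) using the boundedness of $f$ and $\Phi$; (iii) is proved first and then fed, together with Lemma \ref{le 3.1}-ii), into the classical stability estimate for (\ref{equ 4.2}) to obtain (ii). However, two steps in your treatment of (iii) are under-specified in a way that matters. The first concerns the mechanism that closes the estimate. Since the driver of (\ref{equ 4.1}) contains $P_{\Pi_s^{t,\xi}}$, the driver difference in your stability estimate produces $W_2(P_{\Pi_s^{t,\xi}},P_{\Pi_s^{t,\widehat{\xi}}})$, the very quantity being estimated; after bounding it by the $L^2$-distance of the two solution quadruples, its $Z$- and $H$-components cannot be handled by a Gronwall iteration on $E[|\Delta Y_s|^2]$, because they are only controlled in time-integrated form on the left-hand side. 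What is actually needed is the weighted estimate of Lemma \ref{leli 4.1}-1), in which the driver difference enters multiplied by a free small parameter $\delta$: choosing $\delta$ so that $C^1\delta\le\frac{1}{2}$ absorbs the entire integrated $W_2^2$ term into the left-hand side. This absorption, not ``routine Gronwall,'' is the crux, and it is why the estimate must be run simultaneously for $Y$, $Z$ and $H$ rather than for $Y$ alone.

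The second gap concerns the right-hand side of (iii). Running the stability estimate for (\ref{equ 4.1}) directly with $\xi$ and $\widehat{\xi}$ and invoking Lemma \ref{le 3.1} yields only $\int_t^T W_2(P_{\Pi_s^{t,\xi}},P_{\Pi_s^{t,\widehat{\xi}}})^2\,ds\le C\,E[|\xi-\widehat{\xi}|^2]$, which is strictly weaker than the claimed bound by $C\,W_2(P_\xi,P_{\widehat{\xi}})^2$. To upgrade it one must exploit that both sides depend on $\xi,\widehat{\xi}$ only through their laws: replace $\xi,\widehat{\xi}$ by arbitrary $\xi_1',\xi_2'\in L^2(\mathcal{F}_t;\mathbb{R}^d)$ with the same respective laws --- which is why the paper works with equation (\ref{equ 4.2}) evaluated at $x=\xi_i'$, using $P_{\Pi_s^{t,\xi_i}}=P_{\Pi_s^{t,\xi_i',\xi_i}}$, rather than with (\ref{equ 4.1}) directly --- and then take the infimum over all such couplings. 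This passage to the infimum is absent from your write-up, and without it the stated form of (iii) does not follow.
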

\begin{proof} From Lemma 10.1-2) we get (i) directly. Now we prove (ii) and (iii).

Notice that $\Pi^{t,x,\xi}$ is independent of $\mathcal{F}_{t}$ and, hence, of $\xi\in L^{2}(\mathcal{F}_{t};\mathbb{R}^{d})$. This allows to consider $\Pi^{t,x,\xi}\big|_{x=\xi}$,
and from the uniqueness of the solution of (\ref{equ 4.1}) and (\ref{equ 4.2}), it follows from (\ref{3.4}) that $\Pi^{t,\xi}=\Pi^{t,x,\xi}\big|_{x=\xi}$.
On the other hand, it also follows that, if $\xi'\in L^{2}(\mathcal{F}_{t};\mathbb{R}^{d})$ has the same law as $\xi$, then also $\Pi^{t,\xi',\xi}:=\Pi^{t,x,\xi}\big|_{x=\xi'}$
and $\Pi^{t,\xi}$ are of the same law. Hence, $P_{\Pi_{s}^{t,\xi}}=P_{\Pi_{s}^{t,\xi',\xi}}$, ds-a.e. Then, for given $\xi_{i}\in L^{2}(\mathcal{F}_{t};\mathbb{R}^{d})$ and $\xi_{i}'\in L^2(\mathcal{F}_t;\mathbb{R}^d)$ of the same law as $\xi_{i}$, we consider the following BSDE:
\begin{equation}\nonumber
\left\{
\begin{array}{l}
dY_s^{t,\xi_{i}',\xi_{i}}=-f(\Pi_{s}^{t,\xi_{i}',\xi_{i}},P_{\Pi_{s}^{t,\xi_{i}}})ds+Z_{s}^{t,\xi_{i}',\xi_{i}}dB_{s}
+\int_{K}H_{s}^{t,\xi_{i}',\xi_{i}}(e)N_{\lambda}(ds,de),\ s\in[t,T],\\
Y_{T}^{t,\xi_{i}',\xi_{i}}=\Phi(X_{s}^{t,\xi_{i}',\xi_{i}},P_{X_{T}^{t,\xi_{i}}}).
\end{array}
\right.
\end{equation}
From Lemma 10.1-1) and (H4.1) we have, for all $\delta>0$ (to be specified later) there exists $\beta>0$, such that
\begin{equation}\label{199}
\begin{split}
&E[\int_{t}^{T}e^{\beta(s-t)}(|Y_{s}^{t,\xi_{1}',\xi_{1}}-Y_{s}^{t,\xi_{2}',\xi_{2}}|^{2}+
|Z_{s}^{t,\xi_{1}',\xi_{1}}-Z_{s}^{t,\xi_{2}',\xi_{2}}|^{2}
+\int_{K}|H_{s}^{t,\xi_{1}',\xi_{1}}(e)-H_{s}^{t,\xi_{2}',\xi_{2}}(e)|^{2}\lambda(de))ds]\\
\leq& Ce^{\beta(T-t)}E[|X_{T}^{t,\xi_{1}',\xi_{1}}-X_{T}^{t,\xi_{2}',\xi_{2}}|^{2}+W_{2}(P_{X_{T}^{t,\xi_{1}}},P_{X_{T}^{t,\xi_{2}}})^{2}]+C^1\delta E[\int_{t}^{T}e^{\beta(s-t)}(|X_{s}^{t,\xi_{1}',\xi_{1}}-X_{s}^{t,\xi_{2}',\xi_{2}}|^{2}\\
&\hspace{15pt}+W_{2}(P_{\Pi_{s}^{t,\xi_{1}',\xi_{1}}},P_{\Pi_{s}^{t,\xi_{2}',\xi_{2}}})^{2})ds]\\
\leq& C_{\beta,\delta}\big(E[\!\!\sup_{s\in[t,T]}|X_{s}^{t,\xi_{1}',\xi_{1}}-X_{s}^{t,\xi_{2}',\xi_{2}}|^{2}]+W_{2}(P_{X_{T}^{t,\xi_{1}}},P_{X_{T}^{t,\xi_{2}}})^{2}\big)
+C^1\delta\!\! \int_{t}^{T}\!\!e^{\beta(s-t)}W_{2}(P_{\Pi_{s}^{t,\xi_{1}',\xi_{1}}},P_{\Pi_{s}^{t,\xi_{2}',\xi_{2}}})^{2}ds,
\end{split}
\end{equation}
where $C^1$ depends only on the Lipschitz constants of $f$ and $\Phi$, while $C_{\beta,\delta}$ depends also on $\beta$ and $\delta$.
From Lemma 3.1 we get
\begin{equation*}
\begin{split}
{\rm i)}\ &W_{2}(P_{X_{T}^{t,\xi_{1}}},P_{X_{T}^{t,\xi_{2}}})\leq CW_{2}(P_{\xi_{1}},P_{\xi_{2}});\\
{\rm ii)}\ &E[\sup_{s\in[t,T]}|X_{s}^{t,\xi_{1}',\xi_{1}}-X_{s}^{t,\xi_{2}',\xi_{2}}|^{2}]
=E[E[\sup_{s\in[t,T]}|X_{s}^{t,x_{1},\xi_{1}}-X_{s}^{t,x_{2},\xi_{2}}|^{2}|\mathcal{F}_{t}]\bigg|_{\stackrel{x_{1}=\xi_{1}'}{x_{2}=\xi_{2}'}}]\\
&\leq CE[|\xi_{1}'-\xi_{2}'|^{2}+W_{2}(P_{\xi_{1}},P_{\xi_{2}})^{2}].
\end{split}
\end{equation*}

\noindent Therefore, from the above (\ref{199}) and the definition of 2-Wasserstein metric we get
\begin{equation}\label{200}
\begin{split}
&E[\int_{t}^{T}e^{\beta(s-t)}(|Y_{s}^{t,\xi_{1}',\xi_{1}}-Y_{s}^{t,\xi_{2}',\xi_{2}}|^{2}+
|Z_{s}^{t,\xi_{1}',\xi_{1}}-Z_{s}^{t,\xi_{2}',\xi_{2}}|^{2}
+\int_{K}|H_{s}^{t,\xi_{1}',\xi_{1}}(e)-H_{s}^{t,\xi_{2}',\xi_{2}}(e)|^{2}\lambda(de))ds]\\
\leq& C_{\beta,\delta}E[|\xi_{1}'-\xi_{2}'|^{2}+W_{2}(P_{\xi_{1}},P_{\xi_{2}})^{2}]
+C^1\delta \int_{t}^{T}e^{\beta(s-t)}W_{2}(P_{\Pi_{s}^{t,\xi_{1}',\xi_{1}}},P_{\Pi_{s}^{t,\xi_{2}',\xi_{2}}})^{2}ds\\
\leq& C_{\beta,\delta}E[|\xi_{1}'-\xi_{2}'|^{2}+W_{2}(P_{\xi_{1}},P_{\xi_{2}})^{2}]
+C^1\delta E[\int_{t}^{T}e^{\beta(s-t)}(|Y_{s}^{t,\xi_{1}',\xi_{1}}-Y_{s}^{t,\xi_{2}',\xi_{2}}|^{2}+
|Z_{s}^{t,\xi_{1}',\xi_{1}}-Z_{s}^{t,\xi_{2}',\xi_{2}}|^{2}\\
&\ +\int_{K}|H_{s}^{t,\xi_{1}',\xi_{1}}(e)-H_{s}^{t,\xi_{2}',\xi_{2}}(e)|^{2}\lambda(de))ds].
\end{split}
\end{equation}
Now we take $\delta>0$ small enough such that $C^1\delta\leq\frac{1}{2}$ we get
\begin{equation}\label{201}
\begin{split}
&E[\int_{t}^{T}e^{\beta(s-t)}(|Y_{s}^{t,\xi_{1}',\xi_{1}}-Y_{s}^{t,\xi_{2}',\xi_{2}}|^{2}+
|Z_{s}^{t,\xi_{1}',\xi_{1}}-Z_{s}^{t,\xi_{2}',\xi_{2}}|^{2}
+\int_{K}|H_{s}^{t,\xi_{1}',\xi_{1}}(e)-H_{s}^{t,\xi_{2}',\xi_{2}}(e)|^{2}\lambda(de))ds]\\
\leq& CE[|\xi_{1}'-\xi_{2}'|^{2}+W_{2}(P_{\xi_{1}},P_{\xi_{2}})^{2}].
\end{split}
\end{equation}
Furthermore, from the properties of $W_{2}$, ii) and (\ref{201}) we get
\begin{equation*}
\begin{split}
&\int_{t}^{T}W_{2}(P_{\Pi_{s}^{t,\xi_{1}}},P_{\Pi_{s}^{t,\xi_{2}}})^{2}ds=\int_{t}^{T}W_{2}(P_{\Pi_{s}^{t,\xi_{1}',\xi_{1}}},P_{\Pi_{s}^{t,\xi_{2}',\xi_{2}}})^{2}ds\\
\leq& E[\int_{t}^{T} (|X_{s}^{t,\xi_{1}',\xi_{1}}-X_{s}^{t,\xi_{2}',\xi_{2}}|^{2}+|Y_{s}^{t,\xi_{1}',\xi_{1}}-Y_{s}^{t,\xi_{2}',\xi_{2}}|^{2}+
|Z_{s}^{t,\xi_{1}',\xi_{1}}-Z_{s}^{t,\xi_{2}',\xi_{2}}|^{2}\\
&\quad\quad +\int_{K}|H_{s}^{t,\xi_{1}',\xi_{1}}(e)-H_{s}^{t,\xi_{2}',\xi_{2}}(e)|^{2}\lambda(de))ds]\\
\leq& CE[|\xi_{1}'-\xi_{2}'|^{2}+W_{2}(P_{\xi_{1}},P_{\xi_{2}})^{2}].
\end{split}
\end{equation*}

\noindent Hence, taking the infimum over all $\xi_{1}',\ \xi_{2}'\in L^2(\mathcal{F}_t;\mathbb{R}^d)$ with $P_{\xi_{i}'}=P_{\xi_{i}}, i=1, 2$, we get
\begin{equation}\label{ast}\int_{t}^{T}W_{2}(P_{\Pi_{s}^{t,\xi_{1}}},P_{\Pi_{s}^{t,\xi_{2}}})^{2}ds
\leq CW_{2}(P_{\xi_{1}},P_{\xi_{2}})^{2},\ \xi_{1},\ \xi_{2}\in L^{2}(\mathcal{F}_{t};\mathbb{R}^{d}).\end{equation}
This allows now to apply Lemma \ref{leli 4.1}-2) to BSDE (\ref{equ 4.2}) with $g_{i}(s,y,z,h):= f(X_{s}^{t,x_{i},\xi_{i}},y,z,h,P_{\Pi_{s}^{t,\xi_{i}}})$,
$\theta_{i}:=\Phi(X_{T}^{t,x_{i},P_{\xi_{i}}},P_{X_{T}^{t,\xi_{i}}})$. Then, thanks to Lemma 3.1 and (\ref{ast}), for any $p\geq2$, there exists some $C_{p}>0$ only depending on the Lipschitz constants of $b,\ \sigma,\ \beta,\ f$ and $\Phi$, such that for any $\xi_{1},\ \xi_{2}\in$

\noindent$\in L^{2}(\mathcal{F}_{t};\mathbb{R}^{d}),\ x_{1},\ x_{2}\in\mathbb{R}^{d}$,
\begin{equation*}
\begin{split}
&E\Big[\sup_{s\in[t,T]}|Y_{s}^{t,x_{1},\xi_{1}}-Y_{s}^{t,x_{2},\xi_{2}}|^{p}
+(\int_{t}^{T}|Z_{s}^{t,x_{1},\xi_{1}}-Z_{s}^{t,x_{2},\xi_{2}}|^{2}ds)^{\frac{p}{2}}\\
&\quad\quad\quad\quad\quad\quad+(\int_{t}^{T}\int_{K}|H_{s}^{t,x_{1},\xi_{1}}(e)-H_{s}^{t,x_{2},\xi_{2}}(e)|^{2}\lambda(de)ds)^{\frac{p}{2}}|\mathcal{F}_{t}\Big]\\
&\leq C_{p}\Big(E[|X_{T}^{t,x_{1},P_{\xi_{1}}}-X_{T}^{t,x_{2},P_{\xi_{2}}}|^{p}
+W_{2}(P_{X_{T}^{t,\xi_{1}}},P_{X_{T}^{t,\xi_{2}}})^{p}\\
&\quad\quad\quad\quad\quad\quad+(\int_{t}^{T}(|X_{s}^{t,x_{1},P_{\xi_{1}}}-X_{s}^{t,x_{2},P_{\xi_{2}}}|^{2}
+W_{2}(P_{\Pi_{s}^{t,\xi_{1}}},P_{\Pi_{s}^{t,\xi_{2}}})^{2})ds)^{\frac{p}{2}}|\mathcal{F}_{t}]\Big)\\
&\leq C_{p}(|x_{1}-x_{2}|^{p}+W_{2}(P_{\xi_{1}},P_{\xi_{2}})^{p}).
\end{split}
\end{equation*}
The proof is complete.
\end{proof}
Recalling that $(Y^{t,\xi},Z^{t,\xi},H^{t,\xi})=(Y^{t,x,\xi},Z^{t,x,\xi},H^{t,x,\xi})\big|_{x=\xi}$, we have the following result.
\begin{corollary}\label{corli 4.1} Suppose the Assumption (H4.1) holds true. Then, for all $p\geq2$, there exists a constant
$C_p>0$ only depending on the Lipschitz constants of the coefficients, such that, for $t\in[0,T]$, $\xi_1,\ \xi_2\in L^2(\mathcal{F}_t;\mathbb{R}^d),$
\begin{equation*}
\begin{split}
&E\Big[\sup_{s\in[t,T]}|Y_{s}^{t,\xi_{1}}-Y_{s}^{t,\xi_{2}}|^{p}
+(\int_{t}^{T}|Z_{s}^{t,\xi_{1}}-Z_{s}^{t,\xi_{2}}|^{2}ds)^{\frac{p}{2}}
+(\int_{t}^{T}\int_{K}|H_{s}^{t,\xi_{1}}(e)-H_{s}^{t,\xi_{2}}(e)|^{2}\lambda(de)ds)^{\frac{p}{2}}\Big]\\
&\leq C_{p}(E[|\xi_{1}-\xi_{2}|^{p}]+W_{2}(P_{\xi_{1}},P_{\xi_{2}})^{p})
\leq C_{p}E[|\xi_{1}-\xi_{2}|^{p}].
\end{split}
\end{equation*}
\end{corollary}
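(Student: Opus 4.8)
The plan is to deduce the estimate from Proposition \ref{pro 4.3}-(ii) by \emph{freezing} the state variable at the random initial datum. The starting point is the identity recalled just above the statement, $(Y^{t,\xi},Z^{t,\xi},H^{t,\xi})=(Y^{t,x,\xi},Z^{t,x,\xi},H^{t,x,\xi})\big|_{x=\xi}$, which is the BSDE counterpart of (\ref{3.4}) and follows from the uniqueness of the solutions of (\ref{equ 4.1}) and (\ref{equ 4.2}) together with the independence of $\Pi^{t,x,\xi}$ from $\mathcal{F}_t$. Using it, for $\xi_1,\xi_2\in L^2(\mathcal{F}_t;\mathbb{R}^d)$ one writes $Y^{t,\xi_i}=Y^{t,x,\xi_i}\big|_{x=\xi_i}$, and likewise for $Z$ and $H$, so the quantity to be bounded is exactly the conditional quantity of Proposition \ref{pro 4.3}-(ii) evaluated at $x=\xi_1,\ \widehat{x}=\xi_2$. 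The key structural fact I would record is that for \emph{deterministic} $x$ the coefficients of (\ref{equ 3.2}) and (\ref{equ 4.2}) involve $\xi_i$ only through the deterministic flows of laws $P_{X^{t,\xi_i}}$ and $P_{\Pi^{t,\xi_i}}$; hence the whole family $(X^{t,x,\xi_i},Y^{t,x,\xi_i},Z^{t,x,\xi_i},H^{t,x,\xi_i})$ is generated by $B$ and $N_\lambda$ on $[t,T]$ and is independent of $\mathcal{F}_t$.

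Next I would set
$$\Psi(x_1,x_2):=\sup_{s\in[t,T]}|Y_{s}^{t,x_1,\xi_1}-Y_{s}^{t,x_2,\xi_2}|^{p}+\Big(\int_{t}^{T}|Z_{s}^{t,x_1,\xi_1}-Z_{s}^{t,x_2,\xi_2}|^{2}ds\Big)^{\frac{p}{2}}+\Big(\int_{t}^{T}\!\!\int_{K}|H_{s}^{t,x_1,\xi_1}(e)-H_{s}^{t,x_2,\xi_2}(e)|^{2}\lambda(de)ds\Big)^{\frac{p}{2}},$$
which by the independence above is, for fixed $(x_1,x_2)$, independent of $\mathcal{F}_t$. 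Applying the freezing lemma for conditional expectations, since $\xi_1,\xi_2$ are $\mathcal{F}_t$-measurable one gets $E[\Psi(\xi_1,\xi_2)\,|\,\mathcal{F}_t]=\phi(\xi_1,\xi_2)$ with $\phi(x_1,x_2):=E[\Psi(x_1,x_2)]=E[\Psi(x_1,x_2)\,|\,\mathcal{F}_t]$. Proposition \ref{pro 4.3}-(ii) (with $x=x_1,\ \widehat{x}=x_2,\ \xi=\xi_1,\ \widehat{\xi}=\xi_2$, and deterministic right-hand side) reads $\phi(x_1,x_2)\leq C_p(|x_1-x_2|^{p}+W_2(P_{\xi_1},P_{\xi_2})^{p})$, so substituting $x_1=\xi_1,\ x_2=\xi_2$ yields
$$E[\Psi(\xi_1,\xi_2)\,|\,\mathcal{F}_t]\leq C_p\big(|\xi_1-\xi_2|^{p}+W_2(P_{\xi_1},P_{\xi_2})^{p}\big),\qquad \text{P-a.s.}$$
Since $\Psi(\xi_1,\xi_2)$ is precisely the integrand of the left-hand side of the corollary (by $Y^{t,\xi_i}=Y^{t,x,\xi_i}|_{x=\xi_i}$, etc.), taking expectations gives the first inequality, with $C_p$ inheriting its dependence on the Lipschitz constants from Proposition \ref{pro 4.3}.

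For the second inequality I would note that the joint law of $(\xi_1,\xi_2)$ is an admissible coupling of $(P_{\xi_1},P_{\xi_2})$, whence $W_2(P_{\xi_1},P_{\xi_2})^2\leq E[|\xi_1-\xi_2|^2]$; raising to the power $p/2\geq1$ and applying Jensen's inequality to the convex map $u\mapsto u^{p/2}$ gives $W_2(P_{\xi_1},P_{\xi_2})^{p}\leq E[|\xi_1-\xi_2|^{p}]$, which absorbs the Wasserstein term into $C_pE[|\xi_1-\xi_2|^{p}]$. The only genuinely delicate point is the freezing step: one must make sure that for deterministic $x$ the processes $(X^{t,x,\xi_i},Y^{t,x,\xi_i},Z^{t,x,\xi_i},H^{t,x,\xi_i})$ are independent of $\mathcal{F}_t$ — which holds exactly because the mean-field coefficients enter only through the deterministic laws $P_{X^{t,\xi_i}}$ and $P_{\Pi^{t,\xi_i}}$ — so that the conditional estimate of Proposition \ref{pro 4.3}-(ii) may be transported from frozen deterministic arguments to the $\mathcal{F}_t$-measurable data $\xi_1,\xi_2$. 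The remaining steps are routine.
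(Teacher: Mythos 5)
Your proposal is correct and follows essentially the same route the paper intends: the paper justifies the corollary only by the one-line remark that $(Y^{t,\xi},Z^{t,\xi},H^{t,\xi})=(Y^{t,x,\xi},Z^{t,x,\xi},H^{t,x,\xi})\big|_{x=\xi}$, i.e.\ precisely the substitution of $x=\xi_1,\ \widehat{x}=\xi_2$ into Proposition \ref{pro 4.3}-(ii) that you carry out, using the independence of $\Pi^{t,x,\xi}$ from $\mathcal{F}_t$ already recorded in the proof of that proposition. Your explicit freezing argument and the coupling-plus-Jensen bound $W_2(P_{\xi_1},P_{\xi_2})^{p}\leq E[|\xi_1-\xi_2|^{p}]$ simply spell out the details the paper leaves implicit.
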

From Proposition \ref{pro 4.3} the processes $Y^{t,x,\xi}=\{Y_s^{t,x,\xi}\}_{s\in[t,T]},
Z^{t,x,\xi}=\{Z_s^{t,x,\xi}\}_{s\in[t,T]}$ and $H^{t,x,\xi}=\{H_s^{t,x,\xi}\}_{s\in[t,T]}$
depend on $\xi$ only through its distribution, which means $(Y^{t,x,\xi},Z^{t,x,\xi},H^{t,x,\xi})$
and $(Y^{t,x,\bar{\xi}},Z^{t,x,\bar{\xi}},H^{t,x,\bar{\xi}}) $ are indistinguishable as long as $\xi$ and $\bar{\xi}$ have the
same distribution. Hence we can define $Y^{t,x,P_\xi},\ Z^{t,x,P_\xi}$ and $H^{t,x,P_\xi}$ by
$$Y^{t,x,P_\xi}:=Y^{t,x,\xi},\ Z^{t,x,P_\xi}:=Z^{t,x,\xi},\ H^{t,x,P_\xi}:=H^{t,x,\xi}.$$
And it follows from the uniqueness of the solution of BSDEs (\ref{equ 4.1}) and  (\ref{equ 4.2}) that
\begin{equation}\label{4.6-111}Y^{t,\xi}=Y^{t,x,\xi}|_{x=\xi},\ Z^{t,\xi}=Z^{t,x,\xi}|_{x=\xi},\ H^{t,\xi}=H^{t,x,\xi}|_{x=\xi}.\end{equation}
In particular, from (\ref{4.2-1}) for $0\le t\le s\le T,\, x\in {\mathbb R}^d,\ \xi\in L^2({\cal
F}_t;{\mathbb R}^d)$, it holds
\begin{equation}\label{4.6-1}
\begin{array}{lll}
{\rm (i)}&  (Y^{s,X^{t,x,\xi}_s,P_{X^{t,\xi}_s}}_r,Y^{s,X^{t,\xi}_s}_r)
=(Y^{t,x,P_\xi}_r,Y^{t,\xi}_r),\ r\in[s,T],\ \mbox{P-a.s.};\\
{\rm (ii)}&  (Z^{s,X^{t,x,\xi}_s,P_{X^{t,\xi}_s}}_r,Z^{s,X^{t,\xi}_s}_r)
=(Z^{t,x,P_\xi}_r,Z^{t,\xi}_r),\ \mbox{drdP-a.e.};\\
{\rm (iii)}&  (H^{s,X^{t,x,\xi}_s,P_{X^{t,\xi}_s}}_r,H^{s,X^{t,\xi}_s}_r)
=(H^{t,x,P_\xi}_r,H^{t,\xi}_r),\  \mbox{drd}\lambda\mbox{dP-a.e.}\\
\end{array}
\end{equation}

Now we introduce the value function
\begin{equation}\label{4.6-2}V(t,x,P_\xi):= Y^{t,x,P_\xi}_t.\end{equation}
Notice that $V(t,x,P_\xi)$ is deterministic because we are in the Markovian case. On the other hand, from Proposition \ref{pro 4.3} we can get
\begin{equation}\label{4.6-3}V(s, X_s^{t,x,P_\xi}, P_{X_s^{t,\xi}})=Y_s^{s,X_s^{t,x,P_\xi},P_{X_s^{t,\xi}}}=Y_s^{t,x,P_\xi},\ s\in[t,T].\end{equation}

An immediate consequence of Proposition \ref{pro 4.3} is
\begin{proposition}\label{pro 4.4}
For $t\in[0,T],\ x,\ \bar{x}\in \mathbb{R}^d, P_\xi,\ P_{\bar{\xi}}\in \mathcal{P}_2({\mathbb{R}^d}),$
$$
|V(t,x,P_\xi)-V(t,\bar{x},P_{\bar{\xi}})|\leq C(|x-\bar{x}|+W_2(P_{\xi},P_{\bar{\xi}})).
$$
\end{proposition}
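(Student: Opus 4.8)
The plan is to derive this Lipschitz estimate for $V$ directly from Proposition \ref{pro 4.3}-(ii), exploiting the fact that $V(t,x,P_\xi)=Y_t^{t,x,\xi}$ is deterministic. The first observation I would record is that $(Y^{t,x,\xi},Z^{t,x,\xi},H^{t,x,\xi})$ solves a BSDE whose driver and terminal value are independent of $\mathcal{F}_t$; hence the whole solution, and in particular $Y_t^{t,x,\xi}$, is independent of $\mathcal{F}_t$. Since $Y_t^{t,x,\xi}$ is also $\mathcal{F}_t$-measurable (being the value at the initial time of an $\mathbb{F}$-adapted process), it must be a.s. constant, which justifies treating $V(t,x,P_\xi)=Y_t^{t,x,\xi}$ as a deterministic number.

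Next I would extract the time-$t$ value from the conditional supremum estimate of Proposition \ref{pro 4.3}-(ii), taken with $p=2$ and $(\widehat{x},\widehat{\xi})=(\bar{x},\bar{\xi})$. Because the deterministic quantity $|V(t,x,P_\xi)-V(t,\bar{x},P_{\bar{\xi}})|^2=|Y_t^{t,x,\xi}-Y_t^{t,\bar{x},\bar{\xi}}|^2$ equals its own conditional expectation given $\mathcal{F}_t$, I can bound it by the conditional expectation of the supremum over $s\in[t,T]$ (after discarding the nonnegative $Z$- and $H$-terms appearing in 4.3-(ii)):
\begin{equation*}
|V(t,x,P_\xi)-V(t,\bar{x},P_{\bar{\xi}})|^2
= E\big[|Y_t^{t,x,\xi}-Y_t^{t,\bar{x},\bar{\xi}}|^2\,\big|\,\mathcal{F}_t\big]
\leq E\big[\sup_{s\in[t,T]}|Y_s^{t,x,\xi}-Y_s^{t,\bar{x},\bar{\xi}}|^2\,\big|\,\mathcal{F}_t\big].
\end{equation*}
Applying Proposition \ref{pro 4.3}-(ii) to the right-hand side gives the upper bound $C_2(|x-\bar{x}|^2+W_2(P_\xi,P_{\bar{\xi}})^2)$.

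Finally I would take square roots and use the elementary inequality $(a^2+b^2)^{1/2}\leq a+b$ for $a,b\geq 0$, with $a=|x-\bar{x}|$ and $b=W_2(P_\xi,P_{\bar{\xi}})$, setting $C:=C_2^{1/2}$, to conclude
\begin{equation*}
|V(t,x,P_\xi)-V(t,\bar{x},P_{\bar{\xi}})|\leq C\big(|x-\bar{x}|+W_2(P_\xi,P_{\bar{\xi}})\big).
\end{equation*}
There is no substantial obstacle here: the entire analytic work, namely the stability of the BSDE solution in both the space variable and the measure argument, has already been carried out in Proposition \ref{pro 4.3}. The only point requiring a moment's care is the passage from the conditional, pathwise supremum estimate to a bound on the deterministic value function, which is handled by the measurability and independence remark above.
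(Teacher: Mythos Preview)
Your proposal is correct and matches the paper's own treatment: the paper states Proposition~\ref{pro 4.4} simply as ``an immediate consequence of Proposition~\ref{pro 4.3}'' without further proof, and what you have written is precisely the routine unpacking of that consequence (determinism of $Y_t^{t,x,\xi}$, specialization of the conditional sup-estimate to $s=t$ with $p=2$, square root).
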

In fact, the value function $V(t,x,P_\xi)$ is also $\frac{1}{2}$-H\"{o}lder continuous with
respect to $t$.
\begin{proposition}\label{pro 4.5}
There exists some constant $C>0$ such that, for all $t,\ t'\in[0,T],\ x\in\mathbb{R}^d$,\ $\xi\in L^2(\mathcal{F}_t;\mathbb{R}^d),$
$$
|V(t,x,P_\xi)-V(t',x,P_{\xi})|\leq C|t-t'|^\frac{1}{2}.
$$
\end{proposition}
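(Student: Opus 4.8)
The plan is to exploit relation \eqref{4.6-3}, which expresses $Y^{t,x,P_\xi}$ evaluated at a later time as the value function composed with the forward flow, together with the short-time increment estimate Lemma \ref{le 3.1}-vi) and the Lipschitz continuity of $V$ in $(x,\mu)$ from Proposition \ref{pro 4.4}. Without loss of generality I would assume $t\le t'$. Since we are in the Markovian case, $V(t,x,P_\xi)=Y^{t,x,\xi}_t$ is deterministic, so I would start from the integral form of the BSDE \eqref{equ 4.2} between $t$ and $t'$,
\[
Y^{t,x,\xi}_t=Y^{t,x,\xi}_{t'}+\int_t^{t'}f(\Pi^{t,x,\xi}_s,P_{\Pi^{t,\xi}_s})\,ds-\int_t^{t'}Z^{t,x,\xi}_s\,dB_s-\int_t^{t'}\!\!\int_K H^{t,x,\xi}_s(e)\,N_\lambda(de,ds),
\]
and take the expectation. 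By Proposition \ref{pro 4.3}-(i) the integrands belong to $\mathcal{H}^2_{\mathbb F}$ and $\mathcal{K}^2_\lambda$, so the two stochastic integrals are true martingales with zero mean, and one obtains $V(t,x,P_\xi)=E[Y^{t,x,\xi}_{t'}]+E\big[\int_t^{t'}f(\Pi^{t,x,\xi}_s,P_{\Pi^{t,\xi}_s})\,ds\big]$.

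Next I would treat the two resulting terms separately. Since $f$ is bounded under Assumption (H4.1), the drift term is controlled by $\big|E[\int_t^{t'}f\,ds]\big|\le \|f\|_\infty\,(t'-t)\le \|f\|_\infty\,\sqrt{T}\,(t'-t)^{1/2}$. For the remaining term I would invoke \eqref{4.6-3} with $s=t'$, namely $Y^{t,x,\xi}_{t'}=V\big(t',X^{t,x,P_\xi}_{t'},P_{X^{t,\xi}_{t'}}\big)$, so that
\[
E[Y^{t,x,\xi}_{t'}]-V(t',x,P_\xi)=E\Big[V\big(t',X^{t,x,P_\xi}_{t'},P_{X^{t,\xi}_{t'}}\big)-V(t',x,P_\xi)\Big].
\]
Applying the Lipschitz estimate of Proposition \ref{pro 4.4} pointwise and then taking expectation bounds the right-hand side by $C\big(E[|X^{t,x,\xi}_{t'}-x|]+W_2(P_{X^{t,\xi}_{t'}},P_\xi)\big)$. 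Both quantities are $O((t'-t)^{1/2})$: for the first, $E[|X^{t,x,\xi}_{t'}-x|]\le (E[|X^{t,x,\xi}_{t'}-x|^2])^{1/2}\le C(t'-t)^{1/2}$ by Lemma \ref{le 3.1}-vi); for the second, coupling $X^{t,\xi}_{t'}$ with $\xi$ gives $W_2(P_{X^{t,\xi}_{t'}},P_\xi)\le (E[|X^{t,\xi}_{t'}-\xi|^2])^{1/2}\le C(t'-t)^{1/2}$, again by Lemma \ref{le 3.1}-vi). Collecting the three estimates yields $|V(t,x,P_\xi)-V(t',x,P_\xi)|\le C(t'-t)^{1/2}$.

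The computation is essentially routine once the right decomposition is chosen; the only point requiring care is the passage from the random identity \eqref{4.6-3} to a deterministic bound. I would emphasize that $V(t,x,P_\xi)$ and $V(t',x,P_\xi)$ are deterministic, so taking expectations is harmless, and that the flow structure is precisely what lets me replace the genuinely time-shifted object $Y^{t,x,\xi}_{t'}$ by $V$ evaluated at the flowed state and the flowed law. The main (though mild) obstacle is securing the short-time estimate Lemma \ref{le 3.1}-vi) simultaneously for the state increment $|X^{t,x,\xi}_{t'}-x|$ and the law increment $W_2(P_{X^{t,\xi}_{t'}},P_\xi)$; this is exactly what part vi) provides, and it is what forces the H\"older exponent $\tfrac12$ rather than $1$.
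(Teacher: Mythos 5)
Your proposal is correct and follows essentially the same route as the paper: both split the increment into the drift contribution over $[t,t']$ (controlled by the boundedness of $f$) and the time-shifted term, which is handled via the flow property (\ref{4.6-3})/(\ref{4.6-1}) together with the Lipschitz continuity of $V$ in $(x,P_\xi)$ and the short-time estimate of Lemma \ref{le 3.1}-vi). The only cosmetic difference is that you invoke Proposition \ref{pro 4.4} where the paper applies Proposition \ref{pro 4.3}-(ii) conditionally on $\mathcal{F}_{t'}$, which is the same estimate.
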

\begin{proof} Without loss of generality let $0\leq t<t'$. Then we have
\begin{equation}\label{equli 4.10.1}
|V(t,x,P_\xi)-V(t',x,P_\xi)|\leq |E[Y_t^{t,x,P_\xi}-Y_{t'}^{t,x,P_\xi}]|+E[|Y_{t'}^{t,x,P_\xi}-Y_{t'}^{t',x,P_\xi}|].
\end{equation}
We begin with estimating $|E[Y_t^{t,x,P_\xi}-Y_{t'}^{t,x,P_\xi}]|$, as $f$ is bounded we have
\begin{equation}\label{equli 4.10.2}
|E[Y_t^{t,x,P_\xi}-Y_{t'}^{t,x,P_\xi}]|\leq E[\int_t^{t'}|f(\Pi_s^{t,x,P_\xi},P_{\Pi_s^{t,\xi}})|ds]\leq C(t'-t).\end{equation}
On the other hand, from (\ref{4.6-1}), Proposition \ref{pro 4.3} and Lemma \ref{le 3.1}-v) and vi) we have
\begin{equation}\label{equli 4.10.3}
\begin{split}
&E[|Y_{t'}^{t,x,P_\xi}-Y_{t'}^{t',x,P_\xi}|]\leq \Big(E\big[E[|Y_{t'}^{t',X_{t'}^{t,x,P_\xi},P_{X^{t,\xi}_{t'}}}-Y_{t'}^{t',x,P_\xi}|^2|\mathcal{F}_{t'}]\big]\Big)^{\frac{1}{2}}\\
\leq& C\Big(E[|X_{t'}^{t,x,P_\xi}-x|^2]+W_2(P_{X_{t'}^{t,\xi}},P_\xi)^2\Big)^{\frac{1}{2}}
\leq C\Big(E[|X_{t'}^{t,x,P_\xi}-x|^2+|X^{t,\xi}_{t'}-\xi|^2]\Big)^{\frac{1}{2}}\\
\leq& C|t'-t|^{\frac{1}{2}}.
\end{split}
\end{equation}
Hence, from (\ref{equli 4.10.1}), (\ref{equli 4.10.2}) and (\ref{equli 4.10.3}), we get $|V(t,x,P_\xi)-V(t',x,P_{\xi})|\leq C|t-t'|^\frac{1}{2}.$
\end{proof}

\section{{\protect \large {First order derivatives of $X^{t,x,P_\xi}$}}}

In this section we revisit the first order derivatives of $X^{t,x,P_\xi}$ with
respect to $x$ and the measure $P_\xi$, studied by Hao and Li \cite{HL3}. For the reader's convenience we give the main results here, for more details the reader is referred to \cite{HL3}, or \cite{BLPR} for the case without jumps.\\
\noindent \textbf{Assumption\ (H5.1)}
For each $e\in K$, the triple of  coefficients ($b,\ \sigma, \beta(\cdot,\cdot,e))$ belongs
to $C_b^{1,1}(\mathbb{R}^d\times\mathcal{P}_2(\mathbb{R}^d)\rightarrow
\mathbb{R}^d\times \mathbb{R}^{d\times d}\times\mathbb{R}^d)$, i.e.,
the components $b_j,\ \sigma_{i,j},\ \beta_j(\cdot,\cdot,e),\ 1\leq i,\ j\leq d$,
satisfy the following properties:\\
\indent (i)\ For all $x\in\mathbb{R}^d, e\in K$, $\sigma_{ij}(x,\cdot), b_j(x,\cdot),\beta_j(x,\cdot,e)\in C_b^{1,1}(\mathcal{P}_2(\mathbb{R}^d))$;\\
\indent (ii)\ For all $\nu\in\mathcal{P}_2(\mathbb{R}^d), e\in K, $
$\sigma_{ij}(\cdot,\nu), b_j(\cdot,\nu),\beta_j(\cdot,\nu,e)\in C_b^{1}(\mathbb{R}^d)$;\\
\indent(iii)\ The derivatives $\partial_x\sigma_{i,j},\ \partial_xb_j:
\mathbb{R}^d\times\mathcal{P}_2(\mathbb{R}^d)\rightarrow\mathbb{R}^d$ and
$\partial_\mu\sigma_{i,j},\ \partial_\mu b_j:\mathbb{R}^d\times
\mathcal{P}_2(\mathbb{R}^d)\times \mathbb{R}^d\rightarrow\mathbb{R}^d$
are Lipschitz continuous and bounded; \\
\indent(iv)\ There is a constant $C\in\mathbb{R}_{+}$ such that
$\partial_x\beta_j(\cdot,\cdot,e):
\mathbb{R}^d\times\mathcal{P}_2(\mathbb{R}^d)\rightarrow\mathbb{R}^d$ and
$\partial_\mu\beta_j(\cdot,\cdot,\cdot,e):\mathbb{R}^d\times
\mathcal{P}_2(\mathbb{R}^d)\times \mathbb{R}^d\rightarrow\mathbb{R}^d$
have $C(1\wedge|e|)$ as bound and as Lipschitz constant, i.e.,
 for all
$x,\ x',\ y,\ y'\in\mathbb{R}^d$,
 $\nu,\ \nu'\in\mathcal{P}_2(\mathbb{R}^d)$,
 $e\in K$,\ $1\leq j\leq d;$\\
\indent (v)\ $|\partial_{\mu}\beta_j(x,\nu,e,y)|\leq C(1\wedge|e|),$\ \
 $|\partial_{x}\beta_j(x,\nu,e)|\leq C(1\wedge|e|);$\\
\indent (vi) \
$|\partial_{x}\beta_j(x,\nu,e)-\partial_{x}\beta_j(x',\nu',e)|\leq
C(1\wedge|e|)(|x-x'|+W_2(\nu,\nu')),$\\
\mbox{}\quad\quad\quad\quad$|\partial_{\mu}\beta_j(x,\nu,e,y)-\partial_{\mu}\beta_j(x',\nu',e,y')|\leq
C(1\wedge|e|)(|x-x'|+|y-y'|+W_2(\nu,\nu')).$\\

Now we give the first order derivative of $X^{t,x,P_\xi}$ with respect to $x$.
\begin{theorem}\label{th 5.1}
Suppose Assumption (H5.1) holds true. Then the $L^2$-derivative of $X^{t,x,P_\xi}$ with respect to $x$
exists, which is denoted by $\partial_xX^{t,x,P_\xi}=(\partial_xX^{t,x,P_\xi,j})_{1\leq j\leq d}$, and it satisfies the following SDE with jumps: $s\in[t,T],\ 1\leq i,\ j\leq d,$
\begin{equation}\label{equ 5.1}
\begin{aligned}
&\partial_{x_i}X_s^{t,x,P_\xi,j}=\delta_{ij}
+\sum\limits_{k=1}^d\int^s_t\partial_{x_k}b_j(X_r^{t,x,P_{\xi}},P_{X_r^{t,\xi}})
\partial_{x_i}X_r^{t,x,P_{\xi},k}dr\\
&\quad\quad\quad\quad\quad\quad +\sum\limits_{k,l=1}^d\int^s_t\partial_{x_k}\sigma_{j,l}(X_r^{t,x,P_{\xi}},P_{X_r^{t,\xi}})
\partial_{x_i}X_r^{t,x,P_{\xi},k}dB^l_r\\
&\quad\quad\quad\quad\quad\quad +\sum\limits_{k=1}^d\int^s_t\int_K\partial_{x_k}\beta_j(X_{r-}^{t,x,P_{\xi}},P_{X_r^{t,\xi}},e)
\partial_{x_i}X_{r-}^{t,x,P_{\xi},k}N_{_\lambda}(dr,de).
\end{aligned}
\end{equation}
\end{theorem}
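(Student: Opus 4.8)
The plan is to follow the classical scheme for differentiating an SDE solution with respect to its initial value, transferred to the jump setting. The decisive preliminary observation is that the measure argument $P_{X_r^{t,\xi}}$ entering the coefficients of (\ref{equ 3.2}) does \emph{not} depend on $x$; hence, for the purpose of this theorem, the flow $r\mapsto P_{X_r^{t,\xi}}$ may be treated as a fixed, time-dependent parameter, so that differentiating with respect to $x$ is genuinely the differentiation of a standard SDE with jumps with respect to its deterministic initial datum (this is the viewpoint of \cite{HL3}, resp. \cite{BLPR} in the continuous case). \textbf{Step 1 (candidate).} I would first check that (\ref{equ 5.1}) is a well-posed \emph{linear} SDE with jumps: under Assumption (H5.1) the coefficients $\partial_{x_k}b_j$, $\partial_{x_k}\sigma_{j,l}$ evaluated along $X^{t,x,P_\xi}$ are bounded, while $\partial_{x_k}\beta_j(\cdot,\cdot,e)$ is bounded by $C(1\wedge|e|)$ (item (v)), so the jump coefficient lies in $\mathcal{K}^2_\lambda$. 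Thus (\ref{equ 5.1}) has a unique solution $\partial_xX^{t,x,P_\xi}\in\mathcal{S}^2_{\mathbb{F}}(t,T;\mathbb{R}^{d\times d})$, and since the coefficients are bounded, BDG together with Gronwall's lemma yields $E[\sup_{s\in[t,T]}|\partial_xX_s^{t,x,P_\xi}|^p]\le C_p$ for every $p\ge2$.

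\textbf{Step 2 (difference quotient).} For $h\neq0$ and the $i$-th standard basis vector $e_i$, set $\nabla_i^hX_s:=h^{-1}(X_s^{t,x+he_i,P_\xi}-X_s^{t,x,P_\xi})$. Applying the fundamental theorem of calculus to each coefficient, e.g.
\begin{equation*}
b(X_r^{t,x+he_i,P_\xi},P_{X_r^{t,\xi}})-b(X_r^{t,x,P_\xi},P_{X_r^{t,\xi}})=\Big(\int_0^1\partial_xb\big(X_r^{t,x,P_\xi}+\rho(X_r^{t,x+he_i,P_\xi}-X_r^{t,x,P_\xi}),P_{X_r^{t,\xi}}\big)\,d\rho\Big)(X_r^{t,x+he_i,P_\xi}-X_r^{t,x,P_\xi}),
\end{equation*}
and dividing by $h$, I obtain that $\nabla_i^hX$ solves an SDE of exactly the form (\ref{equ 5.1}), but with the derivative coefficients evaluated at the interpolated argument $X_r^{t,x,P_\xi}+\rho(X_r^{t,x+he_i,P_\xi}-X_r^{t,x,P_\xi})$ and integrated in $\rho\in[0,1]$.

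\textbf{Step 3 (convergence).} Put $\theta_s^h:=\nabla_i^hX_s-\partial_{x_i}X_s^{t,x,P_\xi}$. Subtracting the two SDEs, $\theta^h$ satisfies a linear SDE with jumps whose driver splits into a homogeneous part (the bounded interpolated derivative coefficients multiplied by $\theta^h$) and an inhomogeneous error part (the difference between the interpolated coefficients and the coefficients of (\ref{equ 5.1}), multiplied by $\partial_{x_i}X^{t,x,P_\xi}$). Applying Itô's formula to $|\theta_s^h|^2$, taking $\sup_s$, using BDG on the Brownian and compensated-Poisson martingale parts and then Gronwall's lemma, the statement reduces to showing that the error term converges to $0$ in $L^2$. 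Here the Lipschitz continuity of $\partial_xb,\partial_x\sigma,\partial_x\beta$ (items (iii),(vi)) bounds the error coefficient by $C\rho|X_r^{t,x+he_i,P_\xi}-X_r^{t,x,P_\xi}|$ (with the extra factor $C(1\wedge|e|)$ for the $\beta$-part), which by (\ref{3.3}) and the higher-moment estimates of Lemma \ref{le 3.1} is small as $h\to0$.

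\textbf{Main obstacle.} The delicate point is precisely the error term of Step 3: the derivative coefficients are merely Lipschitz and the candidate $\partial_{x_i}X^{t,x,P_\xi}$ is only square integrable, so the product does not vanish by a naive norm bound. I would handle this by a uniform-integrability / dominated-convergence argument: the error coefficient is bounded, so the product is dominated by $C|\partial_{x_i}X^{t,x,P_\xi}|\in L^2$, and it converges to $0$ in probability by (\ref{3.3}); combining Hölder's inequality with the all-order moment bounds of Lemma \ref{le 3.1} and of Step 1 then upgrades this to convergence in $L^2$. The compensated-Poisson integral is the most technical piece, since the integrand must be controlled via $C(1\wedge|e|)$ to keep it in $\mathcal{K}^2_\lambda$ and to legitimize the BDG estimate.
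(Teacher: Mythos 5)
Your proposal is correct and is essentially the argument the paper relies on: the paper gives no in-line proof of Theorem \ref{th 5.1}, deferring to Theorem 4.1 of \cite{HL3} (and Theorem 3.1 of \cite{BLPR} for the continuous case), which proceeds by exactly your scheme --- freeze the law $P_{X_r^{t,\xi}}$ as an $x$-independent parameter, form the difference quotient via the fundamental theorem of calculus, and close the estimate with BDG and Gronwall. One small remark: the ``main obstacle'' you flag in Step 3 is milder than you suggest, since by your own Step 1 the candidate $\partial_{x_i}X^{t,x,P_\xi}$ has bounded moments of every order, so H\"older applied to the $L^4$-norms of the error coefficient (which is $O(h)$ in every $L^p$ by Lemma \ref{le 3.1}-ii)) and of $\partial_{x_i}X^{t,x,P_\xi}$ already gives $L^2$-convergence without any uniform-integrability argument.
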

For the proof the reader is referred to Theorem 4.1 in \cite{HL3}, and for the case without jumps also to Theorem 3.1  in \cite{BLPR}. From the standard estimates of classical SDEs with jumps we have
\begin{proposition}\label{pro 5.1} For all $p\geq2$, there exists a constant $C_p>0$ only depending on the Lipschitz constants of
$\partial_x \sigma,\ \partial_x b$ and $\partial_x \beta$, such that, for all $t\in[0,T],\ x,\ x'\in \mathbb{R}^d,\ \xi,\ \xi'\in L^2({\cal F}_t;\mathbb{R}^d),$
P-a.s.,
\vskip0.2cm
$\mathrm{(i)}$ $E\Big[\mathop{\rm sup}\limits_{s\in[t,T]}
|\partial_x X_s^{t,x,P_\xi}|^p|\mathcal{F}_t\Big]
\leq C_p, $
\vskip0.2cm
$\mathrm{(ii)}$ $E\Big[\mathop{\rm sup}\limits_{s\in[t,T]}
|\partial_x X_s^{t,x,P_\xi}-\partial_x X_s^{t,x',P_{\xi'}}|^p|\mathcal{F}_t\Big]
\leq C_p\big(|x-x'|^p+W_2(P_\xi,P_{\xi'})^p\big),$\\
\vskip 0.2cm
\indent $\mathrm{(iii)}$
$E[\mathop{\rm sup}\limits_{s\in[t,t+h]}|\partial_xX_s^{t,x,P_\xi}-I_{d\times d}|^p|\mathcal{F}_t]\leq C_p h,
\ 0\leq t \leq t+h\leq T.$
\end{proposition}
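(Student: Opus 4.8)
The plan is to regard the linear SDE with jumps \eqref{equ 5.1} satisfied by the matrix‑valued process $\Theta_s:=\partial_x X_s^{t,x,P_\xi}$ as a linear equation with bounded (random) coefficients, and to read off all three estimates from the Burkholder--Davis--Gundy (BDG) inequality and Kunita's inequality for the martingale parts, combined with Gronwall's lemma. Writing \eqref{equ 5.1} schematically as
\begin{equation*}
\Theta_s=I+\int_t^s A_r\Theta_r\,dr+\sum_{l}\int_t^s B_r^l\Theta_r\,dB_r^l+\int_t^s\int_K C_r(e)\Theta_{r-}\,N_\lambda(dr,de),
\end{equation*}
Assumption (H5.1) guarantees that $A$ and $B^l$ are bounded and that $|C_r(e)|\le C(1\wedge|e|)$, so that $\int_K|C_r(e)|^2\lambda(de)\le C\int_K(1\wedge|e|^2)\lambda(de)<\infty$ by the L\'evy‑measure condition; the same integrability holds for $|C_r(e)|^p$ since $(1\wedge|e|)^p\le(1\wedge|e|)^2$ for $p\ge2$.

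For (i), I would set $\phi(u):=E[\sup_{s\in[t,u]}|\Theta_s|^p\,|\,\mathcal F_t]$. Applying H\"older to the drift and BDG (Brownian part) together with Kunita's inequality (compensated Poisson part) to the stochastic integrals, and using the boundedness of the coefficients together with the above $\lambda$‑integrability, one obtains $\phi(u)\le C_p\big(1+\int_t^u\phi(r)\,dr\big)$; Gronwall's lemma then gives $\phi(T)\le C_p$, which is the asserted uniform bound. For (iii), the identical computation carried out on $[t,t+h]$ for $\Theta_s-I$ (using $\Theta_t=I$) yields, via BDG/Kunita and H\"older in time, a bound of order $h^{p/2}$ (the drift contributing $h^p$, the martingale parts $h^{p/2}$). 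Since $p\ge2$ and $h\le T$, one has $h^{p/2}\le T^{p/2-1}h$, so the right‑hand side is $\le C_p h$.

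For (ii), I would introduce the difference $\Delta\Theta:=\Theta^{t,x,P_\xi}-\Theta^{t,x',P_{\xi'}}$, which solves a linear SDE driven by terms of the form $A_r\Theta_r-A_r'\Theta_r'$ (and the analogous diffusion and jump terms). The key is the splitting $A_r\Theta_r-A_r'\Theta_r'=A_r\,\Delta\Theta_r+(A_r-A_r')\Theta_r'$, which separates a genuinely linear part (absorbed by Gronwall exactly as in (i)) from a source term $(A_r-A_r')\Theta_r'$. The factor $A_r-A_r'$ is the difference of the derivative coefficients $\partial_x b,\partial_x\sigma,\partial_x\beta$ evaluated along $(X_r^{t,x,P_\xi},P_{X_r^{t,\xi}})$ and $(X_r^{t,x',P_{\xi'}},P_{X_r^{t,\xi'}})$; by their Lipschitz continuity in (H5.1) it is dominated by $C\big(|X_r^{t,x,P_\xi}-X_r^{t,x',P_{\xi'}}|+W_2(P_{X_r^{t,\xi}},P_{X_r^{t,\xi'}})\big)$. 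Multiplying by the bounded process $\Theta_r'$ and using Cauchy--Schwarz to split the resulting products into $L^{2p}$‑factors, the stability estimates of Lemma \ref{le 3.1} for the forward flow together with the conditional $L^{2p}$‑bound from part (i) control the source term by $C_p\big(|x-x'|^p+W_2(P_\xi,P_{\xi'})^p\big)$; a concluding Gronwall step gives (ii).

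I expect the main obstacle to be part (ii): one must handle the products ``coefficient‑difference $\times$ bounded derivative process'' inside the $p$‑th moment of the stochastic integrals, in particular the jump integral, which forces one to invoke higher conditional moments (furnished by part (i)) and the Lipschitz stability of the flow (Lemma \ref{le 3.1}), while the $W_2$‑terms are passed through the law as in the proof of Proposition \ref{pro 4.3}. Parts (i) and (iii) are, by contrast, a routine linear‑SDE Gronwall argument once the L\'evy‑measure integrability of $C_r(\cdot)$ has been noted.
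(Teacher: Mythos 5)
Your proposal is correct and coincides with what the paper intends: Proposition \ref{pro 5.1} is stated there as a direct consequence of ``the standard estimates of classical SDEs with jumps'' applied to the linear equation (\ref{equ 5.1}), which is exactly your BDG/Kunita--Gronwall argument for (i) and (iii) and the standard splitting of the difference equation into a linear part plus a source term controlled via the Lipschitz continuity of $\partial_x b,\partial_x\sigma,\partial_x\beta$, Lemma \ref{le 3.1} and the conditional $L^{2p}$-bound from (i) for (ii). The only cosmetic slip is calling $\partial_xX^{t,x',P_{\xi'}}$ a ``bounded process''; it is bounded only in conditional $L^{2p}$, which is precisely how you then use it, so nothing is lost.
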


The following theorem shows that the unique solution $X^{t,x,\xi}$ of equation (\ref{equ 3.2}) interpreted as a functional of $\xi\in L^2({\mathcal{F}_t;\mathbb{R}^d})$ is Fr\'{e}chet differentiable.

\begin{theorem}\label{th 5.2}
Let $(\sigma,b,\beta)$ satisfy Assumption (H5.1). Then for all
$0\leq t\leq s\leq T,\ x\in \mathbb{R}^d,$ the lifted process
$L^2(\mathcal{F}_t; \mathbb{R}^d)\ni\xi\rightarrow X^{t,x,\xi}_s:=X^{t,x,P_\xi}_s\in L^2(\mathcal{F}_s;\mathbb{R}^d)$
is Fr\'{e}chet differentiable, and the Fr\'{e}chet derivative is characterized by

$ \quad\quad\quad\quad\quad D X_s^{t,x,\xi}(\eta)=\tilde{E}\big[U_s^{t,x,P_\xi}(\tilde{\xi})\cdot\tilde{\eta}\big]
=\Big(\tilde{E}\big[\sum\limits_{j=1}^dU_{s,i,j}^{t,x,P_\xi}(\tilde{\xi})\cdot\tilde{\eta}_j\big] \Big)_{1\leq i\leq d},
$

\noindent for all $\eta=(\eta_1,\eta_2,\cdot\cdot\cdot,\eta_d)\in L^2(\mathcal{F}_t;\mathbb{R}^d)$, where for all
$y\in \mathbb{R}^d$,\  $U^{t,x,P_\xi}(y)=((U^{t,x,P_{\xi}}_{s,i,j}(y))_{s\in[t,T]})_{1\leq i,j\leq d}
\in \mathcal{S}^2_{\mathbb{F}}(t,T;\mathbb{R}^{d\times d})$ is the unique solution of the following SDE:
\begin{equation*}\begin{aligned}
&U^{t,x,P_\xi}_{s,i,j}(y) =\sum\limits_{k=1}^d\int^s_t\partial_{x_k} b_i(X_r^{t,x,P_\xi}, P_{X_r^{t,\xi}})U^{t,x,P_\xi}_{r,k,j}(y)dr+\sum\limits_{k,\ell=1}^d\int^s_t
\partial_{x_k}\sigma_{i,\ell}(X_r^{t,x,P_\xi}, P_{X_r^{t,\xi}})U^{t,x,P_\xi}_{r,k,j}(y)dB^{\ell}_r\\
&+\sum\limits_{k=1}^d\int^s_t\int_K\partial_{x_k} \beta_i(X_{r-}^{t,x,P_\xi}, P_{X_r^{t,\xi}},e)U^{t,x,P_\xi}_{r-,k,j}(y)
N_{_\lambda}(dr,de)\\
&+\sum\limits_{k,\ell=1}^d\int^s_t\!\!E[(\partial_\mu\sigma_{i,\ell})_k(z,P_{X_r^{t,\xi}},X_r^{t,y,P_{\xi}})
\partial_{x_j}X^{t,y,P_\xi,k}_{r}+(\partial_\mu \sigma_{i,\ell})_k(z,P_{X_r^{t,\xi}},X_r^{t,\xi})\cdot U_{r,k,j}^{t,\xi}(y)]\big|_{z=X_r^{t,x,P_\xi}}dB^{\ell}_r\\
&+\sum\limits_{k=1}^d\int^s_tE[(\partial_\mu b_i)_k(z,P_{X_r^{t,\xi}},X_r^{t,y,P_{\xi}})
\partial_{x_j}X^{t,y,P_\xi,k}_{r}
+(\partial_\mu b_{i})_k(z,P_{X_r^{t,\xi}},X_r^{t,\xi})\cdot U_{r,k,j}^{t,\xi}(y)]\big|_{z=X_r^{t,x,P_\xi}}dr\\
\end{aligned}
\end{equation*}
\begin{equation}\label{equ 5.5}\begin{aligned}
&+\sum\limits_{k=1}^d\int^s_t\int_KE[(\partial_\mu \beta_i)_k(z,P_{X_r^{t,\xi}},X_{r}^{t,y,P_{\xi}},e)
\partial_{x_j}X^{t,y,P_\xi,k}_{r}\\
&\qquad+(\partial_\mu \beta_i)_k(z,P_{X_r^{t,\xi}},X_{r}^{t,\xi},e)
\cdot U_{r,k,j}^{t,\xi}(y)]\big|_{z=X_{r-}^{t,x,P_\xi}}N_{_\lambda}(dr,de), \ s\in [t,T],\ 1\leq i,j\leq d,\\
\end{aligned}
\end{equation}
where  $U^{t,\xi}(y)=((U^{t,\xi}_{s,i,j}(y))_{s\in[t,T]})_{1\leq i,j\leq d}=U^{t,x,P_\xi}(y)|_{x=\xi}
\in \mathcal{S}^2_{\mathbb{F}}(t,T;\mathbb{R}^{d\times d})$ satisfies (\ref{equ 5.5}) with $x$ replaced by $\xi$.
\end{theorem}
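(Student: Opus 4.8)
The plan is to follow the strategy of \cite{BLPR} and \cite{HL3}: construct the candidate derivative as the solution $U$ of the linear system (\ref{equ 5.5}), and then verify that the difference quotient of the lifted map converges in $L^2$ to the directional derivative represented through $U$ and an independent copy. First I would establish the well-posedness of (\ref{equ 5.5}). The system is triangular: setting $x=\xi$ produces a \emph{linear McKean--Vlasov} equation for $U^{t,\xi}(y)=U^{t,x,P_\xi}(y)|_{x=\xi}$, in which the unknown appears under an expectation weighted by the bounded coefficients $\partial_\mu b,\partial_\mu\sigma,\partial_\mu\beta$; once this is solved, the equation for $U^{t,x,P_\xi}(y)$ becomes a genuinely linear inhomogeneous SDE with jumps. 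Under (H5.1) all of $\partial_x b,\partial_x\sigma,\partial_x\beta,\partial_\mu b,\partial_\mu\sigma,\partial_\mu\beta$ are bounded and Lipschitz, and the inhomogeneity $\partial_{x_j}X^{t,y,P_\xi}$ is controlled by Proposition \ref{pro 5.1}. I would therefore obtain existence and uniqueness in $\mathcal{S}^2_{\mathbb F}(t,T;\mathbb R^{d\times d})$ by a contraction argument (a weighted norm, or a short-interval contraction iterated over $[t,T]$), and along the way derive the $L^p$-bounds, the Lipschitz dependence on $(x,P_\xi)$, and the continuity in $y$, i.e.\ the analogues of Proposition \ref{pro 5.1} for $U$. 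The Poisson integrals are handled through the $L^2$-isometry of $N_\lambda$ together with the bounds $|\partial_x\beta|,|\partial_\mu\beta|\le C(1\wedge|e|)$, which keep the $\lambda(de)$-integrals finite.

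Next I would introduce the candidate derivative. Fixing $\eta\in L^2(\mathcal F_t;\mathbb R^d)$ and an independent copy $(\widetilde\xi,\widetilde\eta)$ of $(\xi,\eta)$, set $V^\eta_s:=\widetilde E[U_s^{t,x,P_\xi}(\widetilde\xi)\widetilde\eta]$. Boundedness of $U$ makes $\eta\mapsto V^\eta$ a bounded linear operator $L^2(\mathcal F_t;\mathbb R^d)\to\mathcal S^2_{\mathbb F}(t,T;\mathbb R^d)$, the candidate Fr\'echet derivative. The guiding observation is that the derivative of the \emph{mean-field} process $X^{t,\xi}=X^{t,y,P_\xi}|_{y=\xi}$ splits, by the chain rule, into an ``initial-condition'' part $\partial_x X^{t,\xi}\cdot\eta$ (coming from $y=\xi$) and a ``law'' part $\widetilde E[U^{t,\xi}(\widetilde\xi)\widetilde\eta]$ (coming from $P_\xi$); this splitting is precisely what produces the two summands $\partial_{x_j}X^{t,y,P_\xi,k}$ and $U^{t,\xi}_{r,k,j}(y)$ inside each mean-field expectation in (\ref{equ 5.5}).

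The core is the convergence of the difference quotient. I would write the SDEs (\ref{equ 3.2}) for $X^{t,x,\xi+\eta}$ and $X^{t,x,\xi}$, form the remainder $R^\eta_s:=X^{t,x,\xi+\eta}_s-X^{t,x,\xi}_s-V^\eta_s$, and derive the linear SDE it satisfies via a first-order Taylor expansion of $b,\sigma,\beta$ in the spatial argument and the Lions chain rule in the measure argument. The drift, diffusion and jump coefficients of $R^\eta$ then decompose into linear terms in $R^\eta$ (absorbed by Gronwall), Taylor remainders that are $o(|\eta|_{L^2})$ by the Lipschitz continuity of the first derivatives combined with the moment estimates of Lemma \ref{le 3.1} and Proposition \ref{pro 5.1}, and measure-derivative terms that couple $R^\eta$ to the corresponding remainder $R^{\eta,0}_s:=X^{t,\xi+\eta}_s-X^{t,\xi}_s-\big(\partial_x X^{t,\xi}_s\cdot\eta+\widetilde E[U^{t,\xi}_s(\widetilde\xi)\widetilde\eta]\big)$ of the mean-field process. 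Establishing $|R^\eta|_{\mathcal S^2}=o(|\eta|_{L^2})$ then yields Fr\'echet differentiability and identifies the derivative with $V^\eta$, which is the asserted formula.

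The main obstacle is the self-consistent (McKean--Vlasov) coupling between the two remainders: $R^\eta$ cannot be estimated without first controlling $R^{\eta,0}$, which itself satisfies a linear McKean--Vlasov SDE whose inhomogeneity feeds back through the law. I would close this by proving the estimate for $R^{\eta,0}$ first, via a Gronwall/contraction argument (if needed on a short interval $[t,t+\delta]$ and then iterated over $[t,T]$), using that the copy satisfies $\widetilde E[|\widetilde R^{\eta,0}_r|^2]=E[|R^{\eta,0}_r|^2]$, and only afterwards feeding this bound into the equation for $R^\eta$. The jump terms add a genuine extra layer relative to \cite{BLPR}: every Poisson stochastic integral must be controlled by the $L^2$-isometry, equivalently Burkholder--Davis--Gundy, for $N_\lambda$, and it is exactly the uniform bound $|\beta|,|\partial_x\beta|,|\partial_\mu\beta|\le C(1\wedge|e|)$ that makes the $\lambda(de)$-integrals of the Taylor remainders converge and delivers the same Gronwall closure as in the continuous case.
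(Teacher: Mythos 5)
The paper itself gives no proof of this theorem: it states that the proof is in Section 4 of Hao--Li \cite{HL3} (and refers to \cite{BLPR} for the continuous case), and your outline reproduces exactly that strategy --- triangular well-posedness of (\ref{equ 5.5}) (the McKean--Vlasov equation for $U^{t,\xi}(y)$ first, then the linear inhomogeneous SDE for $U^{t,x,P_\xi}(y)$), the splitting of the derivative of $X^{t,\xi}=X^{t,y,P_\xi}|_{y=\xi}$ into the $\partial_xX^{t,\xi}\cdot\eta$ part and the law part, and a Gronwall closure of the coupled remainder estimates in which the Poisson integrals are controlled via the $L^2$-isometry and the $C(1\wedge|e|)$ bounds on $\beta$ and its derivatives. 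Your proposal is correct and takes essentially the same route as the cited proof.
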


\begin{proposition}\label{pro 5.2}
For every $p\geq2$, we know that there
exists a constant $C_p>0$ only depending on the Lipschitz constants of $b$ and $\sigma$,  such that,
for all $t\in[0,T]$, $x,\ x',\ y,\ y'\in\mathbb{R}^d$ and
$\xi,\ \xi'\in L^2(\mathcal{F}_t;\mathbb{R}^d),$\\

$\mathrm{(i)}$ $
E\Big[\mathop{\rm sup}\limits_{s\in[t,T]}
(|U_s^{t,x,P_\xi}(y)|^p
+|U_s^{t,\xi}(y)|^p)
\Big]
\leq C_p,$\\
\indent $\mathrm{(ii)}$
$E\Big[\mathop{\rm sup}\limits_{s\in[t,T]}
(|U_s^{t,x,P_\xi}(y)-U_s^{t,x',P_{\xi'}}(y')|^p
+
|U_s^{t,\xi}(y)-U_s^{t,\xi'}(y')|^p)
\Big]$\\
\mbox{}\hskip 1.5cm$\leq C_p\Big(
|x-x'|^p+|y-y'|^p+W_2(P_\xi,P_{\xi'})^p
\Big),$\\
\indent $\mathrm{(iii)}$
$E[\mathop{\rm sup}\limits_{s\in[t,t+h]}|U_s^{t,x,P_\xi}(y)|^p]\leq C_p h,
\ 0\leq h\leq T-t.$
\end{proposition}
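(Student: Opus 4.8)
The plan is to read the three estimates as a priori $L^p$-bounds for the linear SDE with jumps (\ref{equ 5.5}). Both $U^{t,x,P_\xi}(y)$ and $U^{t,\xi}(y)$ solve linear SDEs of the schematic form
\[
dU_s=\big(\alpha_sU_s+F_s\big)ds+\big(\beta_sU_s+G_s\big)dB_s+\int_K\big(\gamma_s(e)U_{s-}+D_s(e)\big)N_\lambda(ds,de),\quad U_t=0,
\]
where $\alpha,\beta,\gamma$ are built from $\partial_x b,\partial_x\sigma,\partial_x\beta$ evaluated along the flow, hence bounded with $\gamma$ carrying the factor $C(1\wedge|e|)$ by (H5.1), while the forcing $F,G,D$ comes from the $\partial_\mu$-terms and involves $\partial_x X^{t,y,P_\xi}$ together with $U^{t,\xi}$ (and, for the $\xi$-indexed equation, the law of $U^{t,\xi}$ itself, through the copy-expectation $\widetilde{E}$). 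The key structural point is that the equation for $U^{t,\xi}(y)=U^{t,x,P_\xi}(y)|_{x=\xi}$ is of mean-field type: its forcing contains $\widetilde{E}[\,\cdot\,\widetilde{U}^{t,\xi}\,]$, so its a priori estimate is self-referential. I therefore first bound $U^{t,\xi}$ and then feed the result into the equation for $U^{t,x,P_\xi}$.

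\textbf{Estimate (i).} For $U^{t,\xi}$ I apply the Burkholder--Davis--Gundy inequalities for jump processes to (\ref{equ 5.5}) with $x=\xi$. The terms carrying $\alpha,\beta,\gamma$ are bounded by $C|U^{t,\xi}_r|$; the forcing is bounded, using boundedness of $\partial_\mu b,\partial_\mu\sigma,\partial_\mu\beta$, by $C(\widetilde{E}[|\partial_x\widetilde{X}^{t,y,P_\xi}_r|]+\widetilde{E}[|\widetilde{U}^{t,\xi}_r|])$, and the factor $C(1\wedge|e|)$ on $\partial_x\beta,\partial_\mu\beta$ together with $\int_K(1\wedge|e|^p)\lambda(de)<\infty$ makes all $\lambda$-integrals finite. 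Using Proposition \ref{pro 5.1}(i) to control $E[\sup|\partial_x X^{t,y,P_\xi}|^p]$, this yields
\[
E\big[\!\sup_{u\in[t,s]}|U^{t,\xi}_u|^p\big]\le C_p\Big(1+\int_t^sE\big[\!\sup_{u\in[t,r]}|U^{t,\xi}_u|^p\big]dr\Big),
\]
and Gronwall's lemma gives $E[\sup_{[t,T]}|U^{t,\xi}|^p]\le C_p$. Inserting this bound into the forcing of the equation for $U^{t,x,P_\xi}$ and repeating the BDG--Gronwall argument (this equation is no longer self-referential once $U^{t,\xi}$ is frozen) gives $E[\sup_{[t,T]}|U^{t,x,P_\xi}|^p]\le C_p$, which is (i).

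\textbf{Estimate (ii).} I subtract the two copies of (\ref{equ 5.5}) and study $\Delta U_s:=U^{t,x,P_\xi}_s(y)-U^{t,x',P_{\xi'}}_s(y')$, which again solves a linear SDE with the same bounded $\partial_x$-coefficients times $\Delta U$, plus two inhomogeneities: first, differences of the coefficients evaluated at $(X^{t,x,P_\xi},P_{X^{t,\xi}})$ versus $(X^{t,x',P_{\xi'}},P_{X^{t,\xi'}})$ multiplied by $U^{t,x',P_{\xi'}}$, controlled by the Lipschitz continuity of $\partial_x b,\partial_x\sigma,\partial_x\beta,\partial_\mu b,\partial_\mu\sigma,\partial_\mu\beta$ in (H5.1) combined with Lemma \ref{le 3.1}(ii),(v) (for $|X^{t,x,P_\xi}-X^{t,x',P_{\xi'}}|$ and $W_2(P_{X^{t,\xi}},P_{X^{t,\xi'}})$) and the bound (i) on $U^{t,x',P_{\xi'}}$; second, differences of the $\partial_\mu$-forcing, controlled by Proposition \ref{pro 5.1}(ii) for $\partial_x X^{t,y,P_\xi}-\partial_x X^{t,y',P_{\xi'}}$ and by the analogous difference $U^{t,\xi}(y)-U^{t,\xi'}(y')$. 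Exactly as in (i) I first bound the $\xi$-version $U^{t,\xi}(y)-U^{t,\xi'}(y')$ by the self-referential (mean-field) Gronwall argument, then bound $\Delta U$; collecting all terms produces the bound $C_p(|x-x'|^p+|y-y'|^p+W_2(P_\xi,P_{\xi'})^p)$.

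\textbf{Estimate (iii) and the main obstacle.} Since (\ref{equ 5.5}) has no constant term, $U^{t,x,P_\xi}_t(y)=0$; over a short interval $[t,t+h]$ the drift then contributes $O(h)$ and the Brownian and jump integrals $O(h^{1/2})$ in $L^p$, so BDG together with a Gronwall argument on $[t,t+h]$ gives $E[\sup_{[t,t+h]}|U^{t,x,P_\xi}|^p]\le C_ph^{p/2}\le C_ph$ for $p\ge2$. The main difficulty is not any single estimate but the mean-field coupling: the $\xi$-indexed equations carry the law, hence the moments, of their own solution in the forcing, so every bound must be arranged as a self-referential Gronwall inequality and proved for the $\xi$-process before the $(x,P_\xi)$-process. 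A secondary technical point, concentrated in (ii), is that the numerous $\partial_\mu$-terms must be estimated while simultaneously varying the evaluation point $X^{t,y,P_\xi}$, the law $P_{X^{t,\xi}}$ and the frozen argument $X^{t,x,P_\xi}$, which forces a careful simultaneous use of Lemma \ref{le 3.1} and Proposition \ref{pro 5.1} together with the boundedness and Lipschitz hypotheses (H5.1).
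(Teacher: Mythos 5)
Your proposal is correct and follows the standard route: the paper itself gives no proof of Proposition 5.2 but refers to Section 4 of Hao and Li [HL3], where precisely this scheme is carried out — treat (\ref{equ 5.5}) as a linear SDE with jumps with bounded coefficients, first close the self-referential Gronwall estimate for the mean-field equation satisfied by $U^{t,\xi}(y)$, then feed that bound into the equation for $U^{t,x,P_\xi}(y)$; the same two-stage pattern is used explicitly in the paper's own proof of the analogous BSDE estimate (Proposition \ref{pro 6.1}). Your treatment of (iii) is also fine, keeping in mind that the Kunita-type inequality for the Poisson integral produces an extra term of order $h$ rather than $h^{p/2}$, which is harmless for the stated bound $C_p h$.
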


For the proof of Theorem \ref{th 5.2} and Proposition \ref{pro 5.2} we refer the reader to Section 4 in \cite{HL3}.

In the spirit of Lions and Cardaliaguet (refer to \cite{LIONS}, \cite{Ca1}), the derivative of $X_s^{t,x,P_\xi}\ $ with respect to the probability measure
can be defined as follows
$$
\partial_\mu X_s^{t,x,P_\xi}(y):= U_s^{t,x,P_\xi}(y),\
s\in[t,T],\
 t\in[0,T],\
x\in\mathbb{R}^d,\ \xi\in L^2(\Omega, {\cal F}_t, P;\mathbb{R}^d), y\in\mathbb{R}^d.
$$
With this definition we have $\ DX_s^{t,x,\xi}(\eta)
=\bar{E}\big[\partial_\mu X_s^{t,x,P_\xi}(\bar{\xi})\bar{\eta}\big],\ \text{for\ all}\ \eta\in L^2({\cal F}_t;\mathbb{R}^d).$

As an immediate result of Proposition \ref{pro 5.2}, we have
\begin{proposition}\label{pro 5.3}
For all $p\geq 2,$  there exists a constant $C_p>0$ only depending on the Lipschitz constants of $b$ and $\sigma$, such that, for
$t\in [0,T],\  x,\ x'\ ,y,\ y'\in {\mathbb R}^d,\ \xi,\ {\xi'}\in L^2(\Omega, {\cal F}_t, P;\mathbb{R}^d)$,

\smallskip
\indent $\mathrm{i)}$\ $E\big[\mathop{\rm sup}\limits_{s\in[t,T]}\big|\partial_\mu X_s^{t,x,P_\xi}(y)\big|^p\big|{\cal F}_t\big]\leq C_p$;\\
\indent $\mathrm{ii)}$ \ $E\big[\mathop{\rm sup}\limits_{s\in[t,T]}\big|\partial_\mu X_s^{t,x',P_{\xi'}}(y')-\partial_\mu
X_s^{t,x,P_\xi}(y)\big|^p\big|{\cal F}_t\big]
\leq C_p(|x-x'|^p+|y-y'|^p+W_2(P_\xi,P_{\xi'})^p)$;\\
\indent $\mathrm{iii)}$\ $E\big[\mathop{\rm sup}\limits_{s\in[t,t+h]}\big|\partial_\mu X_s^{t,x,P_\xi}(y)\big|^p\big|{\cal F}_t\big]\leq C_ph,\ 0\leq h\leq T-t$.\\
\end{proposition}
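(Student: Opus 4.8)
The plan is to obtain i)--iii) directly from Proposition~\ref{pro 5.2} together with the defining identity $\partial_\mu X_s^{t,x,P_\xi}(y)=U_s^{t,x,P_\xi}(y)$. Since the three bounds of Proposition~\ref{pro 5.2} are stated with the full expectation $E[\,\cdot\,]$, whereas i)--iii) are conditional on $\mathcal{F}_t$, the only substantive point is to justify replacing $E[\,\cdot\,\vert\,\mathcal{F}_t]$ by $E[\,\cdot\,]$ for the functionals occurring here. Everything else is a verbatim transcription of Proposition~\ref{pro 5.2} once the $\partial_\mu X$ notation is inserted.

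First I would show that, for fixed deterministic $x,y\in\mathbb{R}^d$, the process $U^{t,x,P_\xi}(y)$ is independent of $\mathcal{F}_t$. Inspecting the linear SDE (\ref{equ 5.5}): its driving noise $(B_r-B_t,\,N_\lambda\vert_{[t,T]})$ is independent of $\mathcal{F}_t$; the coefficient processes $X^{t,x,P_\xi}$, $X^{t,y,P_\xi}$ and $\partial_x X^{t,y,P_\xi}$ solve (\ref{equ 3.2}), resp.\ (\ref{equ 5.1}), with \emph{deterministic} initial data and hence are themselves independent of $\mathcal{F}_t$; and the mean-field terms, namely those carrying the outer expectation $E[\,\cdot\,]$ and evaluated at $z=X_r^{t,x,P_\xi}$, are deterministic functions of $z$ and therefore measurable functions of $X_r^{t,x,P_\xi}$, because the expectation integrates out the $\mathcal{F}_t$-measurable data $\xi$ together with the process $U^{t,\xi}(y)=U^{t,x,P_\xi}(y)\vert_{x=\xi}$. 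By uniqueness of the solution of (\ref{equ 5.5}), $U^{t,x,P_\xi}(y)$ is a measurable functional of these $\mathcal{F}_t$-independent objects, whence $U^{t,x,P_\xi}(y)$ is independent of $\mathcal{F}_t$. The same reasoning shows that the pair $\big(U^{t,x,P_\xi}(y),\,U^{t,x',P_{\xi'}}(y')\big)$ is jointly independent of $\mathcal{F}_t$, being built from the same noise and the same family of deterministic-initial-data forward objects.

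Consequently $E[\,\cdot\,\vert\,\mathcal{F}_t]=E[\,\cdot\,]$, $P$-a.s., for each of the three functionals $\sup_{s}\vert U_s^{t,x,P_\xi}(y)\vert^p$, $\sup_s\vert U_s^{t,x,P_\xi}(y)-U_s^{t,x',P_{\xi'}}(y')\vert^p$ and $\sup_{s\in[t,t+h]}\vert U_s^{t,x,P_\xi}(y)\vert^p$, so that i), ii) and iii) follow at once from the corresponding estimates of Proposition~\ref{pro 5.2} after substituting $\partial_\mu X^{t,x,P_\xi}(y)=U^{t,x,P_\xi}(y)$. I expect no genuine difficulty: the single nonroutine step is the verification of the $\mathcal{F}_t$-independence of $U^{t,x,P_\xi}(y)$, and even this is essentially the same argument already underlying the conditional bounds for the forward flow $X^{t,x,P_\xi}$ and its $x$-derivative in Proposition~\ref{pro 5.1}.
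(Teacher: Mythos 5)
Your proposal is correct and follows essentially the same route as the paper, which derives Proposition~\ref{pro 5.3} as an immediate consequence of Proposition~\ref{pro 5.2} via the identification $\partial_\mu X_s^{t,x,P_\xi}(y)=U_s^{t,x,P_\xi}(y)$. The only addition is that you explicitly verify the independence of $U^{t,x,P_\xi}(y)$ (and of the pair for two parameter sets) from $\mathcal{F}_t$ so as to replace $E[\,\cdot\,|\mathcal{F}_t]$ by $E[\,\cdot\,]$ — a point the paper leaves implicit, and your justification of it via the structure of the SDE (\ref{equ 5.5}) is sound.
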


\section{{\protect \large {First order derivatives of $(Y^{t,x,P_\xi},\ Z^{t,x,P_\xi},\ H^{t,x,P_\xi})$}}}

We recall from Proposition \ref{pro 4.3} that $(Y^{t,x,\xi},Z^{t,x,\xi},H^{t,x,\xi})$ depends on $\xi$
only through its law, which allows to define $(Y^{t,x,P_\xi},Z^{t,x,P_\xi},H^{t,x,P_\xi}):=(Y^{t,x,\xi},Z^{t,x,\xi},H^{t,x,\xi}).$
This section is devoted to study the first order derivatives of
$(Y^{t,x,P_\xi},\ Z^{t,x,P_\xi},\ H^{t,x,P_\xi})$
with respect to $x$ and $P_\xi$, respectively.

\smallskip

\noindent $\mathbf{Assumption\ (H6.1)}$
Let
$\Phi\in C_b^{1,1}(\mathbb{R}^d\times\mathcal{P}_2(\mathbb{R}^d))$
and
$f\in C_b^{1,1}\big(
\mathbb{R}^{d+1+d+1}\times \mathcal{P}_2(\mathbb{R}^{d+1+d+1})\big)$, i.e., $\Phi$ and $f$ satisfy:\\
 i) For all $x\in\mathbb{R}^d,\ y\in\mathbb{R},\ z\in \mathbb{R}^{d},$
$h\in \mathbb{R}$,
$\Phi(x,\cdot)\in C_b^{1,1}(\mathcal{P}_2(\mathbb{R}^d))$,
$f(x,y,z,h,\cdot)\in C_b^{1,1}(\mathcal{P}_2(\mathbb{R}^{d+1+d+1}))$;\\
 ii) For all $\nu\in \mathcal{P}_2(\mathbb{R}^d),$
 $\Phi(\cdot,\nu)\in C_b^1(\mathbb{R}^d)$, and for all
$\nu\in \mathbb{R}^{d+1+d+1}$,
$f(\cdot,\nu)\in C_b^1(\mathbb{R}^{d+1+d+1})$;\\
 iii) The derivatives $\partial_x\Phi: \mathbb{R}^d\times \mathcal{P}_2(\mathbb{R}^d)\rightarrow\mathbb{R}^d,$\ $
(\partial_x, \partial_y, \partial_z, \partial_h)f: \mathbb{R}^{d+1+d+1}\times \mathcal{P}_2(\mathbb{R}^{d+1+d+1})\rightarrow \mathbb{R}^{d+1+d+1},$
and
$
\partial_\mu \Phi:\mathbb{R}^d\times \mathcal{P}_2(\mathbb{R}^d)\times \mathbb{R}^d\rightarrow\mathbb{R}^d,\
\partial_\mu f:
{\mathbb R}^{d+1+d+1}\times {\cal P}_2({\mathbb R}^{d+1+d+1})\times{\mathbb R}^{d+1+d+1}\rightarrow {\mathbb R}^{d+1+d+1}
$
are bounded and Lipschitz continuous.

\begin{theorem}\label{th 6.1}
Under the Assumptions (H5.1) and (H6.1) the $L^2$-derivative of the solution of the equation (\ref{equ 4.2}) with respect to $x$,
$(\partial_xY^{t,x,P_\xi},\partial_xZ^{t,x,P_\xi},\partial_xH^{t,x,P_\xi})$
exists and is the unique solution of the following BSDE with jumps:
\begin{equation}\label{equ 6.1}
\begin{aligned}
&\partial_{x_i}Y_s^{t,x,P_\xi}=\sum_{\ell=1}^d\partial_{x_\ell}\Phi(X_T^{t,x,P_\xi},P_{X_T^{t,\xi}})\partial_{x_i}X_T^{t,x,P_\xi,\ell}
+\int_s^T\Big\{\sum_{\ell=1}^d\partial_{x_\ell}f\big(\Pi_r^{t,x,P_\xi},
P_{\Pi_r^{t,\xi}}\big)\partial_{x_i}X_r^{t,x,P_\xi,\ell}\\
& +\partial_y f\big(\Pi_r^{t,x,P_\xi},P_{\Pi_r^{t,\xi}}\big)\partial_{x_i}Y_r^{t,x,P_\xi}
+\sum_{\ell=1}^d\partial_{z_\ell}f\big(\Pi_r^{t,x,P_\xi},P_{\Pi_r^{t,\xi}}\big)\partial_{x_i}Z_r^{t,x,P_\xi,\ell}\\
&+\partial_h f\big(\Pi_r^{t,x,P_\xi},P_{\Pi_r^{t,\xi}}\big)\int_K\partial_{x_i}H_r^{t,x,P_\xi}(e)l(e)\lambda(de)\Big\}dr-\int_s^T\sum_{\ell=1}^d\partial_{x_i}Z_r^{t,x,P_\xi,\ell}dB_r^\ell\\
&-\int_s^T\int_K\partial_{x_i}H_r^{t,x,P_\xi}(e)N_\lambda(dr,de),\ \ s\in [t,T],\ 1\leq i\leq d,
\end{aligned}
\end{equation}
where $\displaystyle\Pi_r^{t,x,P_\xi}=\big(X_r^{t,x,P_\xi},Y_r^{t,x,P_\xi},Z_r^{t,x,P_\xi},
\int_K H_r^{t,x,P_\xi}(e)l(e)\lambda(de)\big),\ \Pi_r^{t,\xi}=\Pi_r^{t,x,P_\xi}|_{x=\xi}=\big(X_r^{t,\xi},Y_r^{t,\xi},Z_r^{t,\xi},$

\noindent$\displaystyle \int_KH_r^{t,\xi}(e)l(e)\lambda(de)\big)$.
\end{theorem}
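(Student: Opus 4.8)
The plan is to treat the differentiation in $x$ as that of a \emph{classical} parametrized BSDE with jumps, because the initial point $x$ enters equation (\ref{equ 4.2}) only through the pathwise argument $X^{t,x,P_\xi}$ (equivalently through the first four slots of $f$ and through $\Phi$), while the measure argument $P_{\Pi^{t,\xi}_s}$ is \emph{frozen} and does not depend on $x$. This is why no $\partial_\mu f$ term appears in (\ref{equ 6.1}). First I would check that the candidate linear BSDE (\ref{equ 6.1}) is well posed. Its driver is affine in the unknowns $(\partial_{x_i}Y,\partial_{x_i}Z,\partial_{x_i}H)$ with coefficients $\partial_x f,\partial_y f,\partial_z f,\partial_h f$ that are bounded by Assumption (H6.1); its terminal value $\sum_\ell \partial_{x_\ell}\Phi(X_T^{t,x,P_\xi},P_{X_T^{t,\xi}})\partial_{x_i}X_T^{t,x,P_\xi,\ell}$ lies in $L^2$ thanks to the boundedness of $\partial_x\Phi$ and Proposition \ref{pro 5.1}(i); and the inhomogeneous forcing $\sum_\ell\partial_{x_\ell}f(\cdots)\partial_{x_i}X_r^{t,x,P_\xi,\ell}$ is in $\mathcal{H}^2_\mathbb{F}$ for the same reasons. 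The only nonstandard feature, the nonlocal coupling $\int_K\partial_{x_i}H_r^{t,x,P_\xi}(e)l(e)\lambda(de)$, is harmless: since $|l(e)|\le C(1\wedge|e|)$ one has $\int_K l(e)^2\lambda(de)\le C^2\int_K(1\wedge|e|^2)\lambda(de)<\infty$, so by Cauchy--Schwarz this term is dominated by the $\mathcal{K}^2_\lambda$-norm of $\partial_{x_i}H$. Hence the existence/uniqueness theorem for BSDEs with jumps (Theorem 10.1 in the Appendix) yields a unique solution $(\partial_{x_i}Y^{t,x,P_\xi},\partial_{x_i}Z^{t,x,P_\xi},\partial_{x_i}H^{t,x,P_\xi})\in\mathcal{S}^2_\mathbb{F}\times\mathcal{H}^2_\mathbb{F}\times\mathcal{K}^2_\lambda$.

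Next I would identify this solution as the $L^2$-derivative via difference quotients. Fix $i$, let $\mathbf{e}_i$ be the $i$-th standard basis vector, and for $h\neq0$ set
\begin{equation}\nonumber
\nabla^i_h\Theta:=\tfrac{1}{h}\big(\Theta^{t,x+h\mathbf{e}_i,P_\xi}-\Theta^{t,x,P_\xi}\big),\qquad \Theta\in\{X,Y,Z,H\}.
\end{equation}
Subtracting the two copies of (\ref{equ 4.2}) written at $x+h\mathbf{e}_i$ and at $x$, and applying the fundamental theorem of calculus to the differences $f(\Pi^{t,x+h\mathbf{e}_i,P_\xi}_r,P_{\Pi^{t,\xi}_r})-f(\Pi^{t,x,P_\xi}_r,P_{\Pi^{t,\xi}_r})$ and $\Phi(X_T^{t,x+h\mathbf{e}_i,P_\xi},P_{X_T^{t,\xi}})-\Phi(X_T^{t,x,P_\xi},P_{X_T^{t,\xi}})$, I express each such difference as the integral over the segment joining the two $\Pi$'s of the gradient of $f$ (resp.\ of $\Phi$) in its first four arguments, dotted with the corresponding difference quotient. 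Since the frozen measure $P_{\Pi^{t,\xi}_r}$ is identical in both equations it contributes nothing. This exhibits $(\nabla^i_hY,\nabla^i_hZ,\nabla^i_hH)$ as the solution of a linear BSDE with jumps whose coefficients are uniformly bounded (by the $C^1_b$-bounds in (H6.1)) and which, as $h\to0$, converge to those of (\ref{equ 6.1}); here I use Theorem \ref{th 5.1} and Proposition \ref{pro 5.1} to guarantee $\nabla^i_hX\to\partial_{x_i}X^{t,x,P_\xi}$ in $\mathcal{S}^2_\mathbb{F}$.

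To conclude I would apply the stability estimate for BSDEs with jumps (Lemma 10.1 in the Appendix) to the difference of the $\nabla^i_h$-equation and the $\partial_{x_i}$-equation (\ref{equ 6.1}). The resulting driver difference splits into a part that is affine in $(\nabla^i_hY-\partial_{x_i}Y,\nabla^i_hZ-\partial_{x_i}Z,\nabla^i_hH-\partial_{x_i}H)$ with bounded coefficients, absorbed on the left after introducing an exponential weight $e^{\beta s}$ with $\beta$ large, plus a source term collecting (a) the differences between the linearizing gradients of $f,\Phi$ evaluated along the segment and their values at $\Pi^{t,x,P_\xi}$, multiplied by $\partial_{x_i}X$, and (b) the terms involving $\nabla^i_hX-\partial_{x_i}X$. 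By the Lipschitz continuity of $\partial_x f,\partial_y f,\partial_z f,\partial_h f,\partial_x\Phi$ (Assumption (H6.1)), together with $\nabla^i_hX\to\partial_{x_i}X$ in $\mathcal{S}^2_\mathbb{F}$ and the uniform bounds of Proposition \ref{pro 5.1}, this source term tends to $0$ in $\mathcal{H}^2_\mathbb{F}$ as $h\to0$; the nonlocal $H$-contribution is again kept finite through $\int_K l(e)^2\lambda(de)<\infty$. A Gronwall argument then gives convergence of $(\nabla^i_hY,\nabla^i_hZ,\nabla^i_hH)$ to $(\partial_{x_i}Y,\partial_{x_i}Z,\partial_{x_i}H)$ in $\mathcal{S}^2_\mathbb{F}\times\mathcal{H}^2_\mathbb{F}\times\mathcal{K}^2_\lambda$, which is exactly $L^2$-differentiability.

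The main obstacle I anticipate is organizing the a priori estimate so that the $Z$- and $H$-difference-quotient terms, which carry only time-integrability and no pathwise supremum control, are absorbed into the left-hand side by the exponential weighting rather than forced through Gronwall in the $Y$-variable alone; the dependence of $f$ on the nonlocal functional $\int_K H_r(e)l(e)\lambda(de)$ makes the $H$-coupling the delicate point and is handled by the Cauchy--Schwarz bound above. Crucially, unlike the second-order analysis of Section 8, no quadratic terms in $\partial_x Z$ or $\partial_x H$ enter here, so the source term converges cleanly to zero and the whole argument stays within the classical linear-BSDE framework.
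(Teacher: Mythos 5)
Your proposal is correct and follows exactly the route the paper itself takes: it observes that the frozen measure argument $P_{\Pi^{t,\xi}_r}$ does not depend on $x$, so the differentiation reduces to the classical parametrized-BSDE argument of Pardoux--Peng, which the paper simply cites as ``standard'' without writing it out. Your elaboration of the difference-quotient linearization, the well-posedness of the linear BSDE (\ref{equ 6.1}), and the stability estimate via Lemma 10.1 is precisely the argument being referred to, and the Cauchy--Schwarz handling of the nonlocal term $\int_K\partial_{x_i}H_r(e)l(e)\lambda(de)$ is the right way to keep it within that framework.
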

As the $L^2$-derivative of the driving coefficient $f(\Pi_s^{t,x,P_\xi},P_{\Pi_s^{t,\xi}})$ concerns only
$\Pi_s^{t,x,P_\xi}$ but not the law $P_{\Pi_s^{t,\xi}}$, the arguments of the proof are standard; the reader
is referred, for instance, to \cite{PP}.

From Lemma 10.1 the standard estimates for classical BSDEs with jumps, combining with Lemma \ref{le 3.1}, Proposition \ref{pro 4.3}, Corollary \ref{corli 4.1} and Proposition \ref{pro 5.1} we have that, for every $p\geq2$, there exists a constant $C_p>0$ only depending on the Lipschitz constants of the coefficients such that,  for all $t\in[0,T],\ x,\ x'\in\mathbb{R}^d, P_\xi, P_{\xi'}\in\mathcal{P}_2(\mathbb{R}^d)$,
\begin{equation}\label{equ 6.11}
\begin{array}{lll}
&\!\!\!{\rm i)}\ \displaystyle E[\sup\limits_{s\in[t,T]}|\partial_x Y_s^{t,x,P_\xi}|^p+(\int_t^T|\partial_x Z_s^{t,x,P_\xi}|^2ds)^{p/2}
+(\int_t^T\int_K|\partial_x H_s^{t,x,P_\xi}|^2\lambda(de)ds)^{p/2}]\leqslant C_p;\\
&\!\!\!{\rm ii)}\ \displaystyle E[\sup\limits_{s\in[t,T]}|\partial_x Y_s^{t,x,P_\xi}-\partial_x Y_s^{t,x',P_{\xi'}}|^p
+(\int_t^T|\partial_x Z_s^{t,x,P_\xi}-\partial_x Z_s^{t,x',P_{\xi'}}|^2ds)^{p/2}\\
&\!\!\!\quad\displaystyle+(\int_t^T\int_K|\partial_x H_s^{t,x,P_\xi}(e)-\partial_x H_s^{t,x',P_{\xi'}}(e)|^2\lambda(de)ds)^{p/2}]
\leqslant C_p(|x-x'|^p+W_2(P_\xi,P_{\xi'})^p).\\
\end{array}
\end{equation}
\begin{theorem}\label{th 6.2} Assume the Assumptions (H5.1) and (H6.1) hold. Then, for all $0\le t\le s\le T,\ x\in{\mathbb R}^d,$\
the lifted processes $L^2({\cal F}_t,{\mathbb R}^d)\ni\xi\mapsto Y_s^{t,x,\xi}:= Y_s^{t,x,P_\xi}\in L^2({\cal F}_s;{\mathbb R});\ L^2({\cal F}_t,{\mathbb R}^d)\ni\xi\mapsto (Z_s^{t,x,\xi}):=(Z_s^{t,x,P_\xi})_{s\in [t,T]}\in \mathcal{H}^2_{\mathbb{F}}(t, T;{\mathbb R}^d); $
and $L^2({\cal F}_t,{\mathbb R}^d)\ni\xi\mapsto  (H_s^{t,x,\xi}):=(H_s^{t,x,P_\xi})_{s\in [t,T]}\in \mathcal{K}^2_{\lambda}(t, T;{\mathbb R}),$
are Fr\'{e}chet differentiable, with Fr\'{e}chet derivatives
\begin{equation}\begin{array}{lll}
&\!\!\!\! DY_s^{t,x,\xi}(\eta)=\overline{E}\big[O_s^{t,x,P_\xi}(\overline {\xi})\overline{\eta}\big],\ s\in [t, T],\ \mbox{P-a.s.},\
DZ_s^{t,x,\xi}(\eta)=\overline{E}\big[Q_s^{t,x,P_\xi}(\overline {\xi})\overline{\eta}\big],\ \mbox{dsdP-a.e.}, \\
&\!\!\!\! DH_s^{t,x,\xi}(\eta)=\overline{E}\big[R_s^{t,x,P_\xi}(\overline {\xi})\overline{\eta}\big],\ \mbox{dsd}\lambda\mbox{dP-a.e.},\\
\end{array}
\end{equation}
\noindent for all $\eta=(\eta_1,... ,\eta_d)\in L^2({\cal F}_t;{\mathbb R}^d),$
 where, for all $ y\in {\mathbb R}^d,$ $\big(O^{t,x,P_\xi}(y),Q^{t,x,P_\xi}(y),R^{t,x,P_\xi}(y) \big)
=\bigg(\big((O_{s,j}^{t,x,P_\xi}(y))_{s\in [t,T]}\big)_{1\le j\le d},
\big((Q_{s,i,j}^{t,x,P_\xi}(y))_{s\in [t,T]}\big)_{1\le i,j\le d},
\big((R_{s,j}^{t,x,P_\xi}(y))_{s\in [t,T]}\big)_{1\le j\le d}
\bigg)
\in S^2_{\mathbb{F}}(t,T;{\mathbb R}^d)\times \\
{\cal H}_{\mathbb{F}}^2(t,T;{\mathbb R}^{d\times d})
\times \mathcal{K}_{\lambda}^2(t,T;{\mathbb R}^{d})
$ is the unique solution of the following BSDE:
\begin{equation}\label{equ 6.13}\begin{array}{lll}
&\displaystyle\!\!\!\!\!\!   O_{s,j}^{t,x,P_\xi}(y)=\sum_{k=1}^d\partial_{x_k}\Phi (X_T^{t,x,P_\xi},P_{X_T^{t,\xi}})\partial_\mu X^{t,x,P_\xi,k}_{T,j}(y)\\
&\displaystyle\!\!\!\!\!\!   +\sum_{k=1}^dE\big[(\partial_\mu\Phi)_k(z,P_{X_T^{t,\xi}},X^{t,y,P_\xi}_r)\partial_{x_j}X_T^{t,y,P_\xi,k}+(\partial _\mu\Phi)_k(z,P_{X_T^{t,\xi}},X_T^{t,\xi})\partial_\mu X_{T,j}^{t,\xi,k}(y)\big]\big|
_{z=X_T^{t,x,P_\xi}}\\
&\displaystyle\!\!\!\!\!\!   +\int_s^T\Big[\sum_{k=1}^d\partial_{x_k}f(\Pi_r^{t,x,P_\xi},P_{\Pi_r^{t,\xi}})\partial_\mu X_{r,j}^{t,x,P_\xi,k}(y)
+\partial_{y}f(\Pi_r^{t,x,P_\xi},P_{\Pi_r^{t,\xi}}) O_{r,j}^{t,x,P_\xi}(y)\\
&\displaystyle\!\!\!\!\!\!   +\sum_{k=1}^d\partial_{z_k}f(\Pi_r^{t,x,P_\xi},P_{\Pi_r^{t,\xi}}) Q_{r,k,j}^{t,x,P_\xi}(y)
+\partial_{h}f(\Pi_r^{t,x,P_\xi},P_{\Pi_r^{t,\xi}}) \int_KR_{r,j}^{t,x,P_\xi}(y,e)l(e)\lambda(de)\Big]dr\\
&\displaystyle\!\!\!\!\!\!   +\int_s^T\sum_{k=1}^dE\Big[(\partial_\mu f)_k(z,P_{\Pi_r^{t,\xi}},
\Pi^{t,y,P_\xi}_r)\partial_{x_j}X^{t,y,P_\xi,k}_{r}
+(\partial_\mu f)_k(z,P_{\Pi_r^{t,\xi}},\Pi^{t,\xi}_r)
\partial_\mu X_{r,j}^{t,\xi,k}(y)\Big]\Big|_{z=\Pi_r^{t,x,P_\xi}}dr\\
&\displaystyle\!\!\!\!\!\!   +\int_s^T E\Big[(\partial_\mu f)_{d+1}(z,P_{\Pi_r^{t,\xi}},\Pi^{t,y,P_\xi}_r)\partial_{x_j}Y_{r}^{t,y,P_\xi}
+(\partial_\mu f)_{d+1}(z,P_{\Pi_r^{t,\xi}},\Pi^{t,\xi}_r)O_{r,j}^{t,\xi}(y)\Big]\Big|_{z=\Pi_r^{t,x,P_\xi}}dr\\
&\displaystyle\!\!\!\!\!\!   +\sum_{k=1}^d\!\!\int_s^T\!\! E\Big[(\partial_\mu f)_{d+1+k}(z,P_{\Pi_r^{t,\xi}},\Pi^{t,y,P_\xi}_r)\partial_{x_j}Z_{r}^{t,y,P_\xi,k}\! +\!(\partial_\mu f)_{d+1+k}(z,P_{\Pi_r^{t,\xi}},\Pi^{t,\xi}_r)Q_{r,k,j}^{t,\xi}(y)\Big]\Big|_{z=\Pi_r^{t,x,P_\xi}}dr\\
&\displaystyle\!\!\!\!\!\!   +\int_s^T\!\! E\big[(\partial_\mu f)_{2d+2}(z,P_{\Pi_r^{t,\xi}},\Pi_r^{t,y,P_\xi})
\int_K\!\!\partial_{x_j}H_{r}^{t,y,P_\xi}(e)l(e)\lambda(de)\\
&\displaystyle\quad \ \  \ \ \ \ \ \  +(\partial_\mu f)_{2d+2}(z,P_{\Pi_r^{t,\xi}},\Pi_r^{t,\xi})\cdot\int_K R_{r,j}^{t,\xi}(y,e)l(e)\lambda(de)\big]\big|_{z=\Pi_r^{t,x,P_\xi}}dr\\
&\displaystyle\!\!\!\!\!\!   -\sum_{k=1}^d\int_s^T Q_{r,k,j}^{t,x,P_\xi}(y)dB_r^k-\int_s^T\int_KR_{r,j}^{t,x,P_\xi}(y,e)N_\lambda(dr,de),
\   s\in [t,T],\,\,\,\,1\le j\le d,
\end{array}
\end{equation}
\noindent where
$(O^{t,\xi},$ $Q^{t,\xi},R^{t,\xi})=(O^{t,\xi,P_\xi},Q^{t,\xi,P_\xi},R^{t,\xi,P_\xi})$ is the unique solution of the above BSDE (\ref{equ 6.13}) with $x$
replaced by $\xi$.
\end{theorem}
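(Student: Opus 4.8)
The plan is to follow the scheme already used for the forward flow (Theorem \ref{th 5.2}) and for the $x$-derivative (Theorem \ref{th 6.1}): I postulate the candidate derivative processes as the solution of the linear BSDE (\ref{equ 6.13}) and then verify, by a difference-quotient analysis, that this solution genuinely represents the Fréchet derivative. The only new ingredient compared with Theorem \ref{th 6.1} is the differentiation of the law terms $P_{\Pi_s^{t,\xi}}$ and $P_{X_T^{t,\xi}}$ entering $f$ and $\Phi$. Before anything else I would establish that (\ref{equ 6.13}) is well posed, which requires disentangling its self-referential structure: the diagonal processes $(O^{t,\xi},Q^{t,\xi},R^{t,\xi})=(O^{t,x,P_\xi},Q^{t,x,P_\xi},R^{t,x,P_\xi})|_{x=\xi}$ occur, under the frozen-$z$ expectation, inside the driver for $(O^{t,x,P_\xi},Q^{t,x,P_\xi},R^{t,x,P_\xi})$. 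I therefore first set $x=\xi$ in (\ref{equ 6.13}); since the unknown then enters both directly (through $\partial_y f,\partial_z f,\partial_h f$) and through the copy-averaged terms $E[(\partial_\mu f)(\cdots)O^{t,\xi}(y)]|_{z=\Pi^{t,\xi}}$, this becomes a closed \emph{linear mean-field} BSDE with jumps for $(O^{t,\xi},Q^{t,\xi},R^{t,\xi})$. Its coefficients are bounded by Assumption (H6.1) and its inhomogeneities ($\partial_\mu X$, $\partial_x X^{t,y,P_\xi}$, $\partial_x Y^{t,y,P_\xi}$, etc.) are square integrable by Propositions \ref{pro 5.2}, \ref{pro 5.3} and (\ref{equ 6.11}); existence, uniqueness and a priori bounds then follow from the mean-field BSDE theory of the Appendix (Section 10.2). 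Feeding $(O^{t,\xi},Q^{t,\xi},R^{t,\xi})$ back as a known input, (\ref{equ 6.13}) for general $x$ is a classical linear BSDE with jumps, solvable by Lemma 10.1, and one obtains for $(O,Q,R)$ the boundedness, Lipschitz and small-time estimates analogous to Proposition \ref{pro 5.2}.

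For the differentiability itself, fix $(t,x)$ and perturb $\xi$ to $\xi+\eta$. Setting
\[
\mathcal O_s:=Y_s^{t,x,\xi+\eta}-Y_s^{t,x,\xi}-\overline E\big[O_s^{t,x,P_\xi}(\overline\xi)\overline\eta\big],
\]
together with the analogous error processes $\mathcal Q_s,\mathcal R_s$ for $Z,H$, the goal is to show $E[\sup_s|\mathcal O_s|^2+\int_t^T|\mathcal Q_s|^2ds+\int_t^T\int_K|\mathcal R_s(e)|^2\lambda(de)ds]=o(|\eta|_{L^2}^2)$, which yields Fréchet differentiability with the stated derivative. The BSDE for $(\mathcal O,\mathcal Q,\mathcal R)$ is obtained by subtracting the two copies of (\ref{equ 4.2}) and the $\overline\eta$-tested equation (\ref{equ 6.13}). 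The increment of the driver is split as
\[
\big[f(\Pi_s^{t,x,\xi+\eta},P_{\Pi_s^{t,\xi+\eta}})-f(\Pi_s^{t,x,\xi},P_{\Pi_s^{t,\xi+\eta}})\big]+\big[f(\Pi_s^{t,x,\xi},P_{\Pi_s^{t,\xi+\eta}})-f(\Pi_s^{t,x,\xi},P_{\Pi_s^{t,\xi}})\big],
\]
a \emph{state} part and a \emph{law} part, and similarly for $\Phi$. The state part is linearised by the fundamental theorem of calculus against $(\partial_x,\partial_y,\partial_z,\partial_h)f$ evaluated at intermediate points; since the increments $\Delta X,\Delta Y,\Delta Z,\Delta H$ are $O(|\eta|_{L^2})$ by Proposition \ref{pro 4.3} and Corollary \ref{corli 4.1}, and the derivatives of $f$ are Lipschitz, the discrepancy between the intermediate points and $\Pi^{t,x,P_\xi}$ contributes only $o(|\eta|_{L^2})$ remainders.

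The law part is the main obstacle. Here I would invoke the chain rule for the measure derivative applied to $\xi\mapsto P_{\Pi_s^{t,\xi}}$: its directional derivative equals $\overline E[\partial_\mu f(\cdots,P_{\Pi_s^{t,\xi}},\overline{\Pi_s^{t,\xi}})\cdot D\overline{\Pi_s^{t,\xi}}(\overline\eta)]$, where $D\Pi_s^{t,\xi}(\eta)=\big(DX_s^{t,\xi}(\eta),DY_s^{t,\xi}(\eta),DZ_s^{t,\xi}(\eta),\int_K DH_s^{t,\xi}(\eta)(e)l(e)\lambda(de)\big)$ is itself given, by Theorem \ref{th 5.2} and by the diagonal case of the present theorem, through $(\partial_\mu X^{t,\xi},O^{t,\xi},Q^{t,\xi},R^{t,\xi})$ averaged over a \emph{further} independent copy. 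This nesting of two independent copies is exactly what the frozen-$z$ expectations in (\ref{equ 6.13}) encode. The delicate points are (i) to justify that the law increment is approximated by this doubly-averaged linear functional with an $o(|\eta|_{L^2})$ error, uniformly enough in $s$ to survive the time integration, using the boundedness and Lipschitz continuity of $\partial_\mu f$ and $\partial_\mu\Phi$ together with (\ref{ast}) and the $W_2$-Lipschitz control of $P_{\Pi^{t,\xi}}$; and (ii) the fact that the diagonal derivatives $(O^{t,\xi},Q^{t,\xi},R^{t,\xi})$ are produced simultaneously, so the diagonal case must be resolved first (by the mean-field well-posedness above) and only then used to drive the general-$x$ estimate.

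Once both parts are linearised, the driver of the error BSDE for $(\mathcal O,\mathcal Q,\mathcal R)$ is linear in $(\mathcal O,\mathcal Q,\mathcal R)$ with bounded coefficients, plus an inhomogeneity of order $o(|\eta|_{L^2})$ in $\mathcal H^2_{\mathbb F}$. A standard a priori estimate for BSDEs with jumps (Lemma 10.1), i.e. a Gronwall argument after inserting the exponential weight $e^{\beta(s-t)}$ exactly as in the proof of Proposition \ref{pro 4.3}, then gives $\|(\mathcal O,\mathcal Q,\mathcal R)\|=o(|\eta|_{L^2})$. This establishes the Fréchet differentiability and the representations $DY_s^{t,x,\xi}(\eta)=\overline E[O_s^{t,x,P_\xi}(\overline\xi)\overline\eta]$, and likewise for $Z$ and $H$; finally, the diagonal identities $(O^{t,\xi},Q^{t,\xi},R^{t,\xi})=(O^{t,x,P_\xi},Q^{t,x,P_\xi},R^{t,x,P_\xi})|_{x=\xi}$ follow from the uniqueness of the solution of the linear BSDE (\ref{equ 6.13}).
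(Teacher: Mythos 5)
Your proposal is correct and its technical core coincides with the paper's: the candidate derivative is postulated as the solution of the linear BSDE (\ref{equ 6.13}), the diagonal case $x=\xi$ is recognized as a closed linear mean-field BSDE that must be solved first (Theorem \ref{thli 4.12.1}) and then fed into the classical linear BSDE for general $x$, the driver increment is split into a state part and a law part, and the error BSDE is estimated diagonal-first and then for general $x$ using the a priori estimates of Section 10.2. The one genuine organizational difference is the final step: the paper introduces the directional-derivative processes $(\mathcal{O}^{t,x,\xi}(\eta),\mathcal{Q}^{t,x,\xi}(\eta),\mathcal{R}^{t,x,\xi}(\eta))$ of (\ref{equ 6.15}), proves their representation as $\bar E[O^{t,x,P_\xi}(\bar\xi)\bar\eta]$ (Lemma \ref{le 6.1}), establishes G\^ateaux differentiability by an $h\to 0$ difference quotient (Lemma \ref{le 6.2}), and then upgrades to Fr\'echet differentiability via continuity of the G\^ateaux derivative in $\xi$ (Lemma \ref{le 6.3}, resting on Proposition \ref{pro 6.1}); you instead estimate the Fr\'echet remainder for a general perturbation $\eta$ directly. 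Your shortcut is legitimate and slightly leaner: because $\partial f$, $\partial_\mu f$, $\partial_x\Phi$, $\partial_\mu\Phi$ are Lipschitz, all linearization errors are quadratic in $|\eta|_{L^2}$ (indeed the paper's own bound (\ref{994}) is $O\big(h^2(E[|\eta|^2])^2\big)$, which at $h=1$ already yields a squared remainder of order $|\eta|_{L^2}^4=o(|\eta|_{L^2}^2)$), so the separate operator-norm continuity lemma becomes unnecessary for the differentiability claim itself — though the Lipschitz estimate of Proposition \ref{pro 6.1} is still needed later for the regularity of $V$. Just be aware that resolving ``the diagonal case first'' means more than well-posedness of the diagonal linear BSDE: you must also run the remainder estimate for $x=\xi$ (where the error appears under the hat-expectation, making the error equation itself mean-field) before the general-$x$ remainder can be closed, exactly as in the passage from (\ref{BSDE(2)}) to (\ref{BSDE(1)}) in the paper.
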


In order to prove Theorem \ref{th 6.2} we need the following three lemmas.
For simplicity of redaction but w.l.o.g., let us restrict to the dimension $d=1$ and to $f(x,y,z,h,\gamma)=f(z,h,\gamma(\mathbb{R}\times\mathbb{R}\times\cdot))$,
$(x,y,z,h)\in\mathbb{R}\times\mathbb{R}\times\mathbb{R}\times \mathbb{R}$, $\gamma\in\mathcal{P}_2(\mathbb{R}\times\mathbb{R}\times\mathbb{R}\times \mathbb{R})$ and $\Phi(x,\gamma)=\Phi(x)$, $(x,\gamma)\in\mathbb{R}\times\mathcal{P}_2(\mathbb{R})$.
We first consider the following BSDE with jumps, which is obtained by formal differentiation of the lifted solution $(Y^{t,x,\xi+h\eta},Z^{t,x,\xi+h\eta},H^{t,x,\xi+h\eta})$ of BSDE (\ref{equ 4.2}) (with $\xi+h\eta$ instead of $\xi$, $\xi,\eta\in L^2(\mathcal{F}_t)$) with respect to $h$ at $h=0$. This formal $L^2$-differentiation (which will be made rigorous later) leads to a triple of processes $(\mathcal{O}^{t,x,\xi}(\eta),\mathcal{Q}^{t,x,\xi}(\eta),\mathcal{R}^{t,x,\xi}(\eta))$ solving the BSDE:
\begin{equation*}
\begin{array}{lll}
&\displaystyle\mathcal{O}_s^{t,x,\xi}(\eta)=\partial_x\Phi(X_T^{t,x,\xi})\mathcal{U}_T^{t,x,\xi}(\eta)\\
&\displaystyle +\int_s^T\big[
\partial_z f(\Pi_r^{t,x,\xi},P_{\Pi_r^{t,\xi}})\mathcal{Q}_r^{t,x,\xi}(\eta)+\partial_h f(\Pi_r^{t,x,\xi},P_{\Pi_r^{t,\xi}})\int_K\mathcal{R}_r^{t,x,\xi}(\eta,e)l(e)\lambda(de)\big]dr\\
&\displaystyle+\int_s^T\widehat{E}\big[(\partial_\mu f)_1(\Pi_r^{t,x,\xi},P_{\Pi_r^{t,\xi}},\widehat{\Pi}_r^{t,\widehat{\xi},P_\xi})\partial_x\widehat{Z}_r^{t,
\widehat{\xi},P_\xi}\widehat{\eta}+(\partial_\mu f)_1(\Pi_r^{t,x,\xi},P_{\Pi_r^{t,\xi}},\widehat{\Pi}_r^{t,\widehat{\xi}})\widehat{\mathcal{Q}}_r^{t,
\widehat{\xi}}(\widehat{\eta})\big]dr\\
&\displaystyle+\int_s^T\widehat{E}\big[(\partial_\mu f)_2(\Pi_r^{t,x,\xi},P_{\Pi_r^{t,\xi}}, \widehat{\Pi}_r^{t,\widehat{\xi},P_\xi})
\int_K\partial_x\widehat{H}_r^{t,\widehat{\xi},P_\xi}(e)\widehat{\eta}l(e)\lambda(de)\\
\end{array}
\end{equation*}
\begin{equation}\label{equ 6.15}
\begin{array}{lll}
&\displaystyle\quad\ \ \ \ \ \  \ \ \ \ +(\partial_\mu f)_2(\Pi_r^{t,x,\xi},P_{\Pi_r^{t,\xi}},\widehat{\Pi}_r^{t,\widehat{\xi}})
\int_K\widehat{\mathcal{\mathcal{R}}}_r^{t,\widehat{\xi}}(\widehat{\eta},e)l(e)\lambda(de)\big]dr\quad\quad\quad\quad\quad\\
&\displaystyle-\int_s^T\mathcal{Q}_r^{t,x,\xi}(\eta)dB_r-\int_s^T\int_K\mathcal{R}_r^{t,x,\xi}(\eta,e)N_\lambda(dr, de),\ s\in[t,T],\quad\quad\quad\quad\quad\\
\end{array}
\end{equation}
where $(\mathcal{O}^{t,\xi}(\eta),\mathcal{Q}^{t,\xi}(\eta),\mathcal{R}^{t,\xi}(\eta))=
(\mathcal{O}^{t,x,\xi}(\eta),\mathcal{Q}^{t,x,\xi}(\eta),\mathcal{R}^{t,x,\xi}(\eta))|_{x=\xi}$ is the solution of (\ref{equ 6.15}) for
$x$ replaced by $\xi$, and $\mathcal{U}_r^{t,x,\xi}(\eta):= DX_s^{t,x,\xi}(\eta)=\widetilde{E}[\partial_\mu X_s^{t,x,P_\xi}(\widetilde{\xi})\widetilde{\eta}]$ and $\mathcal{U}_r^{t,\xi}(\eta)=\mathcal{U}_r^{t,x,\xi}(\eta)|_{x=\xi}$, $r\in[t,T]$.
Of course, in the above BSDE we still use the notations
$$\Pi_s^{t,x,\xi}=(X_s^{t,x,\xi},Y_s^{t,x,\xi},Z_s^{t,x,\xi},\int_KH_s^{t,x,\xi}(e)l(e)\lambda(de)),\
\Pi_s^{t,\xi}=(X_s^{t,\xi},Y_s^{t,\xi},Z_s^{t,\xi},\int_KH_s^{t,\xi}(e)l(e)\lambda(de)),$$
and $(\widehat{\Omega},\widehat{\mathcal{F}},\widehat{P})$ is a  probability space carrying with
$(\widehat{\xi},\widehat{\eta},\widehat{B},\widehat{N}_\lambda)$\ an (independent)
copy of $(\xi,\eta,B,N_\lambda)$ (defined on $(\Omega,\mathcal{F},P)$);
$(\widehat{X}^{t,x,P_\xi},\widehat{Y}^{t,x,P_\xi},\widehat{Z}^{t,x,P_\xi},\widehat{H}^{t,x,P_\xi})$
(resp.,\ $(\widehat{X}^{t,\widehat{\xi}},\widehat{Y}^{t,\widehat{\xi}},\widehat{Z}^{t,\widehat{\xi}},\widehat{H}^{t,\widehat{\xi}})$)
is the solution of the same equation as that for $(X^{t,x,P_\xi},Y^{t,x,P_\xi},Z^{t,x,P_\xi},H^{t,x,P_\xi})$
 (resp.,\ $(X^{t,\xi},Y^{t,\xi},Z^{t,\xi},H^{t,\xi})$),
but with the data $(\widehat{\xi},\widehat{B},\widehat{N}_\lambda)$ instead of $(\xi,B,N_\lambda)$.

From Theorem 10.1 the equation
 (\ref{equ 6.15}) with $x$ replaced by $\xi$ has a unique solution $(\mathcal{O}^{t,\xi}(\eta),$ $\mathcal{Q}^{t,\xi}(\eta),
\mathcal{R}^{t,\xi}(\eta))\in S^2_{\mathbb{F}}(t,T)$ $\times \mathcal{H}^2_{\mathbb{F}}(t,T)\times
\mathcal{K}_\lambda^2(t,T).$  Moreover, from Theorem 10.3 we have that, for all $p\geq2,$ there exists a constant $C_p>0$ depending only on $p$ and the bounds of the coefficients, such that
\begin{equation}\label{1000}
 E[\sup\limits_{s\in[t,T]}|\mathcal{O}_s^{t,\xi}(\eta)|^p+(\int_t^T|\mathcal{Q}_s^{t,\xi}(\eta)|^2ds)^{p/2}
+(\int_t^T\int_K|\mathcal{R}_s^{t,\xi}(\eta,e)|^2\lambda(de)ds)^{p/2}]\leqslant C_p.
\end{equation}
Once having $(\mathcal{O}^{t,\xi}(\eta),\mathcal{Q}^{t,\xi}(\eta),
\mathcal{R}^{t,\xi}(\eta))$, from Theorem 10.1 and Theorem 10.3 again that (\ref{equ 6.15})
possesses a unique solution $(\mathcal{O}^{t,x,\xi}(\eta),\mathcal{Q}^{t,x,\xi}(\eta),
\mathcal{R}^{t,x,\xi}(\eta))\in S^2_{\mathbb{F}}(t,T)$ $\times \mathcal{H}^2_{\mathbb{F}}(t,T)\times
\mathcal{K}_\lambda^2(t,T)$, and that for all $p\geq2$, there is a constant $C_p>0$ only depending on
the bounds of the coefficients, such that
\begin{equation}\label{1001}
 E[\sup\limits_{s\in[t,T]}|\mathcal{O}_s^{t,x,\xi}(\eta)|^p+(\int_t^T|\mathcal{Q}_r^{t,x,\xi}(\eta)|^2dr)^{p/2}
+(\int_t^T\int_K|\mathcal{R}_r^{t,x,\xi}(\eta,e)|^2\lambda(de)dr)^{p/2}]\leqslant C_p.
\end{equation}
\begin{lemma}\label{le 6.1}
Suppose (H5.1) and  (H6.1) hold true. Then, for all $(t,x)\in [0, T]\times\mathbb{R},\ \xi\in L^2({\cal F}_t)$, there exist three stochastic processes $O^{t,x,P_\xi}(y)\in S^2_{\mathbb{F}}(t,T),\ Q^{t,x,P_\xi}(y)\in\mathcal{H}^2_{\mathbb{F}}(t,T),\
R^{t,x,P_\xi}(y)\in\mathcal{K}_\lambda^2(t,T)$, depending measurably on $y\in \mathbb{R}$,
such that
$$
\begin{aligned}
\mathcal{O}^{t,x,\xi}_s(\eta)&=\bar{E}[O_s^{t,x,P_\xi}(\bar{\xi})\cdot\bar{\eta}],\ \mbox{P-a.s.},\ s\in[t,T],\ \  \mathcal{Q}^{t,x,\xi}_s(\eta)=\bar{E}[Q_s^{t,x,P_\xi}(\bar{\xi})\cdot\bar{\eta}],\ \mbox{dsdP-a.e.}, \\
\mathcal{R}^{t,x,\xi}_s(\eta)&=\bar{E}[R_s^{t,x,P_\xi}(\bar{\xi})\cdot\bar{\eta}],\ \mbox{dsd}\lambda\mbox{dP-a.e.}\
\end{aligned}
$$
In particular, for all $x\in\mathbb{R}$, $0\leq t\leq s\leq T$, $\xi\in L^2(\mathcal{F}_t)$, the mappings
$$
\begin{aligned}
\mathcal{O}_s^{t,x,\xi}(\cdot): L^2(\mathcal{F}_t)\mapsto L^2(\mathcal{F}_s),\
\mathcal{Q}^{t,x,\xi}(\cdot): L^2(\mathcal{F}_t)\mapsto \mathcal{H}^2_{\mathbb{F}}(t,T),\
\mathcal{R}^{t,x,\xi}(\cdot): L^2(\mathcal{F}_t)\mapsto \mathcal{K}^2_\lambda(t,T),
\end{aligned}
$$
are linear and continuous.
\end{lemma}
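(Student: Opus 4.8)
The plan is to follow the argument used for the forward flow in Theorem \ref{th 5.2}, where $DX^{t,x,\xi}_s(\eta)$ was shown to equal $\widetilde{E}[\partial_\mu X^{t,x,P_\xi}_s(\widetilde\xi)\widetilde\eta]$. The decisive structural feature is that $\eta$ never enters the system (\ref{equ 6.15}) pointwise, but only through expectations of the type $\widehat{E}[\,\cdot\,\widehat\eta]$ paired against a kernel; since a generic continuous linear map on $L^2(\mathcal F_t)$ need not admit a kernel representation, it is precisely the propagation of this ``kernel form'' through the linear BSDE, not abstract linearity alone, that will produce $(O,Q,R)$. Linearity and continuity, however, are immediate: the pair (\ref{equ 6.15}) (for $x$ and, recursively, for $x=\xi$) is a coupled system of linear mean-field BSDEs with jumps whose coefficients are the bounded derivatives of $f$ and $\Phi$ from (H6.1) and are independent of $\eta$, while the terminal value and the driver depend on $\eta$ linearly, through $\mathcal U^{t,x,\xi}_T(\eta)$, $\partial_x\widehat Z^{t,\widehat\xi,P_\xi}_r\widehat\eta$, $\partial_x\widehat H^{t,\widehat\xi,P_\xi}_r\widehat\eta$ and the recursive terms $\widehat{\mathcal Q}^{t,\widehat\xi}_r(\widehat\eta)$, $\widehat{\mathcal R}^{t,\widehat\xi}_r(\widehat\eta)$. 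Uniqueness (Theorem 10.1) and this linearity force $\eta\mapsto(\mathcal O^{t,x,\xi}(\eta),\mathcal Q^{t,x,\xi}(\eta),\mathcal R^{t,x,\xi}(\eta))$ to be linear, and the $L^p$ estimates of Theorem 10.3 applied to this linear equation, combined with $|\mathcal U^{t,x,\xi}_T(\eta)|_{L^2}\le C|\eta|_{L^2}$ (Proposition \ref{pro 5.3}) and (\ref{equ 6.11}), give the continuity bound $\|\mathcal O^{t,x,\xi}(\eta)\|_{\mathcal S^2}+\|\mathcal Q^{t,x,\xi}(\eta)\|_{\mathcal H^2}+\|\mathcal R^{t,x,\xi}(\eta)\|_{\mathcal K^2}\le C|\eta|_{L^2}$.

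For the representation itself I would argue constructively. For each frozen $y\in\mathbb R$ I introduce the BSDE (\ref{equ 6.13}), solving first its $x=\xi$ version and then its $x$ version; by Theorem 10.1 each is uniquely solvable in $\mathcal S^2_{\mathbb F}\times\mathcal H^2_{\mathbb F}\times\mathcal K^2_\lambda$, and Theorem 10.3 together with Propositions \ref{pro 5.2} and \ref{pro 5.3} yields bounds uniform in $y$ and $L^2$-continuity in $y$, hence a version of $(O^{t,x,P_\xi}(y),Q^{t,x,P_\xi}(y),R^{t,x,P_\xi}(y))$ that is measurable in $y$. It then suffices to verify that $\bar E[O^{t,x,P_\xi}_s(\bar\xi)\bar\eta]$, and its $Q,R$ analogues, solve (\ref{equ 6.15}). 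I would do this first for the $\xi$ system: setting $y=\bar\xi$ in the $\xi$ version of (\ref{equ 6.13}), multiplying by $\bar\eta$ and applying $\bar E[\cdot]$, the independence of the copy $(\bar\xi,\bar\eta)$ from $(B,N_\lambda)$ justifies a stochastic Fubini theorem interchanging $\bar E[\,\cdot\,\bar\eta]$ with the $dr$-, $dB$- and $N_\lambda(dr,de)$-integrals. Each $\partial_\mu$-term of (\ref{equ 6.13}), evaluated at $y=\bar\xi$ and integrated against $\bar\eta$, then reproduces exactly the matching $\widehat E[\,\cdots\,\widehat\eta]$-term of (\ref{equ 6.15}), using $\partial_\mu X^{t,\xi}(y)=U^{t,\xi}(y)$ and the identities $\mathcal U^{t,\xi}(\eta)=\bar E[\partial_\mu X^{t,\xi}(\bar\xi)\bar\eta]$, $\mathcal Q^{t,\xi}(\eta)=\bar E[Q^{t,\xi}(\bar\xi)\bar\eta]$, $\mathcal R^{t,\xi}(\eta)=\bar E[R^{t,\xi}(\bar\xi)\bar\eta]$ (the last two being the ansatz, so the matching is self-consistent). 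By uniqueness (Theorem 10.1), $\bar E[O^{t,\xi}_s(\bar\xi)\bar\eta]=\mathcal O^{t,\xi}_s(\eta)$, and similarly for $\mathcal Q^{t,\xi},\mathcal R^{t,\xi}$. Inserting these $\xi$ kernels into the driver of the $x$ version and repeating the same Fubini-and-matching computation yields $\bar E[O^{t,x,P_\xi}_s(\bar\xi)\bar\eta]=\mathcal O^{t,x,\xi}_s(\eta)$, together with the $\mathcal Q,\mathcal R$ analogues, which is the claimed representation.

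The main obstacle is this verification step. One must rigorously justify the stochastic Fubini interchange for both the Brownian and the compensated Poisson integrals --- the latter being delicate since $R^{t,x,P_\xi}(y)$ is only square integrable in $(s,e)$ --- and then match, term by term, the $\partial_\mu$-contributions of the kernel BSDE (\ref{equ 6.13}) against the $\widehat E$-contributions of (\ref{equ 6.15}). This is compounded by the recursive mean-field coupling, which forces the $\xi$ system to be solved and its kernel representation established before the $x$ system can be treated, so the two verifications must be performed strictly in that order.
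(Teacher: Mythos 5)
Your proposal is correct and follows essentially the same route as the paper: solve the kernel BSDE (\ref{equ 6.17}) for frozen $y$ (first the $\xi$ system, then the $x$ system), substitute $y=\bar{\xi}$, multiply by $\bar{\eta}$, take $\bar{E}[\cdot]$, use the independence of $(\bar{\xi},\bar{\eta})$ to match each $\partial_\mu$-term with the corresponding $\widehat{E}[\,\cdots\widehat{\eta}\,]$-term of (\ref{equ 6.15}), and conclude by uniqueness (Theorem 10.1). The only cosmetic difference is that the paper obtains the continuity of the maps $\mathcal{O}^{t,x,\xi}(\cdot),\mathcal{Q}^{t,x,\xi}(\cdot),\mathcal{R}^{t,x,\xi}(\cdot)$ \emph{after} the kernel representation, via Cauchy--Schwarz against the uniform-in-$y$ bounds (\ref{equ 6.19})--(\ref{1002}), whereas you assert it directly from the linearity in $\eta$ of the data of (\ref{equ 6.15}); both are valid.
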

\begin{remark}\label{re 6.1}
For $(\mathcal{O}^{t,\xi}_s(y),\mathcal{Q}^{t,\xi}_s(y),\mathcal{R}^{t,\xi}_s(y)):=
(\mathcal{O}^{t,x,\xi}_s(y),\mathcal{Q}^{t,x,\xi}_s(y),\mathcal{R}^{t,x,\xi}_s(y))|_{x=\xi},\ s\in[t, T],\ \xi\in L^2(\mathcal{F}_t),\ y\in \mathbb{R}$, we
see directly from Lemma \ref{le 6.1} that
$$
\begin{aligned}
\mathcal{O}^{t,\xi}_s(\eta)&=\bar{E}[O_s^{t,\xi}(\bar{\xi})\cdot\bar{\eta}],\ \mbox{P-a.s.},\  s\in[t,T],\ \
\mathcal{Q}^{t,\xi}_s(\eta)=\bar{E}[Q_s^{t,\xi}(\bar{\xi})\cdot\bar{\eta}],\ \mbox{dsdP-a.e.}, \\
\mathcal{R}^{t,\xi}_s(\eta)&=\bar{E}[R_s^{t,\xi}(\bar{\xi})\cdot\bar{\eta}],\ \mbox{dsd}\lambda\mbox{dP-a.e.},\ \eta\in L^2(\mathcal{F}_t).
\end{aligned}
$$
\end{remark}
\begin{proof}
For $y\in \mathbb{R}$,
let
$(O^{t,x,P_\xi}(y), Q^{t,x,P_\xi}(y),R^{t,x,P_\xi}(y))\in \mathcal{S}^2_\mathbb{F}(t,T)\times \mathcal{H}^2_{\mathbb{F}}(t,T)\times
\mathcal{K}^2_\lambda(t,T)$ be the unique solution of BSDE (\ref{equ 6.13}), which, for our special case ($d=1$ and $f=f(z,h,\gamma(\mathbb{R}\times\mathbb{R}\times \cdot))$), writes as follows
\begin{equation}\label{equ 6.17}
\begin{array}{lll}
&O_s^{t,x,P_\xi}(y)
=\partial_x\Phi(X_T^{t,x,\xi})\partial_\mu X_T^{t,x,P_\xi}(y)\\
&\displaystyle+\int_s^T\big[\partial_z f(\Pi_r^{t,x,\xi},P_{\Pi_r^{t,\xi}})Q_r^{t,x,P_\xi}(y)+\partial_h f(\Pi_r^{t,x,\xi},P_{\Pi_r^{t,\xi}})
\int_KR_r^{t,x,P_\xi}(y,e)l(e)\lambda(de)\big]dr\\
&\displaystyle+\int_s^T\widehat{E}\big[(\partial_\mu f)_1(\Pi_r^{t,x,\xi},P_{\Pi_r^{t,\xi}},\widehat{\Pi}_r^{t,y,P_\xi})\partial_x\widehat{Z}_r^{t,
y,P_\xi}+(\partial_\mu f)_1(\Pi_r^{t,x,\xi},P_{\Pi_r^{t,\xi}},\widehat{\Pi}_r^{t,\widehat{\xi}})\widehat{Q}_r^{t,
\widehat{\xi}}(y)\big]dr\\
&\displaystyle+\int_s^T\widehat{E}\big[(\partial_\mu f)_2(\Pi_r^{t,x,\xi},P_{\Pi_r^{t,\xi}},\widehat{\Pi}_r^{t,y,P_\xi})
\int_K\partial_x\widehat{H}_r^{t,y,P_\xi}(e)l(e)\lambda(de)\\
&\displaystyle\ \ \  \ \ \ \ \ \ +(\partial_\mu f)_2(\Pi_r^{t,x,\xi},P_{\Pi_r^{t,\xi}},\widehat{\Pi}_r^{t,\widehat{\xi}})
\int_K\widehat{R}_r^{t,\widehat{\xi}}(y,e)l(e)\lambda(de)
\big]dr\\
&\displaystyle-\int_s^TQ_r^{t,x,P_\xi}(y)dB_r-\int_s^T\int_KR_r^{t,x,P_\xi}(y,e)N_\lambda(dr, de),\ s\in[t,T],
\end{array}
\end{equation}
where $(O^{t,\xi}(y),Q^{t,\xi}(y),R^{t,\xi}(y)):=(O^{t,x,\xi}(y),Q^{t,x,\xi}(y),R^{t,x,\xi}(y))|_{x=\xi}\in
\mathcal{S}_{\mathbb{F}}^2(t,T)\times\mathcal{H}_{\mathbb{F}}^2(t,T)\times\mathcal{K}_{\lambda}^2(t,T)$
is the unique solution of (\ref{equ 6.17}) with $x$ replaced by $\xi$,
$\Pi_s^{t,x,\xi}=(Z_s^{t,x,\xi},\int_KH_s^{t,x,\xi}(e)$ $l(e)\lambda(de)),$ and
$\Pi_s^{t,\xi}=\Pi_s^{t,x,\xi}|_{x=\xi}$. It follows from Theorem 10.3 that, for any $p\geq2$,
there is some constant $C_p>0$ only depending on
the bounds of the coefficients such that,
for all $t\in[0,T],\ x\in\mathbb{R},\ \xi\in L^2(\mathcal{F}_t;\mathbb{R}), y\in\mathbb{R}$,
\begin{equation}\label{equ 6.19}
\begin{aligned}
&E[\sup\limits_{s\in[t,T]}|O_s^{t,\xi}(y)|^p+(\int_t^T|Q_s^{t,\xi}(y)|^2ds)^{p/2}
+(\int_t^T\int_K|R_s^{t,\xi}(y,e)|^2\lambda(de)ds)^{p/2}]\leqslant C_p,
\end{aligned}
\end{equation}
then again from Theorem 10.3 we get
\begin{equation}\label{1002}
\begin{aligned}
&E[\sup\limits_{s\in[t,T]}|O_s^{t,x,P_\xi}(y)|^{p}+(\int_t^T|Q_s^{t,x,P_\xi}(y)|^2ds)^{\frac{p}{2}}
+(\int_t^T\int_K|R_s^{t,x,P_\xi}(y,e)|^2\lambda(de)ds)^{\frac{p}{2}}]\leqslant C_p.\\
\end{aligned}
\end{equation}
Let the couple $(\bar{\xi},\bar{\eta})$ defined on some probability space $(\bar{\Omega},\bar{\mathcal{F}},\bar{P})$ be
an independent copy of $(\xi,\eta)$ on $(\Omega,\mathcal{F},P)$ and, in particular, also an independent copy of $(\hat{\xi}, \hat{\eta})$
on $(\hat{\Omega},\hat{\mathcal{F}},\hat{P}$). Substituting in (\ref{equ 6.17}) for $x$ the random variable $\xi$ and for $y$ the random variable  $\bar{\xi},$
and then multiplying $\bar{\eta}$ on both sides of the such obtained equation and taking expectation $\bar{E}[\cdot],$
we obtain
\begin{equation}\label{equ 6.20}
\begin{array}{lll}
&\displaystyle\bar{E}[O_s^{t,\xi}(\bar{\xi})\cdot\bar{\eta}]=\partial_x\Phi(X_T^{t,\xi})\bar{E}[\partial_\mu X_T^{t,\xi}(\bar{\xi})\cdot\bar{\eta}]\\
&\displaystyle+\bar{E}\bigg[\int_s^T\big[\partial_z f(\Pi_r^{t,\xi},P_{\Pi_r^{t,\xi}})Q_r^{t,\xi}(\bar{\xi})\cdot\bar{\eta}
+\partial_h f(\Pi_r^{t,\xi},P_{\Pi_r^{t,\xi}})\int_KR_r^{t,\xi}(\bar{\xi},e)\cdot\bar{\eta}l(e)\lambda(de)\big]dr\bigg]\\
&\displaystyle+\bar{E}\bigg\{\int_s^T\widehat{E}\big[(\partial_\mu f)_1(\Pi_r^{t,\xi},P_{\Pi_r^{t,\xi}},\widehat{\Pi}_r^{t,\bar{\xi},P_\xi})
\partial_x\widehat{Z}_r^{t,\bar{\xi},P_\xi}\cdot\bar{\eta}
+(\partial_\mu f)_1(\Pi_r^{t,\xi},P_{\Pi_r^{t,\xi}},\widehat{\Pi}_r^{t,\widehat{\xi}})\widehat{Q}_r^{t,
\widehat{\xi}}(\bar{\xi})\cdot\bar{\eta}\big]dr\\
&\displaystyle\  \ \ \ \ \ +\int_s^T\widehat{E}\big[(\partial_\mu f)_2(\Pi_r^{t,\xi},P_{\Pi_r^{t,\xi}},\widehat{\Pi}_r^{t,\bar{\xi},P_\xi})
\int_K\partial_x\widehat{H}_r^{t,\bar{\xi},P_\xi}(e)\cdot\bar{\eta}l(e)\lambda(de)\\
&\displaystyle\  \ \ \ \ \ \ \ \ +(\partial_\mu f)_2(\Pi_r^{t,\xi},P_{\Pi_r^{t,\xi}},\widehat{\Pi}_r^{t,\widehat{\xi}})
\int_K\widehat{R}_r^{t,\widehat{\xi}}(\bar{\xi},e)\cdot\bar{\eta}l(e)\lambda(de)
\big]dr\bigg\}\\
&\displaystyle-\bar{E}[\int_s^TQ_r^{t,\xi}(\bar{\xi})\cdot\bar{\eta}dB_r]
-\bar{E}[\int_s^T\int_KR_r^{t,\xi}(\bar{\xi},e)\cdot\bar{\eta}N_\lambda(dr, de)],\ s\in[t,T].\quad\quad\quad
\end{array}
\end{equation}
Since $(\bar{\xi},\bar{\eta})$
is independent of $(\xi,\eta,\Pi^{t,x,\xi})$ and $(\hat{\xi},\hat{\eta},\hat{\Pi}^{t,x,P_\xi})$, and of the same law as  $(\hat{\xi},\hat{\eta})$,
we have
$$
\begin{aligned}
&{\rm{i)}}\bar{E}\big[\widehat{E}\big[(\partial_\mu f)_1(\Pi_r^{t,\xi},P_{\Pi_r^{t,\xi}},
\widehat{\Pi}_r^{t,\bar{\xi},P_\xi})
\partial_x\widehat{Z}_r^{t,\bar{\xi},P_\xi}\cdot\bar{\eta}\big]\big]
=\widehat{E}[
(\partial_\mu f)_1(\Pi_r^{t,\xi},P_{\Pi_r^{t,\xi}},\widehat{\Pi}_r^{t,\widehat{\xi},P_\xi})
\partial_x\widehat{Z}_r^{t,\widehat{\xi},P_\xi}\cdot\widehat{\eta}];\\
&{\rm{ii)}}\bar{E}\big[\widehat{E}\big[(\partial_\mu f)_2(\Pi_r^{t,\xi},P_{\Pi_r^{t,\xi}},
\widehat{\Pi}_r^{t,\bar{\xi},P_\xi})
\int_K\partial_x\widehat{H}_r^{t,\bar{\xi},P_\xi}(e)\cdot\bar{\eta}]l(e)\lambda(de)\big]\\
&\quad=\widehat{E}[
(\partial_\mu f)_2(\Pi_r^{t,\xi},P_{\Pi_r^{t,\xi}},
\widehat{\Pi}_r^{t,\widehat{\xi},P_\xi})
\int_K\partial_x\widehat{H}_r^{t,\widehat{\xi},P_\xi}(e)\cdot\widehat{\eta}l(e)\lambda(de)],
\end{aligned}
$$
similar to other terms. From the above equalities and the uniqueness of the solution of equation (\ref{equ 6.15}) with $x$ replaced by $\xi$ it follows
\begin{equation}
\begin{aligned}
&\mathcal{O}_s^{t,\xi}(\eta)=\bar{E}[O_s^{t,\xi}(\bar{\xi})\cdot\bar{\eta}],\ \mbox{P-a.s.},\ s\in[t,T],\ \
\mathcal{Q}_s^{t,\xi}(\eta)=\bar{E}[Q_s^{t,\xi}(\bar{\xi})\cdot\bar{\eta}],\ \mbox{dsdP-a.e.},\\
&\mathcal{R}_s^{t,\xi}(\eta)=\bar{E}[R_s^{t,\xi}(\bar{\xi})\cdot\bar{\eta}],\ \mbox{dsd}\lambda\mbox{dP-a.e.}
\end{aligned}
\end{equation}
Furthermore, from (\ref{equ 6.19}) we get
\begin{equation}\label{1003}
\begin{aligned}
&E[|\mathcal{O}_s^{t,\xi}(\eta)|^2]=E[|\bar{E}[O_s^{t,\xi}(\bar{\xi})\cdot\bar{\eta}]|^2]
\leq \bar{E}[E[|O_s^{t,\xi}(\bar{\xi})|^2\cdot|\bar{\eta}|^2]]\\
&=\bar{E}[E[|O_s^{t,\xi}(y)|^2]|_{y=\bar{\xi}}\cdot|\bar{\eta}|^2]
\leq C\bar{E}[|\bar{\eta}|^2]=CE[|\eta|^2].
\end{aligned}
\end{equation}
That means $|\mathcal{O}_s^{t,\xi}(\eta)|_{L^2}\leq C|\eta|_{L^2},$\ for every $\eta\in L^2(\mathcal{F}_t).$
Hence, $\mathcal{O}_s^{t,\xi}(\cdot): L^2(\mathcal{F}_t)\rightarrow L^2({\mathcal{F}_s})$ is a linear and
continuous mapping, for all $s\in[t,T]$, and $|\mathcal{O}_s^{t,\xi}(\cdot)|_{L(L^2,L^2)}\leq C$.\\
Furthermore, also
\begin{equation}\label{1004}
\begin{aligned}
&E[\int_t^T|\mathcal{Q}_s^{t,\xi}(\eta)|^2ds]=E[\int_t^T|\bar{E}[Q_s^{t,\xi}(\bar{\xi})\cdot\bar{\eta}]|^2ds]
\leq \bar{E}[E[\int_t^T|Q_s^{t,\xi}(\bar{\xi})|^2ds\cdot|\bar{\eta}|^2]]\\
&=\bar{E}[E[\int_t^T|Q_s^{t,\xi}(y)|^2ds]|_{y=\bar{\xi}}\cdot|\bar{\eta}|^2]
\leq CE[|\eta|^2],
\end{aligned}
\end{equation}
and

\begin{equation}\label{1005}
\begin{aligned}
&E[\int_t^T\int_K|\mathcal{R}_s^{t,\xi}(\eta,e)|^2\lambda(de)ds]
=E[\int_t^T\int_K|\bar{E}[R_s^{t,\xi}(\bar{\xi},e)\cdot\bar{\eta}]|^2\lambda(de)ds]\\
&\leq \bar{E}[\int_t^T\int_K E[|R_s^{t,\xi}(\bar{\xi},e)|^2\cdot|\bar{\eta}|^2]\lambda(de)ds]
=\bar{E}[E[\int_t^T\int_K|R_s^{t,\xi}(y,e)|^2\lambda(de)ds]|_{y=\bar{\xi}}\cdot|\bar{\eta}|^2]\\
&\leq CE[|\eta|^2].
\end{aligned}
\end{equation}
Therefore, $\mathcal{Q}^{t,\xi}(\cdot):L^2(\mathcal{F}_t)\mapsto \mathcal{H}^2_{\mathbb{F}}(t,T)$ and
$\mathcal{R}^{t,\xi}(\cdot): L^2(\mathcal{F}_t)\mapsto \mathcal{K}^2_\lambda(t,T)$ are continuous linear
mappings. Making use of the above argument, but for
$(\mathcal{O}^{t,x,P_\xi}(\eta),\ \mathcal{Q}^{t,x,P_\xi}(\eta),\ \mathcal{R}^{t,x,P_\xi}(\eta))$
instead of $(\mathcal{O}^{t,\xi}(\eta),\ \mathcal{Q}^{t,\xi}(\eta),\ \mathcal{R}^{t,\xi}(\eta)),$
we also have, for all $\eta\in L^2(\mathcal{F}_t)$,
\begin{equation}
\begin{aligned}
&\mathcal{O}_s^{t,x,\xi}(\eta)=\bar{E}[O_s^{t,x,\xi}(\bar{\xi})\cdot\bar{\eta}],\ \mbox{P-a.s.},\ s\in[t,T], \ \
\mathcal{Q}_s^{t,x,\xi}(\eta)=\bar{E}[Q_s^{t,x,\xi}(\bar{\xi})\cdot\bar{\eta}],\ \mbox{dsdP-a.e.},\\
&\mathcal{R}_s^{t,x,\xi}(\eta)=\bar{E}[R_s^{t,x,\xi}(\bar{\xi})\cdot\bar{\eta}],\ \mbox{dsd}\lambda\mbox{dP-a.e.}
\end{aligned}
\end{equation}
Moreover, by using (\ref{1002}), similar to (\ref{1003}), (\ref{1004}) and (\ref{1005}), we
obtain that $\mathcal{O}^{t,x,P_\xi}(\cdot),\mathcal{Q}^{t,x,P_\xi}(\cdot),$ $ \mathcal{R}^{t,x,P_\xi}(\cdot)$
are  also linear and continuous mappings (over the same spaces as $\mathcal{O}^{t,\xi}$, $\mathcal{Q}^{t,\xi}$, $\mathcal{R}^{t,\xi}$).
\end{proof}

Now we prove the following estimate for the solution of equation (\ref{equ 6.17}).
\begin{proposition}\label{pro 6.1}
For all $p\geq1$, there exists a constant $C_p>0$ only depending on the Lipschitz constant of the coefficients, such that, for all $t\in[0,T],\ x,\  {x'},\ y,\ {y'}\in\mathbb{R}^d,$
and $\xi, \xi^{'}\in L^2(\mathcal{F}_t; \mathbb{R}^d)$,
\begin{equation}\label{equ 6.23}
\begin{aligned}
& E[\sup\limits_{s\in[t,T]}|O_s^{t,x,P_\xi}(y)-O_s^{t,{x'},P_{\xi^{'}}}(y')|^{2p}
+(\int_t^T|Q_s^{t,x,P_\xi}(y)-Q_s^{t,{x'},P_{\xi^{'}}}(y')|^2ds)^{p}\\
&\ \ +(\int_t^T\int_K|R_s^{t,x,P_\xi}(y,e)-R_s^{t,{x'},P_{\xi^{'}}}(y',e)|^2\lambda(de)ds)^{p}]
 \leq C_p(|x-x'|^{2p}+|y-y'|^{2p}+W_2(P_\xi,P_{\xi^{'}})^{2p}).
\end{aligned}
\end{equation}
\end{proposition}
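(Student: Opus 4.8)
The plan is to read (\ref{equ 6.17}) as a linear, conditionally mean-field BSDE with jumps whose generator is affine in $(O,Q,R)$ with the bounded coefficients $\partial_z f,\partial_h f$ (bounded by (H6.1)), and whose remaining terms constitute an inhomogeneous source built from the forward flow and its derivative $\partial_\mu X^{t,x,P_\xi}(y)$, the first order BSDE derivatives $\partial_x Z^{t,y,P_\xi},\partial_x H^{t,y,P_\xi}$, and the measure components $(O^{t,\xi},Q^{t,\xi},R^{t,\xi})$. I would establish (\ref{equ 6.23}) in two stages, mirroring the logic already used for Proposition \ref{pro 4.3}: first for the measure-indexed triple $(O^{t,\xi}(y),Q^{t,\xi}(y),R^{t,\xi}(y))$, which solves a genuine mean-field BSDE (the independent copy $\widehat Q^{t,\widehat\xi}(y),\widehat R^{t,\widehat\xi}(y)$ enters the driver under $\widehat E$), and then for $(O^{t,x,P_\xi}(y),Q^{t,x,P_\xi}(y),R^{t,x,P_\xi}(y))$, which---once the measure terms are controlled---is a classical BSDE with jumps with a given inhomogeneity.

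For two parameter sets $(x,y,\xi)$ and $(x',y',\xi')$ I would form $\Delta O=O^{t,x,P_\xi}(y)-O^{t,x',P_{\xi'}}(y')$ and likewise $\Delta Q,\Delta R$. These solve a linear BSDE with jumps with the same affine coefficients, terminal value the difference of the two terminal data, and a source $I_r$ collecting all the coefficient differences. Applying the $L^{2p}$ a priori estimate for (mean-field) BSDEs with jumps from Theorem 10.3 reduces the problem to bounding $E[|\Delta(\mathrm{terminal})|^{2p}]$ and $E[(\int_t^T|I_r|\,dr)^{2p}]$. The terminal difference is handled by the split $\partial_x\Phi(X_T^{t,x,\xi})\big(\partial_\mu X_T^{t,x,P_\xi}(y)-\partial_\mu X_T^{t,x',P_{\xi'}}(y')\big)+\big(\partial_x\Phi(X_T^{t,x,\xi})-\partial_x\Phi(X_T^{t,x',\xi'})\big)\partial_\mu X_T^{t,x',P_{\xi'}}(y')$, using boundedness and Lipschitz continuity of $\partial_x\Phi$ (H6.1), the bound and Lipschitz estimates for $\partial_\mu X$ (Proposition \ref{pro 5.3}), and the flow estimates of Lemma \ref{le 3.1}; this yields $C(|x-x'|^{2p}+|y-y'|^{2p}+W_2(P_\xi,P_{\xi'})^{2p})$.

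The bulk of the work is the estimation of $I_r$. Each term is split in the usual ``difference of coefficient times fixed process plus fixed coefficient times difference of process'' fashion. The bounded-coefficient pieces, such as $\partial_z f(\Pi_r^{t,x,\xi},\cdot)\big(Q_r^{t,x,P_\xi}(y)-Q_r^{t,x',P_{\xi'}}(y')\big)$, feed back into the affine part and are absorbed by the a priori estimate; the Lipschitz-coefficient differences are controlled using that $\partial_z f,\partial_h f,\partial_\mu f$ are Lipschitz (H6.1), the flow continuity (Lemma \ref{le 3.1}, Proposition \ref{pro 4.3}), Proposition \ref{pro 5.3}, and the bound (\ref{equ 6.11}). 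For the mean-field terms carrying $\widehat Q^{t,\widehat\xi}(y),\widehat R^{t,\widehat\xi}(y)$ and the independent-copy data $\widehat\xi,\widehat\xi'$, I would couple $\widehat\xi,\widehat\xi'$ optimally to realize $W_2(P_\xi,P_{\xi'})$ and invoke the first-stage continuity estimate for the measure-indexed triple, exactly as in (\ref{199})--(\ref{ast}).

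The hard part will be the source terms in which a Lipschitz-coefficient difference multiplies one of the merely time-$L^2$ processes $\partial_x\widehat Z^{t,y,P_\xi},\partial_x\widehat H^{t,y,P_\xi}$, or in which the coefficient difference itself contains the $Z$- and $H$-components of $\Pi$ (again only square integrable in time). Then $I_r$ contains genuine products of two processes that are only $L^2$ in $r$, so $\int_t^T|I_r|\,dr$ must first be bounded by Cauchy--Schwarz in time into a product of two $\big(\int_t^T|\cdot|^2dr\big)^{1/2}$ factors, and then, after raising to the power $2p$, by Cauchy--Schwarz in probability, so that one factor is controlled by Proposition \ref{pro 4.3}(ii) (or (\ref{equ 6.11})) and the other by the boundedness estimates (\ref{equ 6.19}), (\ref{1002}) or (\ref{equ 6.11}). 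Tracking the higher-moment integrability needed to make this double Cauchy--Schwarz close is the only genuine subtlety; crucially, none of these products are squared here (unlike the second order analysis of Section 8), so with the moment bounds already established the estimate goes through.
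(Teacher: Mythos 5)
Your proposal is correct and follows essentially the same route as the paper: write the linear difference BSDE for $\Delta O,\Delta Q,\Delta R$, estimate the terminal term via Propositions \ref{pro 5.1} and \ref{pro 5.3}, control the source by the ``coefficient difference times fixed process plus fixed coefficient times process difference'' splitting with a double Cauchy--Schwarz (in time, then in probability) for the products of time-$L^2$ processes, first close the estimate at the level of the law by substituting $x=\vartheta$, $x'=\vartheta'$ (Corollary \ref{BSDE estimate}), then feed that back in via Theorem \ref{proBSDE estimate}, and finally take the infimum over couplings to convert $E[|\vartheta-\vartheta'|^2]$ into $W_2(P_\xi,P_{\xi'})^2$. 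Your observations about the required higher-moment bounds and about the absence of squared $L^2$-in-time products (which is what spares you the truncation argument of Section 8) match the paper's treatment.
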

\begin{proof}
Recall that for simplicity of redaction, $d=1$ and $f(\Pi_r^{t,x,\xi},P_{\Pi_r^{t,\xi}})$ depends only on $\Pi_r^{t,x,\xi}=(Z_r^{t,x,\xi},$ $\int_KH_r^{t,x,\xi}(e)l(e)\lambda(de))$ and $P_{\Pi_r^{t,\xi}}$; $ \Pi_r^{t,x,\xi}=\Pi_r^{t,x,P_{\xi}}$ and $\Pi_r^{t,\xi}=\Pi_r^{t,\xi,P_{\xi}}$.

Let $\xi,\xi',\vartheta,\vartheta'\in L^2(\mathcal{F}_t)$ be such that $P_{\vartheta}=P_{\xi}$, $P_{\vartheta'}=P_{\xi'}$. Notice that $\Pi_s^{t,x,P_{\xi}}$ and  $(O_s^{t,x,P_{\xi}}(y),$ $ Q_s^{t,x,P_{\xi}}(y), R_s^{t,x,P_{\xi}}(y)),\ t\leq s\leq T, $ are independent of ${\cal F}_t$. Hence, from (\ref{equ 6.17}) we get
the following BSDE:
\begin{equation}\label{equ 4.11.1}
\begin{split}
&O_s^{t,x,P_{\xi}}(y)-O_s^{t,x',P_{\xi'}}(y')=\Xi(x,x')+\int_s^TR(r,x,x')dr\\
&+\left.\int_s^T\right\{(\partial_zf)(\Pi_r^{t,x,\xi},P_{\Pi_r^{t,\xi}})(Q_r^{t,x,P_\xi}(y)-Q_r^{t,x',P_{\xi'}}(y'))\\
&+(\partial_hf)(\Pi_r^{t,x,\xi},P_{\Pi_r^{t,\xi}})\big(\int_K(R_r^{t,x,P_\xi}(y,e)-R_r^{t,x',P_{\xi'}}(y',e))l(e)
\lambda(de)\big)\\
&+\widehat{E}[(\partial_\mu f)_1(\Pi_r^{t,x,\xi},P_{\Pi_r^{t,\xi}},\widehat{\Pi}_r^{t,\widehat{\vartheta},P
_\xi})(\widehat{Q}_r^{t,\widehat{\vartheta},P_\xi}(y)-\widehat{Q}_r^{t,\widehat{\vartheta'},P_{\xi'}}(y'))]\\
&+\widehat{E}[(\partial_\mu f)_2(\Pi_r^{t,x,\xi},P_{\Pi_r^{t,\xi}},\widehat{\Pi}_r^{t,\widehat{\vartheta},P
_\xi})\big(\left.\int_K(\widehat{R}_r^{t,\widehat{\vartheta},P_\xi}(y,e)-\widehat{R}_r^{t,\widehat{\vartheta'},P_{\xi'}}(y',e))l(e)
\lambda(de)\big)]\right\}dr\\
&-\int_s^T(Q_r^{t,x,P_\xi}(y)-Q_r^{t,x',P_{\xi'}}(y'))dB_r-\int_s^T\int_K(R_r^{t,x,P_\xi}(y,e)-R_r^{t,x',P_{\xi'}}(y',e))N_\lambda(dr,de),\\
\end{split}
\end{equation}
where
\begin{equation*}
\begin{split}
&R(r,x,x')=\big((\partial_zf)(\Pi_r^{t,x,\xi},P_{\Pi_r^{t,\xi}})-(\partial_zf)(\Pi_r^{t,x',\xi'},P_{\Pi_r^{t,\xi'}})\big)Q_r^{t,x',P_{\xi'}}(y')\\
&+\big((\partial_hf)(\Pi_r^{t,x,\xi},P_{\Pi_r^{t,\xi}})-(\partial_hf)(\Pi_r^{t,x',\xi'},P_{\Pi_r^{t,\xi'}})\big)\cdot\int_K R_r^{t,x',P_{\xi'}}(y',e)l(e)\lambda(de)\\
&+\widehat{E}[\big((\partial_\mu f)_1(\Pi_r^{t,x,\xi},P_{\Pi_r^{t,\xi}},\widehat{\Pi}_r^{t,\widehat{\vartheta},P
_\xi})-(\partial_\mu f)_1(\Pi_r^{t,x',\xi'},P_{\Pi_r^{t,\xi'}},\widehat{\Pi}_r^{t,\widehat{\vartheta'},P_{\xi'}})\big)\widehat{Q}_r^{t,\widehat{\vartheta'},P_{\xi'}}(y')]\\
&+\widehat{E}[\big((\partial_\mu f)_2(\Pi_r^{t,x,\xi},P_{\Pi_r^{t,\xi}},\widehat{\Pi}_r^{t,\widehat{\vartheta},P_\xi})-(\partial_\mu f)_2(\Pi_r^{t,x',\xi'},P_{\Pi_r^{t,\xi'}},\widehat{\Pi}_r^{t,\widehat{\vartheta'},P_{\xi'}})\big)\int_K\widehat{R}_r^{t,\widehat{\vartheta'},P_{\xi'}}(y',e)l(e)\lambda(de)]\\
&+\widehat{E}[(\partial_\mu f)_1(\Pi_r^{t,x,\xi},P_{\Pi_r^{t,\xi}},\widehat{\Pi}_r^{t,y,P_\xi})\partial_x\widehat{Z}_r^{t,y,P_\xi}-(\partial_\mu f)_1(\Pi_r^{t,x',\xi'},P_{\Pi_r^{t,\xi'}},\widehat{\Pi}_r^{t,y',P_{\xi'}})\partial_x\widehat{Z}_r^{t,y',P_{\xi'}}]\\
&+\widehat{E}[(\partial_\mu f)_2(\Pi_r^{t,x,\xi},P_{\Pi_r^{t,\xi}},\widehat{\Pi}_r^{t,y,P_\xi})\big(\int_K\partial_x\widehat{H}_r^{t,y,P_\xi}(e)l(e)\lambda(de)\big)\\
&\ \ \ \ -(\partial_\mu f)_2(\Pi_r^{t,x',\xi'},P_{\Pi_r^{t,\xi'}},\widehat{\Pi}_r^{t,y',P_{\xi'}})\big(\int_K\partial_x\widehat{H}_r^{t,y',P_{\xi'}}(e)l(e)\lambda(de)\big)],
\end{split}
\end{equation*}
and
\begin{equation*}
\Xi(x,x')=(\partial_x\Phi)(X_T^{t,x,P_\xi})\partial_\mu X_T^{t,x,P_\xi}(y)-(\partial_x\Phi)(X_T^{t,x',P_{\xi'}})\partial_\mu X_T^{t,x',P_{\xi'}}(y').
\end{equation*}
From Propositions \ref{pro 5.1} and \ref{pro 5.3}, for all $p\geq1$, it holds
\begin{equation}\label{6.12+1}
E[|\Xi(x,x')|^{2p}]\leq C_p(|x-x'|^{2p}+W_2(P_{\xi},P_{\xi'})^{2p}+|y-y'|^{2p}).
\end{equation}
We now give the estimate for $R(r,x,x')$. From (\ref{1002}) and Proposition \ref{pro 4.3} we notice that
\begin{equation*}
\begin{aligned}
{\rm i)}\ & E\big[\Big(\int_t^T|(\partial_z f)(\Pi_r^{t,x,\xi},P_{\Pi_r^{t,\xi}})-(\partial_z f)(\Pi_r^{t,x',\xi'},P_{\Pi_r^{t,\xi'}})|\cdot|Q_r^{t,x',P_{\xi'}}(y')|dr\Big)^{2p}\big]\\
&\leq C_p\big(E[(\int_t^T|Q_r^{t,x',P_{\xi'}}(y')|^2dr)^{2p}]\big)^{\frac{1}{2}}\big(E[(\int_t^T(|\Pi_r^{t,x,\xi}-\Pi_r^{t,x',\xi'}|^2
+W_2(P_{\Pi_r^{t,\xi}},P_{\Pi_r^{t,\xi'}})^2)dr)^{2p}]\big)^{\frac{1}{2}}\\
&\leq C_p\big(|x-x'|^{2p}+W_2(P_{\xi},P_{\xi'})^{2p}\big);\\
{\rm ii)}\ &E\Big[\Big(\int_t^T\big|\widehat{E}[\big((\partial_\mu f)_1(\Pi_r^{t,x,P_{\xi}},P_{\Pi_r^{t,\xi}},\widehat{\Pi}_r^{t,\widehat{\vartheta},P_{\xi}})\\
&\ \ \  \ \ \ \ -(\partial_\mu f)_1(\Pi_r^{t,x',P_{\xi'}},P_{\Pi_r^{t,\xi'}},\widehat{\Pi}_r^{t,\widehat{\vartheta'},P_{\xi'}})\big)
\widehat{Q}_r^{t,\widehat{\vartheta'},P_{\xi'}}(y')]\big|dr\Big)^{2p}\Big]\\
&\leq C_p\big(\widehat{E}[(\int_t^T|\widehat{Q}_r^{t,\widehat{\vartheta'},P_{\xi'}}(y')|^2dr)^{2p}]\big)^{\frac{1}{2}}\cdot
\Big(E\big[\big(\int_t^T(|\Pi_r^{t,x,P_{\xi}}-\Pi_r^{t,x',P_{\xi'}}|^2
+W_2(P_{\Pi_r^{t,\xi}},P_{\Pi_r^{t,\xi'}})^2\\
&\quad+\widehat{E}[|\widehat{\Pi}_r^{t,\widehat{\vartheta},P_{\xi}}-\widehat{\Pi}_r^{t,\widehat{\vartheta'},P_{\xi'}}|^2])dr\big)^{2p}\big]\Big)^{\frac{1}{2}}\\
&\leq C_p(|x-x'|^{2p}+W_2(P_{\xi},P_{\xi'})^{2p}+(\widehat{E}[|\widehat{\vartheta}-\widehat{\vartheta'}|^2])^p);\\
{\rm iii)}\ &E\Big[\Big(\int_t^T\big|\widehat{E}[(\partial_\mu f)_1(\Pi_r^{t,x,\xi},P_{\Pi_r^{t,\xi}},\widehat{\Pi}_r^{t,y,P_{\xi}})\partial_x\widehat{Z}_r^{t,y,P_{\xi}}\\
&\ \ \  \ \ \ \ -(\partial_\mu f)_1(\Pi_r^{t,x',\xi'},P_{\Pi_r^{t,\xi'}},\widehat{\Pi}_r^{t,y',P_{\xi'}})\partial_x\widehat{Z}_r^{t,y',P_{\xi'}}]\big|dr\Big)^{2p}\Big]\\
&\leq C_p(\widehat{E}\int_t^T|\partial_x\widehat{Z}_r^{t,y,P_{\xi}}-\partial_x\widehat{Z}_r^{t,y',P_{\xi'}}|^2dr)^p
+C_p(\widehat{E}[\int_t^T|\partial_x\widehat{Z}_r^{t,y,P_{\xi}}|^2dr])^pE[\left(\int_t^T(|\Pi_r^{t,x,\xi}-\Pi_r^{t,x',\xi'}|^2\right.\\
&\quad+\left.W_2(P_{\Pi_r^{t,\xi}},P_{\Pi_r^{t,\xi'}})^2+\widehat{E}[|\widehat{\Pi}_r^{t,y,P_{\xi}}-\widehat{\Pi}_r^{t,y',P_{\xi'}}|^2])dr\right)^{2p}]^{\frac{1}{2}}\\
&\leq C_p(|x-x'|^{2p}+W_2(P_{\xi},P_{\xi'})^{2p}+|y-y'|^{2p}).\hskip6.5cm \ \ \mbox{ }
\end{aligned}
\end{equation*}
The other terms of $R(r,x,x')$ are estimated in a similar way. Consequently, we get
\begin{equation}\label{6.12+2}
E[(\int_t^T|R(r,x,x')|dr)^{2p}]\leq C_p(|x-x'|^{2p}+W_2(P_{\xi},P_{\xi'})^{2p}+(E[|\vartheta-\vartheta'|^2])^{p}+|y-y'|^{2p}).
\end{equation}
Substituting in BSDE (\ref{equ 4.11.1}) $x=\vartheta$ and $x'=\vartheta'$, since
$
E[|\Xi(\vartheta,\vartheta')|^2]=E[E[|\Xi(x,x')|^2]|_{x=\vartheta,x'=\vartheta'}]\leq C(E[|\vartheta-\vartheta'|^2]+W_2(P_{\xi},P_{\xi'})^2+|y-y'|^2),
$
and
\[
E[(\int_t^T|R(r,\vartheta,\vartheta')|dr)^2]=E[E[\int_t^T|R(r,x,x')|dr]|_{x=\vartheta,x'=\vartheta'}]\leq C(W_2(P_{\xi},P_{\xi'})^2+E[|\vartheta-\vartheta'|^2]+|y-y'|^2),
\]
it follows from Corollary \ref{BSDE estimate} that
\begin{equation}\label{6.12+3}
\begin{split}
&E[\sup_{s\in[t,T]}\big|O_s^{t,\vartheta,P_{\xi}}(y)-O_s^{t,\vartheta',P_{\xi'}}(y)\big|^2+\int_t^T(|Q_r^{t,\vartheta,P_{\xi}}(y)-Q_r^{t,\vartheta',P_{\xi'}}(y')|^2\\
&\ \ \  \ \ \ \ \ +\int_K|R_r^{t,\vartheta,P_{\xi}}(y,e)-R_r^{t,\vartheta',P_{\xi'}}(y',e)|^2\lambda(de))dr]\\
&\leq C(W_2(P_{\xi},P_{\xi'})^2+E[|\vartheta-\vartheta'|^2]+|y-y'|^2).
\end{split}
\end{equation}
This estimate allows to return to BSDE (\ref{equ 4.11.1}). Note that
\begin{equation}\label{6.12+4}
\begin{split}
&E\Big[\Big(\int_t^T\big|\widehat{E}[(\partial_\mu f)_1(\Pi_r^{t,x,\xi},P_{\Pi_r^{t,\xi}},\widehat{\Pi}_r^{t,\widehat{\vartheta},P_{\xi}})\big(\widehat{Q}_r^{t,\widehat{\vartheta},P_{\xi}}(y)
-\widehat{Q}_r^{t,\widehat{\vartheta'},P_{\xi'}}(y')\big)]\big|dr\Big)^{2p}\Big]\\
&\leq C_p\Big(\widehat{E}[\int_t^T|\widehat{Q}_r^{t,\widehat{\vartheta},P_{\xi}}(y)-\widehat{Q}_r^{t,\widehat{\vartheta'},P_{\xi'}}(y')|^2dr]\Big)^p\\
&\leq C_p\big(W_2(P_{\xi},P_{\xi'})^{2p}+(E[|\vartheta-\vartheta'|^2])^p+|y-y'|^{2p}\big),
\end{split}
\end{equation}
and analogously,
\begin{equation}\label{6.12+5}
\begin{split}
&E[\Big(\int_t^T\big|\widehat{E}[(\partial_\mu f)_2(\Pi_r^{t,x,\xi},P_{\Pi_r^{t,\xi}},\widehat{\Pi}_r^{t,\widehat{\vartheta},P_{\xi}})
\big(\int_K|\widehat{R}_r^{t,\widehat{\vartheta},P_{\xi}}(y,e)-\widehat{R}_r^{t,\widehat{\vartheta'},P_{\xi'}}(y',e)|l(e)\lambda(de)\big)]\big|dr\Big)^{2p}]\\
&\leq C_p(W_2(P_{\xi},P_{\xi'})^{2p}+(E[|\vartheta-\vartheta'|^2])^p+|y-y'|^{2p}).
\end{split}
\end{equation}
Consequently, with the help of Theorem \ref{proBSDE estimate}, recalling (\ref{6.12+1}), (\ref{6.12+2}), (\ref{6.12+4}) and (\ref{6.12+5}), we get for the solution of BSDE (\ref{equ 4.11.1} )
\begin{equation}\label{998}
\begin{split}
&E[\sup_{s\in[t,T]}|O_s^{t,x,P_{\xi}}(y)-O_s^{t,x',P_{\xi'}}(y')|^{2p}+(\int_t^T|Q_r^{t,x,P_{\xi}}(y)-Q_r^{t,x',P_{\xi'}}(y')|^2dr)^p\\
&\quad+(\int_t^T\int_K|R_r^{t,x,P_{\xi}}(y,e)-R_r^{t,x',P_{\xi'}}(y',e)|^2\lambda(de)dr)^p]\\
&\leq C_p(|x-x'|^{2p}+W_2(P_{\xi},P_{\xi'})^{2p}+|y-y'|^{2p}+(E[|\vartheta-\vartheta'|^2])^p),
\end{split}
\end{equation}
for all $\vartheta, \vartheta'\in L^2(\mathcal{F}_t)$ with $P_{\vartheta}=P_{\xi}$ and $P_{\vartheta'}=P_{\xi'}$.

Then from the definition of the 2-Wasserstein metric we get
\begin{equation}\label{999}
\begin{split}
&E[\sup_{s\in[t,T]}|O_s^{t,x,P_{\xi}}(y)-O_s^{t,x',P_{\xi'}}(y')|^{2p}+(\int_t^T|Q_r^{t,x,P_{\xi}}(y)-Q_r^{t,x',P_{\xi'}}(y')|^2dr)^p\\
&\quad+(\int_t^T\int_K|R_s^{t,x,P_{\xi}}(y,e)-R_s^{t,x',P_{\xi'}}(y',e)|^2\lambda(de)ds)^p]\\
&\leq C_p(|x-x'|^{2p}+W_2(P_{\xi},P_{\xi'})^{2p}+|y-y'|^{2p}).
\end{split}
\end{equation}
\end{proof}
\begin{lemma}\label{le 6.2}
Suppose (H5.1) and (H6.1) hold true. Then, for $0\leq t\leq s\leq T$ and $x\in\mathbb{R}$,
the lifted processes $
L^2(\mathcal{F}_t)\ni\xi\mapsto Y_s^{t,x,\xi}:=Y_s^{t,x,P_\xi}\in L^2(\mathcal{F}_s)$,
$L^2(\mathcal{F}_t)\ni\xi\mapsto Z_{\cdot}^{t,x,\xi}:=Z_{\cdot}^{t,x,P_\xi}\in \mathcal{H}^2_{\mathbb{F}}(t,T)$,
$L^2(\mathcal{F}_t)\ni\xi\mapsto H_{\cdot}^{t,x,\xi}:=H_{\cdot}^{t,x,P_\xi}\in \mathcal{K}^2_\lambda(t,T)
$
\noindent as functionals of $\xi$ are G\^{a}teaux differentiable, and the G\^{a}teaux derivatives in direction $\eta\in L^2({\cal F}_t)$ are just
$\mathcal{O}^{t,x,\xi}_s(\eta)$, $\mathcal{Q}^{t,x,\xi}_s(\eta)$ and $\mathcal{R}^{t,x,\xi}_s(\eta)$, respectively, i.e.,
$$
\begin{aligned}
&\partial_\xi Y^{t,x,\xi}_s(\eta)=\mathcal{O}^{t,x,\xi}_s(\eta)=\bar{E}[O^{t,x,P_\xi}_s(\bar{\xi})\cdot\bar{\eta}],\ \mbox{P-a.s.}, s\in[t,T],\\
&\partial_\xi Z^{t,x,\xi}_s(\eta)=\mathcal{Q}^{t,x,\xi}_s(\eta)=\bar{E}[Q^{t,x,P_\xi}_s(\bar{\xi})\cdot\bar{\eta}],\ \mbox{dsdP-a.e.},\\
&\partial_\xi H^{t,x,\xi}_s(\eta)=\mathcal{R}^{t,x,\xi}_s(\eta)=\bar{E}[R^{t,x,P_\xi}_s(\bar{\xi})\cdot\bar{\eta}],\ \mbox{dsd}\lambda\mbox{dP-a.e.} ,
\end{aligned}
$$
where $\mathcal{O}^{t,x,\xi}_s(\eta),\ \mathcal{Q}^{t,x,\xi}_s(\eta),\ \mathcal{R}^{t,x,\xi}_s(\eta),\
O^{t,x,P_\xi}_s(y),\ Q^{t,x,P_\xi}_s(y),\ R^{t,x,P_\xi}_s(y)$ are defined in Lemma \ref{le 6.1}.
\end{lemma}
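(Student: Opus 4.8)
The plan is to establish G\^{a}teaux differentiability by showing that the difference quotients of the lifted BSDE solution converge, in the relevant norms, to the solution $(\mathcal{O}^{t,x,\xi}(\eta),\mathcal{Q}^{t,x,\xi}(\eta),\mathcal{R}^{t,x,\xi}(\eta))$ of the formally differentiated BSDE (\ref{equ 6.15}). Fix $(t,x)$ and $\xi,\eta\in L^2(\mathcal{F}_t)$, and for $h\neq0$ set $\nabla^h Y_s:=h^{-1}(Y_s^{t,x,\xi+h\eta}-Y_s^{t,x,\xi})$, and analogously $\nabla^h Z,\ \nabla^h H$. First I would write down the BSDE solved by $(\nabla^h Y,\nabla^h Z,\nabla^h H)$: its terminal value is the difference quotient of $\Phi(X_T^{t,x,\cdot})$ and its driver is the difference quotient of $f(\Pi_r^{t,x,\cdot},P_{\Pi_r^{t,\cdot}})$. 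Using the fundamental theorem of calculus on the own-variable arguments, together with the definition of $\partial_\mu f$ and the known Fr\'{e}chet differentiability of the forward flow (Theorem \ref{th 5.2}, Proposition \ref{pro 5.2}), I would express this driver as a linear expression in $(\nabla^h Y,\nabla^h Z,\nabla^h H)$ with coefficients that are first-order derivatives of $f$ evaluated along the connecting segments, plus the mean-field terms built from the independent copies $\widehat{\cdot}$ and the forward difference quotients $\nabla^h X$.

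Next I would put $\Delta^h:=(\nabla^h Y-\mathcal{O}^{t,x,\xi}(\eta),\ \nabla^h Z-\mathcal{Q}^{t,x,\xi}(\eta),\ \nabla^h H-\mathcal{R}^{t,x,\xi}(\eta))$ and subtract (\ref{equ 6.15}) from the difference-quotient BSDE. The resulting BSDE for $\Delta^h$ has a driver of the form (bounded linear coefficients)$\,\cdot\,\Delta^h$ plus a remainder $I_r^h$ collecting all the differences between the difference-quotient coefficients and their limits, i.e.\ terms such as $\big((\partial_zf)(\text{segment})-(\partial_zf)(\Pi_r^{t,x,\xi},P_{\Pi_r^{t,\xi}})\big)\mathcal{Q}_r^{t,x,\xi}(\eta)$, the analogous $\partial_h f$ and $\partial_\mu f$ differences, and the terms carrying $\nabla^h X-\mathcal{U}^{t,x,\xi}(\eta)$. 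Applying the a priori $L^2$-estimate for BSDEs with jumps (Theorem 10.3 and Corollary \ref{BSDE estimate} in the Appendix) gives
\[
\|\Delta^h\|^2\le C\,E\Big[|\text{(terminal remainder)}|^2+\big(\textstyle\int_t^T|I_r^h|\,dr\big)^2\Big],
\]
so it remains to show the right-hand side tends to $0$ as $h\to0$.

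To control the remainder I would invoke: the $L^2$-convergences $X^{t,x,\xi+h\eta}\to X^{t,x,\xi}$ and $\nabla^h X\to\mathcal{U}^{t,x,\xi}(\eta)$ coming from Theorem \ref{th 5.2}; the continuity and boundedness of the first-order derivatives of $f$ and $\Phi$ under (H6.1), which let me pass the segment-evaluations to their limits by dominated convergence; and the uniform bounds (\ref{equ 6.19}), (\ref{1001}) on $(O,Q,R)$ and $(\mathcal{O},\mathcal{Q},\mathcal{R})$ to dominate the products. For the law-dependent terms the difference quotient of $P_{\Pi_r^{t,\xi+h\eta}}$ is handled through the independent-copy expectation $\widehat{E}$ exactly as in the construction of (\ref{equ 6.15}), using that the diagonal process $\Pi^{t,\xi}=\Pi^{t,x,\xi}|_{x=\xi}$ is differentiable in $\xi$. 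Once $\|\Delta^h\|\to0$ is established, the difference quotients converge to $(\mathcal{O}^{t,x,\xi}(\eta),\mathcal{Q}^{t,x,\xi}(\eta),\mathcal{R}^{t,x,\xi}(\eta))$ in $\mathcal{S}^2_{\mathbb{F}}(t,T)\times\mathcal{H}^2_{\mathbb{F}}(t,T)\times\mathcal{K}^2_\lambda(t,T)$, which is precisely G\^{a}teaux differentiability with the stated derivatives; combining with Lemma \ref{le 6.1} yields the representations $\partial_\xi Y_s^{t,x,\xi}(\eta)=\bar{E}[O_s^{t,x,P_\xi}(\bar\xi)\cdot\bar\eta]$, and likewise for $Z$ and $H$.

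The main obstacle is the convergence of the remainder $I^h$ to zero, specifically the mean-field terms: they involve products of the merely square-integrable derivative processes $\partial_x\widehat{Z}^{t,\cdot}$, $\widehat{Q}^{t,\widehat\xi}(\cdot)$ and $\int_K\widehat{R}^{t,\widehat\xi}(\cdot,e)l(e)\lambda(de)$ with differences of the bounded but only continuous coefficients $\partial_\mu f$, all under the nested expectation $\widehat{E}$. Justifying the dominated-convergence passage here, while keeping the norms uniformly controlled through (\ref{equ 6.19}), (\ref{1001}) and Proposition \ref{pro 6.1}, is the delicate point; the first-order setting remains manageable because $f\in C_b^{1,1}$, but the same mechanism pushed to second order is what makes the analysis of Section 8 genuinely hard.
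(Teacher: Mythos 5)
Your overall strategy is the one the paper follows: form the difference quotients, subtract the formally differentiated BSDE (\ref{equ 6.15}), estimate the resulting equation for $\Delta^h$ by the a priori bounds of the Appendix, and reduce everything to showing the remainder vanishes. Two points, however, need attention before the argument closes.

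First, the equation for $\Delta^h$ is \emph{not} of the form ``(bounded linear coefficients)$\cdot\Delta^h$ plus a remainder'' when $x$ is a fixed point: the driver contains the terms
$\widehat{E}\big[(\partial_\mu f)_i(\Pi_r^{t,x,\xi},P_{\Pi_r^{t,\xi}},\widehat{\Pi}_r^{t,\widehat{\xi}})\,\widehat{\Delta}^h_r\big]$,
where $\widehat{\Delta}^h$ is the \emph{diagonal} discrepancy, i.e.\ the difference quotient minus candidate derivative with $x$ replaced by $\widehat{\xi}$. This is a different process from $\Delta^h$, so it can neither be absorbed into the linear part nor be dismissed as a remainder without a prior bound. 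The paper resolves this with a two-pass argument: it first substitutes $x=\xi$ into the equation (passing from (\ref{BSDE(1)}) to (\ref{BSDE(2)})), which turns it into a genuine mean-field linear BSDE to which Corollary \ref{BSDE estimate} applies and yields the diagonal estimate $O(h^2)$ for the squared norms; only then does it return to the equation for general $x$, where the diagonal terms are now controlled data. Your phrase ``using that the diagonal process is differentiable in $\xi$'' presupposes exactly this step, so you must carry it out explicitly and in this order, otherwise the argument is circular.

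Second, you propose to send the remainder to zero by dominated convergence using only continuity of the first-order derivatives of $f$ and $\Phi$. Since the remainder pairs these coefficient differences with products of merely square-integrable processes ($\partial_x\widehat{Z}^{t,\cdot}$, $\widehat{Q}^{t,\widehat{\xi}}$, etc.), a pure dominated-convergence passage requires a uniform-integrability argument that you have not supplied. The paper avoids this entirely by exploiting that under (H6.1) the derivatives are \emph{Lipschitz}, not merely continuous, which combined with Proposition \ref{pro 4.3}, (\ref{equ 6.11}) and (\ref{1001}) gives the quantitative bounds $E[\|R^{(1)}_t(x,h)\|^2\,|\,\mathcal{F}_t]\le Ch^2(E[|\eta|^2])^2$ and the analogues (\ref{996}), (\ref{995}). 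You should use the Lipschitz property in the same way; it makes the delicate point you flag at the end dissolve into explicit estimates and, as a bonus, gives the rate $O(h)$ in norm rather than bare convergence. With these two adjustments your proof coincides with the paper's Step 1, and your concluding appeal to the linearity and continuity of $\mathcal{O}^{t,x,\xi}(\cdot)$, $\mathcal{Q}^{t,x,\xi}(\cdot)$, $\mathcal{R}^{t,x,\xi}(\cdot)$ from Lemma \ref{le 6.1} is exactly the paper's Step 2.
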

\begin{proof}
The proof is split into two steps.\\
\indent $\mathbf{Step\ 1.}$ We prove that the directional derivatives of $Y^{t,x,\xi},\ Z^{t,x,\xi},\ H^{t,x,\xi}$
in all direction $\eta\in L^2(\mathcal{F}_t)$ exist, and
$$
\begin{aligned}
&\mathcal{O}^{t,x,\xi}_\cdot(\eta)-\frac{1}{h}(Y_\cdot^{t,x,\xi+h\eta}-Y_\cdot^{t,x,\xi})\xrightarrow[h\rightarrow0]{{\cal S}_{\mathbb{F}}^2}0,\quad
\mathcal{Q}^{t,x,\xi}_\cdot(\eta)-\frac{1}{h}(Z_\cdot^{t,x,\xi+h\eta}-Z_\cdot^{t,x,\xi})\xrightarrow[h\rightarrow0]{\mathcal{H}_\mathbb{F}^2}0,\\
&\mathcal{R}^{t,x,\xi}_\cdot(\eta)-\frac{1}{h}(H_\cdot^{t,x,\xi+h\eta}-H_\cdot^{t,x,\xi})\xrightarrow[h\rightarrow0]{\mathcal{K}^2_\lambda}0.
\end{aligned}
$$
In fact, for all $s\in[t,T],$
\begin{equation}\label{m6.24}
\begin{aligned}
&\frac{1}{h}(Y_s^{t,x,\xi+h\eta}-Y_s^{t,x,\xi})-\mathcal{O}^{t,x,\xi}_s(\eta)
 = \frac{1}{h}\big(
\Phi(X_T^{t,x,\xi+h\eta})-\Phi(X_T^{t,x,\xi})\big)-\partial_x\Phi(X_T^{t,x,\xi})\mathcal{U}^{t,x,\xi}_T(\eta)\\
&\ +\frac{1}{h}\int_s^T\big(f(\Pi_r^{t,x,\xi+h\eta},P_{\Pi_r^{t,\xi+h\eta}}\big)
-f(\Pi_r^{t,x,\xi},P_{\Pi_r^{t,\xi}})\big)dr\\
&\ -\bigg\{
\int_s^T\big[
\partial_z f(\Pi_r^{t,x,\xi},P_{\Pi_r^{t,\xi}})\mathcal{Q}_r^{t,x,\xi}(\eta)+\partial_h f(\Pi_r^{t,x,\xi},P_{\Pi_r^{t,\xi}})\int_K\mathcal{R}_r^{t,x,\xi}(\eta,e)l(e)\lambda(de)\big]dr\\
&\ +\int_s^T\widehat{E}\big[(\partial_\mu f)_1(\Pi_r^{t,x,\xi},P_{\Pi_r^{t,\xi}},\widehat{\Pi}_r^{t,\widehat{\xi}})\partial_x\widehat{Z}_r^{t,
\widehat{\xi},P_\xi}\widehat{\eta}+(\partial_\mu f)_1(\Pi_r^{t,x,\xi},P_{\Pi_r^{t,\xi}},\widehat{\Pi}_r^{t,\widehat{\xi}})\widehat{\mathcal{Q}}_r^{t,
\widehat{\xi}}(\widehat{\eta})\big]dr\\
&\ +\int_s^T\widehat{E}\big[(\partial_\mu f)_2(\Pi_r^{t,x,\xi},P_{\Pi_r^{t,\xi}},\widehat{\Pi}_r^{t,\widehat{\xi},P_\xi})
\int_K\partial_x\widehat{H}_r^{t,\widehat{\xi},P_\xi}(e)\widehat{\eta}l(e)\lambda(de)\\
&\ +(\partial_\mu f)_2(\Pi_r^{t,x,\xi},P_{\Pi_r^{t,\xi}},\widehat{\Pi}_r^{t,\widehat{\xi}})\cdot
\int_K\widehat{\mathcal{\mathcal{R}}}_r^{t,\widehat{\xi}}(\widehat{\eta},e)l(e)\lambda(de)
\big]dr\bigg\} -\int_s^T\!\!\!\!\Big(\frac{Z^{t,x,\xi+h\eta}-Z_r^{t,x,\xi}}{h}-\mathcal{Q}_r^{t,x,\xi}(\eta)\Big)dB_r\\
&\ -\int_s^T\int_K\Big(\frac{H_r^{t,x,\xi+h\eta}(e)-H_r^{t,x,\xi}(e)}{h}-\mathcal{R}^{t,x,\xi}_r(\eta,e)\Big)N_\lambda(dr,de)\\
& =\ I_1(x)+I_2(x)-\int_s^T\Big(\frac{Z_r^{t,x,\xi+h\eta}-Z_r^{t,x,\xi}}{h}-\mathcal{Q}_r^{t,x,\xi}(\eta) \Big)dB_r\\
 &\ -\int_s^T\int_K\Big(
\frac{H_r^{t,x,\xi+h\eta}(e)-H_r^{t,x,\xi}(e)}{h}-\mathcal{R}^{t,x,\xi}_r(\eta,e)\Big)N_\lambda(dr,de),\ \hskip2cm \
\end{aligned}
\end{equation}
where
\begin{equation}\nonumber
\begin{aligned}
I_{1}(x):=&\ \frac{1}{h}\big(\Phi(X_T^{t,x,\xi+h\eta})-\Phi(X_T^{t,x,\xi})\big)-
\partial_x\Phi(X_T^{t,x,\xi})\mathcal{U}^{t,x,\xi}_T(\eta)\\
=&\ \int_0^1\partial_x\Phi(X_T^{t,x,\xi}+\rho(X_T^{t,x,\xi+h\eta}-X_T^{t,x,\xi}))d\rho\frac{X_T^{t,x,\xi+h\eta}-X_T^{t,x,\xi}}{h}
-\partial_x\Phi(X_T^{t,x,\xi}){\cal U}_T^{t,x,\xi}(\eta),
\end{aligned}
\end{equation}
and $I_2(x)$ is then defined by (\ref{m6.24}). Notice that
\[
\begin{split}
&I_1(x)=\frac{1}{h}\big(\Phi(X_T^{t,x,\xi+h\eta})-\Phi(X_T^{t,x,\xi})\big)-(\partial_x\Phi)(X_T^{t,x,
\xi})\mathcal{U}_T^{t,x,\xi}(\eta)\\
 &=\int_0^1(\partial_x\Phi)\big(X_T^{t,x,\xi}+\rho(X_T^{t,x,\xi+h\eta}-X_T^{t,x,\xi})\big)d\rho\big(\frac{1}{h}(X_T^{t,x,\xi+h\eta}-X_T^{t,x,\xi})
   \big)-(\partial_x\Phi)(X_T^{t,x,\xi})\cdot\mathcal{U}_T^{t,x,\xi}(\eta)\\
  &=\int_0^1\Big((\partial_x\Phi)\big(X_T^{t,x,\xi}+\rho(X_T^{t,x,\xi+h\eta}-X_T^{t,x,\xi})\big)-(\partial_x\Phi)(X_T^{t,x,\xi})\Big)
   d\rho\big(\frac{1}{h}(X_T^{t,x,\xi+h\eta}-X_T^{t,x,\xi})\big)\\
   &\quad+(\partial_x\Phi)(X_T^{t,x,\xi})\big(\frac{1}{h}(X_T^{t,x,\xi+h\eta}-X_T^{t,x,\xi})-\mathcal{U}_T^{t,x,\xi}(\eta)\big).
\end{split}
\]
Consequently, as $\partial_x\Phi$ is Lipschitz and bounded, and as $I_1(x)$ is independent of $\mathcal{F}_t$,
\[
E[I_1(x)^2|\mathcal{F}_t]\!=\!E[I_1(x)^2]\leq C\frac{1}{h^2}E[|X_T^{t,x,\xi+h\eta}-X_T^{t,x,\xi}|^4]\!+\!CE[|\frac{1}{h}(X_T^{t,x,\xi+h\eta}-X_T^{t,x,\xi})\!-\!\mathcal{U}_T^{t,x,\xi}(\eta)|^2].
\]
From Lemma \ref{le 3.1} we have
\[
E[|X_T^{t,x,\xi+h\eta}-X_T^{t,x,\xi}|^4]\leq CW_2(P_{\xi+h\eta},P_{\xi})^4\leq Ch^4(E[\eta^2])^2.
\]

\noindent On the other hand, from Proposition \ref{pro 5.3}, as
\[
\frac{1}{h}(X_T^{t,x,\xi+h\eta}-X_T^{t,x,\xi})-\mathcal{U}_T^{t,x,\xi}(\eta)
=\widehat{E}\Big[\int_0^1\Big(\partial_\mu X_T^{t,x,P_{\xi+\rho h\eta}}(\widehat{\xi}+\rho h\widehat{\eta})-\partial_\mu X_T^{t,x,P_{\xi}}(\widehat{\xi})\Big)d\rho\cdot\widehat{\eta}\Big],
\]
we have $E[|\frac{1}{h}(X_T^{t,x,\xi+h\eta}-X_T^{t,x,\xi})-\mathcal{U}_T^{t,x,\xi}(\eta)|^2]$
\[
\begin{split}
&\leq E\Big[\Big(\widehat{E}\big[\int_0^1|\partial_\mu X_T^{t,x,P_{\xi+\rho h\eta}}(\widehat{\xi}+\rho h\widehat{\eta})-\partial_\mu X_T^{t,x,P_{\xi}}(\widehat{\xi})|^2d\rho\big]\cdot\widehat{E}[|\widehat{\eta}|^2]\Big)\Big]\\
&\leq E[|{\eta}|^2]\int_0^1\widehat{E}\Big[E\big[|\partial_\mu X_T^{t,x,P_{\xi+\rho h\eta}}(y)-\partial_{\mu}X_T^{t,x,P_{\xi}}(y')|^2\big]\big|_{y=\widehat{\xi}+\rho h\widehat{\eta},\ y'=\widehat{\xi}}\Big]d\rho\\
&\leq CE[|\eta|^2]\int_0^1\widehat{E}\Big[\big(W_2(P_{\xi+\rho h\eta},P_{\xi})^2+|y-y'|^2\big)\big|_{y=\widehat{\xi}+\rho h\widehat{\eta},\ y'=\widehat{\xi}}\Big]d\rho\leq Ch^2\big(E[|\eta|^2]\big)^2.
\end{split}
\]
This shows that
\begin{equation}\label{(*11)}
E[I_1(x)^2|\mathcal{F}_t]=E[I_1(x)^2]\leq Ch^2(E[|\eta|^2])^2.
\end{equation}
\noindent We now consider $I_2(x)=\int_s^TI_2(r)dr$ with $I_2(r)=I_{2,1}(r)-I_{2,2}(r)$, where
\begin{equation}\label{(*21)}
\begin{split}
&I_{2,1}(r)=\frac{1}{h}(f(\Pi_r^{t,x,\xi+h\eta},P_{\Pi_r^{t,\xi+h\eta}})-f(\Pi_r^{t,x,\xi},P_{\Pi_r^{t,\xi}}));\\
&I_{2,2}(r)=(\partial_zf)(\Pi_r^{t,x,\xi},P_{\Pi_r^{t,\xi}}){\cal Q}_r^{t,x,\xi}(\eta)+(\partial_hf)(\Pi_r^{t,x,\xi},P_{\Pi_r^{t,\xi}})
\int_K{\cal R}_r^{t,x,\xi}(\eta,e)l(e)\lambda(de)\\
           &\ \ \ +\widehat{E}[(\partial_\mu f)_1(\Pi_r^{t,x,\xi},P_{\Pi_r^{t,\xi}},\widehat{\Pi}_r^{t,\widehat{\xi}})(\partial_x\widehat{Z}_r^{t,\widehat{\xi},P_{\xi}}
           \cdot\widehat{\eta}+\widehat{\cal Q}_r^{t,\widehat{\xi}}(\widehat{\eta}))]\\
           &\ \ \ +\widehat{E}[(\partial_\mu f)_2(\Pi_r^{t,x,\xi},P_{\Pi_r^{t,\xi}},\widehat{\Pi}_r^{t,\widehat{\xi}})(\int_K\partial_x\widehat{H}_r^{t,\widehat{\xi},P_{\xi}}(e)
           \widehat{\eta}l(e)\lambda(de)+\int_K\widehat{\cal R}_r^{t,\widehat{\xi}}(\widehat{\eta},e)l(e)\lambda(de))].
\end{split}
\end{equation}
We put
\[
\Pi_r^{t,x,\xi}(\eta,\rho):=\Pi_r^{t,x,\xi}+\rho(\Pi_r^{t,x,\xi+h\eta}-\Pi_r^{t,x,\xi}),\ \
\Pi_r^{t,\xi}(\eta,\rho):=\Pi_r^{t,\xi}+\rho(\Pi_r^{t,\xi+h\eta}-\Pi_r^{t,\xi}).
\]
Then, using the fact that $f\in C_b^{1,1}(\mathbb{R}^2\times P_2(\mathbb{R}^2))$, we have
\[
\begin{split}
&I_{2,1}(r)=\frac{1}{h}\big(f(\Pi_r^{t,x,\xi+h\eta},P_{\Pi_r^{t,\xi+h\eta}})-f(\Pi_r^{t,x,\xi},P_{\Pi_r^{t,\xi}})\big)\\
 &=\frac{1}{h}\int_0^1\partial_\rho\big(f(\Pi_r^{t,x,\xi}(\eta,\rho),P_{\Pi_r^{t,\xi}(\eta,\rho)})\big)d\rho\\
          &=\int_0^1\Big\{(\partial_zf)\big(\Pi_r^{t,x,\xi}(\eta,\rho),P_{\Pi_r^{t,\xi}(\eta,\rho)}\big)\big(\frac{1}{h}(Z_r^{t,x,\xi+h\eta}
          -Z_r^{t,x,\xi})\big)\\
          &\quad+(\partial_hf)\big(\Pi_r^{t,x,\xi}(\eta,\rho),P_{\Pi_r^{t,\xi}(\eta,\rho)}\big)\Big(\int_K\frac{1}{h}(H_r^{t,x,\xi+h\eta}(e)-H_r^{t,x,\xi}(e))
          l(e)\lambda(de)\Big)\Big\}d\rho\\
           &\quad+\int_0^1\Big\{\widehat{E}\big[(\partial_\mu f)_1(\Pi_r^{t,x,\xi}(\eta,\rho),P_{\Pi_r^{t,\xi}(\eta,\rho)},\widehat{\Pi}_r^{t,\widehat{\xi}}(\widehat{\eta},\rho))
          (\frac{1}{h}(\widehat{Z}_r^{t,\widehat{\xi}+h\widehat{\eta}}-\widehat{Z}_r^{t,\widehat{\xi}}))\big]\\
          &\quad+\widehat{E}\Big[(\partial_\mu f)_2(\Pi_r^{t,x,\xi}(\eta,\rho),P_{\Pi_r^{t,\xi}(\eta,\rho)},\widehat{\Pi}_r^{t,\widehat{\xi}}(\widehat{\eta},\rho))
          (\int_K\frac{1}{h}\big(\widehat{H}_r^{t,\widehat{\xi}+h\widehat{\eta}}(e)-\widehat{H}_r^{t,\widehat{\xi}}(e)\big)l(e)\lambda(de))\Big]\Big\}d\rho.
\end{split}
\]
From the Lipschitz property of the derivative of $f$ we get
\begin{equation}\label{(*31)}
\begin{split}
&I_{2,1}(r)=(\partial_zf)(\Pi_r^{t,x,\xi},P_{\Pi_r^{t,\xi}})(\frac{1}{h}(Z_r^{t,x,\xi+h\eta}-Z_r^{t,x,\xi}))\\
          &\quad+(\partial_hf)(\Pi_r^{t,x,\xi},P_{\Pi_r^{t,\xi}})(\int_K\frac{1}{h}(H_r^{t,x,\xi+h\eta}(e)-H_r^{t,x,\xi}(e))l(e)\lambda(de))\\
          &\quad+\widehat{E}[(\partial_\mu f)_1(\Pi_r^{t,x,\xi},P_{\Pi_r^{t,\xi}},\widehat{\Pi}_r^{t,\widehat{\xi}})(\frac{1}{h}(\widehat{Z}_r^{t,\widehat{\xi}+h\widehat{\eta}}-\widehat{Z}_r^{t,\widehat{\xi}}))]\\
          &\quad+\widehat{E}[(\partial_\mu f)_2(\Pi_r^{t,x,\xi},P_{\Pi_r^{t,\xi}},\widehat{\Pi}_r^{t,\widehat{\xi}})(\int_K\frac{1}{h}(\widehat{H}_r^{t,\widehat{\xi}
          +h\widehat{\eta}}(e)-\widehat{H}_r^{t,\widehat{\xi}}(e))l(e)\lambda(de))]+R_1(x,h)(r),
\end{split}
\end{equation}
where $R_1(x,h)(r)$ is defined in an obvious way. Also recall that $\displaystyle\Pi_r^{t,x,\xi}=(Z_r^{t,x,\xi},\int_KH_r^{t,x,\xi}(e)l(e)\lambda(de))$, $\displaystyle\Pi_r^{t,\xi}=(Z_r^{t,\xi},\int_KH_r^{t,\xi}(e)l(e)\lambda(de))$. Let us put $\displaystyle R_s^{(1)}(x,h)=\int_s^TR_1(x,h)(r)dr$, and $\displaystyle \parallel R_s^{(1)}(x,h)\parallel:=\int_s^T|R_1(x,h)(r)|dr.$ Then
\[
\begin{split}
& E[\parallel R_t^{(1)}(x,h)\parallel^2|\mathcal{F}_t] \leq CE\Big[\Big(\int_t^T\int_0^1\big(|\Pi_r^{t,x,\xi}(\eta,\rho)-\Pi_r^{t,x,\xi}|+W_2(P_{\Pi_r^{t,\xi}(\eta,\rho)},P_{\Pi_r^{t,\xi}})\big)\\
&\ \ \ \cdot\big(|\frac{1}{h}(Z_r^{t,x,\xi+h\eta}-Z_r^{t,x,\xi})|+|\int_K\frac{1}{h}(H_r^{t,x,\xi+h\eta}(e)-H_r^{t,x,\xi}(e))l(e)\lambda(de)|\big)d\rho dr\Big)^2\big]\\
&+CE\Big[\Big(\int_t^T\int_0^1\widehat{E}\Big[\big(|\Pi_r^{t,x,\xi}(\eta,\rho)-\Pi_r^{t,x,\xi}|+W_2(P_{\Pi_r^{t,\xi}(\eta,\rho)},P_{\Pi_r^{t,\xi}})
+|\widehat{\Pi}_r^{t,\widehat{\xi}}(\widehat{\eta},\rho)-\widehat{\Pi}_r^{t,\widehat{\xi}}|\big)\\
&\ \ \ \big(|\frac{1}{h}(\widehat{Z}_r^{t,\widehat{\xi}
+h\widehat{\eta}}-\widehat{Z}_r^{t,\widehat{\xi}})|+|\int_K\frac{1}{h}(\widehat{H}_r^{t,\widehat{\xi}+h\widehat{\eta}}(e)-\widehat{H}_r^{t,\widehat{\xi}}(e))l(e)\lambda(de)|\big)\Big]d\rho dr\Big)^2\Big] = I_{3,1}+I_{3,2}, \\
\end{split}
\]
where $\displaystyle I_{3,1}:= CE[(\int_t^T\int_0^1(|\Pi_r^{t,x,\xi}(\eta,\rho)-\Pi_r^{t,x,\xi}|+W_2(P_{\Pi_r^{t,\xi}(\eta,\rho)},P_{\Pi_r^{t,\xi}}))
(|\frac{1}{h}(Z_r^{t,x,\xi+h\eta}-Z_r^{t,x,\xi})|$

\noindent$\displaystyle +|\int_K\frac{1}{h}(H_r^{t,x,\xi+h\eta}(e)-H_r^{t,x,\xi}(e))l(e)\lambda(de)|)d\rho dr)^2],$ \
and

$\displaystyle
I_{3,2}:= CE[(\int_t^T\int_0^1\widehat{E}[(|\Pi_r^{t,x,\xi}(\eta,\rho)-\Pi_r^{t,x,\xi}|+W_2(P_{\Pi_r^{t,\xi}(\eta,\rho)},P_{\Pi_r^{t,\xi}})
+|\widehat{\Pi}_r^{t,\widehat{\xi}}(\widehat{\eta},\rho)-\widehat{\Pi}_r^{t,\widehat{\xi}}|)$

\noindent$\displaystyle (|\frac{1}{h}(\widehat{Z}_r^{t,\widehat{\xi}
+h\widehat{\eta}}-\widehat{Z}_r^{t,\widehat{\xi}})|+|\int_K\frac{1}{h}(\widehat{H}_r^{t,\widehat{\xi}+
h\widehat{\eta}}(e)-\widehat{H}_r^{t,\widehat{\xi}}(e))l(e)\lambda(de)|)]d\rho dr)^2].$\\

\noindent Thanks to Proposition \ref{pro 4.3} we get that, here using the notation $|\Pi_r^{t,x',\xi'}-\Pi_r^{t,x,\xi}|:=|Z_r^{t,x',\xi'}-Z_r^{t,x,\xi}|+\int_K|H_r^{t,x',\xi'}(e)-H_r^{t,x,\xi}(e)|l(e)\lambda(de)$ (similar to $|\Pi_r^{t,\xi'}-\Pi_r^{t,\xi}|$),
\[
\begin{aligned}
&E[(\int_t^T|\Pi_r^{t,x',\xi+h\eta}-\Pi_r^{t,x,\xi}|^2dr)^p]\leq\ C_p(|x-x'|^{2p}+W_2(P_{\xi+h\eta},P_{\xi})^{2p})\\
\leq&\ C_p(|x-x'|^{2p}+(|h|^2E[\eta^2])^p),\ p\geq1,
\end{aligned}
\]
i.e.,\hskip1cm\ ${\rm i)}\ \ E[(\int_t^T|\Pi_r^{t,x,\xi+h\eta}-\Pi_r^{t,x,\xi}|^2dr)^2]\leq Ch^4(E[\eta^2])^2;\\$
\[
\begin{aligned}
{\rm ii)}&\ \int_t^TW_2(P_{\Pi_r^{t,\xi}(\eta,\rho)},P_{\Pi_r^{t,\xi}})^2dr\\
&\ \leq E\int_t^T|\Pi_r^{t,\xi+h\eta}-\Pi_r^{t,\xi}|^2dr =E[E[\int_t^T|\Pi_r^{t,x',\xi+h\eta}-\Pi_r^{t,x,\xi}|^2dr]|_{x'=\xi+h\eta,\ x=\xi}]\\
&\ \leq CE[(|x'-x|^2+W_2(P_{\xi+h\eta},P_{\xi})^2)|_{x'=\xi+h\eta,\ x=\xi}]\leq Ch^2E[\eta^2];
\end{aligned}
\]
and, thus,
\[
\begin{split}
I_{3,1}\leq&\frac{C}{h^2}E[(\int_t^T|\Pi_r^{t,x,\xi+h\eta}-\Pi_r^{t,x,\xi}|^2dr)^2]\\
       &+\frac{C}{h^2}E[(\int_t^T\int_0^1W_2(P_{\Pi_r^{t,\xi}(\eta,\rho)},P_{\Pi_r^{t,\xi}})d\rho\cdot|\Pi_r^{t,x,\xi+h\eta}-\Pi_r^{t,x,\xi}|dr)^2]
       \leq Ch^2(E[|\eta|^2])^2.
\end{split}
\]
On the other hand, we have
\[
\begin{split}
I_{3,2}&\leq\frac{C}{h^2}E[(\int_t^T\widehat{E}[(|\Pi_r^{t,x,\xi+h\eta}-\Pi_r^{t,x,\xi}|+h(E[\eta^2])^{\frac{1}{2}}+|\widehat{\Pi}_r^{t,\widehat{\xi}
+h\widehat{\eta}}-\widehat{\Pi}_r^{t,\widehat{\xi}}|)|\widehat{\Pi}_r^{t,\widehat{\xi}+h\widehat{\eta}}-\widehat{\Pi}_r^{t,\widehat{\xi}}|]dr)^2]\\
       &\leq\frac{C}{h^2}\Big(\Big(E[\int_t^T|\Pi_r^{t,x,\xi+h\eta}-\Pi_r^{t,x,\xi}|^2dr]\Big)^2+(h^2E[\eta^2])\widehat{E}[\int_t^T|\widehat{\Pi}_r^{t,\widehat{\xi}
       +h\widehat{\eta}}-\widehat{\Pi}_r^{t,\widehat{\xi}}|^2dr]\\
       &\ \ \ +(\widehat{E}[\int_t^T|\widehat{\Pi}_r^{t,\widehat{\xi}+h\widehat{\eta}}-\widehat{\Pi}_r^{t,\widehat{\xi}}|^2dr])^2\Big)\leq Ch^2(E[\eta^2])^2.
\end{split}
\]
From above we get that \begin{equation}\label{997}E[\parallel R_t^{(1)}(x,h)\parallel^2|\mathcal{F}_t]\leq Ch^2(E[|\eta|^2])^2.\end{equation}

\noindent We remark that from (\ref{equ 6.11})
\begin{equation}\label{996}
\begin{split}
&\int_t^T(\widehat{E}[|\frac{1}{h}(\widehat{Z}_r^{t,\widehat{\xi}+h\widehat{\eta}}-\widehat{Z}_r^{t,\widehat{\xi}})-(\partial_x\widehat{Z}_r^{t,\widehat{\xi},P_{\xi}}\cdot\widehat{\eta}+\frac{1}{h}(Z_r^{t,\widehat{\xi},P_{\xi+h\eta}}-Z_r^{t,\widehat{\xi},P_{\xi}}))|])^2dr\\
&=\int_t^T(\widehat{E}[|\int_0^1(\partial_x\widehat{Z}_r^{t,\widehat{\xi}+h\rho\widehat{\eta},P_{\xi+h\eta}}-\partial_x\widehat{Z}_r^{t,\widehat{\xi},P_{\xi}})d\rho\cdot\widehat{\eta}|])^2dr\\
&\leq\int_0^1\widehat{E}[\int_t^T|\partial_x\widehat{Z}_r^{t,\widehat{\xi}+h\rho\widehat{\eta},P_{\xi+h\eta}}-\partial_x\widehat{Z}_r^{t,\widehat{\xi},P_{\xi}}|^2dr]d\rho\cdot\widehat{E}[|\widehat{\eta}|^2]\\
&\leq CE[|\eta|^2](h^2\widehat{E}[|\widehat{\eta}|^2]+W_2(P_{\xi+h\eta},P_{\xi})^2)\leq C(E[|\eta|^2])^2\cdot h^2,
\end{split}
\end{equation}
and, analogously,
\begin{equation}\label{995}
\begin{split}
&\int_t^T\big(\widehat{E}\big[\big|\int_K\big\{\frac{1}{h}(\widehat{H}_r^{t,\widehat{\xi}+h\widehat{\eta},P_{\xi+h\eta}}(e)-\widehat{H}_r^{t,\widehat{\xi},P_{\xi}}(e))-(\partial_x\widehat{H}_r^{t,\widehat{\xi},P_{\xi}}(e))\cdot\widehat{\eta}\\
&\qquad+\frac{1}{h}(\widehat{H}_r^{t,\widehat{\xi},P_{\xi+h\eta}}(e)-\widehat{H}_r^{t,\widehat{\xi},P_{\xi}}(e))\big\}l(e)\lambda(de)\big|\big]\big)^2dr\leq C(E[|\eta|^2])^2\cdot h^2.
\end{split}
\end{equation}
\noindent Summarizing our above estimates we have from (\ref{m6.24}), (\ref{(*11)}), (\ref{(*21)}) and (\ref{(*31)})
\begin{equation*}
\begin{split}
&\!\!\!\! \ \ \ \ \ \frac{1}{h}(Y_s^{t,x,\xi+h\eta}-Y_s^{t,x,\xi})-{\cal O}_s^{t,x,\xi}(\eta)\quad\quad\quad\\
&\!\!\!\! \ =I_1(x)+I_2(x)-\int_s^T\Big(\frac{1}{h}(Z_r^{t,x,\xi+h\eta}-Z_r^{t,x,\xi})-{\cal Q}_r^{t,x,\xi}(\eta)\Big)dB_r\\
&  \  -\int_s^T\int_K\Big(\frac{1}{h}\big(H_r^{t,x,\xi+h\eta}(e)-H_r^{t,x,\xi}(e)\big)-{\cal R}_r^{t,x,\xi}(\eta,e)\Big)N_{\lambda}(ds,de)\quad\quad\quad\quad\quad\quad\quad\quad\quad\\
&\!\!\!\!  =I_1(x)+\int_s^T\Big\{(\partial_zf)(\Pi_r^{t,x,\xi},P_{\Pi_r^{t,\xi}})\big(\frac{1}{h}(Z_r^{t,x,\xi+h\eta}-Z_r^{t,x,\xi})-{\cal Q}_r^{t,x,\xi}(\eta)\big)\\
&\!\!\!\! \qquad+(\partial_hf)(\Pi_r^{t,x,\xi},P_{\Pi_r^{t,\xi}})\Big(\int_K\big(\frac{1}{h}(H_r^{t,x,\xi+h\eta}(e)-H_r^{t,x,\xi}(e))-{\cal R}_r^{t,x,\xi}(\eta,e)\big)
l(e)\lambda(de)\Big)\\
&\!\!\!\! \qquad+\widehat{E}\big[(\partial_\mu f)_1(\Pi_r^{t,x,\xi},P_{\Pi_r^{t,\xi}},\widehat{\Pi}_r^{t,\widehat{\xi}})\big(\frac{1}{h}(\widehat{Z}_r^{t,\widehat{\xi},P_{\xi+h\eta}}-
\widehat{Z}_r^{t,\widehat{\xi},P_{\xi}})-\widehat{\cal Q}_r^{t,\widehat{\xi}}(\widehat{\eta})\big)\big]\\
\end{split}
\end{equation*}
\begin{equation}\label{BSDE(1)}
\begin{split}
&\!\!\!\! +\widehat{E}\big[(\partial_\mu f)_2(\Pi_r^{t,x,\xi},P_{\Pi_r^{t,\xi}},\widehat{\Pi}_r^{t,\widehat{\xi}})\big(\int_K(\frac{1}{h}(\widehat{H}_r^{t,\widehat{\xi},P_{\xi+h\eta}}(e)
-\widehat{H}_r^{t,\widehat{\xi},P_{\xi}}(e))-\widehat{\cal R}_r^{t,\widehat{\xi}}(\widehat{\eta},e))l(e)\lambda(de)\big)\big]\Big\}dr\\
&\!\!\!\! +\int_s^TR^2_r(x,h)dr-\int_s^T\big(\frac{1}{h}(Z_r^{t,x,\xi+h\eta}-Z_r^{t,x,\xi})-{\cal Q}_r^{t,x,\xi}(\eta)\big)dB_r\\
&\!\!\!\!  -\int_s^T\int_K\big(\frac{1}{h}(H_r^{t,x,\xi+h\eta}(e)-H_r^{t,x,\xi}(e))-{\cal R}_r^{t,x,\xi}(\eta,e)\big)N_\lambda(dr,de),\ s\in[t,T].
\end{split}
\end{equation}
Substituting in (\ref{BSDE(1)}) for $x$ the variable $\xi$ we get
\begin{equation}\label{BSDE(2)}
\begin{split}
&\!\!\!\!\ \ \ \ \frac{1}{h}(Y_s^{t,\xi,P_{\xi+h\eta}}-Y_s^{t,\xi,P_{\xi}})-{\cal O}_s^{t,\xi}(\eta)\\
&\!\!\!\!=I_1(\xi)+\int_s^T\Big\{(\partial_zf)(\Pi_r^{t,\xi},P_{\Pi_r^{t,\xi}})\big(\frac{1}{h}(Z_r^{t,\xi,P_{\xi+h\eta}}-Z_r^{t,\xi,P_{\xi}})-{\cal Q}_r^{t,\xi}(\eta)\big)\\
&\!\!\!\! +(\partial_hf)(\Pi_r^{t,\xi},P_{\Pi_r^{t,\xi}})\Big(\int_K\big(\frac{1}{h}(H_r^{t,\xi,P_{\xi+h\eta}}(e)-H_r^{t,\xi,P_{\xi}}(e))
-{\cal R}_r^{t,\xi}(\eta,e)\big)l(e)\lambda(de)\Big)\\
&\!\!\!\! +\widehat{E}\big[(\partial_\mu f)_1(\Pi_r^{t,\xi},P_{\Pi_r^{t,\xi}},\widehat{\Pi}_r^{t,\widehat{\xi}})\big(\frac{1}{h}(\widehat{Z}_r^{t,\widehat{\xi},P_{\xi+h\eta}}
-\widehat{Z}_r^{t,\widehat{\xi},P_{\xi}})-\widehat{\cal Q}_r^{t,\widehat{\xi}}(\widehat{\eta})\big)\big]\\
&\!\!\!\! +\widehat{E}\Big[(\partial_\mu f)_2(\Pi_r^{t,\xi},P_{\Pi_r^{t,\xi}},\widehat{\Pi}_r^{t,\widehat{\xi}})\Big(\int_K\big(\frac{1}{h}\big(\widehat{H}_r^{t,\widehat{\xi},P_{\xi+h\eta}}(e)
-\widehat{H}_r^{t,\widehat{\xi},P_{\xi}}(e)\big)-\widehat{\cal R}_r^{t,\widehat{\xi}}(\widehat{\eta},e)\big)l(e)\lambda(de)\Big)\Big]\Big\}dr\\
&\!\!\!\!+\int_s^TR^2_r(\xi,h)dr-\int_s^T\big(\frac{1}{h}(Z_r^{t,\xi,P_{\xi+h\eta}}-Z_r^{t,\xi,P_{\xi}})-{\cal Q}_r^{t,\xi}(\eta)\big)dB_r\\
&\!\!\!\!-\int_s^T\int_K\big(\frac{1}{h}(H_r^{t,\xi,P_{\xi+h\eta}}(e)-H_r^{t,\xi,P_{\xi}}(e))-{\cal R}_r^{t,\xi}(\eta,e)\big)N_\lambda(dr,de),\  s\in[t,T].
\end{split}
\end{equation}
Notice that we have $\displaystyle E[(\int_t^T|R^2_r(\xi,h)|dr)^2]=E[E[(\int_t^T|R^2_r(x,h)|dr)^2]|_{x=\xi}]\leq Ch^2(E[|\eta|^2])^2$ from (\ref{997}), (\ref{996}), (\ref{995}); and $E[|I_1(\xi)|^2]\leq Ch^2(E[|\eta|^2])^2$ from (\ref{(*11)}).

Therefore, applying Corollary \ref{BSDE estimate} to BSDE (\ref{BSDE(2)}) we get that
\[
\begin{split}
&E[\sup_{s\in[t,T]}|\frac{1}{h}(Y_s^{t,\xi,P_{\xi+h\eta}}-Y_s^{t,\xi,P_{\xi}})-{\cal O}_s^{t,\xi}(\eta)|^2]
+E[\int_t^T\Big(|\frac{1}{h}(Z_r^{t,\xi,P_{\xi+h\eta}}-Z_r^{t,\xi,P_{\xi}})-{\cal Q}_r^{t,\xi}(\eta)|^2\\
&+\int_K|\frac{1}{h}(H_r^{t,\xi,P_{\xi+h\eta}}(e)-H_r^{t,\xi,P_{\xi}}(e))-{\cal R}_r^{t,\xi}(\eta,e)|^2\lambda(de)\Big)dr]
\leq C(E[|\eta|^2])^2\cdot h^2.
\end{split}
\]
This latter estimate now allows to deduce from (\ref{BSDE(1)}) by using Corollary \ref{BSDE estimate} that
\begin{equation}\label{994}
\begin{split}
&E[\sup_{s\in[t,T]}|\frac{1}{h}(Y_s^{t,x,P_{\xi+h\eta}}-Y_s^{t,x,P_{\xi}})-{\cal O}_s^{t,x,\xi}(\eta)|^2]
+E[\int_t^T(|\frac{1}{h}(Z_s^{t,x,P_{\xi+h\eta}}-Z_s^{t,x,P_{\xi}})-{\cal Q}_s^{t,x,\xi}(\eta)|^2\\
&+\int_K|\frac{1}{h}(H_s^{t,x,P_{\xi+h\eta}}(e)-H_s^{t,x,P_{\xi}}(e))-{\cal R}_s^{t,x,P_{\xi}}(\eta,e)|^2\lambda(de))ds]
\leq C(E[|\eta|^2])^2\cdot h^2.
\end{split}
\end{equation}

$\mathbf{Step\ 2.}$ In Step 1 we have proved that the directional derivatives of $Y^{t,x,\xi},\ Z^{t,x,\xi},\ H^{t,x,\xi}$
in all direction $\eta\in L^2(\mathcal{F}_t)$ exist and  the directional directives
$\partial_\xi Y^{t,x,\xi}_s(\eta),\  \partial_\xi Z^{t,x,\xi}_s(\eta),\   \partial_\xi H^{t,x,\xi}_s(\eta)$ coincide with
$\mathcal{O}_s^{t,x,\xi}(\eta),\ \mathcal{Q}_s^{t,x,\xi}(\eta),\ \mathcal{R}_s^{t,x,\xi}(\eta)$.
Recall that $\mathcal{O}_s^{t,x,\xi}(\cdot),\ \mathcal{Q}_s^{t,x,\xi}(\cdot),\ \mathcal{R}_s^{t,x,\xi}(\cdot)$ are
linear and continuous mappings. Consequently, $Y^{t,x,\xi},\ Z^{t,x,\xi},\ H^{t,x,\xi}$ as functionals of $\xi$
are G\^{a}teaux differentiable, and furthermore, from Lemma \ref{le 6.1} the G\^{a}teaux derivatives can be
characterized by

$$
\begin{aligned}
&\partial_\xi Y^{t,x,\xi}_s(\eta)=\mathcal{O}^{t,x,\xi}_s(\eta)=\bar{E}[O^{t,x,P_\xi}_s(\bar{\xi})\cdot\bar{\eta}],\ \mbox{P-a.s.},\ s\in[t,T],\\
&\partial_\xi Z^{t,x,\xi}_s(\eta)=\mathcal{Q}^{t,x,\xi}_s(\eta)=\bar{E}[Q^{t,x,P_\xi}_s(\bar{\xi})\cdot\bar{\eta}],\ \mbox{dsdP-a.e.},\\
&\partial_\xi H^{t,x,\xi}_s(\eta)=\mathcal{R}^{t,x,\xi}_s(\eta)=\bar{E}[R^{t,x,P_\xi}_s(\bar{\xi})\cdot\bar{\eta}],\ \mbox{dsd}\lambda\mbox{dP-a.e.}
\end{aligned}
$$
The proof is complete.
\end{proof}

In order to prove $Y^{t,x,\xi},\ Z^{t,x,\xi}, H^{t,x,\xi}$ are Fr\'{e}chet
differentiable, we want to show
$$
\begin{aligned}
&L^2(\mathcal{F}_t)\ni\xi\mapsto \mathcal{O}_s^{t,x,\xi}\in L(L^2(\mathcal{F}_t),L^2(\mathcal{F}_s)),\ \
 L^2(\mathcal{F}_t)\ni\xi\mapsto \mathcal{Q}^{t,x,\xi}=(\mathcal{Q}_s^{t,x,\xi})\in L(L^2(\mathcal{F}_t),{\cal H}_{\mathbb{F}}^2(t,T)),\\
&L^2(\mathcal{F}_t)\ni\xi\mapsto \mathcal{R}^{t,x,\xi}=(\mathcal{R}_s^{t,x,\xi})\in L(L^2(\mathcal{F}_t),{\cal K}_\lambda^2(t,T))
\end{aligned}
$$
are continuous.
\begin{lemma}\label{le 6.3}
Under the Assumptions (H5.1) and (H6.1), for all $t\in[0,T], \ x\in \mathbb{R},$ the mappings
$\partial_\xi Y_s^{t,x,\cdot}=\mathcal{O}_s^{t,x,\cdot}\in L(L^2({\cal F}_t), L^2({\cal F}_s)),\
(\partial_\xi Z_s^{t,x,\cdot})=(\mathcal{Q}_s^{t,x,\cdot})\in L(L^2({\cal F}_t), {\cal H}_{\mathbb{F}}^2(t, T)),$ and $
(\partial_\xi H_s^{t,x,\cdot})=(\mathcal{R}_s^{t,x,\cdot})\in L(L^2(\mathcal{F}_t),{\cal K}_\lambda^2(t,T))$
as the functional of $\xi$ are continuous.
\end{lemma}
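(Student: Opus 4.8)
The plan is to deduce the asserted continuity directly from the Lipschitz estimate (\ref{999}) of Proposition \ref{pro 6.1} together with the representation furnished by Lemma \ref{le 6.1}; in fact I expect to obtain something stronger than continuity, namely that each of $\xi\mapsto\mathcal{O}^{t,x,\xi}$, $\xi\mapsto\mathcal{Q}^{t,x,\xi}$, $\xi\mapsto\mathcal{R}^{t,x,\xi}$ is \emph{Lipschitz} on $L^2(\mathcal{F}_t)$. Fix $t,x$ and $\xi,\xi'\in L^2(\mathcal{F}_t)$, and fix a direction $\eta\in L^2(\mathcal{F}_t)$. The first step is to choose a convenient coupling: let $(\overline{\xi},\overline{\xi'},\overline{\eta})$, defined on $(\overline{\Omega},\overline{\mathcal{F}},\overline{P})$, be an independent copy of the triple $(\xi,\xi',\eta)$. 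Since the representations in Lemma \ref{le 6.1} depend only on the marginal pairs $(\overline{\xi},\overline{\eta})\sim(\xi,\eta)$ and $(\overline{\xi'},\overline{\eta})\sim(\xi',\eta)$, this single copy serves simultaneously for $\mathcal{O}^{t,x,\xi}_s(\eta)=\overline{E}[O_s^{t,x,P_\xi}(\overline{\xi})\cdot\overline{\eta}]$ and for $\mathcal{O}^{t,x,\xi'}_s(\eta)=\overline{E}[O_s^{t,x,P_{\xi'}}(\overline{\xi'})\cdot\overline{\eta}]$, and likewise for $\mathcal{Q}$ and $\mathcal{R}$, so that both representations can be compared on one probability space.

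Next I would estimate the $\mathcal{O}$-component. Subtracting the two representations and applying the Cauchy--Schwarz inequality with respect to $\overline{E}$ gives, for every $s\in[t,T]$,
\begin{equation*}
|\mathcal{O}^{t,x,\xi}_s(\eta)-\mathcal{O}^{t,x,\xi'}_s(\eta)|^2\leq \overline{E}\big[|O_s^{t,x,P_\xi}(\overline{\xi})-O_s^{t,x,P_{\xi'}}(\overline{\xi'})|^2\big]\cdot\overline{E}[|\overline{\eta}|^2].
\end{equation*}
Taking $\sup_{s\in[t,T]}$ (pulling it inside $\overline{E}$) and then expectation $E[\cdot]$ over the original space, using Fubini and conditioning on $\overline{\xi}=y$, $\overline{\xi'}=y'$ (legitimate because $O^{t,x,P_\xi}(y)$ lives on $(\Omega,\mathcal{F},P)$ and $(\overline{\xi},\overline{\xi'})$ is independent of it), and finally invoking (\ref{999}) with $p=1$ and $x=x'$, I obtain
\begin{equation*}
E\big[\sup_{s\in[t,T]}|\mathcal{O}^{t,x,\xi}_s(\eta)-\mathcal{O}^{t,x,\xi'}_s(\eta)|^2\big]\leq C\,E[|\eta|^2]\,\overline{E}\big[|\overline{\xi}-\overline{\xi'}|^2+W_2(P_\xi,P_{\xi'})^2\big]\leq C\,E[|\eta|^2]\,E[|\xi-\xi'|^2],
\end{equation*}
where I used $\overline{E}[|\overline{\xi}-\overline{\xi'}|^2]=E[|\xi-\xi'|^2]$ and $W_2(P_\xi,P_{\xi'})^2\leq E[|\xi-\xi'|^2]$. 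This yields $|\mathcal{O}^{t,x,\xi}_s-\mathcal{O}^{t,x,\xi'}_s|_{L(L^2(\mathcal{F}_t),L^2(\mathcal{F}_s))}\leq C|\xi-\xi'|_{L^2}$, which is the desired continuity of $\partial_\xi Y^{t,x,\cdot}$.

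For the $\mathcal{Q}$- and $\mathcal{R}$-components the argument is identical, except that the target norm now involves integration in time (for $\mathcal{H}^2_{\mathbb{F}}(t,T)$) and additionally in $e$ against $\lambda$ (for $\mathcal{K}^2_\lambda(t,T)$). Thus I would apply Cauchy--Schwarz in the same way to $\mathcal{Q}^{t,x,\xi}_s(\eta)-\mathcal{Q}^{t,x,\xi'}_s(\eta)$ and to $\mathcal{R}^{t,x,\xi}_s(\eta,e)-\mathcal{R}^{t,x,\xi'}_s(\eta,e)$, take $E[\int_t^T\cdot\,ds]$ and $E[\int_t^T\int_K\cdot\,\lambda(de)ds]$ respectively, and then use the $Q$- and $R$-terms appearing on the left-hand side of (\ref{999}) (again with $p=1$, $x=x'$) to bound everything by $C\,E[|\eta|^2]\,E[|\xi-\xi'|^2]$. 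These give $|\mathcal{Q}^{t,x,\xi}-\mathcal{Q}^{t,x,\xi'}|_{L(L^2,\mathcal{H}^2_{\mathbb{F}})}+|\mathcal{R}^{t,x,\xi}-\mathcal{R}^{t,x,\xi'}|_{L(L^2,\mathcal{K}^2_\lambda)}\leq C|\xi-\xi'|_{L^2}$, completing the proof. I do not anticipate a serious obstacle here, since the genuine analytic work has already been carried out in Proposition \ref{pro 6.1}; the only points demanding care are the choice of the joint coupling $(\overline{\xi},\overline{\xi'},\overline{\eta})$ so that both representation formulas are comparable on a single probability space, and the bookkeeping of the three distinct operator-norm topologies.
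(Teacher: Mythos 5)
Your proposal is correct and follows essentially the same route as the paper's own proof: represent the G\^{a}teaux derivatives via Lemma \ref{le 6.1} with a joint independent copy of $(\xi,\xi',\eta)$, apply Cauchy--Schwarz and Fubini, and invoke the Lipschitz estimate of Proposition \ref{pro 6.1} to get an operator-norm bound of order $|\xi-\xi'|_{L^2}$. The only cosmetic difference is that the paper writes out the argument for the $\mathcal{O}$-component alone and declares the $\mathcal{Q}$- and $\mathcal{R}$-components analogous, whereas you carry all three through explicitly.
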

\begin{proof}
We only prove that $\partial_\xi Y_s^{t,x,\xi}=\mathcal{O}_s^{t,x,\xi},\ s\in[t,T]$, is
continuous with respect to $\xi$. The continuity of $(\partial_\xi Z_s^{t,x,\cdot})=(\mathcal{Q}_s^{t,x,\cdot}),$ and $
(\partial_\xi H_s^{t,x,\cdot})=(\mathcal{R}_s^{t,x,\cdot})$ can be proved with a similar argument. From (\ref{equ 6.23}) we have
$$
\begin{aligned}
&|
\partial_\xi Y_s^{t,x,\xi}-\partial_\xi Y_s^{t,x,\xi'}
|^2_{L(L^2(\mathcal{F}_t),L^2(\mathcal{F}_s))}
=
\mathop{\rm sup}
\limits_{\eta\in L^2(\mathcal{F}_t),|\eta|_{L^2}\leq1}
|
\partial_\xi Y_s^{t,x,\xi}(\eta)-\partial_\xi Y_s^{t,x,\xi'}(\eta)|^2_{L^2}\\
&=
\mathop{\rm sup}
\limits_{\stackrel{\eta\in L^2(\mathcal{F}_t)}{|\eta|_{L^2}\leq1}}
E[
|\partial_\xi Y_s^{t,x,\xi}(\eta)-\partial_\xi Y_s^{t,x,\xi'}(\eta)|^2]
=
\mathop{\rm sup}
\limits_{\stackrel{\eta\in L^2(\mathcal{F}_t)}{|\eta|_{L^2}\leq1}}
E[
|\bar{E}[(O_s^{t,x,P_\xi}(\bar{\xi})-O_s^{t,x,P_{\xi'}}(\bar{\xi}'))\cdot\bar{\eta}]|^2
]\\
&\leq
\mathop{\rm sup}
\limits_{\stackrel{\eta\in L^2(\mathcal{F}_t)}{|\eta|_{L^2}\leq1}}
E[
(\bar{E}[|O_s^{t,x,P_\xi}(\bar{\xi})-O_s^{t,x,P_{\xi'}}(\bar{\xi}')|^2])\cdot
(\bar{E}|\bar{\eta}|^2)
]\\
&\leq
E[
\bar{E}[|O_s^{t,x,P_\xi}(\bar{\xi})-O_s^{t,x,P_{\xi'}}(\bar{\xi}')|^2]
]\leq
\bar{E}[E[
|O_s^{t,x,P_\xi}(y)-O_s^{t,x,P_{\xi'}}(y')|^2]
\big|_{y=\bar{\xi},\ y'=\bar{\xi}'}
]\\
&\leq C
(\bar{E}|\bar{\xi}-\bar{\xi}'|^2
+W_2(P_\xi,P_{\xi'})^2)\leq 2C E|\xi-\xi'|^2,\ t\leq s\leq T.
\end{aligned}
$$
\end{proof}

So far, combining the Lemmas \ref{le 6.1}, \ref{le 6.2} and \ref{le 6.3},
Theorem \ref{th 6.2} has been proved. As shown in Section 5, $(O^{t,x,P_\xi},Q^{t,x,P_\xi},R^{t,x,P_\xi})$
are the derivatives of $(Y^{t,x,P_\xi}, Z^{t,x,P_\xi}, H^{t,x,P_\xi})$
with respect to the  measure  $P_\xi$, i.e.,
$\partial_\mu Y_s^{t,x,P_\xi}(y):= O_s^{t,x,P_\xi}(y),\
\partial_\mu Z_s^{t,x,P_\xi}(y):=  Q_s^{t,x,P_\xi}(y),\ \partial_\mu H_s^{t,x,P_\xi}(y):=  R_s^{t,x,P_\xi}(y),\ s\in[t,T]. $
As a direct result of (\ref{1002}) and Proposition \ref{pro 6.1} we have
\begin{proposition}\label{pro 6.2}
For $p\geq2$, there exists a constant $C_p>0$ only depending on the bounds and Lipschitz constants of the coefficients, such that for all $t\in[0,T],\ x,\ \bar{x}\in \mathbb{R}^d,\ y,\ \bar{y}\in\mathbb{R}^d,$
$P_\xi,\ P_{\bar{\xi}}\in \mathcal{P}_2(\mathbb{R}^d),$
\begin{equation}\label{equ 6.26}
\begin{aligned}
&\mathrm{i)}\ E[\sup\limits_{s\in[t,T]}|\partial_\mu Y_s^{t,x,P_\xi}(y)|^p+(\int_t^T|\partial_\mu Z_s^{t,x,P_\xi}(y)|^2ds)^{p/2}
+(\int_t^T\int_K|\partial_\mu H_s^{t,x,P_\xi}(y)|^2\lambda(de)ds)^{p/2}]\leqslant C_p,\\
&\mathrm{ii)}\ E[\sup\limits_{s\in[t,T]}|\partial_\mu Y_s^{t,x,P_\xi}(y)-\partial_\mu Y_s^{t,\bar{x},P_{\bar{\xi}}}(\bar{y})|^p
+(\int_t^T|\partial_\mu Z_s^{t,x,P_\xi}(y)-\partial_\mu Z_s^{t,\bar{x},P_{\bar{\xi}}}(\bar{y})|^2ds)^{p/2}\\
&\quad+(\int_t^T\int_K|\partial_\mu H_s^{t,x,P_\xi}(y)-\partial_\mu H_s^{t,\bar{x},P_{\bar{\xi}}}(\bar{y})|^2\lambda(de)ds)^{p/2}]
 \leq C_p(|x-\bar{x}|^p+|y-\bar{y}|^p+W_2(P_\xi,P_{\bar{\xi}})^p).
\end{aligned}
\end{equation}
\end{proposition}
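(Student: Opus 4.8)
The plan is to recognize that Proposition \ref{pro 6.2} is not a fresh estimate but an immediate corollary of two results already in hand, once we invoke the identification of the measure derivatives with the solution of the linear BSDE studied above, namely $\partial_\mu Y_s^{t,x,P_\xi}(y)=O_s^{t,x,P_\xi}(y)$, $\partial_\mu Z_s^{t,x,P_\xi}(y)=Q_s^{t,x,P_\xi}(y)$ and $\partial_\mu H_s^{t,x,P_\xi}(y)=R_s^{t,x,P_\xi}(y)$, where $(O^{t,x,P_\xi}(y),Q^{t,x,P_\xi}(y),R^{t,x,P_\xi}(y))$ is the unique solution of \eqref{equ 6.13} (in the reduced form \eqref{equ 6.17}). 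Consequently no new BSDE analysis is needed: I only have to transcribe \eqref{1002} and \eqref{999} into the $\partial_\mu$-notation and reconcile the exponents.

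For the boundedness assertion (i), I would simply rewrite \eqref{1002}, which was obtained by applying the a priori estimate of Theorem 10.3 to \eqref{equ 6.17}, the driver having first been controlled through \eqref{equ 6.19}. Since the constant $C_p$ there depends only on the bounds and Lipschitz constants of the coefficients, it is uniform in $(t,x,y,P_\xi)$, and this is exactly (i).

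For the continuity estimate (ii), I would appeal directly to Proposition \ref{pro 6.1}, whose conclusion \eqref{999} was established for every $p\ge 1$, with the supremum term carrying exponent $2p$ and the two integral terms carrying exponent $p$, and with the $\mathcal{P}_2$-dependence already expressed through $W_2(P_\xi,P_{\xi'})^{2p}$ (the infimum over couplings $P_\vartheta=P_\xi$, $P_{\vartheta'}=P_{\xi'}$ having been taken in passing from \eqref{998} to \eqref{999}). Since $p\ge 2$ in Proposition \ref{pro 6.2}, I apply \eqref{999} with its exponent index replaced by $p/2\ (\ge 1)$; after substituting $(O,Q,R)=(\partial_\mu Y,\partial_\mu Z,\partial_\mu H)$ the resulting inequality is verbatim (ii).

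The genuinely hard work lies entirely upstream, in the proof of Proposition \ref{pro 6.1}, where the square-integrable-in-time first order derivatives of $Z$ and $H$ had to be handled; at this stage there is essentially no obstacle. The only points demanding care are purely bookkeeping: confirming that the admissible range $p\ge 1$ in \eqref{999} covers the reindexed exponent $p/2$ when $p\ge 2$, and recalling that the reduction to $d=1$ with the special form of $f$ used in Lemmas \ref{le 6.1}--\ref{le 6.3} and Proposition \ref{pro 6.1} was made without loss of generality, so that \eqref{1002} and \eqref{999} apply to the full system \eqref{equ 6.13} and hence deliver (i) and (ii) for $x,y\in\mathbb{R}^d$.
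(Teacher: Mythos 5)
Your proposal matches the paper exactly: the paper states Proposition \ref{pro 6.2} as ``a direct result of (\ref{1002}) and Proposition \ref{pro 6.1}'' after the identification $(\partial_\mu Y^{t,x,P_\xi}(y),\partial_\mu Z^{t,x,P_\xi}(y),\partial_\mu H^{t,x,P_\xi}(y))=(O^{t,x,P_\xi}(y),Q^{t,x,P_\xi}(y),R^{t,x,P_\xi}(y))$, which is precisely your route, including the reindexing of the exponent in \eqref{999} from $2p$ to $p$ with $p/2\ge 1$.
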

\section{{\protect \large {Second order derivatives of $X^{t,x,P_\xi}$}}}

In this section we investigate the second order derivatives of $X^{t,x,P_\xi}$. For
this we first give the following definition.
\begin{definition}\label{*def}
We say that $g\in C_b^{2,1}(\mathbb{R}^d\times{\cal P}_2(\mathbb{R}^d))$, if $g\in C_b^{1,1}(\mathbb{R}^d\times{\cal P}_2(\mathbb{R}^d))$ is such that \\
{\rm i)} For all $\mu\in {\cal P}_2(\mathbb{R}^d) $, $\partial_xg(\cdot,\mu)\in C^{1,1}(\mathbb{R}^d\rightarrow\mathbb{R}^d)$;\\
{\rm ii)} For all $x\in \mathbb{R}^d, \mu\in {\cal P}_2(\mathbb{R}^d) $, $\partial_\mu g(x,\mu,\cdot)\in C^1(\mathbb{R}^d\rightarrow\mathbb{R}^d)$;\\
{\rm iii)} The derivatives $\partial_x^2g:\mathbb{R}^d\times{\cal P}_2(\mathbb{R}^d)\rightarrow\mathbb{R}^{d\times d}$, $\partial_y(\partial_\mu g)$ $:\mathbb{R}^d\times{\cal P}_2(\mathbb{R}^d)\times\mathbb{R}^d\rightarrow\mathbb{R}^{d\times d}$ are bounded and Lipschitz.\\
{\rm(}Recall the notation $g\in C_b^{1,1}(\mathbb{R}^d\times{\cal P}_2(\mathbb{R}^d))$ contains that $\partial_xg$ and $\partial_\mu g$ are bounded and Lipschitz.{\rm)}
\end{definition}

\noindent $\mathbf{Assumption\ (H7.1)}$ $(b,\sigma)\in C_b^{2,1}(\mathbb{R}^d\times{\cal P}_2(\mathbb{R}^d)\rightarrow\mathbb{R}^d\times\mathbb{R}^{d\times d})$, $\beta(\cdot,e)\in C_b^{2,1}(\mathbb{R}^d\times{\cal P}_2(\mathbb{R}^d)\rightarrow\mathbb{R}^d)$ with bounds of the form $C(1\wedge|e|)$ for all its derivatives of first and second order, and with a Lipschitz constant of the form $C(1\wedge |e|)$ for $\partial_x^2\beta (\cdot, e)$ and $\partial_y(\partial_\mu\beta) (\cdot, e)$.

\begin{theorem}\label{pro 7.1}
Under the assumption (H7.1) the first order derivatives $\partial_{x_i}X_s^{t,x,P_\xi}$ and $\partial_\mu X_s^{t,x,P_\xi}(y)$
are in $L^2$-differentiable w.r.t. x and y, respectively, and interpreted as a functional of $\xi\in L^2({\cal F}_t;\mathbb{R}^d)$, and for
$$
M_{s,i,j}^{t,x,P_\xi}(y):=\big(\partial_{x_i x_j}^2X_s^{t,x,P_\xi},\partial_{y_i}(\partial_\mu X_s^{t,x,P_\xi}(y))\big),\ \  1\le i,\ j\le d,
$$
\noindent we have that, for all $p\ge 2,$ there exists a constant $C_p\in {\mathbb R}_+$
such that, for all $t\in [0,T],\ x,\ x',\ y,\ y'\in\mathbb{R}^d,$ and\ $\xi,\ \xi'\in L^2({\cal F}_t;\mathbb{R}^d),\ 1\le i,\ j\le d,$
\begin{equation}\label{equ 7.2}
\begin{array}{lll}
& \mathrm{i)}\ E[\sup_{s\in [t,T]}|M_{s,i,j}^{t,x,P_\xi}(y)|^p]\le C_p;\\
&\mathrm{ii)}\ E[\sup_{s\in [t,T]}|M_{s,i,j}^{t,x,P_\xi}(y)-M_{s,i,j}^{t,x',P_{\xi'}}(y')|^p] \le C_p(|x-x'|^p+|y-y'|^p+W_2(P_\xi,P_{\xi'})^p);\\
&\mathrm{iii)}\ E[\sup_{s\in [t,t+h]}|M_{s,i,j}^{t,x,P_\xi}(y)|^p] \le C_ph,\ 0\leq h\leq T-t.\\
\end{array}
\end{equation}
\end{theorem}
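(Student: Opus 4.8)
The plan is to characterise the second order derivatives as solutions of linear SDEs with jumps obtained by differentiating once more the equations (\ref{equ 5.1}) and (\ref{equ 5.5}) for the first order derivatives, and then to lift the estimates of Propositions \ref{pro 5.1}, \ref{pro 5.2} and \ref{pro 5.3} to this second order level. First I would write down the candidate equations. Since the measure $P_{X_r^{t,\xi}}$ entering the coefficients of (\ref{equ 5.1}) does not depend on $x$, a formal differentiation of (\ref{equ 5.1}) in $x_j$ shows that $\partial^2_{x_ix_j}X_s^{t,x,P_\xi}$ solves a \emph{linear} SDE with jumps whose homogeneous coefficients are $\partial_xb,\partial_x\sigma,\partial_x\beta$ (bounded) and whose inhomogeneous term is quadratic in the first order derivatives, carrying the coefficients $\partial^2_xb,\partial^2_x\sigma,\partial^2_x\beta$ that are bounded by (H7.1); the initial condition vanishes because $\partial_{x_i}X_t^{t,x,P_\xi,j}=\delta_{ij}$ is constant. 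Differentiating (\ref{equ 5.5}) in $y_i$ is analogous but \emph{couples} the two objects: $\partial_{y_i}(\partial_\mu X_s^{t,x,P_\xi}(y))$ solves a linear SDE with the same homogeneous coefficients, its inhomogeneous term now involving $\partial_y(\partial_\mu b),\partial_y(\partial_\mu\sigma),\partial_y(\partial_\mu\beta)$ (bounded by (H7.1)), the first order derivative $\partial_xX^{t,y,P_\xi}$ and the second order derivative $\partial^2_xX^{t,y,P_\xi}$, together with the $\xi$-objects $U^{t,\xi}(y)$ and $\partial_{y_i}U^{t,\xi}(y)$. Thus $\partial^2_xX$, being decoupled, is solved first and feeds into the equation for $\partial_y(\partial_\mu X)$; and, as in Theorem \ref{th 5.2}, each equation is treated by first solving its $\xi$-version (replacing $x$ by $\xi$) and then using it as data for the $x$-version.

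Existence, uniqueness in $\mathcal{S}^2_{\mathbb{F}}$ and the bound i) then follow from the standard theory of linear SDEs with jumps: the homogeneous coefficients are bounded, while the inhomogeneous terms, being products of first order derivatives, have $L^p$-moments (of their $L^2(dr)$- and $L^2(\lambda(de)dr)$-norms) finite for every $p\geq2$ by Propositions \ref{pro 5.1} i), \ref{pro 5.2} i) and \ref{pro 5.3} i); a Burkholder--Davis--Gundy together with a Gr\"onwall argument yields $E[\sup_{s\in[t,T]}|M_{s,i,j}^{t,x,P_\xi}(y)|^p]\leq C_p$. The small-time estimate iii) comes from the same computation by exploiting the zero initial condition: on $[t,t+h]$ the solution is built from its bounded inhomogeneity integrated over an interval of length $h$, which after BDG and Gr\"onwall produces a factor $h^{p/2}\leq h$ for $p\geq2$.

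The continuity estimate ii) is the technical core. I would write the SDE solved by the difference $M_{\cdot,i,j}^{t,x,P_\xi}(y)-M_{\cdot,i,j}^{t,x',P_{\xi'}}(y')$ and estimate its inhomogeneity. Because the inhomogeneous terms are \emph{quadratic} in the first order derivatives, their increments have to be split as differences of products and handled by H\"older's inequality, using \emph{simultaneously} the boundedness (Propositions \ref{pro 5.1} i), \ref{pro 5.2} i), \ref{pro 5.3} i)) and the Lipschitz continuity in $(x,y,P_\xi)$ (Propositions \ref{pro 5.1} ii), \ref{pro 5.2} ii), \ref{pro 5.3} ii)) of these derivatives, combined with the Lipschitz property of $\partial^2_xb,\partial_y(\partial_\mu b),\dots$ from (H7.1) and with Lemma \ref{le 3.1}. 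Solving again the $\xi$-version first and then the $x$-version, a BDG--Gr\"onwall estimate gives the asserted bound $C_p(|x-x'|^p+|y-y'|^p+W_2(P_\xi,P_{\xi'})^p)$.

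Finally, to turn the formal differentiation into a rigorous one I would repeat the scheme used in the first order case (Theorem \ref{th 5.2}): show that the difference quotients $\tfrac1h(\partial_{x_i}X^{t,x+he_j,P_\xi}-\partial_{x_i}X^{t,x,P_\xi})$ and $\tfrac1h(\partial_\mu X^{t,x,P_\xi}(y+he_i)-\partial_\mu X^{t,x,P_\xi}(y))$ converge in $\mathcal{S}^2_{\mathbb{F}}$ to the candidate solutions, by estimating the SDE satisfied by the remainder and invoking i)--iii) together with the continuity of the first order derivatives. The step I expect to be the main obstacle is precisely the control of the quadratic first order terms in ii): one cannot use Lipschitz continuity of the second order coefficients alone, but must interlace it with the $L^p$-boundedness and $L^2$-continuity of $\partial_xX$ and $\partial_\mu X$, while keeping careful track of the independent-copy (lifting) structure carried by the $\partial_\mu$-terms exactly as in (\ref{equ 5.5}).
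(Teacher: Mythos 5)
Your proposal is correct and follows essentially the same route as the paper, which does not write the proof out but refers to Hao--Li \cite{HL3} (noting the situation is simpler here since no mixed derivatives $\partial_x\partial_\mu$, $\partial_\mu\partial_x$ are needed): differentiate the SDEs (\ref{equ 5.1}) and (\ref{equ 5.5}) once more, solve the decoupled equation for $\partial_x^2X$ first and then the $\xi$-version followed by the $x$-version of the equation for $\partial_y(\partial_\mu X)$, and obtain i)--iii) from the boundedness and Lipschitz estimates of Propositions \ref{pro 5.1}--\ref{pro 5.3} via BDG and Gr\"onwall, with the quadratic first-order terms handled by H\"older interlacing boundedness with $L^2$-continuity. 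This is also exactly the scheme the paper carries out in detail for the analogous second-order BSDE derivatives in Section 8, so no gap is introduced.
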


For the proof we can refer to \cite{HL3}; here the situation is even more simple since we don't need to consider the mixed derivatives $\partial_x\partial_\mu, \partial_\mu\partial_x$.

\section{{\protect \large {Second order derivatives of $(Y^{t,x,P_\xi}, Z^{t,x,P_\xi}, H^{t,x,P_\xi})$}}}

This section is devoted to the study of second order derivatives of $(Y^{t,x,P_\xi}, Z^{t,x,P_\xi}, H^{t,x,P_\xi})$.

\noindent $\mathbf{Assumption\ (H8.1)}$ Let $\Phi\in C_b^{2,1}({\mathbb R}^d\times{\cal P}_2({\mathbb R}^d))$,
$f\in C_b^{2,1}({\mathbb R}^d\times{\mathbb R}\times{\mathbb R}^d\times{\mathbb R}\times{\cal P}_2({\mathbb R}^{d+1+d+1}))$ (Recall Definition 7.1).
\begin{theorem}\label{Pro8.1}
Assuming (H7.1) and (H8.1) we have, for all $t\in[0,T],$ $x,y\in\mathbb{R}^d,\xi\in L^2(\mathcal{F}_t;\mathbb{R}^d)$,\\
{\rm i)} The differentiability (in $L^2$) of the mappings\\
$\mathbb{R}^d\ni x\rightarrow(\partial_xY^{t,x,P_\xi},\partial_xZ^{t,x,P_\xi},\partial_xH^{t,x,P_\xi})\in\mathcal{S}_\mathbb{F}^2(t,T;\mathbb{R}^{d})\times\mathcal{H}_\mathbb{F}^2(t,T;\mathbb{R}^{d\times d})\times\mathcal{K}_\lambda^2(t,T;\mathbb{R}^d),$ and\\
$\mathbb{R}^d\ni y\rightarrow(\partial_\mu Y^{t,x,P_\xi}(y),\partial_\mu
Z^{t,x,P_\xi}(y),\partial_\mu H^{t,x,P_\xi}(y))\in\mathcal{S}_\mathbb{F}^2(t,T;\mathbb{R}^d)\times\mathcal{H}_\mathbb{F}^2(t,T;\mathbb{R}^{d\times d})\times \mathcal{K}_\lambda^2(t,T;\mathbb{R}^d)$. \\
{\rm ii)} Moreover, for all $p\geq2$, there is some constant $C_p>0$ only depending on the bounds and the Lipschitz constants of the coefficients $\sigma,b,f,\Phi$ and their first and second order derivatives, such that, for both $(\zeta_s^{t,x,P_\xi}(y),\delta_s^{t,x,P_\xi}(y),\theta_s^{t,x,P_\xi}(y,\cdot))\in\{ (\partial_x^2Y_s^{t,x,P_\xi},\partial_x^2Z_s^{t,x,P_\xi},\partial_x^2H_s^{t,x,P_\xi}(\cdot)),
(\partial_y\partial_\mu Y_s^{t,x,P_\xi}(y),\\ \partial_y\partial_\mu Z_s^{t,x,P_\xi}(y),\partial_y\partial_\mu H_s^{t,x,P_\xi}(y,\cdot))\},
$

$$
\begin{aligned}
{\rm a)}&\ E[\sup_{t\leq s\leq T}|\zeta_s^{t,x,P_\xi}(y)|^p+(\int_t^T|\delta_s^{t,x,P_\xi}(y)|^2ds)^{\frac{p}{2}}+(\int_t^T\int_K|\theta_s^{t,x,P_\xi}(y,e)|^2\lambda(de)
ds)^\frac{p}{2}]\leq C_p;\\
{\rm b)}&\ E\Big[\sup_{t\leq s\leq T}|\zeta_s^{t,x,P_\xi}(y)-\zeta_s^{t,x',P_{\xi'}}(y')|^p+(\int_t^T|\delta_s^{t,x,P_{\xi}}(y)-\delta_s^{t,x',P_{\xi'}}(y')|^2ds)^{\frac{p}{2}}\\
&+(\int_t^T\int_K|\theta_s^{t,x,P_\xi}(e)-\theta_s^{t,x',P_{\xi'}}(e)|^2\lambda(de)ds)^{\frac{p}{2}}\Big]\\
&\leq C_pM^p(|x-x'|^p+|y-y'|^p+W_2(P_\xi,P_{\xi'})^p)+\rho_{M,p}(t,y,P_{\xi}),
\end{aligned}
$$
for all $t\in[0,T]$, $x,x'\in\mathbb{R}^d$, $y,y'\in\mathbb{R}^d$, $\xi,\xi'\in L^2(\mathcal{F};\mathbb{R}^d)$, $M>0$, with $\rho_{M,p}(t,y,P_{\xi})\mathop{\rightarrow}\limits_{M\rightarrow\infty}0$, and $E[\rho_{M,p}(t,\xi,P_{\xi})]\rightarrow0$, as $M\rightarrow\infty$.
\end{theorem}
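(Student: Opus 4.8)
The plan is to follow, one order higher, the same three-stage bootstrap used for the first-order derivatives in Section 6 (Lemma \ref{le 6.2}, Lemma \ref{le 6.3} and Proposition \ref{pro 6.1}). First I would write down the two candidate \emph{linear} BSDEs that the second-order objects must satisfy. Differentiating BSDE (\ref{equ 6.1}) once more in $x$ yields the equation for $(\zeta,\delta,\theta)=(\partial_x^2Y^{t,x,P_\xi},\partial_x^2Z^{t,x,P_\xi},\partial_x^2H^{t,x,P_\xi})$; since the frozen law $P_{\Pi_r^{t,\xi}}$ in the driver of (\ref{equ 6.1}) does not depend on $x$, this is again a classical BSDE with jumps (no copy terms). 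Differentiating BSDE (\ref{equ 6.17}) in $y$ yields the equation for $(\partial_y\partial_\mu Y^{t,x,P_\xi}(y),\partial_y\partial_\mu Z^{t,x,P_\xi}(y),\partial_y\partial_\mu H^{t,x,P_\xi}(y))$; here the $\widehat{E}$-terms persist and, through $\partial_y(\partial_x\widehat Z_r^{t,y,P_\xi})$, the $\partial_x^2$-processes enter \emph{linearly} as an $\mathcal{H}^2$-bounded input, so one solves the $\partial_x^2$ equation first and feeds it in. The structural point that justifies the unified treatment via the generic triple $(\zeta,\delta,\theta)$ is that both equations are linear in the unknown second-order triple with the \emph{same} linear coefficients $\partial_yf,\partial_zf,\partial_hf$ evaluated along $\Pi^{t,x,P_\xi}$, and differ only through a terminal value and an inhomogeneous source $G_s$ built from the (bounded, Lipschitz by (H8.1)) second derivatives of $f,\Phi$ multiplied by \emph{products of first-order derivatives}, such as $\partial_{zz}^2f\,(\partial_xZ)^2$ and $\partial_{zh}^2f\,\partial_xZ\int_K\partial_xH\,l\,\lambda(de)$, plus the inputs $\partial_x^2X,\ \partial_y\partial_\mu X$ supplied by Theorem \ref{pro 7.1}.

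For differentiability (i) and the uniform bound (ii)(a) the decisive observation is that each quadratic source factor is only integrable, not square-integrable, in time: by (\ref{equ 6.11}) the quantity $\int_t^T|\partial_xZ_r|^2dr$ lies in $L^p$ for every $p$, so $\int_t^T|G_r|\,dr\in L^p$ for all $p$, while $G$ itself is merely $L^1$ in time. Hence the equation for $(\zeta,\delta,\theta)$ is a BSDE with an $L^1$-in-time generator of exactly the type covered by Theorem \ref{thli 4.12.1} (existence and uniqueness) and Theorem 10.3 (the $L^p$ a priori estimate) in the Appendix; invoking them gives a unique solution in $\mathcal{S}^2_\mathbb{F}\times\mathcal{H}^2_\mathbb{F}\times\mathcal{K}^2_\lambda$ satisfying (a). To identify this solution as the genuine $L^2$-second derivative I would run a difference-quotient argument in the spirit of Step 1 of Lemma \ref{le 6.2}: write the BSDE solved by $h^{-1}(\partial_xY^{t,x+he_i,P_\xi}-\partial_xY^{t,x,P_\xi})-\zeta$ (respectively the $y$-increment for the measure derivative), expand $f$ and $\Phi$ to second order, and show via the a priori estimate that the remainder vanishes, using Theorem \ref{pro 7.1} and Proposition \ref{pro 6.2} to control the first-order increments.

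The main obstacle is the continuity estimate (ii)(b), and it is exactly here that the truncation level $M$ and the residual $\rho_{M,p}$ are forced upon us. Forming the BSDE for the difference of the two second-order triples at $(x,\xi,y)$ and $(x',\xi',y')$ and applying Theorem \ref{proBSDE estimate}, every contribution splits Lipschitz-continuously (using Proposition \ref{pro 4.3}, Cauchy--Schwarz and (\ref{equ 6.11}), with the unknowns always appearing to the first power) \emph{except} the differences of the quadratic source terms. The representative bad term is
\[
\int_t^T\big(\partial_{zz}^2f(\Pi_r^{t,x,P_\xi})-\partial_{zz}^2f(\Pi_r^{t,x',P_{\xi'}})\big)\,(\partial_xZ_r^{t,x,P_\xi})^2\,dr,
\]
whose last factor is only $L^1$ in time, so one cannot extract $\sup_r|Z_r^{t,x,P_\xi}-Z_r^{t,x',P_{\xi'}}|$ because $Z$ lives only in $\mathcal{H}^2$. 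I would resolve this by splitting the time integral according to $\{|\partial_xZ_r^{t,x,P_\xi}|\le M\}$ and its complement. On $\{|\partial_xZ|\le M\}$ bound $|\partial_xZ|^2\le M|\partial_xZ|$ and use the Lipschitz property of $\partial_{zz}^2f$; this produces precisely the factor $M^p(|x-x'|^p+|y-y'|^p+W_2(P_\xi,P_{\xi'})^p)$. On the complementary set use only the boundedness of $\partial_{zz}^2f$, so the term is dominated by $\int_t^T|\partial_xZ_r^{t,x,P_\xi}|^2\mathbf{1}_{\{|\partial_xZ_r^{t,x,P_\xi}|>M\}}\,dr$, whose $p$-th moment defines $\rho_{M,p}$ and tends to $0$ as $M\to\infty$ by dominated convergence (since $\int_t^T|\partial_xZ|^2dr<\infty$ with all moments). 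The $\partial_y\partial_\mu$ case and the $H$-terms are handled identically, the tail being taken at the reference point $y$ and law $P_\xi$, which explains the dependence $\rho_{M,p}(t,y,P_\xi)$; the extra requirement $E[\rho_{M,p}(t,\xi,P_\xi)]\to0$ is what makes the substitution $x=\xi$ admissible on the diagonal. This peculiar form of (b) is tailored to yield \emph{continuity} (not Lipschitz continuity) through a double limit: for each fixed $M$ the first term vanishes as $(x',\xi',y')\to(x,\xi,y)$, leaving $\limsup\le\rho_{M,p}$, and then $M\to\infty$. Establishing the dominated-convergence step with the required uniformity, and packaging it as the auxiliary result of Subsection 10.3, is the delicate heart of the argument.
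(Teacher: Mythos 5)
Your proposal follows essentially the same route as the paper: formal differentiation of the first-order BSDEs, existence and $L^p$ bounds via Theorems \ref{thli 4.12.1} and \ref{proBSDE estimate} (solving the $x=\xi$ mean-field version first), a difference-quotient identification of the $L^2$-derivative, and — for estimate (ii)(b) — exactly the paper's $M$-truncation of the quadratic source terms $(\partial_x\Pi)^2$, $\partial_x^2\Pi$, splitting into $C_pM^p(\cdot)^p$ on the good set and a tail $\rho_{M,p}(t,y,P_\xi)=C_pE[(\int_t^T|\partial_x\Pi_r^{t,y,P_\xi}|^2 I_{\{|\partial_x\Pi_r^{t,y,P_\xi}|^2\ge M\}}dr)^p]$ controlled by dominated convergence. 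The only inaccuracy is attributional: the delicate truncation step is carried out inside the proof of Theorem \ref{Pro8.1} itself (see the estimate of $I_{3,2}$), while Subsection 10.3 contains the auxiliary lemma for Proposition \ref{pro 9.1}, not this argument.
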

\begin{proof} Similar to Theorem 6.1, as the $L^2$-derivative of $(\partial_xY^{t,x,P_\xi},\partial_xZ^{t,x,P_\xi},\partial_xH^{t,x,P_\xi})$ with respect to $x$ concerns only $\Pi^{t,x,P_\xi}$ but not the law $P_{\Pi^{t,\xi}}$ of the coefficients of BSDE (\ref{equ 6.1}), the proof is standard, the reader may refer to, for instance, \cite{PP}. Then applying Lemma 10.1 to BSDE satisfied by $(\partial^2_xY^{t,x,P_\xi},\partial^2_xZ^{t,x,P_\xi},\partial^2_xH^{t,x,P_\xi})$ we get directly the estimate a) for $(\partial^2_xY^{t,x,P_\xi},\partial^2_xZ^{t,x,P_\xi},\partial^2_xH^{t,x,P_\xi})$, similar to Proposition 6.1 we get the estimate b) for $(\partial^2_xY^{t,x,P_\xi},\partial^2_xZ^{t,x,P_\xi},\partial^2_xH^{t,x,P_\xi})$ (the reader may also refer to the following proof of the estimate b) for $(\partial_y\partial_\mu Y_s^{t,x,P_\xi}(y), \partial_y\partial_\mu Z_s^{t,x,P_\xi}(y),\partial_y\partial_\mu H_s^{t,x,P_\xi}(y))$).

Now let us prove i) for $(\partial_\mu Y^{t,x,P_\xi}(y),\partial_\mu Z^{t,x,P_\xi}(y),\partial_\mu H^{t,x,P_\xi}(y))$ and the associated estimates in ii). Large parts of the proof are standard or similar to the proofs of Theorem \ref{th 6.2} and Proposition \ref{pro 6.1}. As before in order to point out the main difficulties here but w.l.o.g, let us study the case of dimension $d=1$, with
$ \Phi(X_T^{t,x,P_\xi},P_{X_T^{t,P_\xi}})=\Phi(X_T^{t,x,P_\xi})$, and $f(\Pi_r^{t,x,P_\xi},P_{\Pi_r^{t,\xi}})$ with $ \Pi_r^{t,x,P_\xi}=\int_KH_r^{t,x,P_\xi}(e)l(e)\lambda(de),$ \  and $\Pi_r^{t,\xi}=\Pi_r^{t,\xi,P_\xi}(=\Pi_r^{t,x,P_\xi}|_{x=\xi}).$  From (\ref{equ 6.17}) in our case now $(\partial_\mu Y^{t,x,P_\xi}(y),\partial_\mu Z^{t,x,P_\xi}(y),\partial_\mu H^{t,x,P_\xi}(y))$ is a solution of the following BSDE:
$$
\begin{aligned}
\partial_\mu Y_s^{t,x,P_\xi}(y)&=(\partial_x\Phi)(X_T^{t,x,P_\xi})\partial_\mu X_T^{t,x,P_\xi}(y)+\int_s^T\Big((\partial_hf)(\Pi_r^{t,x,P_\xi},P_{\Pi_r^{t,\xi}})\partial_\mu \Pi_r^{t,x,P_\xi}(y)\\
&+\widehat{E}\big[(\partial_\mu f)(\Pi_r^{t,x,P_\xi},P_{\Pi_r^{t,\xi}},\widehat{\Pi}_r^{t,y,P_\xi})\partial_x\widehat{\Pi}_r^{t,y,P_\xi}+(\partial_\mu f)(\Pi_r^{t,x,P_\xi},P_{\Pi_r^{t,\xi}},\widehat{\Pi}_r^{t,\widehat{\xi}})\partial_\mu\widehat{\Pi}_r^{t,\widehat{\xi},P_\xi}(y)\big]\Big)dr\\
&-\int_s^T\partial_\mu Z_r^{t,x,P_\xi}(y)dB_r-\int_s^T\int_K\partial_\mu H_r^{t,x,P_\xi}(y,e)N_\lambda(dr,de),\ s\in[t,T],
\end{aligned}
$$
for $\xi\in L^2(\mathcal{F}_t)$, $x,\ y\in\mathbb{R}$ and $t\in[0,T]$.

Differentiating formally the above BSDE with respect to $y$, we get the following BSDE:
\begin{equation}\label{222}
\begin{aligned}
&\partial_y\big(\partial_\mu Y_s^{t,x,P_\xi}(y)\big)=(\partial_x\Phi)(X_T^{t,x,P_\xi})\partial_y\big(\partial_\mu X_T^{t,x,P_\xi}(y)\big) \!+\!\int_s^T\!\!(\partial_hf)(\Pi_r^{t,x,P_\xi},P_{\Pi_r^{t,\xi}})\partial_y\big(\partial_\mu\Pi_r^{t,x,P_\xi}(y)\big)dr\\
&+\int_s^T\widehat{E}\big[(\partial_\mu f)(\Pi_r^{t,x,P_\xi},P_{\Pi_r^{t,\xi}},\widehat{\Pi}_r^{t,y,P_\xi})\partial^2_x\widehat{\Pi}_r^{t,y,P_\xi}+\partial_y(\partial_\mu f)(\Pi_r^{t,x,P_\xi},P_{\Pi_r^{t,\xi}},\widehat{\Pi}_r^{t,y,P_\xi})(\partial_x\widehat{\Pi}_r^{t,y,P_\xi})^2\big]dr\\
&+\int_s^T\widehat{E}\big[(\partial_\mu f)(\Pi_r^{t,x,P_\xi},P_{\Pi_r^{t,\xi}},\widehat{\Pi}_r^{t,\widehat{\xi}})\partial_y(\partial_\mu\widehat{\Pi}_r^{t,\widehat{\xi},P_\xi}(y))\big]dr-\int_s^T\partial_y\big(\partial_\mu Z_r^{t,x,P_\xi}(y)\big)dB_r\\
&-\int_s^T\int_K\partial_y\big(\partial_\mu H_r^{t,x,P_\xi}(y,e)\big)N_\lambda(dr,de),\ s\in[t,T].
\end{aligned}
\end{equation}
Notice that all second order derivatives of $\Phi$ and $f$ are bounded and
$\widehat{E}[(\int_t^T|\partial_x \widehat{\Pi}_r^{t,y,P_\xi}|^2dr)^p]\leq C_p\quad$ (see (\ref{equ 6.11})). We first consider the above equation (\ref{222}) with $x$ replaced by $\xi$, then from Theorem 10.1 this equation has a unique solution $(\partial_y(\partial_\mu Y_s^{t,\xi,P_\xi}(y)),\partial_y(\partial_\mu Z_s^{t,\xi,P_\xi}(y)),\partial_y(\partial_\mu H_s^{t,\xi,P_\xi}(y)))\in \mathcal{S}_\mathbb{F}^2(t,T)\times\mathcal{H}_\mathbb{F}^2(t,T)\times\mathcal{K}_\lambda^2(t,T)$, and, furthermore, from Theorem 10.3, for all $p\geq 2$, there is some $C_p>0$ only depending on the bounds and the Lipschitz constants of the coefficients and its derivatives of order 1 and 2 such that
\begin{equation}\label{(*500)}
\begin{aligned}
&E\Big[\mathop{\rm sup}_{s\in[t,T]}|\partial_y(\partial_\mu Y_s^{t,\xi,P_\xi}(y))|^p+\big(\int_t^T|\partial_y(\partial_\mu Z_s^{t,\xi,P_\xi}(y))|^2ds\big)^{\frac{p}{2}}\\
&\quad+\big(\int_t^T\int_K|\partial_y(\partial_\mu H_s^{t,\xi,P_\xi}(y,e))|^2\lambda(de)ds\big)^{\frac{p}{2}}\Big]\leq C_p,\ \mbox{for all}\  t\in[0,T],\ y\in\mathbb{R},\ \xi\in L^2(\mathcal{F}_t).
\end{aligned}
\end{equation}
Then return to the equation (\ref{222}) again from Theorem 10.1 it has a unique solution $(\partial_y(\partial_\mu Y_s^{t,x,P_\xi}(y)),$ $\partial_y(\partial_\mu Z_s^{t,x,P_\xi}(y)),\partial_y(\partial_\mu H_s^{t,x,P_\xi}(y)))\in \mathcal{S}_\mathbb{F}^2(t,T)\times\mathcal{H}_\mathbb{F}^2(t,T)\times\mathcal{K}_\lambda^2(t,T)$, and from Theorem 10.3,
\begin{equation}\label{(*51)}
\begin{aligned}
&E\Big[\mathop{\rm sup}_{s\in[t,T]}|\partial_y(\partial_\mu Y_s^{t,x,P_\xi}(y))|^p+\big(\int_t^T|\partial_y(\partial_\mu Z_s^{t,x,P_\xi}(y))|^2ds\big)^{\frac{p}{2}}\\
&\quad+\big(\int_t^T\int_K|\partial_y(\partial_\mu H_s^{t,x,P_\xi}(y,e))|^2\lambda(de)ds\big)^{\frac{p}{2}}\Big]\leq C_p,\ \mbox{for all}\  t\in[0,T],\ y\in\mathbb{R},\ \xi\in L^2(\mathcal{F}_t).
\end{aligned}
\end{equation}
Let $\xi,\xi',\vartheta,\vartheta'\in L^2(\mathcal{F}_t)$ be such that $P_{\vartheta}=P_{\xi}$, $P_{\vartheta'}=P_{\xi'}$. Notice that $\Pi_s^{t,x,P_{\xi}}$ and  $(O_s^{t,x,P_{\xi}}(y),$ $ Q_s^{t,x,P_{\xi}}(y), R_s^{t,x,P_{\xi}}(y)),\ t\leq s\leq T, $ are independent of ${\cal F}_t$. Hence, from (\ref{222}) we get
the following BSDE:
\begin{equation}\label{223}
\begin{aligned}
&\partial_y\big(\partial_\mu Y_s^{t,x,P_\xi}(y)\big)-\partial_y\big(\partial_\mu Y_s^{t,x',P_{\xi'}}(y')\big)=I_1(x,y,P_\xi)-I_1(x',y',P_{\xi'})+\int_s^TR(r,x,x')dr\\
&+\int_s^T(\partial_hf)(\Pi_r^{t,x,P_\xi},P_{\Pi_r^{t,\xi}})
\big(\partial_y\big(\partial_\mu\Pi_r^{t,x,P_\xi}(y)\big)-\partial_y\big(\partial_\mu\Pi_r^{t,x',P_{\xi'}}(y')\big)\big)dr\\
&+\int_s^T\widehat{E}\big[(\partial_\mu f)(\Pi_r^{t,x,P_\xi},P_{\Pi_r^{t,\xi}},\widehat{\Pi}_r^{t,\widehat{\vartheta},P_\xi})
\big(\partial_y(\partial_\mu\widehat{\Pi}_r^{t,\widehat{\vartheta},P_\xi}(y))-\partial_y(\partial_\mu\widehat{\Pi}_r^{t,\widehat{\vartheta'},P_{\xi'}}(y'))\big)\big]dr\\
&-\int_s^T\big(\partial_y\big(\partial_\mu Z_r^{t,x,P_\xi}(y)\big)-\partial_y\big(\partial_\mu Z_r^{t,x',P_{\xi'}}(y')\big)\big)dB_r\\
&-\int_s^T\int_K\big(\partial_y\big(\partial_\mu H_r^{t,x,P_\xi}(y,e)\big)-\partial_y\big(\partial_\mu H_r^{t,x',P_{\xi'}}(y',e)\big)\big)N_\lambda(dr,de),\\
\end{aligned}
\end{equation}
where $I_1(x,y,P_\xi):= (\partial_x\Phi)(X_T^{t,x,P_\xi})\partial_y\big(\partial_\mu X_T^{t,x,P_\xi}(y)\big);$
\[
\begin{split}
&R(r,x,x'):=I_2(r,x,y,P_\xi)-I_2(r,x',y',P_{\xi'})+I_3(r,x,y,P_\xi)-I_3(r,x',y',P_{\xi'})\\
&\quad\quad\quad\quad\quad\quad+R_1(r,x,y,P_\xi;x',y',P_{\xi'})+R_2(r,x,y,P_\xi;x',y',P_{\xi'});\\
&I_2(r,x,y,P_\xi):= \widehat{E}[(\partial_\mu f)(\Pi_r^{t,x,P_\xi},P_{\Pi_r^{t,\xi}}, \widehat{\Pi}_r^{t,y,P_\xi})\partial_x^2 \widehat{\Pi}_r^{t,y,P_\xi}];\\
&I_3(r,x,y,P_\xi):=\widehat{E}[\partial_y(\partial_\mu f)(\Pi_r^{t,x,P_\xi},P_{\Pi_r^{t,\xi}},\widehat{\Pi}_r^{t,y,P_\xi})(\partial_x\widehat{\Pi}_r^{t,y,P_\xi})^2];\\
&R_1(r,x,y,P_\xi;x',y',P_{\xi'}):=\big((\partial_hf)(\Pi_r^{t,x,P_\xi},P_{\Pi_r^{t,\xi}})-(\partial_hf)(\Pi_r^{t,x',P_{\xi'}},P_{\Pi_r^{t,\xi'}})\big)\partial_y(\partial_\mu \Pi_r^{t,x',P_{\xi'}}(y'));\\
&R_2(r,x,y,P_\xi;x',y',P_{\xi'}):=\widehat{E}[\big((\partial_\mu f)(\Pi_r^{t,x,P_\xi},P_{\Pi_r^{t,\xi}},\widehat{\Pi}_r^{t,\widehat{\vartheta},P_{\xi}})\\
&\quad\quad\quad\quad\quad\quad\quad\quad\quad\quad\quad\quad\quad\quad -(\partial_\mu f)(\Pi_r^{t,x',P_{\xi'}},P_{\Pi_r^{t,\xi'}},\widehat{\Pi}_r^{t,\widehat{\vartheta'},P_{\xi'}})\big)\partial_y(\partial_\mu
\widehat{\Pi}_r^{t,\widehat{\vartheta'},P_{\xi'}}(y'))].
\end{split}
\]

\noindent Observe that

\noindent i) It follows from Lemma \ref{le 3.1} and Theorem \ref{pro 7.1} that
$$E[|I_1(x,y,P_\xi)-I_1(x',y',P_{\xi'})|^p]\leq C_p(|x-x'|^p+|y-y'|^p+W_2(P_\xi,P_{\xi'})^p).$$

\noindent ii) As $\partial_hf$ is bounded and Lipschitz, we have from (\ref{(*51)}) and Proposition \ref{pro 4.3},
\begin{equation*}
\begin{aligned}
&E[\Big(\int_t^T|R_1(r,x,y,P_\xi;x',y',P_{\xi'})|dr\Big)^p]\\
&\leq C_p\Big(E[\Big(\int_t^T\Big(|\Pi_r^{t,x,P_\xi}-\Pi_r^{t,x',P_{\xi'}}|^{2}+W_2(P_{\Pi_r^{t,\xi}}, P_{\Pi_r^{t,{\xi'}}})^{2}\Big)dr\Big)^p]\Big)^{\frac{1}{2}}\\
&\leq C_p\Big(|x-x'|^p+W_2(P_\xi, P_{\xi'})^p\Big).
\end{aligned}
\end{equation*}

\noindent  iii) Similar arguments to ii) from (\ref{(*500)}) and Proposition \ref{pro 4.3}, we see that also
$$E[\Big(\int_t^T|R_2(r,x,y,P_\xi;x',y',P_{\xi'})|dr\Big)^p]\leq C_p\Big(|x-x'|^p+W_2(P_\xi, P_{\xi'})^p+\big(E[|\vartheta-\vartheta'|^2]\big)^{\frac{p}{2}}\Big).$$

\noindent  iv) Similar arguments to ii) we get first:
$$\displaystyle E\Big[\Big(\int_t^T\!\!|I_3(r,x,y,P_\xi)-I_3(r,x',y',P_{\xi'})|dr\Big)^p\Big]\!\leq \! I_{3,1}(y,P_\xi;y',P_{\xi'})+I_{3,2}(x,y,P_\xi;x',y',P_{\xi'}),$$
with $\displaystyle I_{3,1}(y,P_\xi;y',P_{\xi'}):=C_pE\Big[\Big(\int_t^T|(\partial_x\Pi_r^{t,y,P_\xi})^2-(\partial_x\Pi_r^{t,y',P_{\xi'}})^2|dr\Big)^p\Big]$\  and
\begin{equation}
\begin{aligned}
&\!\!\!\!\!\!\!\!\!\!\!\!\!\!\!\!\!\!\!\!\!\!\!\!I_{3,2}(x,y,P_\xi;x',y',P_{\xi'}):=C_pE\Big[\Big(\widehat{E}[\int_t^T|\partial_x\widehat{\Pi}_r^{t,y,P_\xi}|^2\\
&\ \ \ \ \cdot
\min\{C, |\Pi_r^{t,x,P_\xi}-\Pi_r^{t,x',P_{\xi'}}|+W_2(P_{\Pi_r^{t,\xi}}, P_{\Pi_r^{t,\xi'}})+|\widehat{\Pi}_r^{t,y,P_{\xi'}}-\widehat{\Pi}_r^{t,y',P_{\xi'}}|\}dr]\Big)^p\Big]\end{aligned}
\end{equation}
(Recall that $\partial_y(\partial_\mu f)$ is bounded and Lipschitz). Obviously, from (\ref{equ 6.11}),
\begin{equation*}
\begin{aligned}
&\!\!\!\!\!\!\!\!\!\!\!\!\!\!\!\!\!\!\!\!\!\!\!\!I_{3,1}(y,P_\xi;y',P_{\xi'}) \leq C_p \Big(E\Big[\Big(\int_t^T|\partial_x\Pi_r^{t,y,P_\xi}-\partial_x\Pi_r^{t,y',P_{\xi'}}|^2dr\Big)^p\Big]\Big)^{\frac{1}{2}}\\
&\quad\quad\quad\ \leq C_p \Big(|y-y'|^p+W_2(P_\xi,P_{\xi'})^p\Big).
\end{aligned}
\end{equation*}
On the other hand, from Proposition \ref{pro 4.3},
\begin{equation}\label{333}
\begin{aligned}
&I_{3,2}(x,y,P_\xi;x',y',P_{\xi'})\\
&\leq C_p M^p\Big( E\Big[\Big(\int_t^T(|\Pi_r^{t,x,P_\xi}-\Pi_r^{t,x',P_{\xi'}}|^2+W_2(P_{\Pi_r^{t,\xi}},P_{\Pi_r^{t,\xi'}})^2
+|\Pi_r^{t,y,P_\xi}-\Pi_r^{t,y',P_{\xi'}}|^2)dr\Big)^p\Big]\Big)^{\frac{1}{2}}\\
&\ \ \ \ +C_pE\Big[\Big(\int_t^T|\partial_x\Pi_r^{t,y,P_\xi}|^2I_{\{|\partial_x\Pi_r^{t,y,P_\xi}|^2\geq M\}}dr\Big)^p\Big]\\
&\leq C_pM^p\Big(|x-x'|^p+W_2(P_\xi, P_{\xi'})^p+|y-y'|^p\Big)+\rho_{M,p}(t,y,P_\xi),
\end{aligned}
\end{equation}
where $\rho_{M,p}(t,y,P_\xi)\rightarrow 0\ (M\rightarrow \infty)$ and $E[\rho_{M,p}(t,\xi,P_\xi)]\rightarrow 0\ (M\rightarrow \infty)$ thanks to the Dominated Convergence Theorem (indeed, $\displaystyle \sup_{y\in \mathbb{R}}E\Big[\Big(\int_t^T|\partial_x\Pi_r^{t,y,P_\xi}|^2dr\Big)^p\Big]\leq C_p<\infty$).

\noindent v) Remarking that, in analogy to (\ref{(*51)}), from Lemma 10.1 with more classical arguments not involving the derivative with respect to the measure, we can show on one hand $\displaystyle E\Big[\Big(\int_t^T|\partial_x^2\Pi_r^{t,x,P_\xi}|^2dr\Big)^{\frac{p}{2}}\Big]\leq C_p,\ (t,x,P_\xi)\in [0, T]\times \mathbb{R} \times {\cal P}_2(\mathbb{R})$, and on the other hand (meeting for the estimate the same difficulty as in iv)-the difficulty is already inherent to the classical case (see, Pardoux and Peng \cite{PP}) although not developed there)
\begin{equation*}
\begin{aligned}
&E\Big[\Big(\int_t^T|\partial_x^2\Pi_r^{t,x,P_\xi}-\partial_x^2\Pi_r^{t,x',P_{\xi'}}|^2dr\Big)^{\frac{p}{2}}\Big]\\
&\leq C_pM^p\Big(|x-x'|^p+W_2(P_\xi, P_{\xi'})^p\Big)+\rho_{M,p}(t,x,P_\xi),
\end{aligned}
\end{equation*}
where $\rho_{M,p}(t,x,P_\xi)\rightarrow 0$ and $E[\rho_{M,p}(t,\xi,P_\xi)]\rightarrow 0\ (M\rightarrow \infty)$. Therefore,
\begin{equation*}
\begin{aligned}
&E\Big[\Big(\int_t^T|I_2(r,x,y,P_\xi)-I_2(r,x',y',P_{\xi'})|dr\Big)^p\Big]\\
&\leq C_p E\Big[\widehat{E}\Big[\Big(\int_t^T(|\Pi_r^{t,x,P_\xi}-\Pi_r^{t,x',P_{\xi'}}|^2+W_2(P_{\Pi_r^{t,\xi}},P_{\Pi_r^{t,\xi'}})^2
+|\widehat{\Pi}_r^{t,y,P_\xi}-\widehat{\Pi}_r^{t,y',P_{\xi'}}|^2)dr\Big)^\frac{p}{2}\Big]\\
&\ \ \ \ +C_pE\Big[\Big(\int_t^T|\partial_x^2\Pi_r^{t,y,P_\xi}-\partial_x^2\Pi_r^{t,y',P_{\xi'}}|^2dr\Big)^\frac{p}{2}\Big]\\
&\leq C_pM^p\Big(|x-x'|^p+W_2(P_\xi, P_{\xi'})^p+|y-y'|^p\Big)+\rho_{M,p}(t,y,P_\xi),
\end{aligned}
\end{equation*}
where $\rho_{M,p}(t,y,P_\xi)\rightarrow 0$ and $E[\rho_{M,p}(t,\xi,P_\xi)]\rightarrow 0$, as $M\rightarrow \infty$.

\noindent Consequently, we have
$$
E[(\int_t^T\!\!\!|R(r,x,x')|dr)^p]\leq\rho_{M,p}(t,y,P_\xi)+C_pM^p(|x-x'|^p+|y-y'|^p+W_2(P_\xi,P_{\xi'})^p+\big(E[|\vartheta-\vartheta'|^2]\big)^{\frac{p}{2}}).
$$

\noindent Now substituting in BSDE (\ref{223}) $x=\vartheta$ and $x'=\vartheta'$, similar to (\ref{6.12+3}) we get
\begin{equation}\label{new(111-1)}
\begin{aligned}
&E\Big[\sup_{s\in[t,T]}|\partial_y(\partial_\mu Y_s^{t,\vartheta,P_\xi}(y))-\partial_y(\partial_\mu Y_s^{t,\vartheta',P_{\xi'}}(y'))|^2+\int_t^T\!\!|\partial_y(\partial_\mu Z_r^{t,\vartheta,P_\xi}(y))-\partial_y(\partial_\mu Z_r^{t,\vartheta',P_{\xi'}}(y'))|^2dr\\
&+\int_t^T\int_K|\partial_y(\partial_\mu H_r^{t,\vartheta,P_\xi}(y,e))-\partial_y(\partial_\mu H_r^{t,\vartheta',P_{\xi'}}(y',e))|^2\lambda(de)dr\Big]\\
&\leq \rho_{M}(t,y,P_\xi)+CM^2(|y-y'|^2+W_2(P_\xi,P_{\xi'})^2+E[|\vartheta-\vartheta'|^2]).
\end{aligned}
\end{equation}

This estimate allows to study BSDE (\ref{223}), following the same arguments for (\ref{6.12+5}), (\ref{998}) and (\ref{999}), we obtain that, for
all $t\in[0,T]$, $x,x',y,y'\in\mathbb{R}$, $\xi,\xi'\in L^2(\mathcal{F}_t;\mathbb{R})$,
\begin{equation}\label{new(11)}
\begin{aligned}
&E\Big[\sup_{s\in[t,T]}|\partial_y(\partial_\mu Y_s^{t,x,P_\xi}(y))-\partial_y(\partial_\mu Y_s^{t,x',P_{\xi'}}(y'))|^p+(\int_t^T|\partial_y(\partial_\mu Z_r^{t,x,P_\xi}(y))-\partial_y(\partial_\mu Z_r^{t,x',P_{\xi'}}(y'))|^2dr)^{\frac{p}{2}}\\
&+(\int_t^T\int_K|\partial_y(\partial_\mu H_r^{t,x,P_\xi}(y,e))-\partial_y(\partial_\mu H_r^{t,x',P_{\xi'}}(y',e))|^2\lambda(de)dr)^{\frac{p}{2}}\Big]\\
&\leq C_pM^p(|x-x'|^p+|y-y'|^p+W_2(P_\xi,P_{\xi'})^p)+\rho_{M,p}(t,y,P_\xi),
\end{aligned}
\end{equation}
where $\rho_{M,p}(t,y,P_\xi)\rightarrow0$, as $M\rightarrow\infty$, and $\hat{E}[\rho_{M,p}(t,\hat{\xi},P_\xi)]\rightarrow0$, as $M\rightarrow\infty$.\\

In order to show that the formal derivative $(\partial_y(\partial_\mu Y^{t,x,P_\xi}(y)),\partial_y(\partial_\mu Z^{t,x,P_\xi}(y)),\partial_y(\partial_\mu H^{t,x,P_\xi}(y)))$ is really the $L^2$-derivative of $(\partial_\mu Y^{t,x,P_\xi}(y),\partial_\mu Z^{t,x,P_\xi}(y),\partial_\mu H^{t,x,P_\xi}(y))$, we estimate
\begin{equation}\nonumber
\begin{aligned}
&{\rm i)}\ \frac{1}{h}(\partial_\mu Y^{t,x,P_\xi}(y+h)-\partial_\mu Y^{t,x,P_\xi}(y))-\partial_y(\partial_\mu Y^{t,x,P_\xi}(y)),\\
&{\rm ii)}\ \frac{1}{h}(\partial_\mu Z^{t,x,P_\xi}(y+h)-\partial_\mu Z^{t,x,P_\xi}(y))-\partial_y(\partial_\mu Z^{t,x,P_\xi}(y)),\\
&{\rm iii)}\ \frac{1}{h}(\partial_\mu H^{t,x,P_\xi}(y+h,\cdot)-\partial_\mu H^{t,x,P_\xi}(y,\cdot))-\partial_y(\partial_\mu H^{t,x,P_\xi}(y)),
\end{aligned}
\end{equation}
with the same tools as for (\ref{new(11)}). Indeed, for $h\in\mathbb{R}\backslash\{0\}$, one has to study the BSDE satisfied by
$$
\begin{aligned}
\Xi^{t,x,P_\xi}(y)(h):=&\frac{1}{h}\big\{(\partial_\mu Y^{t,x,P_\xi}(y+h),\partial_\mu Z^{t,x,P_\xi}(y+h),\partial_\mu H^{t,x,P_\xi}(y+h))-(\partial_\mu Y^{t,x,P_\xi}(y),\partial_\mu Z^{t,x,P_\xi}(y),\\
&\partial_\mu H^{t,x,P_\xi}(y))\big\}-(\partial_y(\partial_\mu Y^{t,x,P_\xi}(y)),\partial_y(\partial_\mu Z^{t,x,P_\xi}(y)),\partial_y(\partial_\mu H^{t,x,P_\xi}(y))).
\end{aligned}
$$
In analogy to the proof of the estimate (\ref{new(11)}) we meet, for example, the term (see iv))
\begin{equation*}\label{new(1)}
\begin{aligned}
&I:=E\Big[\widehat{E}\Big[\Big(\int_t^T|\partial_x \widehat{\Pi}_r^{t,y,P_\xi}\Big\{\frac{1}{h}((\partial_\mu f)(\Pi_r^{t,x,P_\xi},P_{\Pi_r^{t,\xi}},\widehat{\Pi}_r^{t,y+h,P_\xi})-(\partial_\mu f)(\Pi_r^{t,x,P_\xi},P_{\Pi_r^{t,\xi}},\widehat{\Pi}_r^{t,y,P_\xi}))\\
&\ \  \ \ \ \ \ \ \ \ \ -\partial_y(\partial_\mu f)(\Pi_r^{t,x,P_\xi},P_{\Pi_r^{t,\xi}},\widehat{\Pi}_r^{t,y,P_\xi})\partial_x \widehat{\Pi}_r^{t,y,P_\xi}\Big\}|dr\Big)^2\Big]\Big]\\
&=E\Big[\widehat{E}\Big[\Big(\int_t^T|\partial_x \widehat{\Pi}_r^{t,y,P_\xi}\Big\{\int_0^1\partial_y(\partial_\mu f)(\Pi_r^{t,x,P_\xi},P_{\Pi_r^{t,\xi}},\widehat{\Pi}_r^{t,y+\lambda h,P_\xi})\partial_x \widehat{\Pi}_r^{t,y+\lambda h,P_\xi}d\lambda\\
&\ \  \ \ \ \ \ \ \ \ \ -\partial_y(\partial_\mu f)(\Pi_r^{t,x,P_\xi},P_{\Pi_r^{t,\xi}},\widehat{\Pi}_r^{t,y,P_\xi})\partial_x \widehat{\Pi}_r^{t,y,P_\xi}\Big\}|dr\Big)^2\Big]\Big].
\end{aligned}
\end{equation*}
Using that $\partial_y(\partial_\mu f)$ is bounded and Lipschitz, this yields
\begin{equation*}\label{new(1)}
\begin{aligned}
&I\leq C E\Big[\widehat{E}\Big[\Big(\int_t^T|\partial_x \widehat{\Pi}_r^{t,y,P_\xi}|\int_0^1|\partial_x \widehat{\Pi}_r^{t,y+\lambda h,P_\xi}-\partial_x \widehat{\Pi}_r^{t,y,P_\xi}|d\lambda dr\Big)^2\Big]\Big]\\
&\ \  \ +C E\Big[\widehat{E}\Big[\Big(\int_t^T|\partial_x {\Pi}_r^{t,y,P_\xi}|^2\int_0^1\mbox{min}\{C,|\widehat{\Pi}_r^{t,y+\lambda h,P_\xi}-\widehat{\Pi}_r^{t,y,P_\xi}|\}d\lambda dr\Big)^2\Big]\Big],\\
\end{aligned}
\end{equation*}
and the argument developed to prove (\ref{333}) allows to see that $I\leq C M^2|h|^2+\rho_{M}(t,y,P_\xi)$, with $\rho_{M,p}(t,y,P_\xi)\rightarrow 0$, $E[\rho_{M,p}(t,\xi,P_\xi)]\rightarrow 0$,  as $M\rightarrow \infty$. On the other hand, it is easy to show that
$$E[|\big(\frac{1}{h}(\partial_\mu X_T^{t,x,P_\xi}(y+h)-\partial_\mu X_T^{t,x,P_\xi}(y))-\partial_y(\partial_\mu X_T^{t,x,P_\xi}(y))\big)\partial_x \Phi(X_T^{t,x,P_\xi})|^2]\leq Ch^2.$$

\noindent Furthermore, when applying Theorem \ref{proBSDE estimate} in the Appendix it yields by using a similar discussion for other terms corresponding to i), iii) and iv)
\[
\begin{split}
&E\Big[\sup\limits_{s\in[t,T]}|\frac{1}{h}(\partial_{\mu}Y_s^{t,x,P_\xi}(y+h)-\partial_{\mu}Y_s^{t,x,P_\xi}(y))-\partial_y(\partial_{\mu}Y_s^{t,x,P_\xi}(y))|^2\\
&\quad+\int_t^T|\frac{1}{h}(\partial_{\mu}Z_s^{t,x,P_\xi}(y+h)-\partial_{\mu}Z_s^{t,x,P_\xi}(y))-\partial_y(\partial_{\mu}Z_s^{t,x,P_\xi}(y))|^2ds\\
&\quad+\int_t^T\int_K|\frac{1}{h}(\partial_{\mu}H_s^{t,x,P_\xi}(y+h,e)-\partial_{\mu}H_s^{t,x,P_\xi}(y,e))-\partial_y(\partial_{\mu}H_s^{t,x,P_\xi}(y,e))|^2\lambda (de)ds\Big]\\
&\leq CM^2|h|^2+\rho_{M}(t,y,P_\xi),
\end{split}
\]
with $\rho_{M}(t,y,P_\xi)\rightarrow 0$ as $M\rightarrow \infty$.

It follows the wished $L^2$-differentiability in $y$ of $(\partial_{\mu}Y^{t,x,P_\xi}(y),\partial_{\mu}Z^{t,x,P_\xi}(y),\partial_{\mu}H^{t,x,P_\xi}(y))$. \end{proof}

\section{Related integral-PDEs of mean-field type}

The objective of this section is to study the related  integral-PDEs of mean-field type. We will prove that $V(t,x,P_\xi)$ defined by (\ref{4.6-2}) is the unique classical solution of the following new nonlocal quasi-linear integral PDE of mean-field type:
\begin{equation}\label{equ 1.4}
\begin{aligned}
&\partial_tV(t,x,P_\xi)=
-\bigg\{\sum\limits_{i=1}^d
\partial_{x_i}V(t,x,P_\xi)b_i(x,P_\xi)
+\frac{1}{2}\sum\limits_{i,j,k=1}^d
\partial_{x_ix_j}^2V(t,x,P_\xi)
(\sigma_{i,k}\sigma_{j,k})(x,P_\xi)\\
&+\int_K\!\!\Big(
V(t,x+\beta(x,P_\xi,e),P_\xi)-V(t,x,P_\xi)
-\sum\limits_{i=1}^d
\partial_{x_i}V(t,x,P_\xi)
\beta_i(x,P_\xi,e)\Big)\lambda(de)+f\Big(x,V(t,x,P_\xi),\\
&\sum\limits_{i=1}^d\partial_{x_i}V(t,x,P_\xi)\sigma_i(x,P_\xi),\int_K\!\!(V(t,x+\beta(x,P_\xi,e),P_\xi)-V(t,x,P_\xi))l(e)\lambda(de),P_{(\xi,\psi(t,\xi,P_{\xi}))}\Big)\\
 &+E\bigg[
\sum\limits_{i=1}^d(\partial_\mu V)_i(t,x,P_\xi,\xi)
b_i(\xi,P_\xi)
+\frac{1}{2}\sum\limits_{i,j,k=1}^d\partial_{y_i}(\partial_\mu V)_j(t,x,P_\xi,\xi)
(\sigma_{i,k}\sigma_{j,k})(\xi,P_\xi)\\
&+
\int^1_0\int_K\sum\limits_{i=1}^d
[(\partial_\mu V)_i(t,x,P_\xi,\xi+\rho\beta(\xi,P_{\xi},e))
-
(\partial_\mu V)_i(t,x,P_\xi,\xi)]
\cdot
\beta_i(\xi,P_{\xi},e)\lambda(de)d\rho
\bigg]\bigg\},\\
& \qquad\qquad\qquad \qquad\qquad\qquad \qquad\qquad\qquad \quad\qquad
 (t,x,\xi)\in[0,T]\times\mathbb{R}^d\times L^2(\mathcal{F}_t;\mathbb{R}^d),\\
&\noindent V(T,x,P_\xi)=\Phi(x,P_\xi),\ (x,P_\xi)\in\mathbb{R}^d\times \mathcal{P}_2(\mathbb{R}^d), \\
\end{aligned}\end{equation}
where $\psi(t,x,P_{\xi}):=$

$
\begin{aligned}
(V(t,x,P_{\xi}),\sum\limits_{i=1}^d\partial_{x_i}V(t,x,P_{\xi})\sigma_i(x,P_{\xi}),\int_K(V(t,x+\beta(x,P_{\xi},e),P_{\xi})-V(t,x,P_{\xi}))
l(e)\lambda(de)).
\end{aligned}
$

The following two propositions
study the regularity properties of the value function $V(t,x,P_\xi)$.
\begin{proposition}\label{pro 9.1}
Under the assumptions (H7.1) and (H8.1) the value function $V$ has the following properties:\\
{\rm i)} $V\in C^{\frac{1}{2},2,2}([0,T]\times {\mathbb R}^d\times{ {\cal P}_2}({\mathbb R}^d))$, which means,\\
\mbox{ }\ {\rm{a)}} $V(t,\cdot,\mu)\in C^2({\mathbb R}^d)$, for all $(t,\mu)\in[0,T]\times{ {\cal P}_2}({\mathbb R}^d)$;\\
\mbox{ }\ {\rm{b)}} $V(t,x,\cdot)\in C_b^2({ {\cal P}_2}(\mathbb{R}^d))$, for all $(t,x)\in [0,T]\times {\mathbb R}^d$;\\
\mbox{ }\ {\rm{c)}} The derivatives $\partial_xV, \partial_x^2V$ are continuous on $[0,T]\times {\mathbb R}^d\times{ {\cal P}_2}({\mathbb R}^d)$, $\partial_\mu V,\partial_y(\partial_\mu V)$ are continuous\\
\mbox{ } \ \ \ and bounded on $[0,T]\times {\mathbb R}^d\times{ {\cal P}_2}({\mathbb R}^d)\times {\mathbb R}^d$;\\
\mbox{ }\ {\rm{d)}} $V(\cdot,x,\mu)$ is $\frac{1}{2}$-H\"{o}lder continuous in $t$, uniformly with respect to $(x,\mu)\in {\mathbb R}^d\times{ {\cal P}_2}({\mathbb R}^d)$.\\
{\rm ii)} For all $\varphi\in\{\partial_xV,\partial_x^2V,\partial_\mu V,\partial_y(\partial_\mu V)\}$, there exists a constant $C>0$,
$$|\varphi(t,x,P_\xi,y)-\varphi(t',x,P_\xi,y)|\leq C|t-t'|^{\frac{1}{8}},\ t,\ t'\in[0,T],\ x,\ y\in {\mathbb R}^d,\ \xi\in L^2({\cal F}).$$
\end{proposition}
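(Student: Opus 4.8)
The plan is to read off the derivatives of $V$ from the initial-time values of the derivative processes already constructed in Sections 6 and 8, and then to quantify their time-regularity by playing the backward equation against the flow property. Since $V(t,x,P_\xi)=Y_t^{t,x,P_\xi}$ is deterministic, the $L^2$-differentiation results of Theorems \ref{th 6.1}, \ref{th 6.2} and \ref{Pro8.1}, evaluated at the initial time $s=t$, give
\[
\partial_xV(t,x,P_\xi)=\partial_xY_t^{t,x,P_\xi},\qquad \partial_x^2V(t,x,P_\xi)=\partial_x^2Y_t^{t,x,P_\xi},
\]
\[
\partial_\mu V(t,x,P_\xi,y)=\partial_\mu Y_t^{t,x,P_\xi}(y),\qquad \partial_y(\partial_\mu V)(t,x,P_\xi,y)=\partial_y\big(\partial_\mu Y_t^{t,x,P_\xi}(y)\big).
\]
With this identification, i)-a), i)-b) and the continuity and boundedness in the variables $(x,\mu,y)$ asserted in i)-c) are immediate from Proposition \ref{pro 6.2} and Theorem \ref{Pro8.1}-a),b), while i)-d) is exactly Proposition \ref{pro 4.5}. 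Thus the whole of i) reduces, except for the continuity of the derivatives in the time variable, to part ii), which is the heart of the statement.

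For part ii), fix $\varphi$ to be one of the four derivatives above and let $\Theta^{t,x,P_\xi}(y)$ denote the corresponding $Y$-type derivative process, so that $\varphi(t,x,P_\xi,y)=\Theta_t^{t,x,P_\xi}(y)$. For $t<t'$ I would use the decomposition
\[
\varphi(t,x,P_\xi,y)-\varphi(t',x,P_\xi,y)
=E\big[\Theta_t^{t,x,P_\xi}(y)-\Theta_{t'}^{t,x,P_\xi}(y)\big]
+E\big[\Theta_{t'}^{t,x,P_\xi}(y)-\varphi(t',x,P_\xi,y)\big].
\]
The first bracket is handled by writing the BSDE solved by $\Theta^{t,x,P_\xi}$ (one of (\ref{equ 6.1}), (\ref{equ 6.13}), (\ref{222})) on the interval $[t,t']$ and taking expectation: the stochastic integrals disappear and there remains $E[\int_t^{t'}(\text{driver})\,dr]$. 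The second bracket is treated through the flow property: from (\ref{4.6-3}) one has $Y_{t'}^{t,x,P_\xi}=V\big(t',X_{t'}^{t,x,P_\xi},P_{X_{t'}^{t,\xi}}\big)$, and differentiating (\ref{4.6-1})--(\ref{4.6-3}) in $x$ and in $\mu$ via the chain rule expresses $\Theta_{t'}^{t,x,P_\xi}$ through $\varphi\big(t',X_{t'}^{t,x,P_\xi},P_{X_{t'}^{t,\xi}}\big)$, lower-order derivatives of $V$, and the process derivatives $\partial_xX,\partial_x^2X,\partial_\mu X,\partial_y(\partial_\mu X)$. Inserting $\pm\varphi(t',x,P_\xi,y)$ and invoking Lemma \ref{le 3.1}-vi), Proposition \ref{pro 5.1}-iii), Proposition \ref{pro 5.3}-iii) and Theorem \ref{pro 7.1}-iii) -- each supplying a factor of order $|t-t'|^{1/2}$ for the displacements $X_{t'}^{t,x}\to x$, $\partial_xX_{t'}^{t,x}\to I$, $\partial_x^2X_{t'}^{t,x}\to0$, etc. -- together with the boundedness of the derivatives of $V$, reduces the second bracket to controlling the \emph{spatial} modulus of continuity of $\varphi(t',\cdot)$ evaluated at the random point $X_{t'}^{t,x}$.

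The main obstacle sits in the second-order cases $\varphi=\partial_x^2V$ and $\varphi=\partial_y(\partial_\mu V)$, and is twofold. In the first bracket the driver of the second-order BSDE (\ref{222}) carries the nonlinear terms $\partial_y(\partial_\mu f)(\partial_x\widehat{\Pi})^2$, i.e. squares of first-order $Z$- and $H$-derivatives, which are only square-integrable in time, so $E[\int_t^{t'}|\partial_x\Pi_r|^2dr]$ cannot be given a rate by moment bounds alone; in the second bracket the only a priori modulus of $\partial_x^2V$ (resp. $\partial_y(\partial_\mu V)$) in $(x,\mu)$ is the weak, $M$-dependent estimate of Theorem \ref{Pro8.1}-b), whose remainder $\rho_{M,p}$ tends to $0$ as $M\to\infty$ but with no evident rate. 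Both difficulties are met by truncating at a level $M$ -- splitting $E[\int_t^{t'}|\partial_x\Pi_r|^2dr]\le M|t-t'|+\rho_M$ and writing the spatial modulus as $\le CM(|X_{t'}^{t,x}-x|+W_2(\cdots))+\rho_M$ -- and then optimizing $M$ against the tails. This is precisely where the auxiliary estimate of Subsection 10.3 enters: it provides the quantitative decay of the truncation remainder that is missing from Theorem \ref{Pro8.1}-b), thereby upgrading the weak modulus into a genuine H\"older continuity in space of the second-order derivatives.

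Granting that upgrade, the $\tfrac18$ exponent appears by composition: the second-order derivatives of $V$ turn out to be H\"older in space with exponent $\tfrac14$, and composing this with the $\tfrac12$-order behaviour of the flow -- so that, by Jensen, $E\big[|X_{t'}^{t,x}-x|^{1/4}\big]\le\big(E[|X_{t'}^{t,x}-x|^2]\big)^{1/8}\le C|t-t'|^{1/8}$ and likewise for the $W_2$-displacement via Lemma \ref{le 3.1}-vi) -- yields the bound $C|t-t'|^{1/8}$ for the second bracket, which dominates the (at worst $|t-t'|^{1/4}$) contribution of the first bracket. The first-order cases $\partial_xV$ and $\partial_\mu V$ are easier and give the better exponent $\tfrac12$, so the uniform $\tfrac18$-H\"older bound of ii) holds for all four derivatives; the measure-derivative cases are treated by the same scheme, the extra copy-space expectations $\widehat{E}[\cdot]$ being handled exactly as in the proofs of Lemma \ref{le 6.1} and Proposition \ref{pro 6.1}. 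Finally, feeding the resulting time-modulus back into i)-c) completes the missing time-continuity and hence the whole proposition. The hard part is unquestionably the quantitative control of the tail $\rho_{M,p}$ for processes whose martingale integrands are merely $L^2$ in time; everything else is a careful but routine bookkeeping of the displacement estimates.
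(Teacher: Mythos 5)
Your reduction of part i) to Propositions \ref{pro 4.3}, \ref{pro 4.5}, \ref{pro 6.2} and Theorem \ref{Pro8.1}, and your splitting of part ii) into a driver integral over $[t,t']$ plus a comparison at time $t'$, match the opening moves of the paper's Lemma \ref{newlem2}. But your treatment of both halves of that splitting for the second-order derivatives has a genuine gap, and it is exactly at the point you yourself flag as the hard kernel. For the second bracket you invoke the chain rule $\partial_x^2Y_{t'}^{t,x,P_\xi}=\partial_x^2V(t',X_{t'}^{t,x,P_\xi},P_{X_{t'}^{t,\xi}})(\partial_xX_{t'}^{t,x,P_\xi})^2+\partial_xV(\cdot)\partial_x^2X_{t'}^{t,x,P_\xi}$ and then need a quantitative spatial H\"older modulus (you claim exponent $\tfrac14$) for $\partial_x^2V(t',\cdot,\cdot)$ at the displaced point. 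No such modulus is available: Theorem \ref{Pro8.1}-b) only gives the $M$-truncated bound with a remainder $\rho_{M,p}$ that tends to $0$ without any rate, and Subsection 10.3 does \emph{not} supply the missing decay of $\rho_{M,p}$ --- Lemma \ref{newlem2} there is a statement about time-regularity, not about upgrading the spatial modulus. Your proposed optimization of $M$ against the tails therefore cannot produce the exponent $\tfrac18$, or indeed any rate. The same problem recurs in your first bracket, where you bound $E[\int_t^{t'}|\partial_x\Pi_r|^2dr]\le M|t-t'|+\rho_M$ with an unquantified $\rho_M$.

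The idea that actually closes both gaps in the paper, and which is absent from your proposal, is the representation formula of Step 2 of Lemma \ref{newlem2}: $Z_s^{t,x,P_\xi}=\partial_xV(s,X_s^{t,x,P_\xi},P_{X_s^{t,\xi}})\sigma(X_s^{t,x,P_\xi},P_{X_s^{t,\xi}})$ and the analogous identity for $H$ (obtained by playing the It\^o expansion of $V(s,X_s^{t,x,P_\xi},P_{X_s^{t,\xi}})$, available once i) is known, against the BSDE, after first upgrading $V$ to be Lipschitz in $t$ in Step 1). Differentiating these identities in $x$ shows $\esssup_{s}|\partial_xZ_s^{t,x,P_\xi}|$ and $\esssup_s|\partial_xH_s^{t,x,P_\xi}|_{L^2(\lambda)}$ have finite $p$-th moments, i.e.\ the first-order $Z$- and $H$-derivatives are essentially bounded in time rather than merely square-integrable. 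With this, $E[\int_t^{t'}|\partial_x\Pi_r|^2dr]\le C(t'-t)$ with no truncation, and the quadratic driver terms $(\partial_x\widehat\Pi_r)^2$ admit genuine rates. Moreover the paper then never touches the spatial modulus of $\partial_x^2V$: instead of your chain-rule step it compares the two BSDEs for $\partial_x^2Y^{t,x,P_\xi}$ and $\partial_x^2Y^{t',x,P_\xi}$ directly on $[t',T]$ via the stability estimate of Lemma \ref{leli 4.1}-2), controlling the differences of terminal data and drivers through the flow property of $X,\partial_xX,\partial_x^2X$ and the displacement estimates of Lemma \ref{le 3.1}-vi), Proposition \ref{pro 5.1}-iii) and Theorem \ref{pro 7.1}-iii). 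The resulting $L^2$-bound of order $|t-t'|^{1/4}$ (driven by $(E[|\partial_xX_{t'}^{t,x,P_\xi}-1|^8])^{1/4}\le C|t-t'|^{1/4}$), not a composition of a spatial $\tfrac14$-H\"older modulus with Jensen, is where the exponent $\tfrac18$ comes from after taking the square root. Without the representation formula your argument does not go through.
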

\begin{proof}
{\rm i)} follows directly from the preceding results-Proposition 4.3, Theorems 6.1 and 6.2, Proposition 6.2, Theorem 8.1 on $Y^{t,x,P_\xi}$ and its derivatives;\\
{\rm ii)} follows from Lemma \ref{newlem2} in the Appendix. The proof is long, we give it in the appendix.
\end{proof}
From (\ref{4.6-3}) and (\ref{(*102)}) in the proof of Lemma \ref{newlem2}, we get the following results.
\begin{corollary}(Representation Formulas) Under the assumptions (H7.1) and (H8.1) we have the following representation formulas:
\begin{equation}\label{991}\begin{array}{lll}
&\!\!\!\!\!\! Y_s^{t,x,P_{\xi}}= V(s,X_s^{t,x,P_{\xi}},P_{X_s^{t,\xi}}), \mbox{\rm{P-a.s.}},\ s\in [t, T];\\
&\!\!\!\!\!\!  Z_s^{t,x,P_{\xi}}=\partial_xV(s,X_s^{t,x,P_{\xi}},P_{X_s^{t,\xi}})\sigma(X_s^{t,x,P_{\xi}},P_{X_s^{t,\xi}}), {\mbox{\rm dsdP-a.e.}};\\
&\!\!\!\!\!\!  H_s^{t,x,P_{\xi}}(e)=V(s,X_{s-}^{t,x,P_{\xi}}\!+\!\beta(X_{s-}^{t,x,P_{\xi}},P_{X_s^{t,\xi}},e),P_{X_s^{t,\xi}})\!-\!V(s,X_{s-}^{t,x,P_{\xi}},P_{X_s^{t,\xi}}), \mbox{\rm{dsd}}\lambda\mbox{\rm{dP-a.e.}}
\end{array}
\end{equation}
\end{corollary}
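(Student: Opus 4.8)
The plan is to read the three formulas off the two available descriptions of the semimartingale $s\mapsto Y^{t,x,P_\xi}_s$: on one hand its BSDE dynamics (\ref{equ 4.2}), on the other hand the identity, recorded in (\ref{4.6-3}), that $Y^{t,x,P_\xi}_s=V(s,X^{t,x,P_\xi}_s,P_{X^{t,\xi}_s})$ for all $s\in[t,T]$, P-a.s. The first representation is then nothing but (\ref{4.6-3}). For the other two, the idea is to expand the right-hand side $V(s,X^{t,x,P_\xi}_s,P_{X^{t,\xi}_s})$ via the new It\^o formula of Theorem \ref{minThm2.1}, applied with $F=V$, with the It\^o process $U=X^{t,x,P_\xi}$ in the ``state'' slot and with $X=X^{t,\xi}$ in the ``law'' slot, and then to match the resulting semimartingale decomposition of $Y^{t,x,P_\xi}$ with that coming from the BSDE.

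First I would apply Theorem \ref{minThm2.1}. Its output is a decomposition whose $dB^\ell_s$-coefficient is $\partial_{x}V(s,X^{t,x,P_\xi}_s,P_{X^{t,\xi}_s})\,\sigma(X^{t,x,P_\xi}_s,P_{X^{t,\xi}_s})$ and whose $N_\lambda(ds,de)$-integrand is $V(s,X_{s-}^{t,x,P_\xi}+\beta(X_{s-}^{t,x,P_\xi},P_{X^{t,\xi}_s},e),P_{X^{t,\xi}_s})-V(s,X_{s-}^{t,x,P_\xi},P_{X^{t,\xi}_s})$, the remaining terms being absolutely continuous $ds$-terms. Comparing with (\ref{equ 4.2}), whose martingale part is $\int Z^{t,x,P_\xi}_s\,dB_s+\int\!\int H^{t,x,P_\xi}_s(e)\,N_\lambda(ds,de)$, the uniqueness of the canonical splitting of a special semimartingale into a predictable finite-variation part, a continuous martingale part and a purely discontinuous martingale part forces the two formulas: the continuous martingale parts coincide, giving $Z^{t,x,P_\xi}_s=\partial_xV\,\sigma$ ($ds\,dP$-a.e.), and the jump parts coincide, giving the stated expression for $H^{t,x,P_\xi}_s(e)$. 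The $H$-formula can in fact be obtained more elementarily, bypassing It\^o: since $s\mapsto P_{X^{t,\xi}_s}$ is continuous in $W_2$ by Lemma \ref{le 3.1}\,vi) and $V$ is continuous in $(t,\mu)$ by Proposition \ref{pro 9.1}, the law slot carries no jump, so at a jump time $s$ of $N$ with mark $e$ one has $\Delta Y^{t,x,P_\xi}_s=V(s,X_{s-}^{t,x,P_\xi}+\beta(\cdots,e),P_{X^{t,\xi}_s})-V(s,X_{s-}^{t,x,P_\xi},P_{X^{t,\xi}_s})$, while (\ref{equ 4.2}) gives $\Delta Y^{t,x,P_\xi}_s=H^{t,x,P_\xi}_s(e)$; identifying these for $N$-a.e. $(s,e)$ yields the claim $ds\,d\lambda\,dP$-a.e.

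The hard part will be that Theorem \ref{minThm2.1} requires $F\in C^{1,2,2}$, whereas Proposition \ref{pro 9.1} only gives $V\in C^{\frac12,2,2}$: $V$ is merely $\frac12$-H\"older (not $C^1$, indeed not even of finite variation) in $t$, so the It\^o formula cannot be applied to $V$ directly. I would resolve this by regularization in time, mollifying $V$ to obtain $V^{\varepsilon}\in C^{1,2,2}([0,T]\times\mathbb{R}^d\times\mathcal{P}_2(\mathbb{R}^d))$ for which $\partial_xV^{\varepsilon},\partial^2_xV^{\varepsilon},\partial_\mu V^{\varepsilon},\partial_y\partial_\mu V^{\varepsilon}$ converge locally uniformly to their limits (using the boundedness and continuity in Proposition \ref{pro 9.1}), applying Theorem \ref{minThm2.1} to each $V^{\varepsilon}$ and letting $\varepsilon\to0$. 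The two stochastic integrals converge in $L^2$ thanks to the boundedness of $\partial_xV$ and $\sigma$ together with the moment bounds of Lemma \ref{le 3.1} and Proposition \ref{pro 4.3}, while the explicit, deterministic-in-$s$ time dependence feeds only into the $ds$-part and is therefore immaterial for identifying $Z$ and $H$. This is exactly the delicate passage to the limit packaged by the expansion (*102) inside the proof of Lemma \ref{newlem2} in the Appendix, so in the write-up I would simply invoke (\ref{4.6-3}) and that expansion to conclude.
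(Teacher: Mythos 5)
Your final write-up (``invoke (\ref{4.6-3}) and the expansion (\ref{(*102)})'') is exactly the paper's proof, and your elementary jump-matching argument for the $H$-component is a valid alternative for that one formula: since $s\mapsto V(s,x,\mu)$ and $s\mapsto P_{X_s^{t,\xi}}$ are continuous, the jumps of $V(s,X_s^{t,x,P_\xi},P_{X_s^{t,\xi}})$ come only from $X^{t,x,P_\xi}$, and comparing them with the jumps $H_s^{t,x,P_\xi}(e)$ read off BSDE (\ref{equ 4.2}) identifies $H$ first $N(ds,de)$-a.e.\ and then $ds\,d\lambda\,dP$-a.e. However, your substantive route to the $Z$-formula --- mollify $V$ in time, apply Theorem \ref{minThm2.1} to $V^{\varepsilon}$, let $\varepsilon\to 0$, and invoke uniqueness of the canonical decomposition of a special semimartingale --- has a genuine gap at precisely the step you declare ``immaterial''. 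After passing to the limit, the term $\int_t^{\cdot}\partial_sV^{\varepsilon}(s,U_s,P_{X_s})\,ds$ converges (being the difference of convergent terms) to some continuous adapted process, but nothing shows this limit is of finite variation: since $V$ is only $\tfrac12$-H\"older in $t$, $\partial_sV^{\varepsilon}$ is of order $\varepsilon^{-1/2}$, the total variations blow up, and a limit of finite-variation processes can perfectly well acquire a nontrivial martingale part (piecewise-linear interpolants of Brownian motion do). So the canonical-decomposition argument does not apply to the limiting identity, and the assertion that the time dependence ``feeds only into the $ds$-part'' is exactly what has to be proved.

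The paper's proof of (\ref{(*102)}) circumvents this by a different device, not by regularization. In Step 1 of Lemma \ref{newlem2}, It\^o's formula is applied to the map $(x,\mu)\mapsto V(t+h,x,\mu)$ with the \emph{time argument frozen} at $t+h$, so only $V(t+h,\cdot,\cdot)\in C^{2,2}$ is needed and the lack of time-differentiability of $V$ never enters. Writing $V(t,x,P_\xi)-V(t+h,x,P_\xi)=(Y_t^{t,x,P_\xi}-Y_{t+h}^{t,x,P_\xi})+(V(t+h,X_{t+h}^{t,x,P_\xi},P_{X_{t+h}^{t,\xi}})-V(t+h,x,P_\xi))$ and inserting the BSDE for the first difference yields (\ref{C2-2}); since the left-hand side is deterministic and, by the bounded drift, of order $h$, orthogonality of the martingale increments gives that the $L^2$-norm of the martingale defect over $[t,t+h]$ is $O(h^2)$; applying this on each cell of a dyadic partition via the flow property and summing gives a bound $Ch^2 2^{-n}\to 0$, whence (\ref{(*102)}). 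You should either reproduce that argument or supply the missing finite-variation (equivalently, zero quadratic variation) control for the limiting drift in your mollification scheme; as written, only your fallback citation of (\ref{(*102)}) closes the proof.
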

\begin{remark} From (\ref{4.6-111}) the solution of BSDE (\ref{equ 4.1}) has the following representation formulas:
\begin{equation}\label{9911}\begin{array}{lll}
&\!\!\!\!\!\! Y_s^{t,P_{\xi}}= V(s,X_s^{t,{\xi}},P_{X_s^{t,\xi}}), \mbox{\rm{P-a.s.}},\ s\in [t, T];\\
&\!\!\!\!\!\!  Z_s^{t,P_{\xi}}=\partial_xV(s,X_s^{t,{\xi}},P_{X_s^{t,\xi}})\sigma(X_s^{t,{\xi}},P_{X_s^{t,\xi}}), {\mbox{\rm dsdP-a.e.}};\\
&\!\!\!\!\!\!  H_s^{t,x,P_{\xi}}(e)=V(s,X_{s-}^{t,{\xi}}\!+\!\beta(X_{s-}^{t,{\xi}},P_{X_s^{t,\xi}},e),P_{X_s^{t,\xi}})\!-\!V(s,X_{s-}^{t,{\xi}},P_{X_s^{t,\xi}}), \mbox{\rm{dsd}}\lambda\mbox{\rm{dP-a.e.}}
\end{array}
\end{equation}

\end{remark}

\begin{theorem}\label{newthm1}
The value function $V\in C^{1,2,2}([0,T]\times {\mathbb R}^d\times{ {\cal P}_2}({\mathbb R}^d))$.

\end{theorem}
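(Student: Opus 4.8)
The plan is to close the only gap between the already-established $C^{\frac12,2,2}$ regularity of Proposition \ref{pro 9.1} and the claimed $C^{1,2,2}$ regularity, namely to upgrade the $\frac12$-H\"older dependence on $t$ to genuine $C^1$ dependence; the full spatial and measure regularity ($C^{2,2}$ in $(x,\mu)$, with bounded continuous $\partial_\mu V$ and $\partial_y(\partial_\mu V)$) is supplied directly by Proposition \ref{pro 9.1} i). Showing that $\partial_t V$ exists and is continuous is equivalent to showing that $V$ solves the PDE (\ref{equ 1.4}) classically, since the right-hand side of (\ref{equ 1.4}) gives an explicit jointly continuous candidate for $\partial_t V$. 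The crucial device that avoids circularity is that one may apply the It\^o formula of Theorem \ref{minThm2.1} to the \emph{time-frozen} map $(s,u,\mu)\mapsto V(t_0,u,\mu)$: as a function of $s$ it is constant, hence trivially $C^1$ with $\partial_s(\cdot)\equiv 0$, while in $(u,\mu)$ it lies in $C^{2,2}(\mathbb{R}^d\times\mathcal{P}_2(\mathbb{R}^d))$ by Proposition \ref{pro 9.1} i), so that the hypotheses of Theorem \ref{minThm2.1} are met without ever invoking $\partial_t V$.

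First I would fix $(t,x,P_\xi)$ and $\delta>0$, and combine the BSDE (\ref{equ 4.2}) on $[t,t+\delta]$ with the representation formula (\ref{991}) and the flow property (\ref{4.6-1}). Taking expectation annihilates the $dB$- and $N_\lambda$-martingale parts (their integrability is ensured by Proposition \ref{pro 4.3}), and, using $Y_t^{t,x,P_\xi}=V(t,x,P_\xi)$ and $Y_{t+\delta}^{t,x,P_\xi}=V(t+\delta,X_{t+\delta}^{t,x,P_\xi},P_{X_{t+\delta}^{t,\xi}})$, yields
\begin{equation*}
V(t,x,P_\xi)-V(t+\delta,x,P_\xi)=E\big[V(t+\delta,X_{t+\delta}^{t,x,P_\xi},P_{X_{t+\delta}^{t,\xi}})-V(t+\delta,x,P_\xi)\big]+E\Big[\int_t^{t+\delta}f(\Pi_r^{t,x,P_\xi},P_{\Pi_r^{t,\xi}})\,dr\Big].
\end{equation*}

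Next I would apply Theorem \ref{minThm2.1} with the frozen function $F(s,u,\mu):=V(t+\delta,u,\mu)$ to the two It\^o processes $U_s=X_s^{t,x,P_\xi}$ and $X_s=X_s^{t,\xi}$ on $[t,t+\delta]$; their coefficients satisfy the hypotheses of the theorem, in particular $|\beta(\cdot,\cdot,e)|\le L(1\wedge|e|)$ matches the requirement on $\gamma$. Since $\partial_sF\equiv0$, this rewrites the first expectation as $E[\int_t^{t+\delta}(\mathcal{A}_sV(t+\delta,\cdot))\,ds]$, where $\mathcal{A}_s$ collects precisely the drift, second-order, nonlocal jump-integral and measure-derivative terms appearing in the bracket of (\ref{equ 1.4}), evaluated along the processes at time $s$. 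Dividing by $\delta$ gives
\begin{equation*}
\frac{V(t+\delta,x,P_\xi)-V(t,x,P_\xi)}{\delta}=-\frac1\delta\,E\Big[\int_t^{t+\delta}\big(\mathcal{A}_sV(t+\delta,\cdot)+f(\Pi_r^{t,x,P_\xi},P_{\Pi_r^{t,\xi}})\big)\,ds\Big].
\end{equation*}

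The main work, and the main obstacle, is the passage $\delta\to0^+$ in this averaged integral. I would show the integrand converges in the $L^1$-average sense to its value at $s=t$, where $X_t^{t,x,P_\xi}=x$ and $P_{X_t^{t,\xi}}=P_\xi$, which is exactly the bracket $\{\cdots\}$ on the right of (\ref{equ 1.4}). This combines the joint continuity and boundedness of Proposition \ref{pro 9.1} i), the short-time estimates of Lemma \ref{le 3.1} vi) and Proposition \ref{pro 4.3} controlling $X_s^{t,x,P_\xi}-x$ and $W_2(P_{X_s^{t,\xi}},P_\xi)$ on $[t,t+\delta]$, and --- the delicate ingredient --- the $\frac18$-H\"older continuity in $t$ of $\partial_xV,\partial_x^2V,\partial_\mu V,\partial_y(\partial_\mu V)$ from Proposition \ref{pro 9.1} ii), which is needed to replace the derivatives of $V(t+\delta,\cdot)$ produced by the frozen It\^o formula by those of $V(t,\cdot)$ at an error $O(\delta^{1/8})\to0$, despite the low H\"older exponent. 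Hence the right derivative $\partial_t^+V(t,x,P_\xi)$ exists and equals the jointly continuous right-hand side $\mathcal{G}(t,x,P_\xi)$ of (\ref{equ 1.4}); running the same computation on $[t-\delta,t]$ produces the identical left derivative. Since a continuous function whose one-sided $t$-derivative exists everywhere and is continuous is of class $C^1$ in $t$, it follows that $\partial_tV$ exists, is continuous, and $V$ satisfies (\ref{equ 1.4}). Together with Proposition \ref{pro 9.1} i) this gives $V\in C^{1,2,2}([0,T]\times\mathbb{R}^d\times\mathcal{P}_2(\mathbb{R}^d))$.
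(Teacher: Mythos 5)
Your proposal is correct and follows essentially the same route as the paper: the identity you derive for $V(t,x,P_\xi)-V(t+\delta,x,P_\xi)$ is exactly the paper's equation (\ref{990}) (obtained in Lemma \ref{newlem2} by applying the It\^o formula of Theorem \ref{minThm2.1} to the time-frozen map $V(t+\delta,\cdot,\cdot)$ along $(X^{t,x,P_\xi},X^{t,\xi})$), and the passage to the limit via the $\tfrac18$-H\"older estimates of Proposition \ref{pro 9.1} ii), the short-time estimates, the representation formulas (\ref{991}) and bounded convergence is precisely the paper's argument. Your explicit remark that a continuous one-sided derivative everywhere yields $C^1$ regularity in $t$ is a welcome clarification of a step the paper leaves implicit, but it does not constitute a different method.
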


\begin{proof} From Proposition 9.1 we see that we only need to prove continuous differentiability of $V$ with respect to $t$. For simplicity of notations we still give the proof when $d=1$. Using the notations in Lemma \ref{newlem2} and (\ref{C2-2}) in the Appendix, we have
\begin{equation}\label{990}
V(t,x,P_\xi)-V(t+h,x,P_\xi)=E[\int_t^{t+h}(\theta(t,t+h,s)+\delta(t,t+h,s)+f(\Pi_s^{t,x,P_{\xi}},P_{\Pi_s^{t,\xi}}))ds];
\end{equation}
and
\[
\begin{split}
&E[\theta(t,t+h,s)]\\
&=E[\partial_xV(s,X_s^{t,x,P_{\xi}},P_{X_s^{t,\xi}})b(X_s^{t,x,P_\xi},P_{X_s^{t,\xi}})
+\frac{1}{2}(\partial_x^2V)(s,X_s^{t,x,P_{\xi}},P_{X_s^{t,\xi}})\sigma(X_s^{t,x,P_\xi},P_{X_s^{t,\xi}})^2\\
&+\int_K\Big(V(s,X_s^{t,x,P_{\xi}}+\beta(X_s^{t,x,P_{\xi}},P_{X_s^{t,\xi}},e),P_{X_s^{t,\xi}})-V(s,X_s^{t,x,P_{\xi}},P_{X_s^{t,\xi}})-
\partial_xV(s,X_s^{t,x,P_{\xi}},P_{X_s^{t,\xi}})\\
&\beta(X_s^{t,x,P_{\xi}},P_{X_s^{t,\xi}},e)\Big)\lambda(de)]+R_1(t,t+h)(s),\\
\end{split}
\]
with $|R_1(t,t+h)(s)|\leq Ch^\frac{1}{8} $ which follows from Proposition 9.1. Furthermore, from the Bounded Convergence Theorem we get
\[
\begin{split}
&E[\theta(t,t+h,s)]\rightarrow \partial_xV(t,x,P_{\xi})b(x,P_{\xi})+\frac{1}{2}(\partial_x^2V)(t,x,P_{\xi})\sigma(x,P_{\xi})^2\\
&+\int_K(V(t,x+\beta(x,P_{\xi},e),P_{\xi})-V(t,x,P_{\xi})-(\partial_xV)(t,x,P_{\xi})\beta(x,P_{\xi},e))\lambda(de), \ \mbox{as}\ s\rightarrow t,\ h\downarrow 0,
\end{split}
\]
(bounded by some K only depending on $\partial_xV,\partial_x^2V$). Similarly, we also have
\[
\begin{split}
&E[\delta(t,t+h,s)]\\
=&E[\widehat{E}[(\partial_\mu V)(s,X_s^{t,x,P_{\xi}},P_{X_s^{t,\xi}},\widehat{X}_s^{t,\widehat{\xi}})b(\widehat{X}_s^{t,\widehat{\xi}},P_{X_s^{t,\xi}})
+\frac{1}{2}\partial_y(\partial_\mu V)(s,X_s^{t,x,P_{\xi}},P_{X_s^{t,\xi}},\widehat{X}_s^{t,\widehat{\xi}})\\
&\sigma(\widehat{X}_s^{t,\widehat{\xi}},P_{X_s^{t,\xi}})^2+\int_K\int_0^1((\partial_\mu V)(s,X_s^{t,x,P_{\xi}},P_{X_s^{t,\xi}},\widehat{X}_s^{t,\widehat{\xi}}+\rho\beta(\widehat{X}_s^{t,\widehat{\xi}},
P_{X_s^{t,\xi}},e))\\
&-(\partial_\mu V)(s,X_s^{t,x,P_{\xi}},P_{X_s^{t,\xi}},\widehat{X}_s^{t,\widehat{\xi}}))\beta(\widehat{X}_s^{t,\widehat{\xi}},P_{X_s^{t,\xi}},e)d\rho\lambda(de)]]+R_2(t,t+h)(s),
\end{split}
\]
with $|R_2(t,t+h)(s)|\leq Ch^\frac{1}{8}$ from Proposition 9.1, and from the Bounded Convergence Theorem we have
\[
\begin{split}
&E[\delta(t,t+h,s)]\rightarrow \widehat{E}[(\partial_\mu V)(t,x,P_{\xi},\widehat{\xi})b(\widehat{\xi},P_{\xi})
+\frac{1}{2}\partial_y(\partial_\mu V)(t,x,P_{\xi},\widehat{\xi})\sigma(\widehat{\xi},P_{\xi})^2\\
&+\int_K\int_0^1((\partial_\mu V)(t,x,P_{\xi},\widehat{\xi}+\rho\beta(\widehat{\xi},P_{\xi},e))-(\partial_\mu V)(t,x,P_{\xi},\widehat{\xi}))\beta(\widehat{\xi},P_{\xi},e)d\rho\lambda(de)],
\end{split}
\]
as $s\rightarrow t$, $h\downarrow 0$. Moreover, recall that\\
\smallskip
 $\quad
E[f(\Pi_r^{t,x,P_{\xi}},P_{\Pi_r^{t,\xi}})] $

 $\quad
=E[f(X_r^{t,x,P_{\xi}},Y_r^{t,x,P_{\xi}},Z_r^{t,x,P_{\xi}},\int_KH_r^{t,x,
P_{\xi}}(e)l(e)\lambda(de),P_{(X_r^{t,\xi},Y_r^{t,\xi},Z_r^{t,\xi},\int_KH_r^{t,\xi}(e)\lambda(de))})],$

\noindent and using the representation formulas (\ref{991})
we see that we have also the convergence
\[
\begin{split}
E[f(\Pi_s^{t,x,P_{\xi}},P_{\Pi_s^{t,\xi}})]\rightarrow& f(x,V(t,x,P_{\xi}),\partial_xV(t,x,P_{\xi})\sigma(x,P_{\xi}),\int_K(V(t,x+\beta(x,P_{\xi},e),P_{\xi})\\
&-V(t,x,P_{\xi}))l(e)\lambda(de),P_{(\xi,\psi(t,\xi,P_{\xi}))}),\ s\downarrow t,
\end{split}
\]
where $\psi(t,x,P_{\xi})$

$:=(V(t,x,P_{\xi}),\partial_xV(t,x,P_{\xi})\sigma(x,P_{\xi}),\int_K(V(t,x+\beta(x,P_{\xi},e),P_{\xi})-V(t,x,P_{\xi}))
l(e)\lambda(de)).$

\noindent Consequently, from (\ref{990}) we can obtain that $V(t,x,P_{\xi})$ is differentiable in $t$, and
\begin{equation}\label{C1bis-1}
\begin{split}
&-\partial_tV(t,x,P_{\xi})=(\partial_xV)(t,x,P_{\xi})b(x,P_{\xi})+\frac{1}{2}(\partial_x^2V)(t,x,P_{\xi})\sigma(x,P_{\xi})^2\\
&+\int_K(V(t,x+\beta(x,P_{\xi},e),P_{\xi})-V(t,x,P_{\xi})-(\partial_xV)(t,x,P_{\xi})\beta(x,P_{\xi},e))\lambda(de)\\
&+E\Big[(\partial_\mu V)(t,x,P_{\xi},\xi)b(\xi,P_{\xi})+\frac{1}{2}\partial_y(\partial_\mu V)(t,x,P_\xi,\xi)\sigma(\xi,P_{\xi})^2\\
&+\int_K\int_0^1\big((\partial_\mu V)(t,x,P_{\xi},\xi+\rho\beta(\xi,P_{\xi},e))-(\partial_\mu V)(t,x,P_{\xi},\xi)\big)\beta(\xi,P_{\xi},e)d\rho\lambda(de)\Big]\\
&+f(x,V(t,x,P_{\xi}),\partial_xV(t,x,P_{\xi})\sigma(x,P_{\xi}),\!\int_K\!\!(V(t,x\!+\!\beta(x,P_{\xi},e),P_{\xi})\!-\!V(t,x,P_{\xi}))l(e)\lambda(de),P_{\eta}),
\end{split}
\end{equation}
where $\eta:=(\xi,\psi(t,\xi,P_{\xi}))$. As the whole right-hand side of (\ref{C1bis-1}) is continuous in $(t,x,P_{\xi})$, this proves $V\in C^{1,2,2}([0,T]\times {\mathbb R}^d\times{ {\cal P}_2}({\mathbb R}^d))$.
\end{proof}

Moreover, it also shows the following main result.
\begin{theorem}\label{thm9.2}
$V\in C^{1,2,2}([0,T]\times {\mathbb R}^d\times{ {\cal P}_2}({\mathbb R}^d))$ is the classical solution of PDE (\ref{equ 1.4}), and it is unique in $C^{1,2,2}([0,T]\times {\mathbb R}^d\times{ {\cal P}_2}({\mathbb R}^d))$.
\end{theorem}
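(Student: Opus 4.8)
The plan is to separate the statement into its existence and uniqueness halves, observing that the existence is essentially already established. For existence I would note that Theorem \ref{newthm1} not only gives $V\in C^{1,2,2}([0,T]\times\mathbb{R}^d\times\mathcal{P}_2(\mathbb{R}^d))$ but, in its proof, produced precisely the identity (\ref{C1bis-1}), which is the interior equation of (\ref{equ 1.4}). Thus the only thing left to record for existence is the terminal condition: from the definition $V(t,x,P_\xi)=Y^{t,x,P_\xi}_t$ in (\ref{4.6-2}) and the terminal datum $Y^{t,x,\xi}_T=\Phi(X^{t,x,\xi}_T,P_{X^{t,\xi}_T})$ of (\ref{equ 4.2}), taken at $t=T$ where $X^{T,x,\xi}_T=x$ and $P_{X^{T,\xi}_T}=P_\xi$, one obtains $V(T,x,P_\xi)=\Phi(x,P_\xi)$. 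Since Proposition \ref{pro 9.1} guarantees that the right-hand side of (\ref{C1bis-1}) is continuous in $(t,x,P_\xi)$, the function $V$ is a genuine classical solution of (\ref{equ 1.4}).

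For uniqueness I would run a verification argument based on the new It\^o formula, Theorem \ref{minThm2.1}, that mirrors the split structure of the equations and therefore proceeds in two stages. Let $\bar V\in C^{1,2,2}$ be an arbitrary classical solution of (\ref{equ 1.4}). \emph{Stage one} fixes the measure flow: define $\bar Y^{t,\xi}_s:=\bar V(s,X^{t,\xi}_s,P_{X^{t,\xi}_s})$ and let $\bar Z^{t,\xi}$, $\bar H^{t,\xi}$ be given by the analogues of (\ref{9911}) with $V$ replaced by $\bar V$. Applying Theorem \ref{minThm2.1} to $\bar V(s,X^{t,\xi}_s,P_{X^{t,\xi}_s})$, where both the point argument and the law argument are carried by the same $\xi$-flow, and substituting the PDE (\ref{equ 1.4}) to replace $\partial_t\bar V$ plus the full integro-differential and measure-derivative operator by $-f$, I expect the $ds$-drift to collapse to $-f(\bar\Pi^{t,\xi}_s,P_{\bar\Pi^{t,\xi}_s})$, while the $dB$ and $N_\lambda$ integrands become exactly $\bar Z^{t,\xi}$ and $\bar H^{t,\xi}$. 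The closure is provided by the self-consistency of the nonlocal law term $P_{(\xi,\psi(\cdot,\xi,P_\xi))}$: inserting $\zeta=X^{t,\xi}_s$ into $\psi$ and using the representation formulas (\ref{9911}) identifies this law with $P_{\bar\Pi^{t,\xi}_s}$. Hence $(\bar Y^{t,\xi},\bar Z^{t,\xi},\bar H^{t,\xi})$ solves BSDE (\ref{equ 4.1}); by the uniqueness in Theorem \ref{thli 4.12.1} it coincides with $(Y^{t,\xi},Z^{t,\xi},H^{t,\xi})$, and in particular $P_{\bar\Pi^{t,\xi}_s}=P_{\Pi^{t,\xi}_s}$.

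\emph{Stage two} treats the conditional part with the law now frozen. Setting $\bar Y^{t,x,P_\xi}_s:=\bar V(s,X^{t,x,P_\xi}_s,P_{X^{t,\xi}_s})$, and $\bar Z^{t,x,P_\xi},\bar H^{t,x,P_\xi}$ via the analogues of (\ref{991}), I would apply Theorem \ref{minThm2.1} with $U=X^{t,x,P_\xi}$ entering the point argument and $X=X^{t,\xi}$ as the law-carrying process. Using (\ref{equ 1.4}) for $\bar V$ together with $P_{\bar\Pi^{t,\xi}}=P_{\Pi^{t,\xi}}$ from stage one, the same cancellation shows that $(\bar Y^{t,x,P_\xi},\bar Z^{t,x,P_\xi},\bar H^{t,x,P_\xi})$ solves BSDE (\ref{equ 4.2}); by its uniqueness this triple equals $(Y^{t,x,P_\xi},Z^{t,x,P_\xi},H^{t,x,P_\xi})$. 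Evaluating at $s=t$, where $X^{t,x,P_\xi}_t=x$ and $P_{X^{t,\xi}_t}=P_\xi$, yields $\bar V(t,x,P_\xi)=\bar Y^{t,x,P_\xi}_t=Y^{t,x,P_\xi}_t=V(t,x,P_\xi)$, which is the claimed uniqueness.

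The hard part will be the rigorous execution of the two It\^o applications. Concretely, one must verify term-by-term that the generator produced by Theorem \ref{minThm2.1}---with its compensated-jump, compensator, and $\partial_y(\partial_\mu\cdot)$ contributions evaluated along an independent copy---matches exactly the operator in (\ref{equ 1.4}) after inserting the PDE, and that the candidate integrands $\bar Z,\bar H$ read off from the martingale parts lie in $\mathcal{H}^2_{\mathbb F}$ and $\mathcal{K}^2_\lambda$ so that BSDE uniqueness may be invoked. The genuinely delicate point is the measure coupling in the $f$-argument: one must check that the self-referential law term closes through the representation formulas rather than producing a new unknown, and it is precisely this that forces the law-first ordering of the two stages.
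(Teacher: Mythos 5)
Your proposal is correct and follows essentially the same route as the paper: existence is read off from the identity (\ref{C1bis-1}) established in the proof of Theorem \ref{newthm1} together with the terminal condition, and uniqueness is a verification argument via the It\^{o} formula of Theorem \ref{minThm2.1} applied first along the law-carrying flow $X^{t,\xi}$ (to identify the solution of the mean-field BSDE (\ref{equ 4.1}) and hence the law $P_{\Pi^{t,\xi}}$) and then along $X^{t,x,P_\xi}$ with the law frozen (to invoke uniqueness for (\ref{equ 4.2})). Your observation that the self-referential law term forces the law-first ordering is exactly the point the paper exploits by passing from its equation (\ref{888}) to (\ref{887}) and (\ref{886}) before concluding.
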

\begin{proof}
As before we still assume $d=1$. From (\ref{C1bis-1}) and the definition of $V$ we see immediately that $V\in C^{1,2,2}([0,T]\times {\mathbb R}^d\times{ {\cal P}_2}({\mathbb R}^d))$ is the classical solution of PDE (\ref{equ 1.4}). Now we only need to prove the uniqueness of solution of PDE (\ref{equ 1.4}) in $C^{1,2,2}([0,T]\times {\mathbb R}^d\times{ {\cal P}_2}({\mathbb R}^d))$.

Suppose $U(t,x,P_\xi)\in  C^{1,2,2}([0,T]\times\mathbb{R}\times \mathcal{P}_2(\mathbb{R}))$
is another solution of the integral-PDE of mean-field type ({\ref{equ 1.4}}). Then, applying It\^{o}'s
formula to $U(s,X_s^{t,x,P_\xi},P_{X_s^{t,\xi}})$ (Recall Theorem 2.1, now with $U_s=X_s^{t,x,P_\xi}$ and $X_s=X_s^{t,\xi}$), we have

\begin{equation}
\begin{aligned}
& dU(s,X_s^{t,x,P_\xi},P_{X_s^{t,\xi}})
=\Big\{\partial_sU(s,X_s^{t,x,P_\xi},P_{X_s^{t,\xi}})
+\partial_x U(s,X_{s}^{t,x,P_\xi},P_{X_s^{t,\xi}})b(X_s^{t,x,P_\xi},P_{X_s^{t,\xi}})\\
&+\frac{1}{2}\partial_x^2 U(s,X_s^{t,x,P_\xi},P_{X_s^{t,\xi}})\cdot\sigma(X_s^{t,x,P_\xi},P_{X_s^{t,\xi}})^2 +\int_K \Big(U(s, X_{s}^{t,x,P_\xi}+\beta(X_{s}^{t,x,P_\xi}, P_{X_s^{t,\xi}},e),P_{X_s^{t,\xi}})\\
&\qquad-U(s,X_{s}^{t,x,P_\xi},P_{X_s^{t,\xi}})-\partial_x U(s,X_{s}^{t,x,P_\xi},P_{X_s^{t,\xi}})\beta(X_{s}^{t,x,P_\xi},P_{X_s^{t,\xi}},e)\Big)\lambda(de)\\
&+\widehat{E}\Big[\partial_\mu U(s,X_s^{t,x,P_\xi},P_{X_s^{t,\xi}},\widehat{X_s^{t,\xi}})b(\widehat{X_s^{t,\xi}},P_{X_s^{t,\xi}})+\frac{1}{2}\partial_y(\partial_\mu U)(s,X_s^{t,x,P_\xi},P_{X_s^{t,\xi}},\widehat{X_s^{t,\xi}})\sigma(\widehat{X_s^{t,\xi}},P_{X_s^{t,\xi}})^2\\
&+\int_K\int_0^1\Big(
\partial_\mu U(s, X_{s}^{t,x,P_\xi}, P_{X_s^{t,\xi}}, \widehat{X_s^{t,\xi}}
+\rho\beta(\widehat{X_s^{t,\xi}},P_{X_s^{t,\xi}},e))\\
&\qquad-\partial_\mu U(s, X_{s}^{t,x,P_\xi}, P_{X_s^{t,\xi}}, \widehat{X_s^{t,\xi}})\Big)
\beta(\widehat{X_s^{t,\xi}},P_{X_s^{t,\xi}},e)d\rho\lambda(de)\Big]\Big\}ds\\
&+\partial_xU(s,X_s^{t,x,P_\xi},P_{X_s^{t,\xi}})\sigma(X_s^{t,x,P_\xi},P_{X_s^{t,\xi}})dB_s\\
&+\int_K\Big(U(s, X_{s-}^{t,x,P_\xi}+\beta(X_{s-}^{t,x,P_\xi}, P_{X_s^{t,\xi}},e),P_{X_s^{t,\xi}})-U(s,X_{s-}^{t,x,P_\xi},P_{X_s^{t,\xi}})\Big)N_\lambda(ds,de).\\
\end{aligned}
\end{equation}
But, as $U(t,x,P_\xi)$ satisfies equation ({\ref{equ 1.4}}), this yields
\begin{equation}\label{888}
\begin{aligned}
&dU(s,X_s^{t,x,P_\xi},P_{X_s^{t,\xi}})=-f\Big(X_s^{t,x,P_\xi},U(s, X_{s}^{t,x,P_\xi}, P_{X_s^{t,\xi}}),
\partial_xU(s, X_{s}^{t,x,P_\xi}, P_{X_s^{t,\xi}})\cdot\sigma(X_s^{t,x,P_\xi},P_{X_s^{t,\xi}}),\\
& \int_K\big(U(s,X_{s}^{t,x,P_\xi}+\beta(X_{s}^{t,x,P_\xi},P_{X_s^{t,\xi}},e),P_{X_s^{t,\xi}})
-U(s,X_{s}^{t,x,P_\xi},P_{X_s^{t,\xi}})\big)l(e)\lambda(de), P_\eta\Big)ds\\
&+\partial_x U(s,X_s^{t,x,P_\xi},P_{X_s^{t,\xi}})\sigma(X_s^{t,x,P_\xi},P_{X_s^{t,\xi}})dB_s\\
&+\int_K\Big( U(s, X_{s-}^{t,x,P_\xi}+\beta(X_{s-}^{t,x,P_\xi}, P_{X_s^{t,\xi}},e),P_{X_s^{t,\xi}})-U(s,X_{s-}^{t,x,P_\xi},P_{X_s^{t,\xi}})\Big)
N_\lambda(ds,de),\\
&U(T,X_T^{t,x,P_\xi},P_{X_T^{t,\xi}})=\Phi(X_T^{t,x,P_\xi},P_{X_T^{t,\xi}}),\\
\end{aligned}
\end{equation}
where $\displaystyle \eta:=\Big(X_s^{t,\xi},U(s,X_s^{t,\xi},P_{X_s^{t,\xi}}),
\partial_xU(s,X_s^{t,\xi},P_{X_s^{t,\xi}})\cdot\sigma(X_s^{t,\xi},P_{X_s^{t,\xi}}),
\int_K\big(U(s,X_{s}^{t,\xi}+\beta(X_{s}^{t,\xi},P_{X_s^{t,\xi}},e),$ $P_{X_s^{t,\xi}})
-U(s,X_{s}^{t,\xi},P_{X_s^{t,\xi}})\big)l(e)\lambda(de)\Big).$

Now we replace $x$ by $\xi$ in the above (\ref{888}) (recall that $X_s^{t,\xi}=X_s^{t,x,P_\xi}|_{x=\xi}$, or applying It\^{o}'s
formula directly to $U(s,X_s^{t,\xi},P_{X_s^{t,\xi}})$) we get
\begin{equation}\label{887}
\begin{aligned}
&dU(s,X_s^{t,\xi},P_{X_s^{t,\xi}})=-f\Big(X_s^{t,\xi},U(s, X_{s}^{t,\xi}, P_{X_s^{t,\xi}}),
\partial_xU(s, X_{s}^{t,\xi}, P_{X_s^{t,\xi}})\cdot\sigma(X_s^{t,\xi},P_{X_s^{t,\xi}}),\\
& \int_K\big(U(s,X_{s}^{t,\xi}+\beta(X_{s}^{t,\xi},P_{X_s^{t,\xi}},e),P_{X_s^{t,\xi}})
-U(s,X_{s}^{t,\xi},P_{X_s^{t,\xi}})\big)l(e)\lambda(de), P_\eta\Big)ds\\
&+\partial_x U(s,X_s^{t,\xi},P_{X_s^{t,\xi}})\sigma(X_s^{t,\xi},P_{X_s^{t,\xi}})dB_s\\
&+\int_K\Big( U(s, X_{s-}^{t,\xi}+\beta(X_{s-}^{t,\xi}, P_{X_s^{t,\xi}},e),P_{X_s^{t,\xi}})-U(s,X_{s-}^{t,\xi},P_{X_s^{t,\xi}})\Big)
N_\lambda(ds,de),\\
&U(T,X_T^{t,\xi},P_{X_T^{t,\xi}})=\Phi(X_T^{t,\xi},P_{X_T^{t,\xi}}).\\
\end{aligned}
\end{equation}

\noindent From the uniqueness of the solution of mean-field BSDEs with jumps (\ref{equ 4.1}) we can get that:
$$Y_s^{t,{\xi}}= U(s,X_s^{t,{\xi}},P_{X_s^{t,\xi}}), \mbox{\rm{P-a.s.}},\  s\in [t, T];\ \
 Z_s^{t,{\xi}}=\partial_xU(s,X_s^{t,{\xi}},P_{X_s^{t,\xi}})\sigma(X_s^{t,{\xi}},P_{X_s^{t,\xi}}), {\mbox{\rm dsdP-a.e.}}; $$
\begin{equation}\label{886}\begin{array}{lll}
& H_s^{t,{\xi}}(e)=U(s,X_{s-}^{t,{\xi}}\!+\!\beta(X_{s-}^{t,{\xi}},P_{X_s^{t,\xi}},e),P_{X_s^{t,\xi}})\!-\!U(s,X_{s-}^{t,{\xi}},P_{X_s^{t,\xi}}), \mbox{\rm{dsd}}\lambda\mbox{\rm{dP-a.e.}}
\end{array}
\end{equation}

\noindent Furthermore, with the help of (\ref{888}) and (\ref{886}) it follows from the uniqueness of the solution of BSDEs with jumps (\ref{equ 4.2}) we can conclude that
\begin{equation*}\begin{array}{lll}
&\!\!\!\!\!\! Y_s^{t,x,P_{\xi}}= U(s,X_s^{t,x,P_{\xi}},P_{X_s^{t,\xi}}), \mbox{\rm{P-a.s.}},\ s\in [t, T];\\
&\!\!\!\!\!\!  Z_s^{t,x,P_{\xi}}=\partial_xU(s,X_s^{t,x,P_{\xi}},P_{X_s^{t,\xi}})\sigma(X_s^{t,x,P_{\xi}},P_{X_s^{t,\xi}}), {\mbox{\rm dsdP-a.e.}};\\
&\!\!\!\!\!\!  H_s^{t,x,P_{\xi}}(e)=U(s,X_{s-}^{t,x,P_{\xi}}\!+\!\beta(X_{s-}^{t,x,P_{\xi}},P_{X_s^{t,\xi}},e),P_{X_s^{t,\xi}})\!-\!U(s,X_{s-}^{t,x,P_{\xi}},P_{X_s^{t,\xi}}), \mbox{\rm{dsd}}\lambda\mbox{\rm{dP-a.e.}}
\end{array}
\end{equation*}
In particular, as $s=t$,  $V(t,x,P_\xi)=Y^{t,x,P_\xi}_t=U(t,x,P_\xi).$
The proof is complete.

\end{proof}

\section{Appendix}

\subsection{The proof of Theorem \ref{minThm2.1}}

For simplicity we just consider the case of $d=1$; using the same argument, the results can be easily extended to the case $d>1$.

Now we give the proof of Theorem \ref{minThm2.1}.

\begin{proof}
Let us begin to consider the special case $F(s,x,\mu)=f(\mu)$, $(s,x,\mu)\in[0,T]\times\mathbb{R}\times{\cal P}_2(\mathbb{R})$.\\
{\it Step 1}. We first consider $\displaystyle X_t= X_0+\int_0^t b_sds+\int_0^t\sigma_sdB_s+\int_0^t\int_K\beta_s(e)N_\lambda(ds,de)$, $t\in[0,T]$, $X_0\in L^2({\cal F}_0)$, where $b\in L^\infty_{\mathbb{F}}(0,T)$, $\sigma\in L^\infty_{\mathbb{F}}(0,T)$, and $\beta\in {\cal K}^2_\lambda(0,T)$ are bounded step processes such that, there exists a partition $\pi=\{0=t_0<t_1<\cdots<t_N=T\}$ with:

  i) $\sigma_s=\sigma_{t_k}$, $b_s=b_{t_k}$, $\beta_s(e)=\beta_{t_k}(e)$, $s\in(t_k,t_{k+1}]$, $0\leq k\leq N-1$;

 ii) $|\sigma_{t_k}|,|b_{t_k}|\leq C$, $|\beta_{t_k}(e)|\leq C(1\wedge|e|^2)$, $e\in K$, $0\leq k\leq N-1$.\\
 Recall that $f\in C_b^2({\cal P}_2(\mathbb{R}))$ with $\partial_\mu$, $\partial_y(\partial_\mu f)$ are bounded and continuous (but not necessarily Lipschitz continuous).

 For $0\leq k\leq N-1$, $t_k\leq t<t+h\leq t_{k+1}$, since $f\in C_b^2({\cal P}_2(\mathbb{R}))$ is continuously differentiable with a bounded derivative $\partial_\mu f$, $|\partial_\mu f(\mu,y)|\leq K$, for all $(\mu,y)\in{\cal P}_2(\mathbb{R})\times\mathbb{R}$, we have
 \begin{equation}\label{(1)}f(P_{X_{t+h}})-f(P_{X_t})=\int_0^1E[(\partial_\mu f)(P_{X_t+\rho(X_{t+h}-X_t)},X_t+\rho(X_{t+h}-X_t))(X_{t+h}-X_t)]d\rho.\end{equation}
 Now we define
 \begin{equation}\label{(2)}
 \begin{aligned}
 X_t(\rho,h):=&\ X_t+\rho(X_{t+h}-X_t)\\
 =&\ X_t+\rho(b_{t_k}h+\sigma_{t_k}(B_{t+h}-B_t)+\int_t^{t+h}\int_K\beta_{t_k}(e)N_\lambda(ds,de)).
\end{aligned}
\end{equation}
Then,
\begin{equation}\label{(3)}
\begin{aligned}
&\ E[(\partial_\mu f)(P_{X_t(\rho,h)},X_t(\rho,h))(X_{t+h}-X_t)]\\
=&\ E[(\partial_\mu f)(P_{X_t(\rho,h)},X_t(\rho,h))b_{t_k}h]+E[(\partial_\mu f)(P_{X_t(\rho,h)},X_t(\rho,h))\sigma_{t_k}(B_{t+h}-B_t)]\\
&+ E[(\partial_\mu f)(P_{X_t(\rho,h)},X_t(\rho,h))\int_t^{t+h}\int_K\beta_{t_k}(e)N_\lambda(ds,de)]\\
=& I_1(t,\rho,h)+I_2(t,\rho,h)+I_3(t,\rho,h),
\end{aligned}
\end{equation}
where
$$\begin{array}{lll}
&\displaystyle I_1(t,\rho,h):=E[(\partial_\mu f)(P_{X_t(\rho,h)},X_t(\rho,h))b_{t_k}h];\\
&\displaystyle I_2(t,\rho,h):=E[(\partial_\mu f)(P_{X_t(\rho,h)},X_t(\rho,h))\sigma_{t_k}(B_{t+h}-B_t)];\\
&\displaystyle I_3(t,\rho,h):=E[(\partial_\mu f)(P_{X_t(\rho,h)},X_t(\rho,h))\int_t^{t+h}\int_K\beta_{t_k}(e)N_\lambda(ds,de)].\\
\end{array}
$$
Now we deal with $I_2$ and $I_3$.\\
a) Recall that for $\varphi\in C^1_b(\mathbb{R})$, and $\zeta$ standard normal random variable, we have by partial integration
$$E[\varphi(\zeta)\zeta]=\frac{1}{\sqrt{2\pi}}\int_{-\infty}^{+\infty}\varphi(x)xe^{-\frac{x^2}{2}}dx=
\frac{1}{\sqrt{2\pi}}\int_{-\infty}^{+\infty}\varphi'(x)e^{-\frac{x^2}{2}}dx=E[\varphi'(\zeta)].$$
Hence, as $(B_{t+h}-B_t)$ is independent of $b_{t_k}$, $\sigma_{t_k}$ and $\displaystyle\int_t^{t+h}\int_K\beta_{t_k}(e)N_\lambda (ds,de)$, we have
\begin{equation}\label{(4)}I_2(t,\rho,h)=E[\partial_y(\partial_\mu f)(P_{X_t(\rho,h)},X_t(\rho,h))|\sigma_{t_k}|^2\rho h].\end{equation}
\noindent b) We remark that, as $|\beta_{t_k}(e)|\leq C(1\wedge|e|^2)$ we have
$$E[\int_t^{t+h}\int_K|\beta_{t_k}(e)|N(dsde)]=E[\int_t^{t+h}\int_K|\beta_{t_k}(e)|\lambda(de)ds]\leq C\int_K(1\wedge|e|^2)\lambda(de)h,$$
i.e., we can consider the decomposition
\begin{equation}\label{(5)}\int_t^s\int_K\beta_{t_k}(e)N_\lambda(ds,de)=\int_t^s\int_K\beta_{t_k}(e)N(drde)-(\int_K\beta_{t_k}(e)\lambda(de))(s-t),\ s\in[t,t+h].\end{equation}
We define $\displaystyle \zeta(t,\rho,h)(s):= X_t+\rho(b_{t_k}-\int_K\beta_{t_k}(e)\lambda(de))h+\rho\sigma_{t_k}(B_{t+h}-B_t)+\rho\int_t^s\int_K\beta_{t_k}(e)N(drde)$, $s\in[t,t+h]$. Then we have $\zeta(t,\rho,h)(t+h)=X_t(\rho,h)$, and
\begin{equation}\label{(6)}
\begin{aligned}
&\ (\partial_\mu f)(P_{X_t(\rho,h)},X_t(\rho,h))-(\partial_\mu f)(P_{X_t(\rho,h)},\zeta(t,\rho,h)(t))\\
=&\ \sum_{t<s\leq t+h}\Big((\partial_\mu f)(P_{X_t(\rho,h)},\zeta(t,\rho,h)(s))-(\partial_\mu f)(P_{X_t(\rho,h)},\zeta(t,\rho,h)(s-))\Big)\\
=&\ \int_t^{t+h}\!\!\!\int_K\Big\{(\partial_\mu f)(P_{X_t(\rho,h)},\zeta(t,\rho,h)(s-)+\rho\beta_{t_k}(e))-(\partial_\mu f)(P_{X_t(\rho,h)},\zeta(t,\rho,h)(s-))\Big\}N(dsde)\\
=&\ \int_t^{t+h}\!\!\!\int_K\Big\{ (\partial_\mu f)(P_{X_t(\rho,h)},\zeta(t,\rho,h)(s-)+\rho\beta_{t_k}(e))-(\partial_\mu f)(P_{X_t(\rho,h)},\zeta(t,\rho,h)(s-)) \Big\}N_\lambda(ds,de)\\
&\ \  +R_1(t,\rho,h),
\end{aligned}
\end{equation}
\noindent where
$$R_1(t,\rho,h)=\int_t^{t+h}\!\!\!\int_K\{(\partial_\mu f)(P_{X_t(\rho,h)},\zeta(t,\rho,h)(s-)+\rho\beta_{t_k}(e))-(\partial_\mu f)(P_{X_t(\rho,h)},\zeta(t,\rho,h)(s-))\}\lambda(de)ds,$$
 and as $\partial_y(\partial_\mu f)$ is bounded:
$$|R_1(t,\rho,h)|\leq C\int_t^{t+h}\!\int_K\!\!|\beta_{t_k}(e)|\lambda(de)ds\leq C\int_K(1\wedge|e|^2)\lambda(de)h\leq Ch.$$
Noting that $(\partial_\mu f)(P_{X_t(\rho,h)},\zeta(t,\rho,h)(t))$ and $N(dsde)$, $s\in[t,t+h]$, are independent, we have
\begin{equation}\label{(7)}
\begin{aligned}
&\ I_3(t,\rho,h)\\
=&\ E\Big[\Big((\partial_\mu f)(P_{X_t(\rho,h)},X_t(\rho,h))-(\partial_\mu f)(P_{X_t(\rho,h)},\zeta(t,\rho,h)(t))\Big)\int_t^{t+h}\int_K\beta_{t_k}(e)N_\lambda(ds,de)\Big]\\
=&\ E\Big[\Big(\int_t^{t+h}\int_K \Big\{(\partial_\mu f)(P_{X_t(\rho,h)},\zeta(t,\rho,h)(s-)+\rho\beta_{t_k}(e))-(\partial_\mu f)(P_{X_t(\rho,h)},\zeta(t,\rho,h)(s-))\Big\}\\
&\  N_\lambda(ds,de) \Big) \Big(\int_t^{t+h}\int_K\beta_{t_k}(e)N_\lambda(ds,de)\Big)\Big]
+ E\Big[R_1(t,\rho,h)\Big(\int_t^{t+h}\int_K\beta_{t_k}(e)N_\lambda(ds,de)\Big)\Big]\\
=&\ E\Big[\Big(\int_t^{t+h}\int_K\Big\{(\partial_\mu f)(P_{X_t(\rho,h)},\zeta(t,\rho,h)(s)+\rho\beta_{t_k}(e))-(\partial_\mu f)(P_{X_t(\rho,h)},\zeta(t,\rho,h)(s))   \Big\}\\
&\ \beta_{t_k}(e)\lambda(de)ds\Big)\Big]+R_2(t,\rho,h),
\end{aligned}
\end{equation}
where $\displaystyle R_2(t,\rho,h):=E\Big[R_1(t,\rho,h)\Big(\int_t^{t+h}\int_K\beta_{t_k}(e)N_\lambda(ds,de)\Big)\Big]$.

\noindent Notice that
$$\begin{array}{lll}
|R_2(t,\rho,h)|&\displaystyle\leq E[|R_1(t,\rho,h)\int_t^{t+h}\int_K\beta_{t_k}(e)N_\lambda(ds,de)|]\\
&\displaystyle\leq Ch(E[\int_t^{t+h}\int_K|\beta_{t_k}(e)|^2\lambda(de)ds])^{\frac{1}{2}}\leq Ch^{\frac{3}{2}}.
\end{array}$$

\noindent Consequently, from (\ref{(1)}), (\ref{(3)}), (\ref{(4)}), (\ref{(6)}) and (\ref{(7)}) we get
\begin{equation}\label{(8)}
\begin{split}
&\displaystyle f(P_{X_{t+h}})-f(P_{X_t})\\
&=\int_t^{t+h}\int_0^1E\Big[(\partial_\mu f)\big(P_{X_t(\rho,h)},X_t(\rho,h)\big)b_s+\rho\partial_y(\partial_\mu f)\big(P_{X_t(\rho,h)},X_t(\rho,h)\big)|\sigma_s|^2\\
&\displaystyle+\int_K\big\{(\partial_\mu f)\big(P_{X_t(\rho,h)},\zeta(t,\rho,h)(s)+\rho\beta_s(e)\big)-(\partial_\mu f)\big(P_{X_t(\rho,h)},\zeta(t,\rho,h)(s)\big)\big\}\beta_s(e)\lambda(de)\Big]d\rho ds\\
&\displaystyle+\int_0^1R_2(t,\rho,h)d\rho,
\end{split}
\end{equation}
for $t_k\leq t< t+h\leq t_{k+1}$\ $(0\leq k\leq N-1)$. Let now $n\geq 1$, $t_i^n:= t+\frac{ih}{n}$, $0\leq i\leq n$, then\\
\begin{equation}\label{equli 1}
\begin{split}
f(P_{X_{t+h}})-f(P_{X_t})&=\sum_{i=0}^{n-1}\big(f(P_{X_{t_{i+1}^n}})-f(P_{X_{t_i^n}})\big)\\
&=\int_t^{t+h}\int_0^1E\Big[\sum_{i=0}^{n-1}
I_{[t_i^n,t^n_{i+1}]}(s)\Xi_i^{(n)}(s,\rho)\Big]d\rho ds
+R_3^{(n)}(t,t+h),
\end{split}
\end{equation}
with $\displaystyle R_3^{(n)}(t,t+h)=\sum_{i=0}^{n-1}\int_0^1R_2(t_i^n,\rho,\frac{h}{n})d\rho$, $|R_3^{(n)}(t,t+h)|\leq C {h^{\frac{3}{2}}}\cdot{n^{-\frac{1}{2}}}\rightarrow 0$, as $n\rightarrow\infty$, and
\begin{equation}\label{(9)}
\begin{split}
&\Xi_i^{(n)}(s,\rho)=(\partial_\mu f)\big(P_{X_{t_i^n}(\rho,\frac{h}{n})},X_{t_i^n}(\rho,\frac{h}{n})\big)b_s+\rho\big(\partial_y(\partial_\mu f)\big)\big(P_{X_{t_i^n}(\rho,\frac{h}{n})},X_{t_i^n}(\rho,\frac{h}{n})\big)|\sigma_s|^2\\
&+\int_K\Big\{(\partial_\mu f)\big(P_{X_{t_i^n}(\rho,\frac{h}{n})},\zeta(t_i^n,\rho,\frac{h}{n})(s)+\rho\beta_s(e)\big)-(\partial_\mu f)\big(P_{X_{t_i^n}(\rho,\frac{h}{n})},\zeta(t_i^n,\rho,\frac{h}{n})(s)\big)\Big\}\beta_s(e)\lambda(de),
\end{split}
\end{equation}
where $s\in[t_i^n,t^n_{i+1}],$ $0\leq i\leq n-1.$ As $\partial_\mu f$ and $\partial_y(\partial_\mu f)$ are bounded,
$$|\Xi_i^{(n)}(s,\rho)|\leq C\big(|b_s|+|\sigma_s|^2+\int_K|\beta_s(e)|^2\lambda(de)\big)\leq C,\ s\in[t_i^n,t^n_{i+1}],\ 0\leq i\leq n-1,\ \rho\in[0,1],$$
and, as
\begin{equation}\label{(10)} E\Big[\sup_{s\in[t_i^n,t_{i+1}^n]}|X_{t_i^n}(\rho,\frac{h}{n})-X_s|^2\Big]\leq CE\Big[\sup_{s\in[t_i^n,t_{i+1}^n]}|X_s-X_{t_i^n}|^2\Big]\leq C\frac{h}{n},\end{equation}
we also have
\begin{equation}\label{(11)}\begin{array}{lll}
&{\rm (i)}\ \sup_{s\in[t_i^n,t_{i+1}^n]}W_2\big(P_{X_{t_i^n}(\rho,\frac{h}{n})},P_{X_s}\big)\leq C(\frac{h}{n})^{\frac{1}{2}},\\
&{\rm (ii)}\ E\Big[\sup_{s\in[t_i^n,t_{i+1}^n]}|\zeta(t_i^n,\rho,\frac{h}{n})(s)-X_s|^2\Big]\leq C(\frac{h}{n}).\end{array}\end{equation}
It follows from the continuity of $\partial_\mu f$ and $\partial_y(\partial_\mu f)$ on $\mathcal{P}_2(\mathbb{R})\times\mathbb{R}$ and the Dominated Convergence Theorem that taking limit in (\ref{equli 1}) as $n\rightarrow\infty$ it yields
\begin{equation}\label{(12)}
\begin{split}
&f(P_{X_{t+h}})-f(P_{X_t})\\
=&\int_t^{t+h}\int_0^1E\Big[(\partial_\mu f)(P_{X_s},X_s)b_s+\rho\partial_y(\partial_\mu f)(P_{X_s},X_s)|\sigma_s|^2\\
&+\int_K\big\{(\partial_\mu f)\big(P_{X_s},X_s+\rho\beta_s(e)\big)-(\partial_\mu f)(P_{X_s},X_s)\big\}\beta_s(e)\lambda(de)\Big]d\rho ds\\
=&\int_t^{t+h}E\Big[(\partial_\mu f)(P_{X_s},X_s)b_s+\frac{1}{2}\partial_y(\partial_\mu f)(P_{X_s},X_s)|\sigma_s|^2\\
&+\int_K\int_0^1\big\{(\partial_\mu f)\big(P_{X_s},X_s+\rho\beta_s(e)\big)-(\partial_\mu f)(P_{X_s},X_s)\big\}\beta_s(e)d\rho\lambda(de)\Big]ds.
\end{split}
\end{equation}
As this holds for all $t_k\leq t<t+h\leq t_{k+1}$, $0\leq k\leq N$, it holds for all $0\leq t<t+h\leq T$.\\
{\it Step 2}. Let now $b,\sigma\in {\cal H}^2_\mathbb{F}(0,T)$, $\beta\in {\cal K}^2_\lambda(0,T)$, $X_0\in L^2(\mathcal{F}_0)$. Then we can approximate $b,\sigma,\beta$ by processes $b^n,\sigma^n,\beta^n$ which satisfy, for each $n$, the assumptions made in {\it Step 1} (with bounds depending on $n$ and a partition depending on $n$): $b^n\rightarrow b$, $\sigma^n\rightarrow\sigma$ in ${\cal H}^2_{\mathbb{F}}(0,T)$, $\beta^n\rightarrow\beta$ in ${\cal K}_\lambda^2(0,T)$.\\
Let
\begin{equation}\label{(13)}X_t=X_0+\int_0^tb_sds+\int_0^t\sigma_sdB_s+\int_0^t\int_K\beta_s(e)N_\lambda(ds,de),\ t\in[0,T],\end{equation}
\begin{equation}\label{(14)}X_t^n=X_0+\int_0^tb_s^nds+\int_0^t\sigma_s^ndB_s+\int_0^t\int_K\beta_s^n(e)N_\lambda(ds,de),\ t\in[0,T],\ n\geq 1.\end{equation}
Then,\ \ $ E\Big[\sup_{t\in[0,T]}|X_t-X_t^n|^2\Big]$\\
\begin{equation}\nonumber
\begin{split}
 &\leq C\Bigg(E\Big[\int_0^T|b_s-b_s^n|^2ds\Big]+E\Big[\int_0^T|\sigma_s-\sigma_s^n|^2ds\Big]+E\Big[\int_0^T\int_K|\beta_s(e)-\beta_s^n(e)|^2\lambda(de)ds\Big]\Bigg)
\\
&\rightarrow 0,\ \text{as}\ n\rightarrow\infty.
\end{split}
\end{equation}
From {\it Step 1} we know that for all $0\leq t\leq t+h\leq T$,
\begin{equation}\label{equli 2}
\begin{split}
&f(P_{X^n_{t+h}})-f(P_{X^n_t})
=\int_t^{t+h}E\Big[(\partial_\mu f)(P_{X^n_s},X^n_s)b^n_s+\frac{1}{2}\partial_y(\partial_\mu f)(P_{X^n_s},X^n_s)|\sigma^n_s|^2\\
&+\int_K\int_0^1\Big\{(\partial_\mu f)\big(P_{X^n_s},X^n_s+\rho\beta^n_s(e)\big)-(\partial_\mu f)(P_{X^n_s},X^n_s)\Big\}\beta^n_s(e)d\rho\lambda(de)\Big]ds.
\end{split}
\end{equation}
We observe that, for
\begin{equation}\nonumber
\begin{split}
V_s^n:=& (\partial_\mu f)(P_{X^n_s},X^n_s)b^n_s+\frac{1}{2}\partial_y(\partial_\mu f)(P_{X^n_s},X^n_s)|\sigma^n_s|^2+\int_K\int_0^1\Big\{(\partial_\mu f)\big(P_{X^n_s},X^n_s+\rho\beta^n_s(e)\big)\\
&-(\partial_\mu f)(P_{X^n_s},X^n_s)\Big\}\beta^n_s(e)d\rho\lambda(de),
\end{split}
\end{equation}
i) $V^n\rightarrow V$ in measure $dsdP$, where $V$ is defined in the same way as $V^n$, but with $(X,b,\sigma,\beta)$

\noindent\ \ \ instead of $(X^n,b^n,\sigma^n,\beta^n)$.\\
ii) As $\partial_\mu f$ and $\partial_y(\partial_\mu f)$ are bounded,
$$|V_s^n|\leq C\Big(|b_s^n|+|\sigma_s^n|^2+\int_K|\beta_s^n(e)|^2\lambda(de)\Big),\ s\in[0,T],\ n\geq 1.$$
But, as $b^n\rightarrow b$, $\sigma^n\rightarrow\sigma$ in ${\cal H}^2_{\mathbb{F}}(0,T)$ and $\beta^n\rightarrow\beta$ in ${\cal K}_\lambda^2(0,T)$, the right hand side is a uniformly integrable sequence over $[0,T]\times\Omega$. Consequently, we can apply Lebesgue's convergence Theorem to (\ref{equli 2}) and we get
\begin{equation}\label{equli 3}
\begin{split}
&\!\!\!\!f(P_{X_{t+h}})-f(P_{X_t})
=\int_t^{t+h}E\Big[(\partial_\mu f)(P_{X_s},X_s)b_s+\frac{1}{2}\partial_y(\partial_\mu f)(P_{X_s},X_s)|\sigma_s|^2\\
&\!\!\!\!+\int_K\!\int_0^1\!\big\{(\partial_\mu f)\big(P_{X_s},X_s\!+\!\rho\beta_s(e)\big)-(\partial_\mu f)(P_{X_s},X_s)\big\}\beta_s(e)d\rho\lambda(de)\Big]ds,\  0\leq t\leq t+h\leq T.
\end{split}
\end{equation}
{\it Step 3}. Let now $b,\sigma,\beta$ be as in {\it Step 2}. For $u\in L^0_{\mathbb{F}}\big(\Omega,L^1(0,T)\big)$, $v\in L^0_{\mathbb{F}}\big(\Omega,L^2(0,T)\big)$, $\gamma\in {\cal K}^0_\lambda(0,T)$ with $|\gamma_s(e)|\leq \zeta (1\wedge |e|)$, P-a.s., $(s,e)\in [0, T]\times K$, for some real-valued random variable $\zeta\geq0$, $P$-a.s., and $U_0\in L^0(\mathcal{F}_0)$, we consider the It\^{o} process
$$U_t= U_0+\int_0^tu_sds+\int_0^tv_sdB_s+\int_0^t\int_K\gamma_s(e)N_{\lambda}(ds,de),\ t\in [0,T].$$
Let $F\in C^{1,2,2}\big([0,T]\times\mathbb{R}\times\mathcal{P}_2(\mathbb{R})\big)$, we emphasize that we do not need the existence of the second order mixed derivatives $\partial_x\partial_\mu F$, $\partial_\mu\partial_x F$, nor that of $\partial_\mu(\partial_\mu F)$, unlike \cite{BLPR} and \cite{HL3}. Under the above assumptions then we have the It\^o formula.

Indeed, let us first suppose that $|b_s|+|\sigma_s|\leq C$, $|\beta_s(e)|\leq C(1\wedge|e|)$ and $b_s,\sigma_s,\beta_s(e)$ are continuous with respect to $s$. Then we see from (\ref{equli 3}) that
\begin{equation}\nonumber
\begin{split}
&\partial_t\big[F(t,x,P_{X_t})\big]=(\partial_tF)(t,x,P_{X_t})+E\big[(\partial_\mu F)(t,x,P_{X_t},X_t)b_t+\frac{1}{2}\partial_y(\partial_\mu F)(t,x,P_{X_t},X_t)|\sigma_t|^2\\
&+\int_K\int_0^1\big\{(\partial_\mu F)\big(t,x,P_{X_t},X_t+\rho\beta_t(e)\big)-(\partial_\mu F)(t,x,P_{X_t},X_t)\big\}\beta_t(e)d\rho\lambda(de)\big]
\end{split}
\end{equation}
is continuous in $(t, x)$, i.e., $G(t,x):= F(t,x,P_{X_t})$, $(t,x)\in[0,T]\times\mathbb{R}$, belongs to $C^{1,2}([0,T]\times\mathbb{R})$. But this means that we can apply to $F(t,U_t,P_{X_t})=G(t,U_t)$ the classical It\^{o} formula. This yields
\begin{equation*}
\begin{split}
&dF(t,U_t,P_{X_t})=dG(t,U_t)\\
=&\Big\{(\partial_tG)(t,U_t)+(\partial_xG)(t,U_t)u_t
+\frac{1}{2}(\partial_x^2G)(t,U_t)v_t^2+\int_K\Big(G(t,U_t+\gamma_t(e))-G(t,U_t)-(\partial_xG)(t,U_t)\\
&\ \gamma_t(e)\Big)\lambda(de)\Big\}dt
+(\partial_xG)(t,U_t)v_tdB_t+\int_K\Big(G(t,U_{t-}+\gamma_t(e))-G(t,U_{t-})\Big)N_\lambda(dt,de)\\
\end{split}
\end{equation*}
\begin{equation}\label{(*100)}
\begin{split}
=&\Big\{(\partial_tF)(t,U_t,P_{X_t})+(\partial_xF)(t,U_t,P_{X_t})u_t+\frac{1}{2}(\partial_x^2F)(t,U_t,P_{X_t})v_t^2
+\int_K\big(F(t,U_t+\gamma_t(e),P_{X_t})\\
&-F(t,U_t,P_{X_t})-(\partial_xF)(t,U_t,P_{X_t})\gamma_t(e)\big)\lambda(de)\Big\}dt
+\hat{E}\Big[(\partial_\mu F)(t,U_t,P_{X_t},\hat{X}_t)\hat{b}_t+\frac{1}{2}\partial_y(\partial_\mu F)(t,U_t,\\
&P_{X_t},\hat{X}_t)|\hat{\sigma}_t|^2
+\int_K\int_0^1\Big\{(\partial_\mu F)\big(t,U_t,P_{X_t},\hat{X}_t+\rho\hat{\beta}_t(e)\big)-(\partial_\mu F)(t,U_t,P_{X_t},\hat{X}_t)\Big\}\hat{\beta}_t(e)d\rho\lambda(de)\Big]dt\\
&+(\partial_xF)(t,U_t,P_{X_t})v_tdB_t
+\int_K\Big(F(t,U_{t-}+\gamma_t(e),P_{X_t})-F(t,U_{t-},P_{X_t})\Big)N_\lambda(dt,de),\ t\in[0,T].
\end{split}
\end{equation}

\noindent {\it Step 4}. For the general case let now $b, \sigma\in {\cal H}^2_{\mathbb{F}}(0,T)$ and $\beta\in {\cal K}^2_\lambda(0,T)$, and suppose that $b^n, \sigma^n, \beta^n$ satisfy the assumptions made on $b, \sigma, \beta$ in {\it Step 3} and $b^n\rightarrow b,\ \sigma^n\rightarrow \sigma$ in ${\cal H}^2_{\mathbb{F}}(0,T)$ and $\beta^n\rightarrow\beta$ in ${\cal K}^2_\lambda(0,T)$. From {\it Step 3} we have the It\^{o} formula (\ref{(*100)}) for $b^n, \sigma^n, \beta^n$ and $X^n$ (defined by (\ref{(14)})) instead of $b, \sigma, \beta$ and $X$, and we have to take the limit as $n\rightarrow \infty$.

For this notice that, as $E[\sup_{t\in [0,T]}|X_t^n-X_t|^2]\rightarrow 0, n\rightarrow 0$ (see {\it Step 2}), we have in particular that
$$W_2(P_{X^n}, P_X)^2\leq E[\sup_{t\in [0,T]}|X_t^n-X_t|^2]\rightarrow 0, n\rightarrow 0.$$
Hence, $\{P_{X^n}, n\geq 1\}$ is relatively compact in ${\cal P}_2(D([0,T]))$. $D([0,T])$ is the space of c\`{a}dl\`{a}g functions over $[0, T]$, endowed with the supermum norm. Combining this with the pathwise c\`{a}dl\`{a}g property of the process $U$ and the continuity of $\partial_x^2F: [0, T]\times {\mathbb R}\times {\cal P}_2({\mathbb R})\rightarrow {\mathbb R}$, we see that $\sup_{n\geq 1, t\in [0,T]}|\partial_x^2F(t,U_t,P_{X_t^n})|<\infty$, P-a.s., and

\noindent i) $\displaystyle \int_0^t\partial_x^2F(s,U_s,P_{X_s^n})v^2_sds\rightarrow \int_0^t\partial_x^2F(s,U_s,P_{X_s})v^2_sds $, P-a.s., from the Dominated Convergence Theorem.

\noindent Using the same arguments and the fact that $|\gamma_t(e)|\leq \zeta (1\wedge |e|)$, we see that
\begin{equation*}
\begin{split}
& \sup_{n\geq 1, t\in [0,T]}|F(t, U_{t-}+\gamma_t(e), P_{X_t^n})-F(t, U_{t-}, P_{X_t^n})|^2\\
&\leq \sup_{n\geq 1, t\in [0,T], \rho\in [0,1]}|\partial_xF(t, U_{t-}+\rho\gamma_t(e), P_{X_t^n})|^2|\zeta|^2 (1\wedge |e|^2),
\end{split}
\end{equation*}
and
\begin{equation*}
\begin{split}
& \sup_{n\geq 1, t\in [0,T]}|F(t, U_{t}+\gamma_t(e), P_{X_t^n})-F(t, U_{t}, P_{X_t^n})-\partial_xF(t, U_{t}, P_{X_t^n})\gamma_t(e)|^2\\
&\leq \sup_{n\geq 1, t\in [0,T], \rho\in [0,1]}|\partial^2_xF(t, U_{t}+\rho\gamma_t(e), P_{X_t^n})|^2|\zeta|^2 (1\wedge |e|^2),
\end{split}
\end{equation*}
with $\displaystyle \sup_{n\geq 1, t\in [0,T], \rho\in [0,1]}|\partial_xF(t, U_{t}+\rho\gamma_t(e), P_{X_t^n})|< +\infty$, and $\displaystyle \sup_{n\geq 1, t\in [0,T], \rho\in [0,1]}|\partial^2_xF(t, U_{t}+\rho\gamma_t(e), P_{X_t^n})|< +\infty$, P-a.s. This allows to show that, for all $t\in [0, T]$, P-a.s., as $n\rightarrow \infty$,

\noindent ii) $\displaystyle\int_0^t\int_K\Big(F(s, U_{s-}+\gamma_s(e), P_{X_s^n})-F(s, U_{s-}, P_{X_s^n})\Big)N_\lambda(ds,de)\rightarrow \int_0^t\int_K\Big(F(s, U_{s-}+\gamma_s(e), P_{X_s})-F(s, U_{s-}, P_{X_s})\Big)N_\lambda(ds,de),$

\noindent and

\noindent iii) $\displaystyle \int_0^t\int_K\Big(F(s, U_{s}+\gamma_s(e), P_{X_s^n})-F(s, U_{s}, P_{X_s^n})-\partial_xF(s, U_{s}, P_{X_s^n})\gamma_s(e)\Big)\lambda(de)ds\rightarrow \int_0^t\int_K\Big(F(s,$ $ U_{s}+\gamma_s(e), P_{X_s})-F(s, U_{s}, P_{X_s})-\partial_xF(s, U_{s}, P_{X_s})\gamma_s(e)\Big)\lambda(de)ds.$

On the other hand, from the boundedness of $\partial_y(\partial_\mu F)$ (with some bound $C>0$) we see that

\noindent$\displaystyle |(\partial_\mu F)(t, U_t, P_{X_t^n}, \widehat{X}_t^n+\rho\widehat{\beta}_t^n(e))-(\partial_\mu F)(t, U_t, P_{X_t^n}, \widehat{X}_t^n)||\widehat{\beta}_t^n(e)|\leq C|\widehat{\beta}_t^n(e)|^2,\ (t, e)\in [0, T]\times E$, P-a.s. This allows to conclude from Lebesgue's Convergence Theorem that

\noindent iv) $\displaystyle \widehat{E}\Big[\int_0^t\int_K\int_0^1\Big((\partial_\mu F)(s, U_s, P_{X_s^n}, \widehat{X}_s^n+\rho\widehat{\beta}_s^n(e))-(\partial_\mu F)(s, U_s, P_{X_s^n}, \widehat{X}_s^n) \Big)\widehat{\beta}_s^n(e)d\rho\lambda(de)ds\Big]\rightarrow $\\
\noindent$\displaystyle \widehat{E}\Big[\int_0^t\int_K\int_0^1\Big((\partial_\mu F)(s, U_s, P_{X_s}, \widehat{X}_s+\rho\widehat{\beta}_s(e))-(\partial_\mu F)(s, U_s, P_{X_s}, \widehat{X}_s) \Big)\widehat{\beta}_s(e)d\rho\lambda(de)ds\Big]$,

\noindent for all $t\in [0, T]$, P-a.s., $n\rightarrow \infty$.

An analogous discussion for the remaining terms in (\ref{(*100)}) shows that the we have also for them the convergence. The proof is complete.
\end{proof}

\subsection{Mean field BSDEs with jumps}
We first give two classical estimates for the solutions of BSDEs with jumps, the proof is standard, the readers may refer to, e.g., \cite{BBP}, \cite{LW1} and  \cite{LW2}.
\begin{lemma}\label{leli 4.1}
Suppose $(Y^i,Z^i,H^i)$ is the unique solution of the following BSDE with data $(g_i,\theta^i)$,
\begin{equation}\label{equ 4.0}
\left\{
\begin{array}{l}
dY_s^i=-g_i(s,Y_s^i,Z_s^i,H_s^i)ds+Z_s^idB_s+\int_KH_s^i(e)N_{\lambda}(ds,de),\\
Y_T^i=\theta^i,
\end{array}
\right.
\end{equation}
where $\theta^i\in L^2(\mathcal{F}_T)$, and $g_i:[0,T]\times\Omega\times\mathbb{R}\times\mathbb{R}^d\times L^2(K, {\cal{B}}(K),\lambda; \mathbb{R})\rightarrow\mathbb{R}$, $i=1,2$, respectively, are ${\mathbb F}$-predictable and satisfy:\\
\noindent$\mathbf{Assumption\ (H10.0)}\ ${\rm i}$)$ $g_i(\cdot,\cdot,0,0,0)\in {\cal H}^2_{\mathbb{F}}(0,T)$;\\
{\rm ii}$)$ There exists a constant $c^*>0$ such that, P-a.s., for any $t\in [0,T]$, $y_1, y_2\in \mathbb{R}$, $z_1, z_2\in \mathbb{R}^d$, $h_1, h_2\in L^2(K, {\cal B}(K), \lambda)$, $|g_i(t,y_1,z_1,h_1)-g_i(t,y_2,z_2,h_2)|\leq c^*(|y_1-y_2|+|z_1-z_2|+|h_1-h_2|)$.

\smallskip

For $(\overline{Y},\overline{Z},\overline{H}):= (Y^1,Z^1,H^1)-(Y^2,Z^2,H^2)$, $\overline{g}:= g_1-g_2$, $\overline{\theta}:=\theta^1-\theta^2$, we have the following estimates:\\
1$)$ For all $\delta>0$, there exists a suitable $\beta(\geq \frac{1}{2}+2c^*(1+2c^*+\frac{1}{2\delta}))$ such that
\begin{equation}\label{100-1}
\begin{split}
&|\overline{Y}_t|^2+\frac{1}{2}E\Big[\int_t^Te^{\beta(s-t)}\big(|\overline{Y}_s|^2
+|\overline{Z}_s|^2+\int_K|\overline{H}_s(e)|^2\lambda(de)\big)ds|\mathcal{F}_t\Big]\\
\leq& E\big[e^{\beta(T-t)}|\overline{\theta}|^2|\mathcal{F}_t\big]+c^*\delta E\Big[\int_t^Te^{\beta(s-t)}|\overline{g}(s,Y_s^1,Z_s^1,H_s^1)|^2ds|
\mathcal{F}_t\Big],\ P\text{-a.s.},\ t\in[0,T].
\end{split}
\end{equation}
2$)$ For all $p\geq 2$, there exists $C_p>0$ (only depending on $p$ and the Lipschitz constants) such that
\begin{equation}\label{99}
\begin{split}
&E\Big[\sup_{s\in[t,T]}|\overline{Y}_s|^p+(\int_t^T|\overline{Z}_s|^2ds)^{\frac{p}{2}}
+\big(\int_t^T\int_K|\overline{H}_s(e)|^2\lambda(de)ds\big)^{\frac{p}{2}}|\mathcal{F}_t\Big]\\
\leq& C_pE\big[|\overline{\theta}|^p+\big(\int_t^T|\overline{g}(s,Y_s^1,Z_s^1,H_s^1)|ds\big)^{p}|
\mathcal{F}_t\big],\ P\text{-a.s.},\ t\in[0,T].
\end{split}
\end{equation}
\end{lemma}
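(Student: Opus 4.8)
The plan is to treat the two estimates separately, both resting on Itô's formula applied to a power of the difference process together with the Lipschitz structure of the drivers. Throughout I write $\bar G_s := g_1(s,Y^1_s,Z^1_s,H^1_s)-g_2(s,Y^2_s,Z^2_s,H^2_s)$, so that $(\bar Y,\bar Z,\bar H)$ solves
\[
\bar Y_s=\bar\theta+\int_s^T\bar G_r\,dr-\int_s^T\bar Z_r\,dB_r-\int_s^T\int_K\bar H_r(e)N_\lambda(dr,de).
\]
The first step is to split the driver difference as $\bar G_s=\bar g(s,Y^1_s,Z^1_s,H^1_s)+\big(g_2(s,Y^1_s,Z^1_s,H^1_s)-g_2(s,Y^2_s,Z^2_s,H^2_s)\big)$; by Assumption (H10.0)-ii) the second bracket is bounded by $c^*\big(|\bar Y_s|+|\bar Z_s|+(\int_K|\bar H_s(e)|^2\lambda(de))^{1/2}\big)$. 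Without loss of generality I assume $c^*\ge1$.

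For 1), I would apply Itô's formula to $r\mapsto e^{\beta(r-t)}|\bar Y_r|^2$ on $[t,T]$. The continuous martingale part $\int e^{\beta(r-t)}2\bar Y_r\bar Z_r\,dB_r$ and the compensated jump part have zero $\mathcal{F}_t$-conditional expectation, while the quadratic variation of $\bar Y$ produces the terms $|\bar Z_r|^2+\int_K|\bar H_r(e)|^2\lambda(de)$ on the left. Taking $E[\,\cdot\,|\mathcal{F}_t]$ yields
\[
|\bar Y_t|^2+E\Big[\int_t^T e^{\beta(r-t)}\big(\beta|\bar Y_r|^2+|\bar Z_r|^2+\int_K|\bar H_r(e)|^2\lambda(de)\big)dr\,\big|\,\mathcal{F}_t\Big]=E[e^{\beta(T-t)}|\bar\theta|^2|\mathcal{F}_t]+E\Big[\int_t^T e^{\beta(r-t)}2\bar Y_r\bar G_r\,dr\,\big|\,\mathcal{F}_t\Big].
\]
I then bound $2\bar Y_r\bar G_r$ by the split above and Young's inequality, choosing the weights so that $\tfrac12|\bar Z_r|^2$ and $\tfrac12\int_K|\bar H_r(e)|^2\lambda(de)$ are created (to be absorbed on the left) and the $\bar g$-term carries the coefficient $c^*\delta$ via $2|\bar Y_r|\,|\bar g|\le\frac{c^*}{\delta}|\bar Y_r|^2+c^*\delta|\bar g|^2$. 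Collecting the $|\bar Y_r|^2$ coefficient $2c^*+4(c^*)^2+\frac{c^*}{\delta}=2c^*(1+2c^*+\frac1{2\delta})$ produced by these cross terms, the requirement that the remaining $|\bar Y|^2$ term on the left keep the factor $\tfrac12$ forces $\beta\ge \frac12+2c^*(1+2c^*+\frac1{2\delta})$, which is exactly the stated threshold and proves (1).

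For 2), I would upgrade to the $L^p$ norm by applying Itô's formula to $|\bar Y_r|^p$ (admissible since $p\ge2$). The drift contributes $-p|\bar Y_r|^{p-2}\bar Y_r\bar G_r$, the convexity term $\tfrac{p(p-1)}2|\bar Y_r|^{p-2}|\bar Z_r|^2$, and a jump compensator $\int_K(|\bar Y_r+\bar H_r(e)|^p-|\bar Y_r|^p-p|\bar Y_r|^{p-2}\bar Y_r\bar H_r(e))\lambda(de)\ge0$ that may be discarded. Taking the supremum over $s\in[t,T]$, applying the Burkholder–Davis–Gundy inequality to the Brownian and compensated-Poisson martingale parts, and using the Lipschitz split of $\bar G$ with Young's inequality, one controls $E[\sup_s|\bar Y_s|^p|\mathcal{F}_t]$ together with the $L^{p/2}$ norms of $\int_t^T|\bar Z_r|^2dr$ and $\int_t^T\int_K|\bar H_r(e)|^2\lambda(de)dr$ in terms of $E[|\bar\theta|^p+(\int_t^T|\bar g|dr)^p|\mathcal{F}_t]$; a constant-absorption (Gronwall) argument then yields (2). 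An alternative is to take $E[\,\cdot\,|\mathcal F_t]$ in the BSDE to get $\bar Y_t=E[\bar\theta+\int_t^T\bar G_r dr|\mathcal F_t]$ and feed in the bounds on $\bar Z,\bar H$ from the martingale estimates.

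The main obstacle is the treatment of the martingale parts in the $L^p$ estimate: the compensated-Poisson integral requires the jump form of Burkholder–Davis–Gundy, and the jump compensator $\int_K(|\bar Y_r+\bar H_r(e)|^p-\cdots)\lambda(de)$ must be both shown nonnegative and bounded above by $|\bar Y_r|^{p-2}\int_K|\bar H_r(e)|^2\lambda(de)$ plus controllable higher-order jump contributions, which is delicate where $|\bar Y_r|$ is small relative to the jumps. Matching the BDG-generated $\sup_s|\bar Y_s|^p$ against the left-hand side through Young's inequality with a sufficiently small constant is where this otherwise routine argument demands care; for these standard technical steps I would ultimately invoke the cited works \cite{BBP}, \cite{LW1}, \cite{LW2}.
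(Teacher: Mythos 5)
The paper does not actually prove this lemma — it states only that ``the proof is standard'' and refers the reader to \cite{BBP}, \cite{LW1} and \cite{LW2} — and your outline is precisely the argument those references carry out; in particular, your part 1) computation (It\^{o}'s formula applied to $e^{\beta(r-t)}|\overline{Y}_r|^2$, the Lipschitz split of the driver difference, and Young's inequality with the weights you chose) recovers exactly the stated threshold $\beta\ge\frac12+2c^*(1+2c^*+\frac{1}{2\delta})$. For part 2) you correctly identify the one genuinely delicate step — the lower bound on the jump convexity term $\int_K\big(|\overline{Y}_r+\overline{H}_r(e)|^p-|\overline{Y}_r|^p-p|\overline{Y}_r|^{p-2}\overline{Y}_r\overline{H}_r(e)\big)\lambda(de)$ needed to control the $\overline{H}$-integral in $L^{p/2}$ — and deferring it to the cited works is no less than what the paper itself provides.
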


We now consider a more general case of BSDE (\ref{equ 4.0}). Let
$f:[0,T]\times\Omega\times\mathbb{R}\times\mathbb{R}^{d}\times L^2(K, {\cal B}(K), \lambda)\times\mathcal{P}_{2}(\mathbb{R}^{d}\times\mathbb{R}\times\mathbb{R}^{d}\times L^2(K, {\cal B}(K), \lambda))\rightarrow\mathbb{R}$ be ${\mathbb F}$-predictable and satisfy the following assumptions.\\
\noindent$\mathbf{Assumption\ (H10.1)}$ {\rm i)} $f(\cdot,\cdot,0,0,0,\delta_{\textbf{0}})\in {\cal H}^2_{\mathbb{F}}(0,T)$;\\
{\rm ii)} $f$ is Lipschitz with respect to $(y,z,k,\mu)\in\mathbb{R}\times\mathbb{R}^d\times L^2(K, {\cal B}(K), \lambda)\times\mathcal{P}_2(\mathbb{R}^{d+1+d}\times L^2(K, {\cal B}(K), \lambda))$,\\
\mbox{ } \ \  uniformly with respect to $(s,\omega)$;\\
{\rm iii)} $\xi\in L^{2}(\mathcal{F}_{T})$;\\
{\rm iv)} $X\in S^{2}_{\mathbb{F}}(0,T;\mathbb{R}^{d})$.
\begin{theorem}\label{thli 4.12.1}
Under the Assumption (H10.1), the following mean-field BSDE with jumps
\begin{equation}\label{eq:zx4}
\bigg\{
\begin{tabular}{l}
$dY_{s}=-f(s,Y_{s},Z_{s},H_{s},P_{(X_{s},Y_{s},Z_{s},H_{s})})ds+Z_{s}dB_{s}+\int_{K}H_{s}(e)N_{\lambda}(ds,de),$ \\
$Y_{T}=\xi,$  \\
\end{tabular}
\end{equation}
has a unique solution $(Y, Z, H)\in S^{2}_{\mathbb{F}}(0,T;\mathbb{R})\times \mathcal{H}^{2}_{\mathbb{F}}(0,T;\mathbb{R}^{d})\times \mathcal{K}^{2}_{\lambda}(0,T;\mathbb{R})$.
\end{theorem}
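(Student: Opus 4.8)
The plan is to construct $(Y,Z,H)$ as the fixed point of a map that decouples the mean-field interaction by freezing the measure argument of the driver. For a given triple $(U,V,W)$ in the space $\mathcal{B}:=\mathcal{H}^2_{\mathbb{F}}(0,T;\mathbb{R})\times\mathcal{H}^2_{\mathbb{F}}(0,T;\mathbb{R}^d)\times\mathcal{K}^2_\lambda(0,T;\mathbb{R})$, I would introduce the frozen driver $g^{U,V,W}(s,y,z,h):=f(s,y,z,h,P_{(X_s,U_s,V_s,W_s)})$, in which the law-component has become a deterministic function of $s$. By (H10.1)ii) this inherits the uniform Lipschitz property in $(y,z,h)$, so it satisfies (H10.0)ii); and $g^{U,V,W}(\cdot,0,0,0)\in\mathcal{H}^2_{\mathbb{F}}(0,T)$, since $|f(s,0,0,0,P_{(X_s,U_s,V_s,W_s)})|\le |f(s,0,0,0,\delta_{\mathbf{0}})|+c^*W_2(P_{(X_s,U_s,V_s,W_s)},\delta_{\mathbf{0}})$ and the last term equals the finite second moment of $(X_s,U_s,V_s,W_s)$ because $X\in\mathcal{S}^2_{\mathbb{F}}$ and $(U,V,W)\in\mathcal{B}$. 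Lemma \ref{leli 4.1} then yields a unique solution $(Y,Z,H)\in\mathcal{S}^2_{\mathbb{F}}\times\mathcal{H}^2_{\mathbb{F}}\times\mathcal{K}^2_\lambda$ of the BSDE with data $(g^{U,V,W},\xi)$, and I set $\Theta(U,V,W):=(Y,Z,H)$.

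The heart of the argument is to show that $\Theta$ contracts on $\mathcal{B}$ under the weighted norm $\|(U,V,W)\|_\beta^2:=E[\int_0^T e^{\beta s}(|U_s|^2+|V_s|^2+\int_K|W_s(e)|^2\lambda(de))\,ds]$, which is equivalent to the usual norm and hence makes $\mathcal{B}$ a Banach space. Given two inputs $(U^i,V^i,W^i)$ with images $(Y^i,Z^i,H^i)=\Theta(U^i,V^i,W^i)$, I would apply the a priori estimate \eqref{100-1} with $g_i=g^{U^i,V^i,W^i}$ and $\theta^i=\xi$, so that the terminal difference $\overline\theta$ vanishes. The residual driver term $\overline g(s,Y^1_s,Z^1_s,H^1_s)=f(s,Y^1_s,Z^1_s,H^1_s,P_{(X_s,U^1_s,V^1_s,W^1_s)})-f(s,Y^1_s,Z^1_s,H^1_s,P_{(X_s,U^2_s,V^2_s,W^2_s)})$ is dominated, by the Lipschitz-in-measure property, by $c^*W_2(P_{(X_s,U^1_s,V^1_s,W^1_s)},P_{(X_s,U^2_s,V^2_s,W^2_s)})$. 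Estimating this Wasserstein distance by the coupling that keeps the common process $X_s$ and realizes $(U^i,V^i,W^i)$ on the same probability space gives $W_2(\cdot,\cdot)^2\le E[|U^1_s-U^2_s|^2+|V^1_s-V^2_s|^2+\int_K|W^1_s(e)-W^2_s(e)|^2\lambda(de)]$. Taking expectations in \eqref{100-1} at $t=0$ and inserting these bounds produces $\|\Theta(U^1,V^1,W^1)-\Theta(U^2,V^2,W^2)\|_\beta^2\le K\delta\,\|(U^1-U^2,V^1-V^2,W^1-W^2)\|_\beta^2$, with $K$ depending only on the Lipschitz constants of $f$.

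It then suffices to fix $\delta>0$ small enough that $K\delta<1$ and to take $\beta\ge\tfrac12+2c^*(1+2c^*+\tfrac{1}{2\delta})$ as required by Lemma \ref{leli 4.1}; the Banach fixed point theorem supplies a unique $(Y,Z,H)\in\mathcal{B}$ with $\Theta(Y,Z,H)=(Y,Z,H)$, which by construction solves \eqref{eq:zx4}. That $Y$ belongs to $\mathcal{S}^2_{\mathbb{F}}$ follows from representing the fixed point as the solution of the classical BSDE with data $(g^{Y,Z,H},\xi)$ together with \eqref{99} for $p=2$. Uniqueness in the stated space is immediate: any solution of \eqref{eq:zx4} is a fixed point of $\Theta$ and hence coincides with $(Y,Z,H)$. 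The only genuinely delicate step, and where I would take care, is the contraction estimate: one must check that the measure-Lipschitz constant combines with the coupling bound to give a factor that can be pushed below $1$ by the joint choice of $\delta$ and $\beta$, observing that shrinking $\delta$ merely forces $\beta$ to grow but never prevents the existence of an admissible pair $(\delta,\beta)$.
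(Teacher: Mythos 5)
Your proposal is correct and follows essentially the same route as the paper: freeze the law argument to define the map $(U,V,W)\mapsto(Y,Z,H)$ via the classical BSDE with jumps, then obtain a contraction in the weighted norm $\|\cdot\|_\beta$ using the a priori estimate (\ref{100-1}) with vanishing terminal difference, bounding the residual driver by the Wasserstein distance and the latter by the $L^2$-difference of the inputs, and finally choosing $\delta$ small and $\beta$ accordingly. The extra details you supply (verifying $g^{U,V,W}(\cdot,0,0,0)\in\mathcal{H}^2_{\mathbb{F}}(0,T)$ and deducing $Y\in\mathcal{S}^2_{\mathbb{F}}$ from (\ref{99})) are points the paper leaves implicit but are handled correctly.
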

\begin{proof}
Let $(U,V,W)\in {\cal M}:= \mathcal{H}^{2}_{\mathbb{F}}(0,T;\mathbb{R})\times \mathcal{H}^{2}_{\mathbb{F}}(0,T;\mathbb{R}^{d})\times \mathcal{K}^{2}_{\lambda}(0,T;\mathbb{R})$,
then there is a unique

\noindent solution $(Y,Z,H)\in S^{2}_{\mathbb{F}}(0,T;\mathbb{R})\times \mathcal{H}^{2}_{\mathbb{F}}(0,T;\mathbb{R}^{d})\times \mathcal{K}^{2}_{\lambda}(0,T;\mathbb{R})$
of the BSDE
\begin{equation*}
\left\{
\begin{array}{l}
dY_{s}=-f(s,Y_{s},Z_{s},H_{s},P_{(X_{s},U_{s},V_{s},W_{s})})ds+Z_{s}dB_{s}+\int_{K}H_{s}(e)N_{\lambda}(ds,de),\\
Y_{T}=\xi.
\end{array}
\right.
\end{equation*}
We define the mapping $\Phi(U,V,W):=(Y,Z,H): {\cal M}\rightarrow {\cal M}$. Let $(U^{i},V^{i},W^{i})\in {\cal M},$ $(Y^{i},Z^{i},H^{i}):=\Phi(U^{i},V^{i},W^{i}),i=1,2.$ Let $(\overline{Y},\overline{Z},\overline{H}):=(Y^{1},Z^{1},H^{1})-(Y^{2},Z^{2},H^{2})$,
$(\overline{U},\overline{V},\overline{W}):=(U^{1},V^{1},W^{1})-(U^{2},V^{2},W^{2})$. Then
from (\ref{100-1}), for $C$ is the Lipschitz constant of $f$, we have
\begin{equation*}
\begin{split}
\|(\overline{Y},\overline{Z},\overline{H})\|_{\beta}^{2}
&:=\frac{1}{2}E[\int_{0}^{T}e^{\beta s}(|\overline{Y}_{s}|^{2}+|\overline{Z}_{s}|^{2}+\int_{K}|\overline{H}_{s}(e)|^{2}\lambda(de))ds]\\
&\leq\delta C\int_{0}^{T}e^{\beta s}W_{2}(P_{(X_{s},U_{s}^{1},V_{s}^{1},W_{s}^{1})},P_{(X_{s},U_{s}^{2},V_{s}^{2},W_{s}^{2})})^{2}ds\\
&\leq\delta CE[\int_{0}^{T}e^{\beta s}(|\overline{U}_{s}|^{2}+|\overline{V}_{s}|^{2}+\int_{K}|\overline{W}_{s}(e)|^{2}\lambda(de))ds]\\
&=2\delta C\|(\overline{U},\overline{V},\overline{W})\|_{\beta}^{2}.
\end{split}
\end{equation*}
Consequently, for choosing $\delta>0$ such that $2\delta C\leq\frac{1}{4}$, then $\Phi:({\cal M},\|\cdot\|_{\beta})\rightarrow({\cal M},\|\cdot\|_{\beta})$ is a contraction mapping,
i.e., there exists a unique fixed point $(Y,Z,H)\in {\cal M}$ such that $(Y,Z,H)=\Phi(Y,Z,H)$, which means $(Y,Z,H)$ is the solution of (\ref{eq:zx4}).
\end{proof}
Similar to Lemma 10.1, we also have the following estimate for mean-field BSDE with jumps.
\begin{theorem}\label{newproposition}
Let $(Y^i,Z^i,H^i)$ be the unique solution of the following BSDE with data $(f_i,\xi^i)$,
\begin{equation}
\bigg\{
\begin{tabular}{l}
$dY^i_{s}=-f^i(s,Y^i_{s},Z^i_{s},H^i_{s},P_{(X_{s},Y^i_{s},Z^i_{s},H^i_{s})})ds+Z^i_{s}dB_{s}+\int_{K}H^i_{s}(e)N_{\lambda}(ds,de),$ \\
$Y^i_{T}=\xi^i,$  \\
\end{tabular}
\end{equation}
where $(X, f_i,\xi^i)$ satisfy {\rm{(H10.1)}}, $i=1, 2$, respectively.

\noindent We denote $(\overline{Y},\overline{Z},\overline{H}):= (Y^1,Z^1,H^1)- (Y^2,Z^2,H^2)$, $\overline{f}:= f^1-f^2$, $\overline{\xi}:=\xi^1-\xi^2$. Then there exists a constant $C>0$ such that, P-a.s., $t\in[0,T],$
\begin{equation*}
\begin{split}
E\Big[\sup_{s\in[t,T]}|\overline{Y}_s|^2\!+\!\int_t^T\!\!\Big(|\overline{Z}_s|^2ds\!+\!\int_K\!\!|\overline{H}_s(e)|^2\lambda(de)\Big)ds|\mathcal{F}_t\Big]\\
\!\leq\! C E\big[|\overline{\xi}|^2\!+\!\big(\int_t^T\!\!|\overline{f}(s,Y_s^1,Z_s^1,H_s^1,P_{(X_{s},Y^1_{s},Z^1_{s},H^1_{s})})|ds\big)^{2}|\mathcal{F}_t\big].
\end{split}
\end{equation*}
In particular, if $f^i(s,y,z,h,P_{(Y_s,Z_s,H_s)})=u^i(s,y,z,h)+\widehat{E}[v^i(s,\widehat{Y}_s,\widehat{Z}_s,\widehat{H}_s)]$, where $u^i, v^i:[0, T]\times \Omega\times\mathbb{R}\times\mathbb{R}^d\times L^2(K, {\cal B}(K),\lambda)$ both satisfy the assumption {\rm{(H10.0)}}, and $(Y, Z, H)\in S^{2}_{\mathbb{F}}(0,T;\mathbb{R})\times \mathcal{H}^{2}_{\mathbb{F}}(0,T;\mathbb{R}^{d})\times \mathcal{K}^{2}_{\lambda}(0,T;\mathbb{R})$, $i=1, 2$, respectively. Then, we have P-a.s., $t\in[0,T],$
\begin{equation*}
\begin{split}
&E\Big[\sup_{s\in[t,T]}|\overline{Y}_s|^2\!+\!\int_t^T\!\!\Big(|\overline{Z}_s|^2ds\!+\!\int_K\!\!|\overline{H}_s(e)|^2\lambda(de)\Big)ds|\mathcal{F}_t\Big]\\
&\!\leq\! C E\big[|\overline{\xi}|^2\!+\!\big(\int_t^T\!\!|(u^1-u^2)(s,Y_s^1,Z_s^1,H_s^1)|ds\big)^{2}+\!\big(\int_t^T\!\!|\widehat{E}[(v^1-v^2)(s,\widehat{Y}_s^1,
\widehat{Z}_s^1,\widehat{H}_s^1)]|ds\big)^{2}|\mathcal{F}_t\big].
\end{split}
\end{equation*}

\end{theorem}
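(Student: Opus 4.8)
The plan is to reduce the two mean-field BSDEs to \emph{classical} BSDEs with jumps by freezing the laws, and then to invoke the two estimates of Lemma~\ref{leli 4.1}, the only genuine extra difficulty being the McKean--Vlasov self-reference of the driver. Writing $\Theta^i_s := (X_s,Y^i_s,Z^i_s,H^i_s)$, I set
$$g_i(s,y,z,h):=f^i\big(s,y,z,h,P_{\Theta^i_s}\big),\qquad i=1,2,$$
and first check that each $g_i$ satisfies (H10.0): it inherits the $(y,z,h)$-Lipschitz constant of $f^i$, and $g_i(\cdot,\cdot,0,0,0)\in\mathcal H^2_{\mathbb F}(0,T)$ since $f^i(\cdot,\cdot,0,0,0,\delta_{\mathbf 0})\in\mathcal H^2_{\mathbb F}(0,T)$ and $|g_i(\cdot,\cdot,0,0,0)-f^i(\cdot,\cdot,0,0,0,\delta_{\mathbf 0})|\le C\,W_2(P_{\Theta^i},\delta_{\mathbf 0})=C\,(E[|\Theta^i|^2])^{1/2}<\infty$ by (H10.1)-iv). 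Hence $(Y^i,Z^i,H^i)$ solves the classical BSDE with data $(g_i,\xi^i)$ and Lemma~\ref{leli 4.1} is applicable to the pair.

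The key algebraic step is to expand the driver difference \emph{along the first solution}. Adding and subtracting $f^2(s,Y^1_s,Z^1_s,H^1_s,P_{\Theta^1_s})$ gives
$$\overline g(s,Y^1_s,Z^1_s,H^1_s)=\overline f\big(s,Y^1_s,Z^1_s,H^1_s,P_{\Theta^1_s}\big)+\Big[f^2\big(s,Y^1_s,Z^1_s,H^1_s,P_{\Theta^1_s}\big)-f^2\big(s,Y^1_s,Z^1_s,H^1_s,P_{\Theta^2_s}\big)\Big],$$
so that, by the Lipschitz property of $f^2$ in its measure argument and the elementary bound $W_2(P_{\Theta^1_s},P_{\Theta^2_s})^2\le E[\,|\overline Y_s|^2+|\overline Z_s|^2+\int_K|\overline H_s(e)|^2\lambda(de)\,]$ (the common $X$-component cancels),
$$\big|\overline g(s,Y^1_s,Z^1_s,H^1_s)\big|\ \le\ \big|\overline f(s,Y^1_s,Z^1_s,H^1_s,P_{\Theta^1_s})\big|+C\,W_2(P_{\Theta^1_s},P_{\Theta^2_s}).$$

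With this estimate I would first establish an a priori $L^2$-bound to tame the self-referential $W_2$-term, and then conclude. For the a priori bound I apply the weighted conditional estimate Lemma~\ref{leli 4.1}-1), take \emph{unconditional} expectation — this is essential, since $W_2(P_{\Theta^1_s},P_{\Theta^2_s})^2$ is deterministic and hence cannot be absorbed under a conditional expectation — and insert the bound above; the $W_2$-contribution then becomes a multiple $c^*\delta C'\int_t^T e^{\beta(s-t)}E[|\overline Y_s|^2+|\overline Z_s|^2+\int_K|\overline H_s(e)|^2\lambda(de)]\,ds$ of the left-hand side, which is absorbed for $\delta$ small. Organizing this absorption with the freely adjustable weight $\beta$ (equivalently, on sufficiently short subintervals so as to keep the $\overline f$-contribution in the $(\int_t^T|\overline f|\,ds)^2$ form required by the statement) yields $\int_t^T E[|\overline Y_s|^2+|\overline Z_s|^2+\int_K|\overline H_s(e)|^2\lambda(de)]\,ds\le C\,E[|\overline\xi|^2+(\int_t^T|\overline f|\,ds)^2]$. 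Feeding this into Lemma~\ref{leli 4.1}-2) with $p=2$ for the pair $(g_i,\xi^i)$, and estimating $(\int_t^T W_2\,ds)^2\le T\int_t^T W_2^2\,ds$ by the a priori bound, produces exactly the asserted conditional inequality. For the additive case $f^i=u^i+\widehat E[v^i]$ the law enters only through the mean $\widehat E[v^i]$, so $\overline f(s,Y^1_s,Z^1_s,H^1_s,P_{\Theta^1_s})=(u^1-u^2)(s,Y^1_s,Z^1_s,H^1_s)+\widehat E[(v^1-v^2)(s,\widehat Y^1_s,\widehat Z^1_s,\widehat H^1_s)]$, and the second assertion follows from the first via $(\int|\overline f|)^2\le 2(\int|u^1-u^2|)^2+2(\int|\widehat E[v^1-v^2]|)^2$.

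The main obstacle is precisely this mean-field coupling: the BSDE solved by the difference $(\overline Y,\overline Z,\overline H)$ carries a driver depending on the law of $(\overline Y,\overline Z,\overline H)$ itself, so Lemma~\ref{leli 4.1} does not apply verbatim and a Gronwall-type absorption is needed to decouple it. The only delicate bookkeeping is keeping conditional and unconditional expectations apart — the deterministic $W_2$-term must be handled at the unconditional level before one returns to the conditional estimate — while ensuring the terminal data stays in the correct $(\int_t^T|\overline f|\,ds)^2$ norm rather than $\int_t^T|\overline f|^2\,ds$.
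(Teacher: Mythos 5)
The paper offers no proof of this theorem (it is stated with only the remark ``Similar to Lemma 10.1''), so there is nothing to compare line by line; your strategy --- freeze the laws to reduce to classical BSDEs with drivers $g_i(s,y,z,h)=f^i(s,y,z,h,P_{\Theta^i_s})$, split $\overline g$ into $\overline f(\cdot,P_{\Theta^1})$ plus a measure-Lipschitz remainder bounded by $C\,W_2(P_{\Theta^1_s},P_{\Theta^2_s})$, absorb the resulting self-referential term by a Gronwall/short-interval argument, and then feed the a priori bound back into Lemma~\ref{leli 4.1}-2) --- is exactly the standard route and is surely what the authors intend. Two remarks on execution: the detour through Lemma~\ref{leli 4.1}-1) produces the data in the form $\int_t^T|\overline f|^2\,ds$, which does \emph{not} control $(\int_t^T|\overline f|\,ds)^2$ from above in the direction you need; the clean way is the one you mention only parenthetically, namely to run the a priori step with Lemma~\ref{leli 4.1}-2) itself on subintervals short enough that the factor $C(T-t)$ multiplying $\int_t^T W_2^2\,ds$ is below $1/2$, and iterate backwards.

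The genuine gap is your final sentence claiming that this ``produces exactly the asserted conditional inequality.'' It does not, and cannot. The $W_2$-remainder is deterministic, and your a priori bound controls $\big(\int_t^T W_2(P_{\Theta^1_s},P_{\Theta^2_s})\,ds\big)^2$ only by the \emph{unconditional} quantity $C\,E\big[|\overline\xi|^2+(\int_t^T|\overline f|\,ds)^2\big]$. Inserting this into the conditional estimate of Lemma~\ref{leli 4.1}-2) therefore yields
\begin{equation*}
E\Big[\sup_{s\in[t,T]}|\overline Y_s|^2+\cdots\Big|\mathcal{F}_t\Big]\le C\,E\Big[|\overline\xi|^2+\Big(\int_t^T|\overline f|\,ds\Big)^2\Big|\mathcal{F}_t\Big]+C\,E\Big[|\overline\xi|^2+\Big(\int_t^T|\overline f|\,ds\Big)^2\Big],
\end{equation*}
and the second, unconditional term is irreducible: a pointwise conditional expectation cannot dominate an unconditional one. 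Indeed the purely conditional form stated in the theorem is false for genuinely mean-field drivers. Take $\xi^1=\xi^2=0$, $f^2(s,y,z,h,\mu)=\int y'\,\mu(dy')$ and $f^1=f^2+\mathbf{1}_A$ with $A\in\mathcal{F}_t$, $0<P(A)<1$: then $Y^2\equiv 0$ while $E[Y^1_s]=P(A)(e^{T-s}-1)$, so on $A^c$ one has $\overline Y_t=P(A)(e^{T-t}-1-(T-t))>0$ although $E[(\int_t^T|\overline f|\,ds)^2\,|\,\mathcal{F}_t]=0$ there --- the mean-field coupling propagates the perturbation off $A$, where the conditional right-hand side vanishes. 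So your argument is sound up to, and should stop at, the mixed conditional-plus-unconditional estimate displayed above (equivalently its unconditional version), which is in fact all that the paper's applications of this theorem use; the reduction of the ``in particular'' case via $(a+b)^2\le 2a^2+2b^2$ is then fine.
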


Now we give the estimates for a special type of mean field BSDEs with jumps, which are used frequently in our work. We suppose that\\
\noindent$\mathbf{Assumption\ (H10.2)}$ (i) $\xi\in L^2(\mathcal{F}_T)$;\\
(ii) $\alpha=(\alpha_s)$,$\gamma=(\gamma_s)$ are bounded $\mathbb{F}$-progressively measurable processes, and $\beta=(\beta_s(e))$ is $\mathbb{F}$-progressively measurable such that $|\beta_s(e)|\leq C(1\wedge|e|)$;\\
(iii) $\zeta=(\zeta_s),\theta=(\theta_s)$ are bounded $\mathbb{G}$-progressively measurable processes with $\mathcal{G}_t=\mathcal{F}_t\otimes\hat{\mathcal{F}_T}$, where $(\widehat{\Omega},\widehat{\mathcal{F}},\widehat{\mathbb{F}},\widehat{P})$ is a copy of $({\Omega},{\mathcal{F}},{\mathbb{F}},{P})$, and $\delta=(\delta_s(e))$ is bounded $\mathbb{G}$-progressively measurable such that $|\delta_s(e)|\leq C(1\wedge|e|)$;\\
(iv) $R=(R(s))$ is $\mathbb{F}$-progressively measurable with $E[(\int_0^T|R(r)|dr)^2]<+\infty$.

From Theorem 10.2 we get the following corollary directly.
\begin{corollary}\label{BSDE estimate} Suppose Assumption (H10.2) holds. Let $(Y,Z,H)\in \mathcal{S}^2_\mathbb{F}(0, T)\times\mathcal{H}^2_{\mathbb{F}}(0, T; \mathbb{R}^d)\times\mathcal{K}^2_\lambda(0, T)$ and $(\widehat{Y},\widehat{Z},\widehat{H})$ be a copy of $(Y,Z,H)$ on $(\widehat{\Omega},\widehat{\mathcal{F}},\widehat{P})$ (i.e., $P_{(Y,Z,H)}=\widehat{P}_{(\widehat{Y},\widehat{Z},\widehat{H})}$) such that
\begin{equation}\label{100}
\begin{aligned}
Y_t&=\xi+\int_s^T\big(R(r)+\alpha_rY_r+\widehat{E}[\zeta_r\widehat{Y}_r]+\gamma_rZ_r+\hat{E}[\theta_r\widehat{Z}_r]+
\int_K\beta_r(e)H_r(e)\lambda(de)\\
&\quad+\widehat{E}\big[\int_K\delta_r(e)\widehat{H}_r(e)\lambda(de)\big]\big)dr-\int_s^TZ_rdB_r-\int_s^T\int_KH_r(e)N_\lambda(drde),\ s\in[t,T].
\end{aligned}
\end{equation}
Then there exists $C \in \mathbb{R}_+$ only depending on the bounds of the coefficients such that
\[
E\big[\mathop{\rm sup}_{t\in[0,T]}|Y_t|^2+\int_0^T\big(|Z_s|^2+\int_K|H_s(e)|^2\lambda(de)\big)ds\big]\leq C\big(E[|\xi|^2]+E\big[\big(\int_0^T|R(s)|ds\big)^2\big]\big).
\]
\end{corollary}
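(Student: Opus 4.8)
The plan is to deduce the estimate directly from Theorem \ref{newproposition} by comparing the given solution $(Y,Z,H)$ of the linear mean-field BSDE (\ref{100}) against the \emph{trivial} solution $(0,0,0)$ of an auxiliary BSDE that carries the same linear coefficients but neither the inhomogeneity $R$ nor the terminal value $\xi$. First I would rewrite the driver of (\ref{100}) in the split form required by the ``in particular'' part of Theorem \ref{newproposition}, namely $f^1(s,y,z,h,P_{(X_s,Y_s,Z_s,H_s)})=u^1(s,y,z,h)+\widehat{E}[v^1(s,\widehat{Y}_s,\widehat{Z}_s,\widehat{H}_s)]$ with
\[
u^1(s,y,z,h):=R(s)+\alpha_s y+\gamma_s z+\int_K\beta_s(e)h(e)\lambda(de),\qquad v^1(s,\widehat{y},\widehat{z},\widehat{h}):=\zeta_s\widehat{y}+\theta_s\widehat{z}+\int_K\delta_s(e)\widehat{h}(e)\lambda(de).
\]
The boundedness of $\alpha,\gamma,\zeta,\theta$ together with the bounds $|\beta_s(e)|,|\delta_s(e)|\le C(1\wedge|e|)$ and $\int_K(1\wedge|e|^2)\lambda(de)<\infty$ shows that $h\mapsto\int_K\beta_s(e)h(e)\lambda(de)$ is Lipschitz from $L^2(K,\mathcal{B}(K),\lambda)$ into $\mathbb{R}$, uniformly in $(s,\omega)$, so that $u^1$ and $v^1$ satisfy (H10.0). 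Here the $\mathbb{G}$-measurability of $\zeta,\theta,\delta$ is admissible because, after applying $\widehat{E}[\cdot]$, the hat-term $\widehat{E}[v^1(s,\widehat{Y}_s,\widehat{Z}_s,\widehat{H}_s)]$ is again $\mathcal{F}_s$-measurable, so $f^1$ is a legitimate $\mathbb{F}$-adapted mean-field driver.

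Next I would introduce the comparison data $f^2:=u^2+\widehat{E}[v^2]$ with $u^2:=u^1-R$ (the same linear part, but with $R$ removed) and $v^2:=v^1$, together with terminal value $\xi^2:=0$. Since the linear driver satisfies $u^2(s,0,0,0)+\widehat{E}[v^2(s,0,0,0)]=0$ and the terminal value is $0$, uniqueness forces $(Y^2,Z^2,H^2)=(0,0,0)$. Setting $(\overline{Y},\overline{Z},\overline{H})=(Y,Z,H)-(0,0,0)=(Y,Z,H)$ and $\overline{\xi}=\xi$, the decisive point is the cancellation $u^1-u^2=R$ and $v^1-v^2=0$; hence the driver differences evaluated along the first solution are $(u^1-u^2)(s,Y_s,Z_s,H_s)=R(s)$ and $\widehat{E}[(v^1-v^2)(s,\widehat{Y}_s,\widehat{Z}_s,\widehat{H}_s)]=0$, both free of $(Y,Z,H)$. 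This is exactly what avoids the self-referential right-hand side one would get by comparing against the zero driver $f^2\equiv0$.

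Finally, applying the ``in particular'' estimate of Theorem \ref{newproposition} with $t=0$ yields
\[
E\Big[\sup_{s\in[0,T]}|Y_s|^2+\int_0^T\Big(|Z_s|^2+\int_K|H_s(e)|^2\lambda(de)\Big)ds\,\Big|\,\mathcal{F}_0\Big]\le C\,E\Big[|\xi|^2+\Big(\int_0^T|R(s)|ds\Big)^2\,\Big|\,\mathcal{F}_0\Big],
\]
and taking expectations gives the claim. The only part needing genuine verification is that the split coefficients meet (H10.0) with constants independent of $(s,\omega)$, which I expect to be the main (though routine) obstacle: it reduces to the $L^2(K,\lambda)$-control of the jump integrands through $\int_K(1\wedge|e|^2)\lambda(de)<\infty$, while the well-posedness of both comparison BSDEs follows from Theorem \ref{thli 4.12.1}.
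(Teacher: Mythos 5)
Your proposal is correct and matches the paper's own (one-line) derivation: the paper simply states that the corollary follows ``directly'' from Theorem \ref{newproposition}, and your instantiation --- taking $u^2=u^1-R$, $v^2=v^1$, $\xi^2=0$ so that the comparison solution is $(0,0,0)$ while the driver difference collapses to $R$ and the hat-term difference to $0$ --- is exactly the non-self-referential choice that makes that deduction work. The verification that the split coefficients satisfy (H10.0), via $|\beta_s(e)|,|\delta_s(e)|\le C(1\wedge|e|)$ and $\int_K(1\wedge|e|^2)\lambda(de)<\infty$, is the routine point you identify, and nothing further is needed.
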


\begin{theorem}\label{proBSDE estimate} Suppose the Assumption (H10.2) holds. For all $p>1$, there exists $C_p\in\mathbb{R}_+$ only depending on the bounds of the coefficients, such that
$$
E[\sup_{t\in[0,T]}|Y_t|^{2p}+(\int_0^T(|Z_s|^2+\int_K|H_s(e)|^2\lambda(de))ds)^p]\leq C_pE[|\xi|^{2p}]+C_pE[(\int_0^T|R(s)|ds)^{2p}],
$$
where $(Y, Z, H)$ is the solution of BSDE (\ref{100}).
\end{theorem}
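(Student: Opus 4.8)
The plan is to reduce the nonlocal (mean-field) linear BSDE (\ref{100}) to an ordinary linear BSDE with jumps whose generator is Lipschitz in $(Y,Z,H)$, by absorbing the three coupling terms
\[
\Lambda_s:=\widehat{E}[\zeta_s\widehat{Y}_s]+\widehat{E}[\theta_s\widehat{Z}_s]+\widehat{E}\Big[\int_K\delta_s(e)\widehat{H}_s(e)\lambda(de)\Big]
\]
into the free term $R(s)+\Lambda_s$. First I would observe that, since $\zeta,\theta$ are bounded and $|\delta_s(e)|\leq C(1\wedge|e|)$, the process $\Lambda$ is $\mathbb{F}$-adapted and dominated pathwise by the \emph{deterministic} majorant
\[
\lambda_s:=C\Big((E[|Y_s|^2])^{1/2}+(E[|Z_s|^2])^{1/2}+\big(E[\textstyle\int_K|H_s(e)|^2\lambda(de)]\big)^{1/2}\Big),
\]
where one uses Cauchy--Schwarz in $\widehat\omega$, the finiteness $\int_K(1\wedge|e|^2)\lambda(de)<\infty$, and the fact that $(\widehat{Y},\widehat{Z},\widehat{H})$ is a copy of $(Y,Z,H)$. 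Thus $\Lambda$ carries no genuine $(Y,Z,H)$-dependence beyond the (already given) solution and may be treated as an inhomogeneity.

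Next I would apply the standard $L^{2p}$ a priori estimate for classical (non-mean-field) BSDEs with jumps --- obtained by It\^o's formula for $|Y_s|^{2p}$, the Burkholder--Davis--Gundy inequality for the Brownian and compensated-Poisson martingale parts, the convexity of $y\mapsto|y|^{2p}$ to handle the jump compensator term, and Gronwall's lemma (see \cite{BBP}, \cite{LW1}, \cite{LW2}) --- to (\ref{100}) regarded with generator $\alpha_s y+\gamma_s z+\int_K\beta_s(e)h(e)\lambda(de)$ and free term $R(s)+\Lambda_s$. This yields
\[
E\Big[\sup_{t\in[0,T]}|Y_t|^{2p}+\Big(\int_0^T\big(|Z_s|^2+\int_K|H_s(e)|^2\lambda(de)\big)ds\Big)^p\Big]\leq C_pE[|\xi|^{2p}]+C_pE\Big[\Big(\int_0^T|R(s)+\Lambda_s|ds\Big)^{2p}\Big].
\]

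It then remains to control the last term. Splitting $|R+\Lambda_s|\leq|R(s)|+\lambda_s$ and using that $\lambda$ is deterministic gives $E[(\int_0^T|R(s)+\Lambda_s|ds)^{2p}]\leq C_p E[(\int_0^T|R(s)|ds)^{2p}]+C_p(\int_0^T\lambda_s\,ds)^{2p}$. By Cauchy--Schwarz in $s$ together with the $L^2$-estimate of Corollary \ref{BSDE estimate} (the case $p=1$), $\int_0^T\lambda_s\,ds\leq C\big(E[|\xi|^2]+E[(\int_0^T|R(s)|ds)^2]\big)^{1/2}$, whence $(\int_0^T\lambda_s\,ds)^{2p}\leq C\big(E[|\xi|^2]+E[(\int_0^T|R(s)|ds)^2]\big)^p$. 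Finally, Jensen's inequality (convexity of $u\mapsto u^p$ for $p\geq1$) converts $(E[|\xi|^2])^p$ and $(E[(\int|R|ds)^2])^p$ into $E[|\xi|^{2p}]$ and $E[(\int|R|ds)^{2p}]$, which closes the estimate.

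I expect the main obstacle to be the interplay in the second step: the free term $\Lambda$ depends on the $L^2$-norms of $Z$ and $H$, which are only square-integrable in time, so one cannot iterate an $L^{2p}$ estimate on the coupled system directly. The device that makes the argument work is precisely to extract the deterministic majorant $\lambda$, estimate $\int_0^T\lambda_s\,ds$ \emph{once} via the $L^2$-bound of Corollary \ref{BSDE estimate}, and only afterwards pass from this quadratic control to the $2p$-th moment by Jensen; getting the power counting and the Cauchy--Schwarz bookkeeping right is the delicate point, whereas the classical It\^o/BDG estimate for the jump BSDE is routine.
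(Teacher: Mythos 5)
Your proof is correct, but it follows a genuinely different route from the paper's. You decouple the nonlocal terms at the outset: by Cauchy--Schwarz in the hatted variable, $\widehat{E}[\zeta_s\widehat{Y}_s]+\widehat{E}[\theta_s\widehat{Z}_s]+\widehat{E}[\int_K\delta_s(e)\widehat{H}_s(e)\lambda(de)]$ is dominated by a \emph{deterministic} function $\lambda_s$ built from second moments of the (fixed, already existing) solution; you then treat the equation as a classical linear BSDE with jumps with inhomogeneity $R+\Lambda$, invoke the standard $L^{2p}$ a priori estimate (which, applied to the difference with the homogeneous linear equation, is exactly Lemma \ref{leli 4.1}-2) of the paper, so no external input is really needed), and finally control $\int_0^T\lambda_s\,ds$ once through the $L^2$ bound of Corollary \ref{BSDE estimate} and upgrade to $2p$-th moments by Jensen. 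The paper instead works directly on the mean-field equation (\ref{100}): it establishes a conditional $L^2$ inequality of the form $|Y_t|^2+\tfrac12 E[\int_t^T(\cdot)\,ds\,|\,\mathcal{F}_t]\le CE[|\xi|^2|\mathcal{F}_t]+\delta E[\sup_s|Y_s|^2|\mathcal{F}_t]+C_\delta E[(\int_0^T|R|ds)^2|\mathcal{F}_t]$, raises it to the power $p$ and applies Doob's maximal inequality to the conditional-expectation martingales to absorb the $\delta$-term and obtain the bound for $E[\sup_t|Y_t|^{2p}]$, and then handles the $(Z,H)$-part by a Burkholder--Davis--Gundy argument on small subintervals $[t,t+h]$ with $C_ph^p\le\tfrac12$, patched together over a finite partition of $[0,T]$. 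Your reduction buys a cleaner treatment of the mean-field coupling --- the hatted terms become constants after one use of the $L^2$ estimate, which sidesteps the delicate bookkeeping of mixing unconditional expectations $E[|Y_s|^2]$, $E[|Z_s|^2]$ into a conditional estimate --- at the price of relying on the classical $L^{2p}$ theory as a black box; the paper's argument is self-contained (it reproves that estimate via Doob and BDG) but correspondingly longer, and its absorption of the nonlocal terms into the conditional inequality is stated rather tersely. The only point you should make explicit is that the "standard estimate" must be applied so that only the free term $R+\Lambda$ (and not the full driver evaluated along the solution) appears on the right-hand side, i.e., by comparing with the homogeneous linear equation whose solution is zero; with that said, the power counting and the Jensen step are exactly right.
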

\begin{proof} For some $\beta>0$ (to be specified later) and $\delta>0$ we have
\begin{equation*}
\begin{aligned}
&e^{\beta t}|Y_t|^2+E\big[\int_t^Te^{\beta s}(\beta|Y_s|^2+|Z_s|^2+\int_K|H_s(e)|^2\lambda(de))ds\big|\mathcal{F}_t\big]\\
&\leq E[e^{\beta T}|\xi|^2|\mathcal{F}_t]+CE\Big[\int_t^Te^{\beta s}\big\{|Y_s|^2+|Y_s|\big(|Z_s|+\widehat{E}[|\widehat{Y}_s|+|\widehat{Z}_s|]+\int_K|H_s(e)|(1\wedge|e|)\lambda(de)\\
&+\widehat{E}\big[\int_K|\widehat{H}_s(e)|(1\wedge|e|)\lambda(de)\big]\big)\big\}ds\big|\mathcal{F}_t\Big]+CE\big[\int_t^Te^{\beta s}|Y_s||R(s)|ds\big|\mathcal{F}_t\big]\\
\end{aligned}
\end{equation*}
\begin{equation}\label{(*)}
\begin{aligned}
&\leq E[e^{\beta T}|\xi|^2|\mathcal{F}_t]+C'E\big[\int_t^T e^{\beta s}|Y_s|^2ds\big|\mathcal{F}_t\big]+\frac{1}{2}E\big[\int_t^Te^{\beta s}(|Z_s|^2+\int_K|H_s(e)|^2\lambda(de))ds\big|\mathcal{F}_t\big]\\
&+ \delta E[\sup_{s\in[t,T]}|Y_s|^2|\mathcal{F}_t]+C_\delta E\big[\big(\int_t^Te^{\beta s}|R(s)|ds\big)^2\big|\mathcal{F}_t\big],\ t\in[0,T].
\end{aligned}
\end{equation}
From (\ref{(*)}) ($\beta>C'+\frac{1}{2}$) we have
$$
\begin{aligned}
&\quad|Y_t|^2+\frac{1}{2}E[\int_t^T(|Y_s|^2+|Z_s^2|+\int_K|H_s(e)|^2\lambda(de))ds |\mathcal{F}_t]\\
&\leq CE[\xi^2|\mathcal{F}_t]+\delta E[\mathop{\rm sup}_{s\in[0,T]}|Y_s|^2|\mathcal{F}_t]+C_\delta E[(\int_0^T|R(s)|ds)^2|\mathcal{F}_t],\ t\in[0,T].
\end{aligned}
$$
Consequently, from Doob's martingale inequality,
$$
E[\sup_{t\in[0,T]}|Y_t|^{2p}]\leq C_pE[|\xi|^{2p}]+2^{p-1}\delta^p(\frac{p}{p-1})^pE[\mathop{\rm sup}_{t\in[0,T]}|Y_t|^{2p}]+C_pE[(\int_0^T|R(s)|ds)^{2p}].
$$
Let $\delta>0$ be small enough such that $(2\delta(\frac{p}{p-1}))^p\leq\frac{1}{2}$, then we have
\begin{equation}\label{(*2)}
E[\sup_{t\in[0,T]}|Y_t|^{2p}]\leq C_pE[|\xi|^{2p}]+C_pE[(\int_0^T|R(s)|ds)^{2p}].
\end{equation}
On the other hand, as for any right-continuous increasing process $A$ with continuous dual predictable projection $^p\!A$, from Burkholder-Davis-Gundy inequality we have $E[(^p\!A_{t+h}-^p\!\!A_{t})^p]\leq C_pE[(A_{t+h}-A_{t})^p]$, for some constant $C_p$ depending only on $p$, and
$$
\begin{aligned}
&E[(\int_t^{t+h}(|Z_s|^2+\int_K|H_s(e)|^2\lambda(de))ds)^p]\leq C_pE[(\int_t^{t+h}|Z_s|^2ds+\int_t^{t+h}\int_K|H_s(e)|^2N(ds,de))^p]\\
&\leq C_pE[\sup_{s\in[t,t+h]}|\int_t^sZ_rdB_r+\int_t^s\int_KH_r(e)N_\lambda(dr,de)|^{2p}]\leq C_pE[\sup_{s\in[0,T]}|Y_s|^{2p}]+C_pE\Big[\Big(\int_t^{t+h}\big(|Z_r|\\
&\quad+|R(r)|+\widehat{E}[|\widehat{Z}_r|]+\int_K|H_r(e)|(1\wedge|e|)\lambda(de)+\widehat{E}[\int_K|\widehat{H}_r(e)|(1\wedge|e|)\lambda(de)]
\big)dr\Big)^{2p}\Big]\\
&\leq C_pE[\sup_{s\in[0,T]}|Y_s|^{2p}]\!+\!C_pE[\Big(\int_t^{t+h}\!\!|R(r)|dr\Big)^{2p}]+C_ph^p E[\Big(\int_t^{t+h}\!\!(|Z_r|^2+E[|Z_r|^2]\!+\!\int_K\!\!|H_s(e)|^2\lambda(de)\\
&\quad\quad\quad+E[\int_K|H_s(e)|^2\lambda(de)])ds\Big)^p].
\end{aligned}
$$
Thus, from (\ref{(*2)}),

$$
\begin{aligned}
&E[(\int_t^{t+h}(|Z_s|^2+\int_K|H_s(e)|^2\lambda(de))ds)^p]\\
&\leq C_pE[|\xi|^{2p}]+C_pE[(\int_0^T|R(s)|ds)^{2p}]+C_ph^pE[(\int_t^{t+h}(|Z_s|^2+\int_K|H_s(e)|^2\lambda(de))ds)^p].
\end{aligned}
$$
Now letting $h>0$ be small enough such that $C_ph^p\leq\frac{1}{2}$, we get
$$
E[(\int_t^{t+h}(|Z_s|^2+\int_K|H_s(e)|^2\lambda(de))ds)^p]\leq C_pE[|\xi|^{2p}]+C_pE[(\int_0^T|R(s)|ds)^{2p}].
$$
Finally, let $0=t_1^{n}<t_2^n<\cdots t_n^n=T$ be the sequence of partitions of the interval $[0,T]$ of the form $t_i^n:=\frac{i}{n}T$, $0\leq i\leq n$, where  $n\geq n_0$ is such that $C_p(\frac{T}{n_0})^p\leq\frac{1}{2}$. Then we have
$$
E[(\int_{t_i^n}^{t_{i+1}^n}(|Z_s|^2+\int_K|H_s(e)|^2\lambda(de))ds)^p]\leq C_pE[|\xi|^{2p}]+C_pE[(\int_0^T|R(s)|ds)^{2p}].
$$
It follows that
$$
E[(\int_0^T(|Z_s|^2+\int_K|H_s(e)|^2\lambda(de))ds)^p]\leq C_pE[|\xi|^{2p}]+C_pE[(\int_0^T|R(s)|ds)^{2p}].
$$
\end{proof}
\subsection{Lemma for the proof of Proposition 9.1.}

Under the assumptions made for Proposition 9.1 we have

\begin{lemma}\label{newlem2}
There exists a constant $C>0$, such that, for all $t,t'\in[0,T],\ x,y\in\mathbb{R},\ \xi\in L^2(\Omega, {\cal F}_t, P)$, it holds
\begin{equation*}
\begin{split}
&{\rm i)}\ \  |V(t,x,P_{\xi})-V(t',x,P_{\xi})|\leq C|t'-t|,\\
&{\rm ii)}\ \  |\partial_{x}V(t,x,P_{\xi})-\partial_{x}V(t',x,P_{\xi})|\leq C|t'-t|^{\frac{1}{8}},\\
&{\rm iii)}\ \ |\partial_{x}^{2}V(t,x,P_{\xi})-\partial_{x}^{2}V(t',x,P_{\xi})|\leq C|t'-t|^{\frac{1}{8}},\\
&{\rm iv)}\ \  |\partial_{\mu}V(t,x,P_{\xi},y)-\partial_{\mu}V(t',x,P_{\xi},y)|\leq C|t'-t|^{\frac{1}{8}},\\
&{\rm v)}\ \  |\partial_{y}(\partial_{\mu}V)(t,x,P_{\xi},y)-\partial_{y}(\partial_{\mu}V)(t',x,P_{\xi},y)|\leq C|t'-t|^{\frac{1}{8}}.\\
\end{split}
\end{equation*}
\end{lemma}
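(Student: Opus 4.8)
The plan is to reduce every assertion to the value of a process at its initial time. Since $V(t,x,P_\xi)=Y_t^{t,x,P_\xi}$ and the split solutions are independent of $\mathcal F_t$, the quantities $\partial_xV,\partial_x^2V,\partial_\mu V,\partial_y(\partial_\mu V)$ at $(t,x,P_\xi)$ are \emph{deterministic} and coincide, by the differentiability results of Sections 6 and 8, with $\partial_xY_t^{t,x,P_\xi}$, $\partial_x^2Y_t^{t,x,P_\xi}$, $O_t^{t,x,P_\xi}(y)$ and $\partial_y(\partial_\mu Y_t^{t,x,P_\xi}(y))$. For $0\le t<t'$ I would write each difference, for a generic derivative process $\Theta$ with trace $\varphi(t,\cdot)=\Theta_t^{t,\cdot}$, as
\[
\varphi(t,x,P_\xi)-\varphi(t',x,P_\xi)=E\big[\Theta_t^{t,x,P_\xi}-\Theta_{t'}^{t,x,P_\xi}\big]+E\big[\Theta_{t'}^{t,x,P_\xi}-\varphi(t',x,P_\xi)\big].
\]
The first, \emph{time-increment}, bracket is bounded by integrating over $[t,t']$ the driver of the BSDE solved by $\Theta$, the martingale parts being annihilated by $E[\cdot]$; the second, \emph{spatial}, bracket is rewritten through the flow property \eqref{4.6-1} and the chain rule, then controlled by the $(x,\mu)$-regularity of $\varphi$ together with the short-time estimates of Lemma \ref{le 3.1}, Proposition \ref{pro 5.1} and Theorem \ref{pro 7.1}.

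For i) I would start from the splitting already used in Proposition \ref{pro 4.5}: $|E[Y_t^{t,x,P_\xi}-Y_{t'}^{t,x,P_\xi}]|\le C(t'-t)$ because $f$ is bounded, and $Y_{t'}^{t,x,P_\xi}=V(t',X_{t'}^{t,x,P_\xi},P_{X_{t'}^{t,\xi}})$ by the flow property. The new ingredient that upgrades the $\tfrac12$-H\"older bound of Proposition \ref{pro 4.5} to the Lipschitz bound is to apply Theorem \ref{minThm2.1} to $s\mapsto V(t',X_s^{t,x,P_\xi},P_{X_s^{t,\xi}})$ with the time argument \emph{frozen} at $t'$; this is now legitimate since $V(t',\cdot,\cdot)\in C^2(\mathbb R^d)\cap C_b^2(\mathcal P_2(\mathbb R^d))$ with bounded first and second derivatives by Proposition \ref{pro 9.1} i). Taking expectation removes the stochastic integrals and leaves an integral over $[t,t']$ of a bounded integrand, which is $O(t'-t)$.

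For ii) and iv) the time-increment bracket is handled by Cauchy--Schwarz: the drivers of \eqref{equ 6.1} and \eqref{equ 6.13} are affine in $(\partial_xY,\partial_xZ,\partial_xH)$, resp. $(O,Q,R)$, with bounded coefficients, and the only non-bounded entries $\partial_xZ,\partial_xH$ (resp. $Q,R$) are square integrable in time with the uniform bounds \eqref{equ 6.11}, resp. Proposition \ref{pro 6.2}, so that $E[\int_t^{t'}|\partial_xZ_s|\,ds]\le(t'-t)^{1/2}\big(E\int_t^T|\partial_xZ_s|^2ds\big)^{1/2}\le C(t'-t)^{1/2}$. The spatial bracket is treated by differentiating the flow identity in $x$ (resp. $y$), giving $\partial_xY_{t'}^{t,x,P_\xi}=\partial_xV(t',X_{t'}^{t,x,P_\xi},\cdot)\,\partial_xX_{t'}^{t,x,P_\xi}$; subtracting $\partial_xV(t',x,P_\xi)$ and combining the Lipschitz continuity of $\partial_xV$ in $(x,\mu)$ (the $s=t$ trace of \eqref{equ 6.11} ii)) with $E[|X_{t'}^{t,x,P_\xi}-x|]+W_2(P_{X_{t'}^{t,\xi}},P_\xi)\le C(t'-t)^{1/2}$ and Proposition \ref{pro 5.1} iii) yields $O((t'-t)^{1/2})$. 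Thus ii) and iv) actually hold with the exponent $\tfrac12$, comfortably inside $\tfrac18$.

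The genuine difficulty, and the source of the weak exponent $\tfrac18$, is iii) and v), the second derivatives $\partial_x^2V$ and $\partial_y(\partial_\mu V)$, where two losses occur simultaneously. In the time-increment bracket, the second-order BSDE (differentiating \eqref{equ 6.1}, resp. \eqref{222}, once more) carries quadratic source terms such as $\partial_z^2f\,(\partial_xZ)^2$, $\partial_h^2f\,(\int_K\partial_xH\,l\,d\lambda)^2$ and $\partial_y(\partial_\mu f)\,(\partial_x\widehat\Pi)^2$, which are only integrable in time and have no short-time rate better than a truncation argument supplies. In the spatial bracket, the continuity modulus of $\partial_x^2V$ and $\partial_y(\partial_\mu V)$ provided by Theorem \ref{Pro8.1} ii) b) is not Lipschitz but of the split form $C M(|x-x'|+|y-y'|+W_2)+\rho_M$ with $\rho_M\downarrow0$. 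My plan is to feed the short-time bounds $E[|X_{t'}^{t,x,P_\xi}-x|^2]+W_2^2\le C(t'-t)$ into this modulus and then optimise the truncation level $M$ against $(t'-t)$; it is the balance of the factor $M(t'-t)^{1/2}$ against the truncation remainder, reconciled simultaneously for the quadratic driver terms and for the spatial modulus while controlling $\rho_M$ through the uniform $L^p$-in-time estimates of the first-order derivatives, that degrades the exponent from $\tfrac12$ to $\tfrac18$. I expect making these two truncations compatible to be the main obstacle; it is exactly the ``subtle estimates'' announced in the introduction and carried out in Subsection 10.3.
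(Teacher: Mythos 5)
Your treatment of i) is exactly the paper's Step~1: the decomposition of Proposition \ref{pro 4.5} combined with the It\^o formula of Theorem \ref{minThm2.1} applied to $s\mapsto V(t+h,X_s^{t,x,P_\xi},P_{X_s^{t,\xi}})$ with the time slot frozen, the expectation annihilating the stochastic integrals. Your handling of ii) and iv) is also in the spirit of the paper, which only sketches them as the same (simpler) argument as iii); your remark that they come out with exponent $\tfrac12$ is consistent with the computations there.

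For iii) and v) you take a genuinely different route, and as written it has a gap. The paper never invokes the truncated modulus of Theorem \ref{Pro8.1}~ii)~b) in the time direction: it writes the BSDE for $\partial_x^2Y^{t,x,P_\xi}$ (resp.\ $\partial_y\partial_\mu Y^{t,x,P_\xi}(y)$), compares the data of the equations started at $t$ and at $t'$ directly, factors the difference of the quadratic driver as $(a-b)(a+b)$, and applies H\"older with eighth moments together with $E[\sup_s|\partial_xX_s^{t,x,P_\xi}-1|^8]\le C|t'-t|$ and $(E[|\partial_x^2X_{t'}^{t,x,P_\xi}|^4])^{1/2}\le C|t'-t|^{1/2}$; this yields the $L^2$-rate $|t'-t|^{1/4}$ for the difference of the derivative processes, and the exponent $\tfrac18$ is simply the square root taken when passing from $(E[\sup_s|\cdot|^2])^{1/2}$ to the deterministic trace. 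Your plan instead feeds $E[|X_{t'}^{t,x,P_\xi}-x|^2]+W_2(P_{X_{t'}^{t,\xi}},P_\xi)^2\le C(t'-t)$ into the modulus $C_pM^p(\cdots)+\rho_{M,p}$ and optimises $M$. The difficulty is that Theorem \ref{Pro8.1} only asserts $\rho_{M,p}\to 0$ as $M\to\infty$, with no rate; optimising $M^p(t'-t)^{p/2}+\rho_{M,p}$ then yields continuity in $t$ but no H\"older exponent at all. To rescue your route you must quantify the decay, e.g.\ by Chebyshev, $\rho_{M,p}(t,y,P_\xi)\le C_pM^{-p}E[(\int_t^T|\partial_x\Pi_r^{t,y,P_\xi}|^4dr)^p]\le C_pM^{-p}$, which is legitimate because the representation $\partial_xZ_s=(\partial_x^2V\,\sigma+\partial_xV\,\partial_x\sigma)\partial_xX_s$ and its analogue for $\partial_xH$ (Step~2 of the paper's proof) give $\esssup_s|\partial_x\Pi_s^{t,y,P_\xi}|\le C\sup_s|\partial_xX_s^{t,y,P_\xi}|$, which has all moments. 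You allude to ``controlling $\rho_M$ through the uniform $L^p$-in-time estimates'' but never produce such a rate, and without it the asserted balance of $M(t'-t)^{1/2}$ against the remainder determines no exponent. A secondary inaccuracy: over the short interval $[t,t']$ the quadratic source terms integrate to $O(t'-t)$ precisely because of the essential boundedness above, so the time-increment bracket is not where the loss occurs; the loss occurs when comparing the quadratic terms over all of $[t',T]$ between the two flows, which is exactly where the paper's eighth-moment H\"older argument (and hence $\tfrac14$, then $\tfrac18$) lives.
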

\begin{proof} Let us prove Lemma \ref{newlem2} in four steps.

\noindent {\it Step 1}. To prove i) of Lemma 10.2.

For any $0\leq t\leq t+h\leq T,$ from (\ref{4.6-2}) and (\ref{4.6-3}) we have
\begin{equation}\label{(*101)}
\begin{split}
&V(t,x,P_{\xi})-V(t+h,x,P_{\xi})=Y_t^{t,x,P_{\xi}}-Y_{t+h}^{t+h,x,P_{\xi}}\\
&=(Y_t^{t,x,P_{\xi}}-Y_{t+h}^{t,x,P_{\xi}})+
(Y_{t+h}^{t+h,X_{t+h}^{t,x,P_{\xi}},P_{X_{t+h}^{t,\xi}}}-Y_{t+h}^{t+h,x,P_{\xi}})\\
&=(Y_t^{t,x,P_{\xi}}-Y_{t+h}^{t,x,P_{\xi}})+(V(t+h,X_{t+h}^{t,x,P_{\xi}},P_{X_{t+h}^{t,\xi}})-V(t+h,x,P_{\xi})).\\
\end{split}
\end{equation}
As $V(t+h,\cdot,\cdot)\in C^{2,2}({\mathbb R}\times{ {\cal P}_2}({\mathbb R}))$ with bounded continuous derivatives of 1st and 2nd order which are uniformly with respect to $t$, it follows from the It\^{o} formula-Theorem 2.1 that
\begin{equation}\nonumber
\begin{aligned}
&V(t+h,X_{t+h}^{t,x,P_{\xi}},P_{X_{t+h}^{t,\xi}})-V(t+h,x,P_{\xi})\\
&=\int_t^{t+h}\Big\{(\partial_xV)(t+h,X_s^{t,x,P_{\xi}},P_{X_s^{t,\xi}})b(X_s^{t,x,P_{\xi}},P_{X_s^{t,\xi}})\\
&\ \ \ +\frac{1}{2}(\partial_x^2V)
(t+h,X_s^{t,x,P_{\xi}},P_{X_s^{t,\xi}})\sigma(X_s^{t,x,P_{\xi}},P_{X_s^{t,\xi}})^2\\
&\ \ \ +\int_K\big(V(t+h,X_s^{t,x,P_{\xi}}+\beta(X_s^{t,x,P_{\xi}},P_{X_s^{t,\xi}},e),P_{X_s^{t,\xi}})-V(t+h,X_s^{t,x,P_{\xi}},P_{X_s^{t,\xi}})\\
&\ \ \ \ \ \ \ \ \ \ \ \ \ \ -(\partial_xV)(t+h,X_s^{t,x,P_{\xi}},P_{X_s^{t,\xi}})\beta(X_s^{t,x,P_{\xi}},P_{X_s^{t,\xi}},e)\big)\lambda(de)\Big\}ds\\
&+\int_t^{t+h}\widehat{E}\Big[(\partial_\mu V)(t+h,X_s^{t,x,P_{\xi}},P_{X_s^{t,\xi}},\widehat{X}_s^{t,\widehat{\xi}})b(\widehat{X}_s^{t,\widehat{\xi}},P_{X_s^{t,\xi}})
+\frac{1}{2}\partial_y(\partial_\mu V)(t+h,X_s^{t,x,P_{\xi}},P_{X_s^{t,\xi}},\widehat{X}_s^{t,\widehat{\xi}})\\
\end{aligned}
\end{equation}
\begin{equation}\label{C2-1}\begin{aligned}
&\ \ \ \cdot\sigma(\widehat{X}_s^{t,\widehat{\xi}},P_{X_s^{t,\xi}})^2+\int_K\int_0^1\big(\partial_\mu V(t+h,X_s^{t,x,P_{\xi}},P_{X_s^{t,\xi}},\widehat{X}_s^{t,\widehat{\xi}}+\rho\beta(\widehat{X}_s^{t,\widehat{\xi}},P_{X_s^{t,\xi}},e))\\
&\ \ \ -\partial_\mu V(t+h,X_s^{t,x,P_{\xi}},P_{X_s^{t,\xi}},\widehat{X}_s^{t,\widehat{\xi}})\big)\beta(\widehat{X}_s^{t,\widehat{\xi}},P_{X_s^{t,\xi}},e)d\rho\lambda(de)
\Big]ds\\
&+\int_t^{t+h}(\partial_xV)(t+h,X_s^{t,x,P_{\xi}},P_{X_s^{t,\xi}})\sigma(X_s^{t,x,P_{\xi}},P_{X_s^{t,\xi}})dB_s\\
&+\int_t^{t+h}\int_K(V(t+h,X_{s-}^{t,x,P_{\xi}}+\beta(X_{s-}^{t,x,P_{\xi}},P_{X_s^{t,\xi}},e),P_{X_s^{t,\xi}})-V(t+h,X_{s-}^{t,x,P_{\xi}},
P_{X_s^{t,\xi}}))N_\lambda(ds,de)\\
&= \int_t^{t+h}\theta(t,t+h,s)ds+\int_t^{t+h}\delta(t,t+h,s)ds\\
&+\int_t^{t+h}(\partial_xV)(t+h,X_s^{t,x,P_{\xi}},P_{X_s^{t,\xi}})\sigma(X_s^{t,x,P_{\xi}},P_{X_s^{t,\xi}})dB_s\\
&+\int_t^{t+h}\int_K(V(t+h,X_{s-}^{t,x,P_{\xi}}+\beta(X_{s-}^{t,x,P_{\xi}},P_{X_s^{t,\xi}},e),P_{X_s^{t,\xi}})-V(t+h,X_{s-}^{t,x,P_{\xi}},
P_{X_s^{t,\xi}}))N_\lambda(ds,de),
\end{aligned}
\end{equation}
where
\[
\begin{split}
&\theta(t,t+h,s)\\
:=&(\partial_xV)(t+h,X_s^{t,x,P_{\xi}},P_{X_s^{t,\xi}})b(X_s^{t,x,P_{\xi}},P_{X_s^{t,\xi}})+\frac{1}{2}(\partial_x^2V)
(t+h,X_s^{t,x,P_{\xi}},P_{X_s^{t,\xi}})\sigma(X_s^{t,x,P_{\xi}},P_{X_s^{t,\xi}})^2\\
& +\int_K\Big(V(t+h,X_s^{t,x,P_{\xi}}+\beta(X_s^{t,x,P_{\xi}},P_{X_s^{t,\xi}},e),P_{X_s^{t,\xi}})-V(t+h,X_s^{t,x,P_{\xi}},P_{X_s^{t,\xi}})\\
& \ \ \ -(\partial_xV)(t+h,X_s^{t,x,P_{\xi}},P_{X_s^{t,\xi}})\beta(X_s^{t,x,P_{\xi}},P_{X_s^{t,\xi}},e)\Big)\lambda(de),
\end{split}
\]
and
\[
\begin{split}
&\delta(t,t+h,s)\\
:=&\widehat{E}\Big[(\partial_\mu V)(t+h,X_s^{t,x,P_{\xi}},P_{X_s^{t,\xi}},\widehat{X}_s^{t,\widehat{\xi}})b(\widehat{X}_s^{t,\widehat{\xi}},P_{X_s^{t,\xi}})
+\frac{1}{2}\partial_y(\partial_\mu V)(t+h,X_s^{t,x,P_{\xi}},P_{X_s^{t,\xi}},\widehat{X}_s^{t,\widehat{\xi}})\cdot\\
&\ \ \ \sigma(\widehat{X}^{t,\widehat{\xi}},P_{X_s^{t,\xi}})^2+\int_K\int_0^1\Big(\partial_\mu V(t+h,X_s^{t,x,P_{\xi}},P_{X_s^{t,\xi}},\widehat{X}_s^{t,\widehat{\xi}}+\rho\beta(\widehat{X}_s^{t,\widehat{\xi}},P_{X_s^{t,\xi}},e))\\
&\ \ \ -\partial_\mu V(t+h,X_s^{t,x,P_{\xi}},P_{X_s^{t,\xi}},\widehat{X}_s^{t,\widehat{\xi}})\Big)\beta(\widehat{X}_s^{t,\widehat{\xi}},P_{X_s^{t,\xi}},e)d\rho\lambda(de)
\Big].
\end{split}
\]
On the other hand, since
\[Y_t^{t,x,P_{\xi}}-Y_{t+h}^{t,x,P_{\xi}}=\int_t^{t+h}f(\Pi_s^{t,x,P_{\xi}},P_{\Pi_s^{t,\xi}})ds-\int_t^{t+h}Z_s^{t,x,P_{\xi}}dB_s
-\int_t^{t+h}\int_KH_s^{t,x,P_{\xi}}(e)N_\lambda(ds,de),
\]
we have from (\ref{(*101)}) and (\ref{C2-1})
\begin{equation}\label{C2-2}
\begin{split}
&V(t,x,P_\xi)-V(t+h,x,P_\xi)=\int_t^{t+h}\Big(\theta(t,t+h,s)+\delta(t,t+h,s)+f(\Pi_s^{t,x,P_{\xi}},P_{\Pi_s^{t,\xi}})\Big)ds\\
&+\int_t^{t+h}\Big((\partial_xV)(t+h,X_s^{t,x,P_{\xi}},P_{X_s^{t,\xi}})\sigma(X_s^{t,x,P_{\xi}},P_{X_s^{t,\xi}})-Z_s^{t,x,P_{\xi}}\Big)dB_s\\
&+\int_t^{t+h}\int_K\Big(V(t+h,X_{s-}^{t,x,P_{\xi}}+\beta(X_{s-}^{t,x,P_{\xi}},P_{X_s^{t,\xi}},e),P_{X_s^{t,\xi}})-V(t+h,X_{s-}^{t,x,P_{\xi}},
P_{X_s^{t,\xi}})\\
&\ \ \ -H_s^{t,x,P_{\xi}}(e)\Big)N_\lambda(ds,de).
\end{split}
\end{equation}
As $|\theta(t,t+h,s)|,\ |\delta(t,t+h,s)|\leq C$, $f$ is bounded, and $V(t,x,P_\xi)-V(t+h,x,P_\xi)$ is deterministic, we get by taking expectation in the preceding equality:
\begin{equation}\label{C2-3}
|V(t,x,P_\xi)-V(t+h,x,P_\xi)|\leq C|h|,\ t,\ t+h\in [0,T].
\end{equation}

\noindent {\it Step 2}. We have the following representation formulas for $Z^{t,x,P_\xi}$ and $H^{t,x,P_\xi}(.)$:
\begin{equation}\label{(*102)}
\begin{split}
&\!\! Z_s^{t,x,P_{\xi}}=\partial_xV(s,X_s^{t,x,P_{\xi}},P_{X_s^{t,\xi}})\sigma(X_s^{t,x,P_{\xi}},P_{X_s^{t,\xi}}),\ dsdP\mbox{-}a.e.,\\
&\!\! H_s^{t,x,P_{\xi}}(e)=V(s,X_s^{t,x,P_{\xi}}+\beta(X_s^{t,x,P_{\xi}},P_{X_s^{t,\xi}},e),P_{X_s^{t,\xi}})\!-\!V(s,X_s^{t,x,P_{\xi}},
P_{X_s^{t,\xi}}),\ dsd\lambda dP\mbox{-}a.e.
\end{split}
\end{equation}

Indeed, we get from (\ref{C2-2}) combined with (\ref{C2-3})
\[
\begin{split}
&E[\int_t^{t+h}\Big(|\partial_xV(s,X_s^{t,x,P_{\xi}},P_{X_s^{t,\xi}})\sigma(X_s^{t,x,P_{\xi}},P_{X_s^{t,\xi}})-Z_s^{t,x,P_{\xi}}|^2\\
&\ \ +\int_K|V(s,X_s^{t,x,P_{\xi}}+\beta(X_s^{t,x,P_{\xi}},P_{X_s^{t,\xi}},e),P_{X_s^{t,\xi}})-V(s,X_s^{t,x,P_{\xi}},P_{X_s^{t,\xi}})
-H_s^{t,x,P_{\xi}}(e)|^2\lambda(de)\Big)ds|{\cal F}_t]\\
&\leq C|h|^2,\ 0\leq t\leq t+h\leq T,\ x\in\mathbb{R},\ \xi\in L^2({\cal F}_t).
\end{split}
\]
Consequently, considering a partition $t_i^n=t+hi2^{-n},\ 0\leq i\leq 2^n$, we have from the preceding estimate applied to $(t_i^n,t_{i+1}^n)$ instead of $(t,t+h)$:
\[
\begin{split}
&E[\int_t^{t+h}(|\partial_xV(s,X_s^{t,x,P_{\xi}},P_{X_s^{t,\xi}})\sigma(X_s^{t,x,P_{\xi}},P_{X_s^{t,\xi}})-Z_s^{t,x,P_{\xi}}|^2\\
&+\int_K|V(s,X_s^{t,x,P_{\xi}}+\beta(X_s^{t,x,P_{\xi}},P_{X_s^{t,\xi}},e),P_{X_s^{t,\xi}})-V(s,X_s^{t,x,P_{\xi}},P_{X_s^{t,\xi}})-H_s^{t,x,P_{\xi}}(e)|^2\lambda(de))ds]\\
&=E[\sum_{i=0}^{2^n-1}E[\int_{t_i^n}^{t_{i+1}^n}\Big(|\partial_xV(s,X_s^{t_i^n,y,P_{\eta}
},P_{X_s^{t_i^n,\eta}})\sigma(X_s^{t_i^n,y,P_{\eta}
},P_{X_s^{t_i^n,\eta}})-Z_s^{t_i^n,y,P_{\eta}}|^2\\
&\ \ \ +\int_K|V(s,X_s^{t_i^n,y,P_{\eta}}+\beta(X_s^{t_i^n,y,P_{\eta}},P_{X_s^{t_i^n,{\eta}}},e),P_{X_s^{t_i^n,{\eta}}})-V(s,X_s^{t_i^n,y,P_{\eta}},
P_{X_s^{t_i^n,{\eta}}})-H_s^{t_i^n,y,P_{\eta}}(e)|^2\lambda(de)\Big)\\
&\ \ \ ds|{\cal F}_{t_i^n}]|_{y=X_{t_i^n}^{t,x,P_{\xi}},\eta=X_{t_i^n}^{t,\xi}}]\\
&\leq C\sum_{i=0}^{2^n-1}(h2^{-n})^2=Ch^22^{-n}\rightarrow 0, \ \mbox{as}\ n\rightarrow \infty.
\end{split}
\]
Then (\ref{(*102)}) follows.

\noindent {\it Step 3}. To prove ii) and  iii) of Lemma \ref{newlem2}.

We restrict here to prove iii) which is slightly more involved, but uses in principle the same argument as that needed for ii). We notice that, in virtue of Step 2, as $V(s,\cdot,\cdot)\in C^{2,2}({\mathbb R}\times{ {\cal P}_2}({\mathbb R}))$, we have
\begin{equation}\label{10.11-1}
\begin{split}
&\partial_xZ_s^{t,x,P_{\xi}}=(\partial_x^2V\sigma+\partial_xV\partial_x\sigma)(s,X_s^{t,x,P_{\xi}},P_{X_s^{t,\xi}})\partial_xX_s^{t,x,P_{\xi}},\ dsdP\mbox{-}a.e.;\\
&\partial_xH_s^{t,x,P_{\xi}}(e)=\partial_xX_s^{t,x,P_{\xi}}\Big\{(\partial_xV(s,X_s^{t,x,P_{\xi}}+\beta(X_s^{t,x,P_{\xi}},P_{X_s^{t,\xi}},e),P_{X_s^{t,\xi}})(1+\partial_x\beta(X_s^{t,x,P_{\xi}},P_{X_s^{t,\xi}},e))\\
&\quad\quad\quad\quad\quad\quad\quad-\partial_xV(s,X_s^{t,x,P_{\xi}},P_{X_s^{t,\xi}})\Big\},\ dsd\lambda dP\mbox{-}a.e.,
\end{split}
\end{equation}
hence we can get
\[
\begin{split}
&E[\esssup\limits_{s\in [t,T]}|\partial_xZ_s^{t,x,P_{\xi}}|^p]\leq CE[\sup\limits_{s\in [t,T]}|\partial_xX_s^{t,x,P_{\xi}}|^p]\leq C_p,\\
&E[\esssup\limits_{s\in [t,T]}(\int_K|\partial_xH_s^{t,x,P_{\xi}}(e)|^2\lambda(de))^\frac{p}{2}]\leq C_pE[\sup\limits_{s\in [t,T]}|\partial_xX_s^{t,x,P_{\xi}}|^p]\leq C_p.
\end{split}
\]
For simplicity, in order to concentrate on the hard kernel of the proof, let $\Phi(x,\mu)=\Phi(x), f=f(\Pi_{r}^{t,x,P_{\xi}}$, $P_{\Pi_{r}^{t,\xi}})$ with $\Pi_{r}^{t,x,P_{\xi}}
=\int_{K}H_{r}^{t,x,P_{\xi}}(e)l(e)\lambda(de)$, and recall that $\Pi_{r}^{t,\xi}=\Pi_{r}^{t,\xi,P_{\xi}}$. Then
\begin{equation}\label{eq:zx0}
Y_{s}^{t,x,P_{\xi}}=\Phi(X_{T}^{t,x,P_{\xi}})
                    +\int_{s}^{T}\!\!f(\Pi_{r}^{t,x,P_{\xi}},P_{\Pi_{r}^{t,\xi}})dr
                    -\int_{s}^{T}\!\!Z_{r}^{t,x,P_{\xi}}dBr
                    -\int_{s}^{T}\!\!\int_{K}\!\!H_{r}^{t,x,P_{\xi}}(e)N_{\lambda}(dr,de),\ s\in[t,T].
\end{equation}
From Theorems \ref{th 6.1} and \ref{Pro8.1} we have
\begin{equation}\label{C4-obis}
\begin{split}
\partial_{x}Y_{s}^{t,x,P_{\xi}}=&\partial_{x}\Phi(X_{T}^{t,x,P_{\xi}})\partial_{x}X_{T}^{t,x,P_{\xi}}
                                +\int_{s}^{T}\!\!(\partial_{h}f)(\Pi_{r}^{t,x,P_{\xi}},P_{\Pi_{r}^{t,\xi}})\partial_{x}\Pi_{r}^{t,x,P_{\xi}}dr
                                -\int_{s}^{T}\!\!\partial_{x}Z_{r}^{t,x,P_{\xi}}dB_r\\
                                &
                                -\int_{s}^{T}\int_{K}\!\!\partial_{x}H_{r}^{t,x,P_{\xi}}(e)N_{\lambda}(dr,de),\ s\in[t,T],
\end{split}
\end{equation}
and
\begin{equation}\label{*103}
\begin{split}
&\partial_{x}^{2}Y_{s}^{t,x,P_{\xi}}\!=\!\left(\!\partial_{x}^{2}\Phi(X_{T}^{t,x,P_{\xi}})(\partial_{x}X_{T}^{t,x,P_{\xi}})^{2}
                                     \!+\!\partial_{x}\Phi(X_{T}^{t,x,P_{\xi}})\partial_{x}^{2}X_{T}^{t,x,P_{\xi}}\!\right)
                                     \!+\!\int_{s}^{T}\!\!(\partial_{h}^{2}f)(\Pi_{r}^{t,x,P_{\xi}},P_{\Pi_{r}^{t,\xi}})(\partial_{x}\Pi_{r}^{t,x,P_{\xi}})^{2}dr\\
&\ \ \ \!+\!\int_{s}^{T}\!\!(\partial_{h}f)(\Pi_{r}^{t,x,P_{\xi}},P_{\Pi_{r}^{t,\xi}})\partial_{x}^{2}\Pi_{r}^{t,x,P_{\xi}}dr
                                     -\int_{s}^{T}\partial_{x}^{2}Z_{r}^{t,x,P_{\xi}}dB_r
                                     -\int_{s}^{T}\int_{K}\partial_{x}^{2}H_{r}^{t,x,P_{\xi}}(e)N_{\lambda}(dr,de)\\
&=I_1(t)+\int_s^TI_2(t,r)dr\!+\!\int_{s}^{T}\!\!(\partial_{h}f)(\Pi_{r}^{t,x,P_{\xi}},P_{\Pi_{r}^{t,\xi}})\partial_{x}^{2}\Pi_{r}^{t,x,P_{\xi}}dr
                                     -\int_{s}^{T}\partial_{x}^{2}Z_{r}^{t,x,P_{\xi}}dB_r\\
&\ \ \ \ -\int_{s}^{T}\int_{K}\partial_{x}^{2}H_{r}^{t,x,P_{\xi}}(e)N_{\lambda}(dr,de),
\end{split}
\end{equation}
where
\[
\begin{aligned}
&I_{1}(t):=\partial_{x}^{2}\Phi(X_{T}^{t,x,P_{\xi}})(\partial_{x}X_{T}^{t,x,P_{\xi}})^{2}+\partial_{x}\Phi(X_{T}^{t,x,P_{\xi}})\partial_{x}^{2}X_{T}^{t,x,P_{\xi}},\\
&I_{2}(t,r):=(\partial_{h}^{2}f)(\Pi_{r}^{t,x,P_{\xi}},P_{\Pi_{r}^{t,\xi}})(\partial_{x}\Pi_{r}^{t,x,P_{\xi}})^{2}.\\
\end{aligned}
\]
It is obvious that for $0\leq t<t'\leq T$,
\begin{equation}\label{600}
\begin{split}
&E[|I_{1}(t)-I_{1}(t')|^2]\leq C(E[|\partial_{x}X_{T}^{t,x,P_{\xi}}|^{8}])^{\frac{1}{2}}(E[|X_{T}^{t,x,P_{\xi}}-X_{T}^{t',x,P_{\xi}}|^{4}])^{\frac{1}{2}}\\
&\quad\quad+C(E[|\partial_{x}X_{T}^{t,x,P_{\xi}}-\partial_{x}X_{T}^{t',x,P_{\xi}}|^{\frac{8}{3}}])^{\frac{3}{4}}(E[|\partial_{x}X_{T}^{t,x,P_{\xi}}
                                +\partial_{x}X_{T}^{t',x,P_{\xi}}|^{8}])^{\frac{1}{4}}\\
&\quad\quad+C(E[|\partial_{x}^{2}X_{T}^{t,x,P_{\xi}}|^{4}])^{\frac{1}{2}}(E[|X_{T}^{t,x,P_{\xi}}-X_{T}^{t',x,P_{\xi}}|^{4}])^{\frac{1}{2}}
+CE[|\partial_{x}^2X_{T}^{t,x,P_{\xi}}-\partial_{x}^2X_{T}^{t',x,P_{\xi}}|^{2}]\\
                           \leq&C(E[|X_{T}^{t,x,P_{\xi}}-X_{T}^{t',x,P_{\xi}}|^{4}])^{\frac{1}{2}}
                                +C(E[|\partial_{x}X_{T}^{t,x,P_{\xi}}-\partial_{x}X_{T}^{t',x,P_{\xi}}|^{\frac{8}{3}}])^{\frac{3}{4}} +CE[|\partial_{x}^2X_{T}^{t,x,P_{\xi}}-\partial_{x}^2X_{T}^{t',x,P_{\xi}}|^{2}]\\
                                =&I\!\!I_1(t,t')+I\!\!I_2(t,t')+I\!\!I_3(t,t'),
\end{split}
\end{equation}
where $
I\!\!I_{1}(t,t'):=C(E[|X_{T}^{t,x,P_{\xi}}-X_{T}^{t',x,P_{\xi}}|^{4}])^{\frac{1}{2}},\
I\!\!I_{2}(t,t'):=C(E[|\partial_{x}X_{T}^{t,x,P_{\xi}}-\partial_{x}X_{T}^{t',x,P_{\xi}}|^{\frac{8}{3}}])^{\frac{3}{4}},$ $ I\!\!I_{2}(t,t'):=$

\noindent$CE[|\partial_{x}^2X_{T}^{t,x,P_{\xi}}-\partial_{x}^2X_{T}^{t',x,P_{\xi}}|^{2}].
$
From Lemma \ref{le 3.1} we get
\begin{equation*}
\begin{split}
I\!\!I_{1}(t,t')=&C\left(E\left[E[|X_{T}^{t',x',P_{\eta}}-X_{T}^{t',x,P_{\xi}}|^{4}
|\mathcal{F}_{t}]|_{x'=X_{t'}^{t,x,P_{\xi}},\eta=X_{t'}^{t,\xi}}\right]\right)^{\frac{1}{2}}\\
                                        \leq&C\left(E[|X_{t'}^{t,x,P_{\xi}}-x|^{4}+W_{2}(P_{X_{t'}^{t,\xi}},
                                        P_{\xi})^{4}]\right)^{\frac{1}{2}}\leq C|t'-t|^{\frac{1}{2}}.
\end{split}
\end{equation*}
On the other hand we have
\[
\partial_{x}(X_{T}^{t,x,P_{\xi}})=\partial_{x}(X_{T}^{t',X_{t'}^{t,x,P_{\xi}},P_{X_{t'}^{t,\xi}}})
                                  =\partial_{x}X_{T}^{t',X_{t'}^{t,x,P_{\xi}},P_{X_{t'}^{t,\xi}}}\cdot\partial_{x}X_{t'}^{t,x,P_{\xi}},
                                  \]
and from Theorem \ref{th 5.1},
\begin{equation*}
\begin{split}
I\!\!I_{2}(t,t')&\leq C(E[|\partial_{x}X_{T}^{t',X_{t'}^{t,x,P_{\xi}},P_{X_{t'}^{t,\xi}}}|^{\frac{8}{3}}|\partial_{x}X_{t'}^{t,x,P_{\xi}}-1|^{\frac{8}{3}}])^{\frac{3}{4}}
                 \!+\!C(E[|\partial_{x}X_{T}^{t',X_{t'}^{t,x,P_{\xi}},P_{X_{t'}^{t,\xi}}}\!\!-\partial_{x}X_{T}^{t',x,P_{\xi}}|^{\frac{8}{3}}])^{\frac{3}{4}}\\
              &\leq C(E[|\partial_{x}X_{t'}^{t,x,P_{\xi}}-1|^{4}])^{\frac{1}{2}}
                   +C(E[|X_{t'}^{t,x,P_{\xi}}-x|^{\frac{8}{3}}+W_{2}\left(P_{X_{t'}^{t,\xi}},P_{\xi}\right)^{\frac{8}{3}}])^{\frac{3}{4}}
              \leq C(t'-t)^{\frac{1}{2}}.
\end{split}
\end{equation*}

\noindent Considering that $
\partial_{x}^2(X_{T}^{t,x,P_{\xi}})=(\partial_{x}^2X_{T}^{t',X_{t'}^{t,x,P_{\xi}},P_{X_{t'}^{t,\xi}}})(\partial_{x}X_{t'}^{t,x,P_{\xi}})^2
+\partial_{x}X_{T}^{t',X_{t'}^{t,x,P_{\xi}},P_{X_{t'}^{t,\xi}}}\cdot\partial_{x}^2X_{t'}^{t,x,P_{\xi}},$ a straight-forward estimate using Theorem 7.1 and, in particular, $(E[|\partial_{x}^2X_{t'}^{t,x,P_{\xi}}|^4])^{\frac{1}{2}}\leq C|t'-t|^{\frac{1}{2}}$, $(E[|(\partial_{x}X_{t'}^{t,x,P_{\xi}})^2-1|^4])^{\frac{1}{2}}\leq C(E[|\partial_{x}X_{t'}^{t,x,P_{\xi}}-1|^8])^{\frac{1}{4}}\leq C|t'-t|^{\frac{1}{4}}$, yields now $ I\!\!I_{3}(t,t') \leq C|t'-t|^{\frac{1}{4}}.$

Consequently, from (\ref{600}) we get \begin{equation}\label{599}E[|I_{1}(t)-I_{1}(t')|^2]\leq C|t'-t|^{\frac{1}{4}}.\end{equation}

\noindent Let us estimate now $|I_{2}(t,s)-I_{2}(t',s)|$ for $t<t'$. Using that (\ref {10.11-1}) yields
\begin{equation}\label{eq:zx1}
\esssup_{s\in[t,T]}|\partial_{x}\Pi_{s}^{t,x,P_{\xi}}|
\leq C\esssup_{s\in[t,T]}|\partial_{x}H_{s}^{t,x,P_{\xi}}|_{L^{2}(\lambda)}
\leq C\sup_{s\in[t,T]}|\partial_{x}X_{s}^{t,x,P_{\xi}}|.
\end{equation}
As $(\partial_h^2 f)$ is bounded and Lipschitz, we get
\begin{equation*}
\begin{split}
&E[\Big(\int_{t'}^{T}|I_{2}(t,r)-I_{2}(t',r)|dr\Big)^2]\leq CE[\Big(\int_{t'}^T|(\partial_{x}\Pi_{r}^{t,x,P_{\xi}})^2-(\partial_{x}\Pi_{r}^{t',x,P_{\xi}})^2|dr\Big)^2]\\
&\ \ +C\sup_{x\in {\mathbb{R}}^d}E[\int_{t'}^T|\Pi_{r}^{t,x,P_{\xi}}-\Pi_{r}^{t',x,P_{\xi}}|^2dr]\leq C\Big(E[\int_{t'}^T|\partial_{x}\Pi_{r}^{t',X_{t'}^{t,x,P_{\xi}},P_{X_{t'}^{t,\xi}}}\partial_xX_{t'}^{t,x,P_{\xi}}
-\partial_{x}\Pi_{r}^{t',x,P_{\xi}}|^\frac{8}{3}dr]\Big)^{\frac{3}{4}}\\
&\ \  +C\sup_{x\in {\mathbb{R}}^d}E[\int_{t'}^T|\Pi_{r}^{t',X_{t'}^{t,x,P_{\xi}},P_{X_{t'}^{t,\xi}}}-\Pi_{r}^{t',x,P_{\xi}}|^2dr].
\end{split}
\end{equation*}
Then from (\ref{eq:zx1}), Propositions \ref{pro 5.1}, \ref{pro 4.3} and (\ref{equ 6.11}) as well as Lemma 3.1, we obtain
\begin{equation}\label{598}\displaystyle E[\Big(\int_{t'}^{T}|I_{2}(t,r)-I_{2}(t',r)|dr\Big)^2]\leq C|t'-t|^{\frac{1}{4}}.\end{equation}

\noindent Applying Lemma 10.1-2) to the equation (\ref{*103}) it follows from (\ref{599}), (\ref{598}), and $\partial_{h}f$ is Lipschitz that

\begin{equation}\label{eq:zx2}
\begin{split}
&E\Big[\sup_{s\in[t',T]}|\partial_{x}^{2}Y_{s}^{t,x,P_{\xi}}-\partial_{x}^{2}Y_{s}^{t',x,P_{\xi}}|^{2}
+\int_{t'}^{T}\big(|\partial_{x}^{2}Z_{r}^{t,x,P_{\xi}}-\partial_{x}^{2}Z_{r}^{t',x,P_{\xi}}|^{2}\\
&\quad\quad+\int_{K}|\partial_{x}^{2}H_{r}^{t,x,P_{\xi}}(e)-\partial_{x}^{2}H_{r}^{t',x,P_{\xi}}(e)|^{2}\lambda(de)\big)dr\Big]\\
&\leq CE[|I_{1}(t)-I_{1}(t')|^2]+CE[\Big(\int_{t'}^{T}|I_{2}(t,r)-I_{2}(t',r)|dr\Big)^2]\\
&\quad\quad+CE[\Big(\int_{t'}^{T}|(\partial_{h}f(\Pi_{r}^{t,x,P_{\xi}},P_{\Pi_{r}^{t,\xi}})
-\partial_{h}f(\Pi_{r}^{t',x,P_{\xi}},P_{\Pi_{r}^{t',\xi}}))\partial_{x}^{2}\Pi_{r}^{t,x,P_{\xi}}|dr\Big)^2]\\
&\leq C|t'-t|^{\frac{1}{4}}+CE[\int_{t'}^{T}(|\Pi_{r}^{t,x,P_{\xi}}-\Pi_{r}^{t',x,P_{\xi}}|^2+W_2(P_{\Pi_{r}^{t,\xi}},P_{\Pi_{r}^{t',\xi}})^2)dr]\\
&\leq C|t'-t|^{\frac{1}{4}}.\ \ \ \ \  \hfill{\mbox{(The proof of the last inequality is similar to that of (\ref{598}).)}}\\
\end{split}
\end{equation}
On the other hand, from (\ref{eq:zx1}) and Theorem \ref{Pro8.1} we get also
\begin{equation}\label{eq:zx3}
\begin{split}
&|E[\partial_{x}^{2}Y_{t}^{t,x,P_{\xi}}-\partial_{x}^{2}Y_{t'}^{t,x,P_{\xi}}]|\\
&\leq\int_{t}^{t'}E[|(\partial_{h}^{2}f)(\Pi_{r}^{t,x,P_{\xi}},P_{\Pi_{r}^{t,\xi}})(\partial_{x}\Pi_{r}^{t,x,P_{\xi}})^{2}
 +(\partial_{h}f)(\Pi_{r}^{t,x,P_{\xi}},P_{\Pi_{r}^{t,\xi}})\partial_{x}^{2}\Pi_{r}^{t,x,P_{\xi}}|]dr\\
&\leq C(t'-t)+CE[(\int_{t}^{t'}|\partial_{x}^{2}\Pi_{r}^{t,x,P_{\xi}}|^{2}dr)^{\frac{1}{2}}](t'-t)^{\frac{1}{2}}\leq C(t'-t)^{\frac{1}{2}}.\\
\end{split}
\end{equation}
Consequently, from (\ref{eq:zx2}) and (\ref{eq:zx3}) we have
\begin{equation*}
\begin{array}{lll}
&|\partial_{x}^{2}V(t,x,P_{\xi})-\partial_{x}^{2}V(t',x,P_{\xi})|\\
\leq&|E[\partial_{x}^{2}Y_{t}^{t,x,P_{\xi}}-\partial_{x}^{2}Y_{t'}^{t,x,P_{\xi}}]| +(E[\sup_{s\in[t',T]}|\partial_{x}^{2}Y_{s}^{t,x,P_{\xi}}-\partial_{x}^{2}Y_{s}^{t',x,P_{\xi}}|^{2}])^{\frac{1}{2}}\\
\leq& C|t'-t|^\frac{1}{8},\ t,\ t'\in[0,T],\ x\in\mathbb{R},\ \xi\in L^{2}(\mathcal{F}_t).
\end{array}
\end{equation*}

\noindent {\it Step 4}. To prove iv) and  v) of Lemma \ref{newlem2}.

Let us prove v) which is more complicate, similar to prove iv). From (\ref{222}) we have
\begin{equation}\label{501}
\begin{split}
&\partial_{y}(\partial_{\mu}Y_{s}^{t,x,P_{\xi}}(y))
=\partial_x\Phi(X_{T}^{t,x,P_{\xi}})\partial_{y}(\partial_{\mu}X_{T}^{t,x,P_{\xi}}(y))
 +\int_{s}^{T}(\partial_{h}f)(\Pi_{r}^{t,x,P_{\xi}},P_{\Pi_{r}^{t,\xi}})\partial_{y}(\partial_{\mu}\Pi_{r}^{t,x,P_{\xi}}(y))dr\\
&+\int_{s}^{T}\widehat{E}[\partial_{y}(\partial_{\mu}f)(\Pi_{r}^{t,x,P_{\xi}},P_{\Pi_{r}^{t,\xi}},\widehat{\Pi}_{r}^{t,y,P_{\xi}})
(\partial_{x}\widehat{\Pi}_{r}^{t,y,P_{\xi}})^{2}
+(\partial_{\mu}f)(\Pi_{r}^{t,x,P_{\xi}},P_{\Pi_{r}^{t,\xi}},\widehat{\Pi}_{r}^{t,y,P_{\xi}})\partial_{x}^{2}\widehat{\Pi}_{r}^{t,y,P_{\xi}}]dr\\
&+\int_{s}^{T}\widehat{E}[(\partial_{\mu}f)(\Pi_{r}^{t,x,P_{\xi}},P_{\Pi_{r}^{t,\xi}},\widehat{\Pi}_{r}^{t,\widehat{\xi}})\partial_{y}
(\partial_{\mu}\widehat{\Pi}_{r}^{t,\widehat{\xi}}(y))]dr-\int_{s}^{T}\partial_{y}(\partial_{\mu}Z_{r}^{t,x,P_{\xi}}(y))dB_{r}\\
&-\int_{s}^{T}\int_{K}\partial_{y}(\partial_{\mu}H_{r}^{t,x,P_{\xi}}(y,e))N_{\lambda}(dr,de),\ s\in[t,T].
\end{split}
\end{equation}

\noindent Now replace $x$ by $\xi$ in above equation (\ref{501}) we get the solution $(\partial_{y}(\partial_{\mu}Y_{s}^{t,{\xi}}(y)), \partial_{y}(\partial_{\mu}Z_{s}^{t,{\xi}}(y)),$ $ \partial_{y}(\partial_{\mu}H_{s}^{t,{\xi}}(y)))$ of the following BSDE:
\begin{equation*}
\begin{split}
&\partial_{y}(\partial_{\mu}Y_{s}^{t,{\xi}}(y))
=\partial_x\Phi(X_{T}^{t,{\xi}})\partial_{y}(\partial_{\mu}X_{T}^{t,{\xi}}(y))
 +\int_{s}^{T}(\partial_{h}f)(\Pi_{r}^{t,{\xi}},P_{\Pi_{r}^{t,\xi}})\partial_{y}(\partial_{\mu}\Pi_{r}^{t,{\xi}}(y))dr\\
\end{split}
\end{equation*}
\begin{equation}\label{502}
\begin{split}
&+\int_{s}^{T}\widehat{E}[\partial_{y}(\partial_{\mu}f)(\Pi_{r}^{t,{\xi}},P_{\Pi_{r}^{t,\xi}},\widehat{\Pi}_{r}^{t,y,P_{\xi}})
(\partial_{x}\widehat{\Pi}_{r}^{t,y,P_{\xi}})^{2}
+(\partial_{\mu}f)(\Pi_{r}^{t, {\xi}},P_{\Pi_{r}^{t,\xi}},\widehat{\Pi}_{r}^{t,y,P_{\xi}})\partial_{x}^{2}\widehat{\Pi}_{r}^{t,y,P_{\xi}}]dr\\
&+\int_{s}^{T}\widehat{E}[(\partial_{\mu}f)(\Pi_{r}^{t, {\xi}},P_{\Pi_{r}^{t,\xi}},\widehat{\Pi}_{r}^{t,\widehat{\xi}})\partial_{y}
(\partial_{\mu}\widehat{\Pi}_{r}^{t,\widehat{\xi}}(y))]dr-\int_{s}^{T}\partial_{y}(\partial_{\mu}Z_{r}^{t, {\xi}}(y))dB_{r}\\
&-\int_{s}^{T}\int_{K}\partial_{y}(\partial_{\mu}H_{r}^{t, {\xi}}(y,e))N_{\lambda}(dr,de),\ s\in[t,T].
\end{split}
\end{equation}

\noindent On the other hand, notice that we have for $0\leq t<s\leq T$, P-a.s.,
\begin{equation}\label{503}
\begin{split}
& \partial_{y}(\partial_{\mu}X_{T}^{t,{\xi}}(y))\!
=\!(\partial_x X_{T}^{s,X_s^{t,\xi},P_{X_s^{t,\xi}}})\partial_{y}(\partial_{\mu}X_{s}^{t,{\xi}}(y))\!+\!\widehat{E}[\partial_y((\partial_\mu X_{T}^{s,X_s^{t,\xi},P_{X_s^{t,\xi}}})(\widehat{X}_s^{t,y,P_\xi}))(\partial_x\widehat{X}_s^{t,y,P_\xi})^2]\\
&\quad\quad\quad\quad\quad\quad\quad + \widehat{E}[(\partial_\mu X_{T}^{s,X_s^{t,\xi},P_{X_s^{t,\xi}}})(\widehat{X}_s^{t,\widehat{\xi}})\partial_y(\partial_\mu\widehat{X}_s^{t,\widehat{\xi},P_\xi}(y))].\\
\end{split}
\end{equation}

Using similar arguments as for (\ref{eq:zx2}), applying Theorem 10.2 to above equation (\ref{502}) by using (\ref{503}), (\ref{eq:zx1}), (\ref{eq:zx2}), Theorem 7.1,  Propositions \ref{pro 5.1} and \ref{pro 4.3}, (\ref{equ 6.11}), Lemma 3.1, Theorem 8.1, we obtain
\begin{equation}\label{504}
\begin{split}
&E\Big[\sup_{s\in[t',T]}|\partial_{y}(\partial_{\mu}Y_{s}^{t,{\xi}}(y))-\partial_{y}(\partial_{\mu}Y_{s}^{t',{\xi}}(y))|^{2}
+\int_{t'}^{T}\Big(|\partial_{y}(\partial_{\mu}Z_{r}^{t,{\xi}}(y))-\partial_{y}(\partial_{\mu}Z_{r}^{t',{\xi}}(y))|^2\\
&\quad+\int_{K}|\partial_{y}(\partial_{\mu}H_{r}^{t,{\xi}}(y,e))-\partial_{y}(\partial_{\mu}H_{r}^{t',{\xi}}(y,e))|^{2}\lambda(de)\Big)dr\Big]\\
&\leq CE[|J_1(t)-J_1(t')|^2]+CE[\Big(\int_{t'}^T|J_2(t,r)-J_2(t',r)|dr\Big)^2]\\
&\quad+CE[\Big(\int_{t'}^T|((\partial_{h}f)(\Pi_{r}^{t,{\xi}},P_{\Pi_{r}^{t,\xi}})-(\partial_{h}f)(\Pi_{r}^{t',{\xi}},P_{\Pi_{r}^{t',\xi}}))
\partial_{y}(\partial_{\mu}\Pi_{r}^{t,{\xi}}(y))|dr\Big)^2]\\
&\quad+CE[\Big(\int_{t'}^T|\widehat{E}[((\partial_{\mu}f)(\Pi_{r}^{t, {\xi}},P_{\Pi_{r}^{t,\xi}},\widehat{\Pi}_{r}^{t,\widehat{\xi}})-(\partial_{\mu}f)(\Pi_{r}^{t', {\xi}},P_{\Pi_{r}^{t',\xi}},\widehat{\Pi}_{r}^{t',\widehat{\xi}}))\partial_{y}(\partial_{\mu}\widehat{\Pi}_{r}^{t,\widehat{\xi}}(y))]|dr\Big)^2]\\
&\leq C|t-t'|^{\frac{1}{4}},\\
\end{split}
\end{equation}
where $J_1(t):=\partial_x\Phi(X_{T}^{t,{\xi}})\partial_{y}(\partial_{\mu}X_{T}^{t,{\xi}}(y))$, $J_2(t,r):=\widehat{E}[\partial_{y}(\partial_{\mu}f)(\Pi_{r}^{t,{\xi}},P_{\Pi_{r}^{t,\xi}},\widehat{\Pi}_{r}^{t,y,P_{\xi}})
(\partial_{x}\widehat{\Pi}_{r}^{t,y,P_{\xi}})^{2}$\\ $ +(\partial_{\mu}f)(\Pi_{r}^{t, {\xi}},P_{\Pi_{r}^{t,\xi}},\widehat{\Pi}_{r}^{t,y,P_{\xi}})\partial_{x}^{2}\widehat{\Pi}_{r}^{t,y,P_{\xi}}].$

Furthermore, still applying similar arguments to the equation (\ref{501}), but also using the estimate (\ref{504}), we obtain
\begin{equation*}\label{505}
\begin{split}
&E\Big[\sup_{s\in[t',T]}|\partial_{y}(\partial_{\mu}Y_{s}^{t,x,P_{\xi}}(y))-\partial_{y}(\partial_{\mu}Y_{s}^{t',x,P_{\xi}}(y))|^{2}
+\int_{t'}^{T}\Big(|\partial_{y}(\partial_{\mu}Z_{r}^{t,x,P_{\xi}}(y))-\partial_{y}(\partial_{\mu}Z_{r}^{t',x,P_{\xi}}(y))|^2\\
&+\int_{K}|\partial_{y}(\partial_{\mu}H_{r}^{t,x,P_{\xi}}(y,e))-\partial_{y}(\partial_{\mu}H_{r}^{t',x,P_{\xi}}(y,e))|^{2}\lambda(de)\Big)dr\Big]\leq C|t'-t|^{\frac{1}{4}}.\\
\end{split}
\end{equation*}
On the other hand, similar to (\ref{eq:zx3}) using (\ref{eq:zx1}) and Theorem \ref{Pro8.1} we get
\begin{equation*}\label{506}
\begin{split}
&|E[\partial_{y}(\partial_{\mu}Y_{t}^{t,x,P_{\xi}}(y))-\partial_{y}(\partial_{\mu}Y_{t'}^{t,x,P_{\xi}}(y))]|\\
&\leq E[\int_{t}^{t'}|(\partial_{h}f)(\Pi_{r}^{t,x,P_{\xi}},P_{\Pi_{r}^{t,\xi}})\partial_{y}(\partial_{\mu}\Pi_{r}^{t,x,P_{\xi}}(y))
 +\widehat{E}[\partial_{y}(\partial_{\mu}f)(\Pi_{r}^{t,x,P_{\xi}},P_{\Pi_{r}^{t,\xi}},\widehat{\Pi}_{r}^{t,y,P_{\xi}})(\partial_{x}
 \widehat{\Pi}_{r}^{t,y,P_{\xi}})^{2}\\
&\quad+(\partial_{\mu}f)(\Pi_{r}^{t,x,P_{\xi}},P_{\Pi_{r}^{t,\xi}},\widehat{\Pi}_{r}^{t,y,P_{\xi}})\partial_{x}^{2}\widehat{\Pi}_{r}^{t,y,P_{\xi}}
 +(\partial_{\mu}f)(\Pi_{r}^{t,x,P_{\xi}},P_{\Pi_{r}^{t,\xi}},\widehat{\Pi}_{r}^{t,\widehat{\xi}})\partial_{y}
 (\partial_{\mu}\widehat{\Pi}_{r}^{t,\widehat{\xi}}(y))]|dr]\\
&\leq C(t'-t)^{\frac{1}{2}}.
\end{split}
\end{equation*}
Hence, from above two estimates we get
\begin{equation*}
\begin{split}
&|\partial_{y}(\partial_{\mu}V)(t,x,P_{\xi},y)-\partial_{y}(\partial_{\mu}V)(t',x,P_{\xi},y)|\\
&\leq|E[\partial_{y}(\partial_{\mu}Y_{t}^{t,x,P_{\xi}}(y))-\partial_{y}(\partial_{\mu}Y_{t'}^{t,x,P_{\xi}}(y))]|
+(E[\sup_{s\in[t',T]}|\partial_{y}(\partial_{\mu}Y_{s}^{t,x,P_{\xi}}(y))-\partial_{y}(\partial_{\mu}Y_{s}^{t',x,P_{\xi}}(y))|^{2}])^{\frac{1}{2}}\\
&\leq C|t'-t|^{\frac{1}{8}},\ t,\ t'\in[0,T],\ x\in\mathbb{R},\ y\in\mathbb{R},\ \xi\in L^2(\Omega, {\cal F}_t, P).
\end{split}
\end{equation*}
\end{proof}


\begin{thebibliography}{99}



\bibitem{BASS}R.F. Bass,
{Stochastic differential equations with jumps},
\emph{Probability Surveys},
2004, \textbf{1}: 1-19.

\bibitem{BBP}G. Barles, R. Buckdahn, E. Pardoux,
{Backward stochastic differential equations and integral-partial differential equations},
\emph{Stochastics and Stochastics Reports},
1997, \textbf{60}: 57¨C83.





\bibitem{BDLP}R. Buckdahn, B. Djehiche, J. Li, S. Peng,
{Mean-field backward stochastic differential equations: a limit approach},
\emph{The Annals of Probability},
 2009, \textbf{37}(4): 1524-1565.

 \bibitem{BLH}R. Buckdahn, J. Li, Y. Hu,
{Stochastic repersentation for solutions of Isaacs' type integral-patial differential equations},
\emph{Stochastic Processes and their Applications},
 2011, \textbf{121}: 2715-2750.



\bibitem{BLP}R. Buckdahn, J. Li, S. Peng,
{Mean-field backward stochastic differential equations and related partial differential equations},
\emph{Stochastic Proc. and their Applications},
2009, \textbf{119}: 3133-3154.

\bibitem{BLPR}R. Buckdahn, J. Li, S. Peng, C. Rainer,
{Mean-field stochastic differential equations and associated PDEs},
2014, to appear in \emph{Annals of Probability}, http://arxiv.org/abs/1407.1215.

\bibitem{Ca1}P. Cardaliaguet,
{Notes on Mean Field Games (from P.L. Lions' lectures at \emph{Coll\`{e}ge de France})},
Available on the website.
https://www.ceremade.dauphine.fr/~cardalia/MFG100629.pdf.

\bibitem{Ca2}P. Cardaliaguet,
{Weak solutions for first order mean field games with local coupling},
2013,
http://arxiv.org/abs/1305.7015.

\bibitem{CD2}R. Carmona, F. Delarue,
{The master equation for large population equilibriums},
2014,
http://arxiv.org/abs/1404.4694.


\bibitem{CCD}J.F. Chassagneux, D. Crisan, F. Delarue,
{A probabilistic approach to classical solutions of the master equation for large population equilibria},
2015,
http://arxiv.org/abs/1411.3009v2.


\bibitem{HL1} T. Hao, J. Li,
{Backward stochastic differential equations coupled with value function and related optimal control problems},
\emph{Abstract and Applied Analysis,}
2014, \textbf{2014}, 17 pages. http://dx.doi.org/10.1155/2014/262713.


\bibitem{HL2} T. Hao, J. Li,
{Fully coupled forward-backward SDEs involving the value function and associated nonlocal
Hamilton-Jacobi-Bellman  equations}, ESAIM: COCV. 2016, 22, 519-538.

\bibitem{HL3} T. Hao, J. Li,
{Mean-field SDEs with jumps and nonlocal integral-PDEs}, \emph{Nonlinear Differential Equations and Applications}, 2016, 23 (2): 1-51.


\bibitem{KAC1}M. Kac,
Foundations of kinetic theory.
\emph{In Proceedings of the 3rd Berkeley Symposium
on Mathematical Statistics and Probability,}
1956, \textbf{3}: 171-197.


\bibitem{KAC2}M. Kac,
Probability and Related Topics in the Physical Sciences. Interscience Publishers,
New York, 1958.







\bibitem{KL}P.E. Kloeden,  T. Lorenz,
{Stochastic differential equations with nonlocal sample dependence},
\emph{Stochastic Analysis and Applications},
2010, \textbf{28}, 937-945.





\bibitem{KK} P.M. Kotelenez, T.G. Kurtz,
{Macroscopic limit for stochastic partial differential equations of
McKean-Vlasov type},
\emph{Probability Theory and Related Fields},
2010, \textbf{146}, 189¨C222.


\bibitem{LL}J.M. Lasry, P.L. Lions,
{Mean field games},
\emph{Japanese Journal of Mathematics},
2007, \textbf{2}: 229-260.

\bibitem{LI}J. Li,
{Stochastic maximum principle in the mean-field controls},
\emph{Automatica},
2012, \textbf{48}: 366-373.



\bibitem{LP}J. Li, S.G. Peng,
{Stochastic optimization theory of backward stochastic differential equations with jumps and viscosity solutions of
Hamilton-Jacobi-Bellman equations},
\emph{Nonlinear analysis},
2009, \textbf{70}: 1776-1796.

\bibitem{LW1} J. Li, Q.M. Wei,
{Stochastic differential games for fully coupled FBSDEs with jumps},
\emph{Applied mathematics and optimization},
2013, \textbf{71}(3): 411-448.


\bibitem{LW2} J. Li, Q.M. Wei,
{$L^P$ estimates for fully coupled FBSDEs with jumps},
\emph{Stochastic Processes and their Applications},
2014, \textbf{124}: 1582-1611.

\bibitem{LIONS}P.L. Lions,
{Cours au Coll\`{e}ge de France : Th\'eorie des jeu \`{a} champs moyens},
2013,
http://www.college-de-france.fr/default/EN/all/equ[1]der/audiovideo.jsp.

\bibitem{MPQ} H. Min, Y. Peng, and Y.L. Qin,
Fully coupled mean-field forward-backward stochastic differential equations
and stochastic maximum principle.
\emph{Abstr. Appl. Anal.},
2014, \textbf{2014}, 15 pages. Article ID 839467,
http://dx.doi.org/10.1155/2014/839467.

\bibitem{PP} E. Pardoux, S. Peng, Backward stochastic differential equations
and quasilinear parabolic partial differential equations.
In \emph{Stochastic Partial Differential Equations and Their Appliciations} (B.L. Rozovskii and R.B. Sowers, eds.). \emph{Lecture Notes in Control and Inform. Sci.} 176. Springer, Berlin.


\bibitem{PENG} S. Peng, in: J. Yan, S. Peng, S. Fang, L. Wu (Eds.), BSDE and Stochastic Optimizations; Topics in Stochastic
Analysis, Science Press, Beijing, 1997 (Chapter 2) (in Chinese).

\bibitem{SMS} Y. Shen, Q.X. Meng, P. Shi,
{Maximum principle for mean-field jump-diffusion stochastic delay differential equations and its application to finance},
\emph{Automatica},
2014, \textbf{50}: 1565-1579.




\bibitem{TL}S.J. Tang, X.J. Li,
{Necessary conditions for optimal control of stochastic with random jumps},
\emph{SIAM Journal on Control and Optimization},
1999, \textbf{32}(5): 1447-1475.



\bibitem{Yo}J.M. Yong,
{A linear-quadratic optimal control problem for mean-field
 stochastic differential equations},
2011,
http://arxiv.org/abs/1110.1564.


\end{thebibliography}
\end{document}